\definecolor{puces}{cmyk}{0,0,0,.5} 
\newcommand{\titre}{$p$-Adic families of Siegel modular  cuspforms}
\newcommand{\auteur}{Fabrizio Andreatta, Adrian Iovita and  Vincent Pilloni}
\title{\titre}
\author{\auteur}
\numberwithin{equation}{subsubsection}
\newcommand{\isolong}{\buildrel{\sim}\over\longrightarrow}
\theoremstyle{plain}
\newtheorem{defi}{Definition}[subsection]
\newtheorem{prop}[defi]{Proposition}
\newtheorem{thm}[defi]{Theorem}
\newtheorem{lem}[defi]{Lemma}
\newtheorem{coro}[defi]{Corollary}
\newtheorem{opprob}{Open problem}
\newtheorem{defi2}{Definition}[subsubsection]
\newtheorem{prop2}[defi2]{Proposition}
\newtheorem{thm2}[defi2]{Theorem}
\newtheorem{lem2}[defi2]{Lemma}
\newtheorem{coro2}[defi2]{Corollary}
\newtheorem{defi3}{Definition}[section]
\newtheorem{thm3}[defi3]{Theorem}
\theoremstyle{remark}
\newtheorem{rem}[defi]{Remark}
\newtheorem{rem2}[defi2]{Remark}
\newenvironment{demo}{\noindent{\textbf{Proof}}}{\hfill \qedsymbol}
\newcommand{\C}{\mathbb{C}}
\newcommand{\R}{\mathbb{R}}
\newcommand{\qq}{\mathbb{Q}}
\newcommand{\Z}{\mathbb{Z}}
\newcommand{\N}{\mathbb{N}}
\newcommand{\Spec}{\mathrm{Spec}}
\newcommand{\Spf}{\mathrm{Spf}}
\newcommand{\Spm}{\mathrm{Spm}}
\newcommand{\HH}{\mathrm{H}}
\newcommand{\ocal}{\mathcal{O}}
\newcommand{\oscr}{\mathscr{O}}
\newcommand{\B}{\mathrm{B}}
\newcommand{\T}{\mathrm{T}}
\newcommand{\HT}{\mathrm{HT}}
\newcommand{\cU}{\mathcal{U}}
\newcommand{\cE}{\mathcal{E}}
\newcommand{\cX}{\mathcal{X}}
\newcommand{\cY}{\mathcal{Y}}
\newcommand{\M}{\mathrm{M}}
\newcommand{\U}{\mathrm{U}}
\newcommand{\NU}{\mathrm{N}}
\newcommand{\I}{\mathrm{I}}
\newcommand{\G}{\mathrm{G}}
\newcommand{\Ha}{\mathrm{Ha}}
\newcommand{\Hdg}{\mathrm{Hdg}}
\newcommand{\XI}{X_{\mathrm{Iw}}}
\newcommand{\cXI}{\cX_{\mathrm{Iw}}}
\newcommand{\GL}{\mathrm{GL}_g}
\newcommand{\colim}{\mathrm{colim}}
\newcommand{\Hom}{\mathrm{Hom}}
\newcommand{\gl}{\mathfrak{g}}
\newcommand{\tor}{\mathfrak{t}}
\newcommand{\mfrak}{\mathfrak{m}}
\newcommand{\Fil}{\mathrm{Fil}}
\newcommand{\Gr}{\mathrm{Gr}}
\newcommand{\gsp}{\mathrm{GSp}_{2g}}
\newcommand{\cusp}{\mathrm{cusp}}
\newcommand{\un}{\mathrm{un}}
\newcommand{\an}{\mathrm{an}}
\newcommand{\rig}{\mathrm{rig}}
\begin{document}
\selectlanguage{english}

\maketitle
\begin{abstract}
Let $p$ be an odd prime and $g\geq 2$ an integer. We prove that a finite slope Siegel cuspidal
eigenform of genus $g$   can be $p$-adically deformed over the $g$-dimensional weight space.  The proof of this
theorem relies on the construction of a family  of sheaves of locally analytic overconvergent modular forms.
\end{abstract}
\tableofcontents
\section{Introduction}



After its glorious start in $1986$ with H.~Hida's article \cite{Hida}, the theory of {\em $p$-adic families of modular forms} was developed in various directions
and was applied  in order to prove many strong results in Arithmetic Geometry. One of its first applications was in the proof of the weight two
Mazur-Tate-Teitelbaum conjecture by R.~Greenberg and G.~Stevens in \cite{greenberg_stevens} and in the proof of certain cases of the Artin conjecture by K.~Buzzard,
M.~Dickinson, N.~Shepherd-Barron, R.~Taylor in \cite{BDSBT}. An important turn in its history was marked by R.~Coleman's
 construction of finite slope $p$-adic families of elliptic modular forms (\cite{Col} and \cite{Colm})
and by the construction of the eigencurve by R.~Coleman and B.~Mazur in \cite{C-M}.  The eigencurve is a
$p$-adic rigid analytic curve which parametrizes overconvergent elliptic modular eigenforms of
finite slope.

During the last fifteen years many  authors have contributed to set up a general theory  of $p$-adic  automorphic forms on  higher rank groups. Some of them used an
approach  based on  the cohomology of arithmetic groups initiated by Hida and Stevens.  Hida's idea, later on developed by Emerton (\cite{Em}) was to amalgamate
(more precisely take the projective limit, or as in \cite{Em}, alternatively consider the inductive limit followed by $p$-adic completion of) cohomology groups with
trivial ($\Z_p$) coefficients of a chosen tower of Shimura varieties. One obtains a large $\qq_p$-Banach space with an action of an appropriate Galois group, of the
$\qq_p$ (or even adelic, depending on the choice of the tower) points of the group and of a certain Hecke algebra. These data were used by H.~Hida in order to
produce a construction of the ordinary part of the eigenvariety for a large class of Shimura varieties.

In \cite{Em} there is also a construction of  finite slope eigenvarieties but so far it could not be proved
that the eigenvarieties thus constructed have the right dimension except in the cases already known:
for elliptic modular forms and modular forms on Shimura curves. Nevertheless the very rich structure
of the completed cohomology of towers of Shimura varieties was successfully used to prove results about the $p$-adic
local (and global) Langlands correspondence.

Stevens' approach is different, namely he uses the cohomology of one Shimura variety (of level type $\Gamma_0(Np)$ with $(N,p)=1$) with complicated coefficients
(usually certain locally analytic functions or distributions on the $\Z_p$-points of the group) as the space (called overconvergent modular symbols or $p$-adic
families of modular symbols) on which the Hecke operators act. These data were used by A.~Ash and G.~Stevens to produce eigenvarieties for   $\GL/\qq$ in \cite{AS}.
Recently E.~Urban  (\cite{Ur}) used the same method to construct equidimensional eigenvarieties  of the expected dimension  for modular eigensymbols of finite slope
associated to reductive groups $G$ over $\qq$ such that $G(\R)$ admits discrete series.

Finally, in constructing the eigencurve some authors (including Hida and Coleman)  used  a geometric approach
based on Dwork's ideas and  Katz's theory of $p$-adic modular
forms and overconvergent modular forms. Namely they interpolated directly the classical modular forms seen
as sections of certain automorphic line bundles on the modular curve (of level prime to $p$)
by defining overconvergent modular forms and allowing  them
to have essential singularities in ``small $p$-adic disks of very supersingular points''.
So far
this geometric approach  has only been successful, in the case of higher rank groups,
in producing ordinary families (\cite{Hi}) or one dimensional
finite slope families. The main theme of our work is to
by-pass these restrictions. In the  articles \cite{AIS} and \cite{PiTW} we explained new points of view
on the construction of the eigencurve of Coleman and
Mazur. Namely, we showed that over the open modular curve (of level prime to $p$), which is  the complement
of a disjoint union of ``small disks of very supersingular points'', one can interpolate the classical automorphic
sheaves and even construct $p$-adic families of such sheaves. We showed that Coleman's $p$-adic families can be seen as
global sections of such $p$-adic families of sheaves.

The present paper is a development of these ideas for Siegel varieties. We  prove that a cuspidal
automorphic form which occurs in the $\HH^0$ of the
coherent cohomology of some automorphic vector bundle on a Siegel variety  can be $p$-adically deformed
over the $g$ dimensional  weight space. We believe that the methods used in this
paper would certainly apply to any $\mathrm{PEL}$ Shimura variety of type $A$ and $C$ and maybe even to those
of type $D$.

\medskip

We now give  a more precise description of our main result. Let $p$ be an odd prime, $g \geq 2$ and
$N \geq 3$
two integers. We assume that $(p,N)=1$. Let
$Y_{\mathrm{Iw}}$ be the Siegel variety of genus $g$, principal level $N$ and Iwahori level structure at $p$.
This is the moduli space over $\Spec~\qq$ of principally
polarized abelian schemes $A$, equipped with a symplectic isomorphism $A[N] \simeq (\Z/N\Z)^{2g}$ and a
totally
isotropic flag $\Fil_\bullet A[p]: 0 = \Fil_0 A[p]
\subset \ldots \subset \Fil_{g} A[p] \subset A[p]$  where $\Fil_i A[p]$ has rank $p^i$. To any $g$-uple
$\kappa = (k_1,\ldots,k_g) \in \Z^{g}$ satisfying $k_1 \geq
k_2 \ldots \geq k_g$, one attaches an automorphic locally free sheaf $\omega^\kappa$ on $Y_{\mathrm{Iw}}$. Its global sections
$\HH^0(Y_{\mathrm{Iw}}, \omega^\kappa)$ constitute the module of classical Siegel modular forms of
weight $\kappa$. It contains the sub-module of cuspidal forms $\HH_{\cusp}^0(Y_{\mathrm{Iw}}, \omega^\kappa)$ which vanish
at infinity. On these modules we have an action of the unramified  Hecke algebra $\mathbb{T}^{Np}$ and of the dilating
Hecke algebra $\mathbb{U}_p =
\Z[U_{p,1},\ldots,U_{p,g}]$ at $p$. Let $f$ be a cuspidal eigenform and $\Theta_{f}\colon  \mathbb{T}^{Np}\otimes
\mathbb{U}_p \rightarrow \bar{\qq}$ be the
associated character. Since $f$ has Iwahori level at $p$, $\Theta_{f}(U_{p,i}) \neq 0$  and $f$  is of finite slope. We fix an embedding of $\bar{\qq}$ in $\C_p$
and denote by $v$ the valuation on $\C_p$ normalized by $v(p)=1$.

\begin{thm3}\label{thm1} Let $f$ be a weight $\kappa$ cuspidal eigenform of Iwahori level at $p$.
Then there is an affinoid neighbourhood $\mathcal{U}$  of $\kappa \in \mathcal{W} = \mathrm{Hom}_{cont} (
(\Z_p^\times)^g, \C_p^\times)$,  a finite surjective map of rigid analytic varieties

$$w\colon  \cE_f \rightarrow \mathcal{U}$$
and  a faithful, finite $\oscr_{\cE_f}$-module  $\mathcal{M}$ which is   projective as an $\oscr_{\mathcal{U}}$-module,

such that:

\begin{enumerate}
\item $\cE_f$ is equidimensional of dimension $g$.
\item We have a character $\Theta\colon \mathbb{T}^{Np}\otimes \mathbb{U}_p \rightarrow \oscr_{\cE_f}$.
\item $\mathcal{M}$ is an $\oscr_{\cE_f}$-module consisting  of finite-slope locally analytic cuspidal overconvergent modular forms.
The modular form $f$ is an element of   $\mathcal{M}\otimes_{\oscr_{\mathcal{U}}}^\kappa \C_p$, where the notation means
that the tensor product is taken with respect to the .$\oscr_{\mathcal{U}}$-module structure on $\C_p$
given by the ring homomorphism $\oscr_{\mathcal{U}}\longrightarrow \C_p$ which is evaluation of
the rigid functions on $\mathcal{U}$ at $\kappa$.

\item There is a point $x_f \in  \cE_f$, with $w(x_f) = \kappa$ and such that the specialization
of $\Theta$ at $x_f$ is $\Theta_f$.
\item For all $\mu = (m_1,\ldots,m_g) \in \Z^g \cap \mathcal{U}$ satisfying $m_1 \geq m_2\ldots
\geq m_g$, $v(\Theta_f(U_{p,i})) < m_{g-i}-m_{g-i+1} + 1$ for $1 \leq i \leq g-1$ and
$v(\Theta_f(U_{p,g})) < m_{g} - \frac{g(g+1)}{2}$
the following hold
\begin{itemize}
\item There is an inclusion  $\mathcal{M}\otimes_{\oscr_{\mathcal{U}}}^\mu\C_p \hookrightarrow  \HH_{\cusp}^0(Y_{\mathrm{Iw}}, \omega^\mu)$,
\item For any point $y$ in the fiber $w^{-1} (\mu)$, the character $\Theta_{y}$ comes from a weight
$\mu$ cuspidal Siegel eigenform on $Y_{\mathrm{Iw}}$.
\end{itemize}
\end{enumerate}

\end{thm3}


The rigid space $\cE_f$ is a neighbourhood of the point $x_f$ in an eigenvariety  $\cE $.
We actually prove the following:

\begin{thm3}\label{mainthm2} \begin{enumerate}
\item There is an equidimensional eigenvariety $ \cE $ and a locally finite map  to the weight space $w \colon \cE \rightarrow \mathcal{W}$. For any $\kappa \in \mathcal{W}$, $w^{-1}(\kappa)$ is in bijection with the  eigensystems of $\mathbb{T}^{Np}\otimes_\Z \mathbb{U}_p$ acting on the space of finite slope locally analytic overconvergent cuspidal modular forms of weight $\kappa$.
\item  Let  $f$ be a  finite slope locally analytic overconvergent cuspidal eigenform of   weight $\kappa$ and $x_f$ be the point corresponding to $f$ in $\cE$. If $w $ is unramified at $x_f$, then there is a  neighbourhood $\cE_f$ of $x_f$ in $\cE$ and a  family $F$ of finite slope locally analytic overconvergent cuspidal eigenforms  parametrized by $\cE_f$ and passing through $f$ at $x_f$.
\end{enumerate}
\end{thm3}

A key step in the proof of these theorems is the construction of the spaces of analytic overconvergent modular forms of any
weight
$\kappa \in \mathcal{W}$. They are  global sections of  sheaves $\omega^{\dag\kappa}_w$ which are defined over
some
strict neighbourhood of the multiplicative ordinary locus of $\XI$, a toroidal compactification of
$Y_{\mathrm{Iw}}$.
These sheaves are locally in the \'etale topology isomorphic to the $w$-analytic induction, from a Borel of
$\GL$ to the
Iwahori subgroup, of the character $\kappa$. They are  particular examples of sheaves over rigid spaces
which we call
Banach sheaves, and whose properties are studied in the appendix. We view these sheaves as the rigid
analytic analogues of quasi-coherent sheaves in algebraic geometry; for example, the Banach sheaves are
acyclic for sheaf cohomology on affinoids.

One important feature of the sheaves $\omega^{\dag\kappa}_w$ is that they vary analytically with the weight $\kappa$. One can thus define families of analytic
overconvergent modular forms parameterized by the weight and construct Banach spaces of  analytic overconvergent modular forms of varying weight. We have been able
to show that the module of cuspidal families, i.e.~the $\oscr_{\mathcal{U}}$-module $\mathcal{M}$ appearing in the theorem above is a  projective module. Therefore
one can use Coleman's spectral theory to construct $g$ dimensionional families of cuspidal eigenforms proving theorems \ref{thm1} and \ref{mainthm2}; see section
\ref{sec:moduleM} for a more detailed discussion. The  fifth part of the theorem \ref{thm1} is a special case of the main result of   \cite{PiS2} where a classicity
criterion (small slope forms are classical) for  overconvergent modular forms   is proved for many Shimura varieties.

As mentioned above, E.~Urban has constructed an eigenvariety  using the cohomology of arithmetic groups. Following \cite{Che}, one can prove that the reduced
eigenvarieties constructed in \cite{Ur} and in our paper coincide.  One way to think about our theorem is that   every cuspidal eigenform  gives a point on an
equidimensional component of the eigenvariety of dimension $g$. In \emph{loc.~cit.} this is proved in general when the weight is cohomological, regular  and the
slope is non critical. One advantage of our construction is that it provides
 $p$-adic  deformations of the ``physical" modular eigenforms   and  of their $q$-expansions.
For the symplectic groups, these carry more information than the Hecke eigenvalues.

\medskip

The paper is organized as follows. In the second section, we gather some useful and now classical results
about the $p$-adic interpolation of the algebraic representations of the group $\GL$. The idea is to
replace algebraic induction from the Borel to $\GL$ of a character by analytic induction from the
Borel to the Iwahori subgroup.  This is important because   the automorphic sheaf $\omega^\kappa$ is locally
over $\XI$ the algebraic induction of the character $\kappa$. Thus, locally for the Zariski topology over
$\XI$,  interpolating the sheaves $\omega^\kappa$ for varying $\kappa$ is equivalent to  interpolating
algebraic representations of $\GL$.  The third and fourth sections are about canonical subgroups.
We recall results of \cite{AM}, \cite{AG} and
\cite{Far}.  Using canonical subgroups we construct Iwahori-like subspaces in  the $\GL$-torsor of
trivializations of the co-normal sheaf of the universal semi-abelian scheme. They are used in section
five where we produce the Banach sheaves $\omega^{\dag\kappa}_{w}$. Section six is about Hecke operators. We
show that they act on our spaces of analytic overconvergent modular forms and we also construct a compact
operator $U$. In section seven we relate classical modular forms and analytic overconvergent modular forms.
This section relies heavily  on the main result of \cite{PiS2}. In section eight we finally construct
families. We let the weight $\kappa$ vary in
$\mathcal{W}$ and study the variation  of the spaces of overconvergent analytic modular forms.

We were able to control  this variation  on the cuspidal part, i.e. we showed that the specialization in any
$p$-adic weight of a family of cuspforms is surjective onto the space of cuspidal overconvergent
forms of that weight. The proof of this result is the technical heart of the paper. The main difference
between the case $g\ge 2$ and $g=1$ (see section \ref{sec:moduleM} for more details)
is the fact that the strict neighbourhoods $\cXI(v)$ of width $v$ of the multiplicative ordinary
locus, in some (any) toroidal compactification of the Siegel modular variety of Iwahori level,
are not affinoids.
Therefore, inspired by \cite{Hi}, we studied the descent of our families of Banach sheaves
$\omega_w^{\dag\kappa^\un}$ from
the toroidal to the minimal compactification and showed that the direct image of the cuspidal sub-sheaf of
$\omega_w^{ \dag\kappa^\un}$ is still a Banach sheaf.
Now the image of the strict neighbourhood $\cXI(v)$ in the minimal compactification is an
affinoid and the acyclicity of Banach sheaves on affinoids mentioned above allows us to prove the desired results, namely
that one can apply Coleman's spectral theory to the modules of $p$-adic families of cusp forms and obtain eigenfamilies of
finite slope. Moreover that any overconvergent modular form of finite slope is the specialization of a $p$-adic family
of finite slope,
in other words that any overconvergent modular form of finite slope deforms over the weight space.

\bigskip
\noindent

{\bf Aknowledgements} We thank Jacques Tilouine, Benoit Stroh and Michael Harris for many inspiring discussions on subjects pertaining to this research. We are also
grateful to the referee of this article for the careful reading of the paper and useful suggestions which hopefully led to its improvement.

\section{Families of representations of the group $\GL$}\label{sect_GL}
We recall some classical results about Iwahoric induction using the BGG analytic resolution of \cite{Ow} (see also \cite{Ur}).

\subsection{Algebraic representations}\label{sec:algrep} Let $\GL$ be the linear algebraic  group of dimension $g$ realized
as the group of $g\times g$ invertible matrices.
Let $\B$ be the Borel subgroup of upper triangular matrices, $\T$ the maximal torus of diagonal matrices, and $\U$
the unipotent radical of $\B$. We let $\B^0$ and
$\U^0$ be the opposite Borel of lower triangular matrices and its unipotent radical. We denote by $X(\T)$ the group of characters
of $\T$ and by $X^+(\T)$  its cone
of dominant weights with respect to $\B$. We identify $X(\T)$ with $\Z^g$ via the map which associates to a $g$-uple $(k_1,\ldots,k_g) \in \Z^g$ the character
$$   \begin{pmatrix} 
      t_1& 0 & 0 \\
      0 & \ddots & 0 \\
      0 & 0 & t_g
   \end{pmatrix} \mapsto t_1^{k_1}\cdots t_g^{k_g}.$$
   With this identification, $X^+(\T)$ is the cone of elements $(k_1,\ldots, k_g) \in \Z^g$ such that
$k_1 \geq k_2 \geq \ldots\geq k_g$. Till the end of   this paragraph, all group schemes are considered over
$\Spec~\qq_p$. For any $\kappa \in X^+(\T)$ we set
   $$V_\kappa = \big\{ f\colon  \GL \rightarrow \mathbb{A}^1\, \hbox{{\rm morphism of schemes s.t. }}
f(gb)= \kappa(b) f(g) ~\forall (g,b)\in \GL\times \B \big\}.$$
This is a finite dimensional $\qq_p$-vector space.   The group $\GL$ acts on $V_\kappa$ by the formula
$\bigl(g\cdot f\bigr)(x) = f(g^{-1}\cdot x)$ for any $(g,f)\in\GL\times
V_\kappa$. If $L$ is an extension of $\qq_p$ we set $V_{\kappa,L} = V_\kappa \otimes_{\qq_p} L$.

\subsection{The weight space}\label{sect_ws}

Let $\mathcal{W}$ be the rigid analytic space over $\qq_p$ associated to the noetherian, complete algebra $\Z_p[[\T(\Z_p]]$,
where let us recall
$\T$ is the split torus of diagonal matrices in $\GL$.  Let us fix an isomorphism $\T \simeq \mathbb{G}_m^g$.
We obtain  an isomorphism  $\T(\Z_p) \tilde{\rightarrow} \T(\Z/p\Z) \times (1+ p\Z_p)^g$ which implies that we have natural isomorphisms
as $\Z_p$-algebras
$$
\Z_p[[\T(\Z_p)]]\tilde{\longrightarrow}\bigl(\Z_p[\T(\Z/p\Z)]\bigr)[[(1+p\Z_p)^g]]
\tilde{\longrightarrow}\bigl(\Z_p[\T(\Z/p\Z)]\bigr)[[X_1,X_2,...,X_g]],
$$
where the second isomorphism is obtained by sending $(1,1,...,1+p,1...,1)$ with $1+p$ on the $i$-th component for $1\le i\le g$, to $X_i$.

It follows that the $\C_p$-points of $\mathcal{W}$ are described by: $\mathcal{W}(\C_p) = \Hom_{\rm cont}(\T(\Z_p), \C_p^\times)$
and if  we denote by $\widehat{  \T(\Z/p\Z)}$  the character group of $\T(\Z/p\Z)$,
 the weight space is isomorphic to a disjoint union,
indexed by the elements of $\widehat{  \T(\Z/p\Z)}$, of $g$-dimensional open unit polydiscs.

More precisely we have the following explicit isomorphism:

 \begin{eqnarray*} \mathcal{W} &\tilde{\rightarrow} & \widehat{  \T(\Z/p\Z)} \times \prod_{i=1}^g B(1, 1^-) \\
\kappa & \mapsto& \big(\kappa\vert_{\T(\Z/p\Z)}, \kappa( (1+p,1,\ldots,1)),
\kappa((1,1+p,\ldots,1)),\ldots, \kappa((1,\ldots, 1, 1+p))\big).
 \end{eqnarray*}

The inverse of the above map  is defined as follows:  $(\chi, s_1,\cdots,s_g) \in  \widehat{  \T(\Z/p\Z)} \times \prod_{i=1}^g B(1, 1^-)$
is assigned to the character which maps $(\lambda, x_1,\cdots, x_g) \in \T(\Z/p\Z) \times (1+ p\Z_p)^g$ to

$$\chi(\lambda)\prod_{i=1}^gs_i^\frac{ \mathrm{log} (x_i)}{\mathrm{log}(1+p)}.$$

\smallskip
\noindent
{\em The universal character}

If we denote by $\oscr_{\mathcal{W}}$ the sheaf of rigid analytic functions on $\mathcal{W}$, we have a natural continuous group homomorphism,
obtained as the composition
$$
\kappa^{\rm un}: \T(\Z_p) \longrightarrow \bigl(\Z_p[[\T(\Z_p)]]\bigr)^\times\longrightarrow \oscr_{\mathcal{W}}(\mathcal{W})^\times,
$$
where the first map is the tautological one. We call $\kappa^{\rm un}$ the universal character. It can alternatively be seen as a
pairing $\kappa^{\rm un}:\mathcal{W}(\C_p)\times \T(\Z_p)\longrightarrow \C_p^\times$  satisfying
the property: for every $t\in \T(\Z_p), \kappa\in \mathcal{W}(\C_p)$
we have $\kappa^{\rm un}(t)(\kappa)=\kappa(t)$. If $\mathcal{U}=\Spm~A\subset \mathcal{W}$ is an admissible affinoid open, we obtain
a universal character for $\mathcal{U}$, $\T(\Z_p)\rightarrow A^\times$ which is the composition of
$\kappa^{\rm un}$ with the natural restriction  homomorphism
$\oscr_{\mathcal{W}}(\mathcal{W})^\times\rightarrow A^\times$. This character will be also denoted by
$\kappa^{\rm un}$ and it may be seen as an
$A$-valued weight, i.e. $\kappa^{\rm un}\in \mathcal{W}(A)$.

\begin{defi} Let $w \in \qq_{>0}$. We say that a character $\kappa \in \mathcal{W}(\C_p)$ is $w$-analytic if $\kappa$ extends to an analytic map
$$\kappa\colon  \T(\Z_p) (1 + p^w \ocal_{\C_p})^g \rightarrow \C_p^\times.$$
\end{defi}

It follows  from the  classical $p$-adic properties of the exponential and the logarithm  that any character $\kappa$ is $w$-analytic for some $w >0$.
In fact, we have the following proposition :
\begin{prop}[\cite{Ur}, lem. 3.4.6] \label{prop_cara} For any quasi-compact open subset $\mathcal{U}\subset \mathcal{W}$,
there exists $w_{\mathcal{U}} \in \R_{>0}$ such that the universal character
$\kappa^{\un}\colon \mathcal{U}\times \T(\Z_p) \rightarrow \C_p^\times$, extends to an analytic function
$\kappa^{\un}\colon  \mathcal{U} \times \T(\Z_p)(1 + p^{w_U} \ocal_{\C_p})^{g} \rightarrow \C_p^\times$.
\end{prop}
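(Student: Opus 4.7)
The plan is to use the explicit description of $\mathcal{W}$ recalled in Section 2.2 together with the classical $p$-adic exponential and logarithm to build the extension, with the quasi-compactness of $\mathcal{U}$ making the required estimates uniform.

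First I would reduce to convenient hypotheses on $\mathcal{U}$. Since $\mathcal{W} \simeq \widehat{\T(\Z/p\Z)} \times B(1,1^-)^g$ is a disjoint union of $g$-dimensional open polydiscs and $\mathcal{U}$ is quasi-compact, $\mathcal{U}$ meets only finitely many of these components; treating each separately, I may assume $\mathcal{U} = \Spm(A)$ is an affinoid contained in a single disc indexed by some $\chi \in \widehat{\T(\Z/p\Z)}$. On the Teichm\"uller part $\T(\Z/p\Z)$ the restriction of $\kappa^{\un}$ is then the constant character $\chi$, so it suffices to extend the restriction of $\kappa^{\un}$ to $(1+p\Z_p)^g$ analytically to some $\mathcal{U} \times (1+p^{w_{\mathcal{U}}}\ocal_{\C_p})^g$.

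Second, using the formula of Section 2.2, for $(x_1,\dots,x_g) \in (1+p\Z_p)^g$ one has
$$
\kappa^{\un}(x_1,\dots,x_g) \;=\; \prod_{i=1}^g s_i^{\log(x_i)/\log(1+p)},
$$
where $s_i := \kappa^{\un}(1,\dots,1+p,\dots,1) \in A^\times$ is the $i$-th coordinate function on $\mathcal{U}$. The natural extension to try is
$$
\tilde\kappa^{\un}(x_1,\dots,x_g) \;:=\; \exp\!\Bigl( \frac{1}{\log(1+p)} \sum_{i=1}^g \log(s_i)\,\log(x_i) \Bigr).
$$
To make sense of this, I would use quasi-compactness of $\mathcal{U}\subset B(1,1^-)^g$ to obtain $|s_i-1|_{\sup,\mathcal{U}} < 1$, and deduce that $\log(s_i) \in A$ is well-defined with a uniform lower bound $v(\log s_i) \geq \delta'$ for some $\delta'>0$ depending only on $\mathcal{U}$ and $p$.

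Third, I would choose $w_{\mathcal{U}}$. For $p$ odd, $v(\log(1+p)) = 1$; and for $w$ large enough (e.g.\ $w > 1/(p-1)$), $x_i \in 1+p^w\ocal_{\C_p}$ gives $v(\log x_i) \geq w$. Hence
$$
v\!\Bigl( \frac{\log(s_i)\log(x_i)}{\log(1+p)} \Bigr) \;\geq\; \delta' + w - 1
$$
uniformly on $\mathcal{U}\times(1+p^w\ocal_{\C_p})^g$. The $\exp$ series converges as soon as this exceeds $1/(p-1)$, which holds for $w > 1 - \delta' + 1/(p-1)$; picking any $w_{\mathcal{U}}$ satisfying all the above produces an analytic $\tilde\kappa^{\un}$ on $\mathcal{U} \times (1+p^{w_{\mathcal{U}}}\ocal_{\C_p})^g$. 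On the dense subset $\mathcal{U}\times(1+p\Z_p)^g$ it coincides with $\kappa^{\un}$ by the identity $x = \exp \log x$ valid on $1+p\Z_p$. The main technical care required is in the second step: extracting from $|s_i-1|_{\sup,\mathcal{U}}<1$ a uniform positive lower bound on $v(\log s_i)$ (this is where quasi-compactness is genuinely used) and, in combination with step three, managing the three separate regions of convergence (for $\log(s_i)$, $\log(x_i)$, and $\exp$) so that they are simultaneously valid. Once this is arranged, the remainder is routine $p$-adic analysis.
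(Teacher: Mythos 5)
The paper offers no proof of this proposition (it is imported wholesale from Urban's Lemma 3.4.6), and your strategy --- explicit coordinates $(\chi,s_1,\dots,s_g)$ on a component of $\mathcal{W}$, the formula $\kappa^{\un}(x)=\prod_i s_i^{\log x_i/\log(1+p)}$, and $\exp$/$\log$ estimates made uniform by quasi-compactness --- is exactly the intended one; compare the explicit construction of $\mathcal{W}(w)$ and its universal character immediately after the proposition. However, two of your intermediate claims are not correct as stated. The first is minor: the uniform \emph{positive} lower bound $v(\log s_i)\geq\delta'>0$ is false. For example $s=(1+p)^{1/p^2}$ satisfies $v(s-1)=p^{-2}>0$, so it lies in $B(1,1^-)$ and in suitable quasi-compact $\mathcal{U}$, yet $v(\log s)=v\bigl(p^{-2}\log(1+p)\bigr)=-1$. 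Quasi-compactness gives $|s_i-1|_{\sup,\mathcal{U}}\leq p^{-\delta}$ with $\delta>0$ and hence only the \emph{finite} bound $v(\log s_i)\geq\delta':=\min_{k\geq 0}(p^k\delta-k)$, which may be negative. This is harmless --- take $w_{\mathcal{U}}>1-\delta'+\tfrac{1}{p-1}$ --- but the positivity must be dropped.

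The second point is a genuine gap. Your $\tilde\kappa^{\un}$ is defined only on $\mathcal{U}\times(1+p^{w}\ocal_{\C_p})^g$, whereas the statement asks for an extension to $\mathcal{U}\times\T(\Z_p)(1+p^{w}\ocal_{\C_p})^{g}$, and your verification does not parse: $(1+p\Z_p)^g$ is not contained in $(1+p^{w}\ocal_{\C_p})^g$ once $w>1$, and at $x_i=1+p$ your formula would return $\exp(\log s_i)$, which for $v(s_i-1)\leq\tfrac{1}{p-1}$ need not converge and, when it does converge, need not equal $s_i=\kappa^{\un}\bigl((1,\dots,1+p,\dots,1)\bigr)$. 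The identity actually needed is $s^t=\exp(t\log s)$ for the weight variable, not $x=\exp\log x$ for the group variable, and it fails for general $s\in 1+\mathfrak{m}_{\C_p}$ and $t\in\Z_p$. The repair is standard: set $\tilde\kappa^{\un}(tu):=\kappa^{\un}(t)\,\tilde\kappa^{\un}(u)$ for $t\in\T(\Z_p)$ and $u\in(1+p^{w}\ocal_{\C_p})^g$ and check well-definedness. Since $\T(\Z_p)\cap(1+p^{w}\ocal_{\C_p})^g=(1+p^{\lceil w\rceil}\Z_p)^g$, this reduces to the identity $s_i^{t}=\exp(t\log s_i)$ for $t\in p^{\lceil w\rceil-1}\Z_p$ only, which does hold uniformly on $\mathcal{U}$ once $w$ is large, because $v(s_i^{p^{m}}-1)\geq\min(\delta+m,\,p^{m}\delta)>\tfrac{1}{p-1}$ for $m\gg 0$. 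With these two corrections your argument is complete.
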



\bigskip
\noindent
In what follows we  construct an admissible affinoid  covering $ \cup_{w>0} \mathcal{W}(w)$ of the weight space $\mathcal{W}$
with the property that for every $w$ the restriction of the universal character $\kappa^{\rm un}$ to
$\mathcal{W}(w)$ is $w$-analytic.

We start by fixing  $w \in ]n-1,n]\cap \qq$ and we choose a finite extension $K$ of $\qq_p$ whose ring of integers,  denoted $\ocal_K$
contains an element $p^w$ of valuation $w$.
We set $\mathfrak{W}(w)^o = \Spf~\ocal_K \langle\langle S_1,\ldots,S_g\rangle\rangle$, it is a
formal unit polydisc over $\Spf~\ocal_K$.

We  define the formal $w$-torus $\mathfrak{T}_w:=\Spf~\ocal_K\langle\langle 1+p^wX_1,\ldots, 1+p^wX_g\rangle\rangle$
and attach to $\mathfrak{W}(w)^o$ a formal universal character
\begin{eqnarray*}
 \kappa^{o\un} \colon  \mathfrak{T}_w \times \mathfrak{W}(w)^o & \rightarrow& \widehat {\mathbb{G}}_m \\
 (1+p^wX_1,\ldots, 1+p^wX_g, S_1,\ldots,S_g) &\mapsto &\prod_{i=1}^g (1+p^wX_i)^{S_ip^{-w + \frac{2}{p-1}}}
\end{eqnarray*}


Let $\mathcal{W}(w)^o$ be the rigid analytic generic fiber of $\mathfrak{W}(w)^o$. We define $\mathcal{W}(w)$
to be the fiber product:

$$ \mathcal{W} \times_{\Hom_{cont}\bigl((1+p^n\Z_p)^g, \C_p^\times\bigr) } \mathcal{W}(w)^o,$$
where the maps used to define the fiber product are the following:
$\mathcal{W}\longrightarrow \Hom_{cont}\bigl((1+p^n\Z_p)^g,\C_p^\times  \bigr)$ is restriction
and the map $\mathcal{W}(w)^o\longrightarrow \Hom_{cont}\bigl((1+p^n\Z_p)^g,\C_p^\times\bigr)$
is given by
$$
(s_1,s_2,\ldots,s_g)\longrightarrow \Bigl((1+p^nx_1,1+p^nx_2,\ldots,1+p^nx_g)\rightarrow\prod_{i=1}^g
(1+p^nx_i)^{s_ip^{-w+\frac{2}{p-1}}}\Bigr).
$$

Then,  $\mathcal{W} = \cup_{w>0}\mathcal{W}(w)$ is an increasing cover by affinoids. By construction,
the restriction of the universal character $\kappa^{\un}$ of
$\mathcal{W}$ to $\mathcal{W}(w)$ is $w$-analytic.

\subsection{Analytic representations}\label{sec:analrep}

Let $\I$ be the Iwahori sub-group of $\GL(\Z_p)$ of matrices whose reduction modulo $p$ is upper triangular.
Let $\NU^0$ be the subgroup of $\U^0(\Z_p)$ of matrices which reduce to the identity modulo $p$. The Iwahori
decomposition is an isomorphism: $\B(\Z_p)\times\NU^0 \rightarrow \I$. We freely identify $\NU^0$  with $(p\Z_p)^{\frac{g(g-1)}{2}}
\subset \mathbb{A}_{\an}^{\frac{g(g-1)}{2}}$, where $\mathbb{A}_{\an}$ denotes the rigid analytic affine line defined over $\qq_p$.
For $\epsilon >0$, we let $\NU^0_\epsilon$ be the rigid analytic space $$\bigcup_{x \in (p\Z_p)^{\frac{g(g-1)}{2}}}
B(x, p^{-\epsilon}) \subset \mathbb{A}_{\an}^{\frac{g(g-1)}{2}}.$$

Let $L$ be an extension of $\qq_p$ and  $\mathcal{F}(\NU^0, L)$ the ring of $L$-valued functions on $\NU^0$. We
say that a function $f \in \mathcal{F}(\NU^0, L)$ is $\epsilon$-analytic if it is the restriction to $\NU^0$ of a necessarily unique analytic function
on $\NU^0_\epsilon$. We denote by $\mathcal{F}^{\epsilon-\an}(\NU^0,L)$ the set of $\epsilon$-analytic functions. A function is analytic if it is
$p^{-1}$-analytic.  We simply denote by $\mathcal{F}^{\an}(\NU^0,L)$ the set of analytic functions. We let
$\mathcal{F}^{l-\an}(\NU^0, L)$ be the set of locally analytic functions on $\NU^0$
i.e. the direct limit of the sets
$\mathcal{F}^{\epsilon-\an}(\NU^0,L)$ for  all $\epsilon>0$.

Let $\epsilon >0$ and   $\kappa \in \mathcal{W}(L)$ be an $\epsilon$-analytic character. We set
$$V_{\kappa,L}^{\epsilon-\an} = \big\{ f\colon  \I \rightarrow  L,~ f(ib) = \kappa(b) f(i)~ \forall (i,b) \in \I \times \B(\Z_p), ~f\vert_{\NU^0} \in \mathcal{F}^{\epsilon-\an}(\NU^0,L) \big\}.$$
  We define  similarly $V_{\kappa,L}^{\an}$ and $V_{\kappa,L}^{l-\an}$. They are all representations of the Iwahori  group $\I$.
\subsection{The BGG resolution}\label{subsect_BGGGL} Let $W$ be the Weyl group of $\GL$, it acts on $X(\T)$. We set $\gl$ and $\tor$ for the  Lie algebras of
$\GL$ and $\T$. The choice of $\B$ determines a system of simple positive roots
$\Delta \subset X(\T)$. To any $\alpha \in \Delta$ are associated an element $H_\alpha \in \tor$,
elements $X_\alpha \in \gl_\alpha$ and $X_{-\alpha} \in \gl_{-\alpha}$ such that $[ X_\alpha, X_{-\alpha}] = H_\alpha$ and a
co-root $\alpha^\vee$.  We let $s_\alpha \in W$ be the symmetry $\lambda \mapsto \lambda- \langle \lambda,\alpha^\vee
\rangle\alpha$. For any $w\in W$ and $\lambda\in X(\T)$, we set $w\bullet \lambda =  w(\lambda +\rho) - \rho$, where $\rho$ is half the sum
of the positive roots. By the main result of \cite{Ow}, for all $\kappa \in X^+(\T)$, and any field extension $L$ of $\qq_p$,
we have an exact sequence of $\I$-representations:
\begin{eqnarray}\label{exactsequence}  0 \longrightarrow V_{\kappa,L} \stackrel{d_0}{\longrightarrow} V_{\kappa,L}^{\an}
\stackrel{d_1}{\longrightarrow }\bigoplus_{\alpha \in \Delta} V_{s_\alpha\bullet \kappa,L}^{\an}
\end{eqnarray}
Let us make  explicit the differentials. The map $d_0$ is the natural inclusion, the map $d_1$ is the sum of maps $\Theta_\alpha\colon  V^{\an}_{\kappa,L}
\rightarrow V^{\an}_{s_\alpha\bullet \kappa,L}$ whose definitions we now recall. We let $\I$ act on the space of analytic functions on $\I$ by the
formula $(i\star f)(j)
= f(j \cdot i)$ for any analytic function $f$ and $i,j\in I$. By differentiating we obtain an action of $\gl$ and hence of the
enveloping algebra $U(\gl)$ on the space of
analytic functions on $\I$. If $f \in V_{\kappa,L}^{\an}$ we set $\Theta_\alpha(f) = X_{-\alpha}^{\langle\kappa,\alpha^\vee\rangle +1}\star f$. We now show that
$\Theta_\alpha(f) \in V_{s_\alpha\bullet \kappa,L}^{\an}$. First of all let us check that $\Theta_\alpha(f)$ is
$\U(\Z_p)$-invariant. It will be enough to prove that
$X_\beta\star \Theta_\alpha(f)=0$ for all $\beta \in \Delta$. If $\beta \neq \alpha$, this follows easily for $[X_\beta,X_{-\alpha}]=0$.
If $\beta=\alpha$, we have
to use the relation
$$[X_\alpha,X_{-\alpha}^{\langle\kappa,\alpha^\vee\rangle}] = (\langle\kappa,\alpha^\vee\rangle +1) X_{-\alpha}
(H_\alpha- \langle\kappa, \alpha^\vee\rangle).$$
We now have
\begin{eqnarray*}
X_\alpha\star \Theta_\alpha(f)&=& [X_\alpha,X_{-\alpha}^{\langle\kappa, \alpha^\vee\rangle+1}] \star f \\
&=&  (\langle\kappa,\alpha^\vee\rangle +1) X_{-\alpha}(H_\alpha- \langle\kappa, \alpha^\vee\rangle)\star f \\
&=& 0.
\end{eqnarray*}
Let us  find the weight of $\Theta_\alpha(f)$. For any $t\in \T(\qq_p)$,  We have
\begin{eqnarray*}
t\star \Theta_{\alpha}(f) &=& \mathrm{Ad}(t)(X_{-\alpha}^{\langle\kappa,\alpha^\vee\rangle+1})t\star f \\
&=& \alpha^{-\langle\kappa,\alpha^\vee\rangle-1}(t) \kappa(t) \Theta_\alpha(f).
\end{eqnarray*}
Since we have $\alpha^{-\langle\kappa,\alpha^\vee\rangle-1} \kappa= s_\alpha\bullet\kappa$, the map $\Theta_\alpha$ is well defined.
\subsection{A classicity criterion}\label{sect_class_GL} For $1\leq i \leq g-1$ we set
$d_i =       \begin{pmatrix} 
     p^{-1} \mathrm{1}_{g-i} & 0 \\
      0 & \mathrm{1}_{i} \\
   \end{pmatrix} \in \GL(\qq_p)$. The adjoint action of $d_i$ on $\GL/\qq_p$ stabilizes the  Borel subgroup $\B$.
The formula $(\delta_i\cdot f)(g) := f(d_i g d_i^{-1})$ defines an action on the space $V_\kappa$ for any $\kappa \in X^+(\T)$.  We now define the action on the spaces
$V_{\kappa,L}^{\epsilon-\an}$ for any $\kappa \in \mathcal{W}(L)$. We have a well-defined adjoint action of $d_i$ on the group $\NU^0$. Let $f\in
V_\kappa^{\epsilon-\an}$ and $j\in \I$. Let $j= n  \cdot b$ be  the Iwahori decomposition of $j$. We set $\delta_if(j) := f( d_i n d_i^{-1} b)$. We hence get
operators $\delta_i$ on  $V_{\kappa,L}^{\epsilon-\an}$ and $V_{\kappa,L}^{l-\an}$.  Let $z_{k,l}$ be the $(k,l)$-matrix coefficient on $\GL$. If we use the
isomorphism $V_{\kappa,L}^{\epsilon-\an} \rightarrow \mathcal{F}^{\epsilon-\an}(\NU^0, L )$ given by the restriction of functions to $\NU^0$, then the operator
$\delta_i$ is given by
   \begin{eqnarray*}
   \delta_i\colon   \mathcal{F}^{\epsilon-\an}(\NU^0, L ) &\rightarrow &  \mathcal{F}^{\epsilon-\an}(\NU^0, L ) \\
   f & \mapsto & [ (z_{k,l})_{k<l} \mapsto f( p^{n_{k,l}}z_{k,l})]
   \end{eqnarray*}
   where  $n_{k,l} = 1$ if $k\geq g- i+1$ and $l \leq g-i$ and $n_{k,l}=0$ otherwise. The operator $\delta_i$ is norm decreasing and the operator
$\prod_{i}\delta_i$ on  $V_\kappa^{\epsilon-\an}$ is completely continuous.

If $\kappa \in X(\T)^+$ the map $d_0$ in the exact sequence (\ref{exactsequence}) is $\delta_i$-equivariant. Regarding the map $d_1$ we have the following variance formula
$$\delta_i \Theta_\alpha = \alpha(d_i)^{\langle\kappa ,\alpha^\vee\rangle+1} \Theta_\alpha \delta_i.$$
Indeed for any $f \in V_{\kappa,L}^{\epsilon-\an}$ we have
\begin{eqnarray*}
\delta_i \Theta_\alpha (f) &= &d_i\cdot ( d_i^{-1} X_{-\alpha}^{\langle\kappa,\alpha^\vee\rangle+1} \star f)\\
&=& \alpha(d_i)^{\langle\kappa,\alpha^\vee\rangle+1} d_i\cdot (X_{-\alpha}^{\langle\kappa,\alpha^\vee\rangle+1}d_i^{-1}\star f)\\
&=& \alpha(d_i)^{\langle\kappa,\alpha^\vee\rangle+1} \Theta_{\alpha}(\delta_if).
\end{eqnarray*}
Let $\underline{v}= (v_1,\ldots,v_{g-1}) \in \R^{g-1}$. We let $V_\kappa^{l-\an, < \underline{v}}$ be the union of the generalized eigenspaces where $\delta_i$ acts by eigenvalues of valuation strictly smaller than $v_i$. We are now able to give the classicity criterion.

\begin{prop} Let $\kappa =(k_1,\ldots,k_g) \in X^+(\T)$. Set $v_{g-i} = k_i-k_{i+1}+1$ for $1\leq i\leq g-1$. Then any element
$f \in V_{\kappa,L}^{l-\an,<\underline{v}}$ is in $V_{\kappa,L}$.
\end{prop}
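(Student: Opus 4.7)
The plan is to combine the BGG-type exact sequence (\ref{exactsequence}), extended from $V^{\an}$ to $\epsilon$-analytic induction for $\epsilon>0$ sufficiently small, with the commutation relation $\delta_i\Theta_\alpha=\alpha(d_i)^{\langle\kappa,\alpha^\vee\rangle+1}\Theta_\alpha\delta_i$, in order to prove that $\Theta_\alpha(f)=0$ for every simple root $\alpha\in\Delta$. Since $V_{\kappa,L}$ is identified with the kernel of $d_1=\bigoplus_\alpha\Theta_\alpha$, this will yield $f\in V_{\kappa,L}$. Decomposing into joint generalised eigenspaces of the commuting operators $\delta_1,\ldots,\delta_{g-1}$, one may assume $f$ is a joint generalised eigenvector with eigenvalues $\lambda_i$ satisfying $v(\lambda_i)<v_i$, and, after enlarging $L$ and choosing $\epsilon>0$ small enough, that $f\in V_{\kappa,L}^{\epsilon-\an}$. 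The differentials in (\ref{exactsequence}) are given by elements of $U(\mathfrak{g})$ and therefore preserve $\epsilon$-analyticity; the corresponding exact sequence at the $\epsilon$-analytic level follows by the same infinitesimal $\mathfrak{g}$-action argument as in the analytic case.

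For the eigenvalue computation, let $\alpha_j$ denote the $j$-th simple root, so $\langle\kappa,\alpha_j^\vee\rangle=k_j-k_{j+1}$. A direct matrix calculation with $d_i=\mathrm{diag}(p^{-1},\ldots,p^{-1},1,\ldots,1)$ ($g-i$ copies of $p^{-1}$ followed by $i$ ones) gives $\alpha_j(d_i)=p^{-1}$ exactly when $i=g-j$, and $\alpha_j(d_i)=1$ otherwise. Specialising the commutation formula at $i=g-j$ and evaluating on $f$ yields
\[
\delta_{g-j}\bigl(\Theta_{\alpha_j}(f)\bigr)\;=\;p^{-(k_j-k_{j+1}+1)}\,\Theta_{\alpha_j}\bigl(\delta_{g-j}f\bigr),
\]
so that $\Theta_{\alpha_j}(f)\in V^{\epsilon-\an}_{s_{\alpha_j}\bullet\kappa,L}$ is a generalised $\delta_{g-j}$-eigenvector with eigenvalue $p^{-(k_j-k_{j+1}+1)}\lambda_{g-j}$, whose valuation equals $v(\lambda_{g-j})-(k_j-k_{j+1}+1)<0$ by the hypothesis $v(\lambda_{g-j})<v_{g-j}=k_j-k_{j+1}+1$.

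To conclude, the explicit coordinate description $\delta_i\colon f\mapsto[(z_{k,l})\mapsto f(p^{n_{k,l}}z_{k,l})]$ with $n_{k,l}\in\{0,1\}$ shows that $\delta_i$ has operator norm $\leq 1$ on $V^{\epsilon-\an}_{\mu,L}$ for every weight $\mu$. Hence every non-zero generalised eigenvalue of $\delta_{g-j}$ on this Banach space has non-negative valuation: for $|\mu|>1$ the Neumann series for $(\delta_{g-j}-\mu)^{-1}$ converges, and so $(\delta_{g-j}-\mu)^Nv=0$ forces $v=0$. Applied to $\Theta_{\alpha_j}(f)$ this gives $\Theta_{\alpha_j}(f)=0$ for each $j=1,\ldots,g-1$, and the $\epsilon$-analytic BGG exact sequence then yields $f\in V_{\kappa,L}$.

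The main point requiring care is the extension of (\ref{exactsequence}) to the $\epsilon$-analytic setting. This is standard: $\Theta_\alpha=X_{-\alpha}^{\langle\kappa,\alpha^\vee\rangle+1}\star(-)$ is a differential operator of bounded order that preserves $\epsilon$-analyticity, and the identification of $\ker d_1$ with $V_{\kappa,L}$ proceeds by the same Verma/highest-weight argument as in the analytic case, using only the infinitesimal $\mathfrak{g}$-action.
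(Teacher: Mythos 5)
Your proof is correct and follows essentially the same route as the paper: reduce to showing $\Theta_{\alpha}(f)=0$ via the BGG sequence, use the commutation relation $\delta_{g-j}\Theta_{\alpha_j}=p^{-(k_j-k_{j+1}+1)}\Theta_{\alpha_j}\delta_{g-j}$ to see that $\Theta_{\alpha_j}(f)$ is a generalised eigenvector with eigenvalue of negative valuation, and kill it with the norm bound $\|\delta_{g-j}\|\leq 1$. The only cosmetic difference is that the paper first upgrades $f$ to be analytic (since the invertible $\delta_i$ increase the radius of analyticity) and then applies the analytic BGG sequence directly, whereas you invoke an $\epsilon$-analytic version of that sequence.
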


\begin{demo} One easily checks that any element $f \in V_{\kappa,L}^{l-\an, <\underline{v}}$ is actually analytic because the operators $\delta_i$ increase
the radius of analyticity. Using the exact sequence (\ref{exactsequence}), we need to see that $d_1.f =0$. Let $\alpha$ be the simple positive
root given by the character $(t_1,\ldots, t_g)\mapsto t_i. t_{i+1}^{-1}$. Since $\delta_{g-i} \Theta_\alpha(f) =p^{k_{i+1}-k_i-1}
\Theta_\alpha \delta_{g-i}(f)$ we see that $\Theta_{\alpha}(f) $ is a generalized eigenvector for $\delta_{g-i}$ for eigenvalues
of negative valuation. But  the norm of $\delta_{g-i}$ is less than $1$ so $\Theta_{\alpha}(f)$ has to be zero.
\end{demo}

\section{Canonical subgroups over complete discrete valuation rings}
\subsection{Existence of canonical subgroups}\label{sect--Fargues}
Let $p>2$ be a prime integer and $K$ a complete valued extension of $\qq_p$ for a valuation
$v\colon  K \rightarrow \R \cup\{\infty\}$ such that $v(p)=1$.  Let $\bar{K}$ be an algebraic closure of $K$.
We denote by $\ocal_K$ the ring of elements of $K$ having non-negative valuation and set $v \colon \ocal_K/p\ocal_K
\rightarrow [0,1]$ to be the truncated valuation defined as follows: if $x \in \ocal_K/p\ocal_K$ and $\hat{x}$ is a (any)
lift of $x$ in $\ocal_K$, set  $v(x) = \inf\{ v(\hat{x}), 1\}$.  For any $w \in v(\ocal_K) $ we set $\mfrak(w)=
\{ x \in K, v(x) \geq w\}$ and $\ocal_{K,w} = \ocal_K/\mfrak(w)$. If $M$ is an $\ocal_K$-module, then $M_w$ denotes
$M\otimes_{\ocal_K} \ocal_{K,w}$. If $M$ is a torsion  $\ocal_K$-module of finite presentation,  there is an integer
$r$ such that  $M \simeq \bigoplus_{i=1}^r \ocal_{K,a_i}$ for real numbers $a_i \in v(\ocal_{K})$. We set
$\deg M = \sum_i v(a_i)$.

Let  $H$ be a group scheme over  $\ocal_K$ and let   $\omega_H$ denote  the co-normal sheaf along the unit section of $H$. If $H$ is a finite flat group
scheme, $\omega_H$ is a torsion $\ocal_K$-module of finite presentation and the degree of $H$, denoted $\deg H$, is by definition the degree
of $\omega_H$ (see \cite{Fa} where the degree is used to define the Harder-Narasimhan filtration of finite flat group schemes).

Let $G$ be a Barsotti-Tate group over $\Spec~\ocal_K$ of dimension $g$ (for example the Barsotti-Tate
group associated to an abelian scheme of dimension $g$). Consider the $\ocal_{K,1}$-module $\mathrm{Lie}~ G[p]$. We denote by $\sigma$ the
Frobenius endomorphism of  $\ocal_{K,1}$.  The module  $\mathrm{Lie}~ G[p]$ is equipped with a $\sigma$-linear Frobenius endomorphism whose determinant,
called the Hasse invariant of $G$ is denoted $\Ha(G)$.
The Hodge height of $G$, denoted $\Hdg(G)$ is the truncated valuation of $\Ha(G)$.

Canonical subgroups have been constructed by Abbes-Mokrane, Andreatta-Gasbarri, Tian and Fargues.
In the sequel we quote mostly results of Fargues.

\begin{thm}[\cite{Far}, thm. 6]\label{thm_can_sg}  Let $n \in \N$. Assume that $\Hdg(G) < \frac{1}{2p^{n-1}}$ ( resp.~$ \frac{1}{3p^{n-1}}$ if $p=3$).
Then the  $n$-th step of the Harder-Narasimhan filtration of $G[p^n]$, denoted $H_n$ is called the canonical subgroup of level $n$ of $G$.
It enjoys the following properties.
\begin{enumerate}
\item   $H_n(\bar{K}) \simeq (\Z/p^n\Z)^g$.

\item  $\deg H_n = ng -\frac{p^n-1}{p-1} \Hdg(G)$.

\item  For any $1\leq k \leq n$, $H_n[p^k] $ is the canonical subgroup of level $k$ of $G$.
\item In $G\vert_{\Spec~\ocal_{K,1-\Hdg(G)}}$ we have that ${H_1}\vert_{\Spec~\ocal_{K,1-\Hdg(G)}}$ is the Kernel of Frobenius.
\item For any $1\leq k < n$, $\Hdg(G/H_k) = p^k\Hdg(G)$ and $H_n/H_k$ is the canonical subgroup of level $n-k$ of $G/H_k$.

\item Let $G^D$ be the dual Barsotti-Tate group of $G$. Denote by $H_n^\bot$ the annihilator of $H_n$ under the natural pairing $G[p^n]\times G^D[p^n]\to \mu_{p^n}$. Then $\Hdg(G^D)= \Hdg(G)$ and $H_n^\bot$ is the canonical subgroup of level $n$ of $G^D$.
\end{enumerate}
\end{thm}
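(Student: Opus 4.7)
The plan is to deduce the theorem from Fargues' formalism of the Harder--Narasimhan filtration on the category of finite flat group schemes over $\ocal_K$. Three ingredients do most of the work: (a) the degree function is additive in short exact sequences and compatible with Cartier duality; (b) the Hodge height of a truncated Barsotti--Tate group controls the HN polygon of $G[p]$ enough to force a single break provided $\Hdg(G)$ is small; (c) the formation of the HN filtration is compatible with quotient by a sub-group scheme that is already a step of the filtration. The quantitative assumption $\Hdg(G) < \frac{1}{2p^{n-1}}$ is precisely what keeps the Hodge heights that appear in an induction on $n$ strictly below $\frac{1}{2}$.

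I would start with the case $n=1$. Identifying $\Hdg(G)$ with the truncated valuation of the determinant of Verschiebung on $\omega_{G[p]}$, and using $\deg G[p] = g$, one shows that every sub-group scheme $H \subset G[p]$ of order $p^g$ satisfies $\deg H \leq g - \Hdg(G)$, with equality realised by a distinguished subgroup coming from the Verschiebung. Under the hypothesis $\Hdg(G) < 1/2$ the slope $(g-\Hdg(G))/g$ strictly exceeds the complementary slope $\Hdg(G)/g$, so the HN polygon of $G[p]$ has exactly one break, at height $g$, and the first step of the HN filtration is the sought $H_1$. Properties (1) and (2) at level one are then immediate, and (4) follows by base change to $\ocal_{K,1-\Hdg(G)}$: over this reduction the relation between Frobenius and Verschiebung on $\Lie G[p]$ together with the degree count forces $H_1 = \ker F$.

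The step from level $n$ to level $n+1$ is handled by induction, with (5) as the engine. The quotient $G' := G/H_1$ is again a Barsotti--Tate group of dimension $g$, and a standard computation with the Hodge--Tate exact sequence for $G[p]$ gives $\Hdg(G') = p\,\Hdg(G)$; this is why the constant $\frac{1}{2p^{n-1}}$ is stable under the induction. Applying the inductive hypothesis to $G'$ yields a canonical $H'_n \subset G'[p^n]$; its preimage in $G[p^{n+1}]$, together with additivity of degree, produces a candidate $H_{n+1}$ of the correct height and degree, and a slope comparison via (c) identifies it with the $(n+1)$-th step of the HN filtration of $G[p^{n+1}]$. Property (3) falls out of the induction, since by construction $H_{n+1}[p^k]$ coincides with the level-$k$ canonical subgroup of $G$.

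Property (6) is obtained from Cartier duality. The identity $\Hdg(G^D) = \Hdg(G)$ reflects the duality between Frobenius on $\Lie G[p]$ and Verschiebung on $\omega_{G^D[p]}$. Cartier duality reverses the HN polygon of a finite flat group scheme, and the annihilator $H_n^{\bot}$ of $H_n$ under the Weil pairing is Cartier dual to $G[p^n]/H_n$; a direct degree computation then identifies $H_n^{\bot}$ with the $n$-th step of the HN filtration of $G^D[p^n]$, i.e.\ with the canonical subgroup of $G^D$. The main obstacle is the slope analysis at level one: pinning down the HN polygon of $G[p]$ purely from the Hodge height is what makes the whole machinery run, and everything else amounts to formal bookkeeping with degree, duality and the inductive structure.
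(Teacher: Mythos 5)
You should first note that the paper does not prove this statement at all: it is quoted verbatim as \cite{Far}, thm.~6, and every subsequent use of it in the article treats it as a black box (the only things the authors actually prove in this circle are the converse statement, proposition \ref{prop_laver}, and the spreading-out of $H_n$ to families). So there is no internal proof to measure your sketch against; what can be assessed is whether your reconstruction of Fargues' argument is sound.

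The formal bookkeeping in your sketch is in the right spirit: additivity of $\deg$, the duality $\deg H^D = \mathrm{ht}(H)-\deg H$ giving (6), the induction on $n$ driven by $\Hdg(G/H_1)=p\,\Hdg(G)$ (which is exactly why the bound $\frac{1}{2p^{n-1}}$ is stable), and the identification of $H_{n+1}$ as the preimage of the level-$n$ canonical subgroup of $G/H_1$. The genuine gap is at the base case $n=1$, precisely where you locate ``the main obstacle'' but then dispose of it in one clause: \emph{``with equality realised by a distinguished subgroup coming from the Verschiebung.''} There is no such subgroup in characteristic zero. The kernel of Frobenius/Verschiebung exists only on $G\vert_{\Spec~\ocal_{K,1-\Hdg(G)}}$, and producing a finite flat subgroup of $G[p]$ over $\ocal_K$ of height $g$ and degree exactly $g-\Hdg(G)$ lifting it is the entire content of the theorem; it occupies most of the work in Abbes--Mokrane, Andreatta--Gasbarri, Tian and Fargues, and it is this lifting step (not the slope comparison) that forces the specific constants $\frac{1}{2}$, resp.\ $\frac{1}{3}$ for $p=3$ --- your slope inequality $(g-\Hdg(G))/g>\Hdg(G)/g$ would only require $\Hdg(G)<g/2$. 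Relatedly, property (4) is not something one deduces afterwards ``by a degree count'': it is essentially the construction itself, and the upper bound $\deg H\le g-\Hdg(G)$ you invoke for all height-$g$ subgroups is the easy converse direction (the paper's proposition \ref{prop_laver}), which cannot by itself yield existence. As written, the sketch assumes the hard half of the theorem and correctly organizes the formal consequences around it.
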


The theorem states in particular that if the Hodge height of $G$ is small, there is a  (canonical) subgroup of high
degree  and rank $g$ inside $G[p]$. The converse is also true.

\begin{prop}\label{prop_laver} Let $H \hookrightarrow G[p]$  be a finite flat subgroup scheme of $G[p]$ of rank $g$. The following are equivalent:
\begin{enumerate}
\item $\deg H > g- \frac{1}{2}$  if $p \neq 3$ or $\deg H > g- \frac{1}{3}$ if $p=3$,
\item $\Hdg(G) < \frac{1}{2}$ if $p\neq 3$ or $\Hdg(G) < \frac{1}{3}$ if $p=3$, and $H$ is the canonical subgroup of level $1$ of  $G$.
\end{enumerate}
\end{prop}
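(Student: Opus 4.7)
The implication $(2) \Rightarrow (1)$ is immediate from Theorem~\ref{thm_can_sg}(2): if $H$ is the canonical subgroup $H_1$ of level~$1$, then $\deg H = g - \Hdg(G) > g - \frac{1}{2}$ (resp.~$> g - \frac{1}{3}$ when $p = 3$) as soon as $\Hdg(G) < \frac{1}{2}$ (resp.~$< \frac{1}{3}$).

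The essential content is the converse $(1) \Rightarrow (2)$, and my plan is to split it in two steps. First I would bound the Hodge height by invoking Fargues' Harder--Narasimhan formalism: the canonical subgroup (when it exists) is the first step of the HN filtration of $G[p]$ and in particular is a subobject of maximal slope, so one obtains the inequality
$$
\deg H \leq g - \Hdg(G)
$$
for every finite flat subgroup $H \subset G[p]$ of rank $p^g$. Combined with hypothesis (1), this forces $\Hdg(G) < \frac{1}{2}$ (resp.~$< \frac{1}{3}$), and Theorem~\ref{thm_can_sg} then furnishes a canonical subgroup $H_1$ of level $1$ with $\deg H_1 = g - \Hdg(G) > g - \frac{1}{2}$ (resp.~$> g - \frac{1}{3}$).

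The second step is to identify $H$ with $H_1$. Suppose for contradiction $H \neq H_1$, and consider the scheme-theoretic intersection $H \cap H_1$ together with the sum $H + H_1 \subset G[p]$ defined as the image of $H \oplus H_1 \to G[p]$. They fit in a short exact sequence of finite flat group schemes killed by $p$
$$
0 \longrightarrow H \cap H_1 \longrightarrow H \oplus H_1 \longrightarrow H + H_1 \longrightarrow 0,
$$
and additivity of the degree yields $\deg H + \deg H_1 = \deg(H \cap H_1) + \deg(H + H_1)$. Since $H + H_1 \subset G[p]$ and $\omega_{G[p]} = \omega_G/p\omega_G$ is free of rank $g$ over $\ocal_K/p\ocal_K$, we have $\deg(H + H_1) \leq \deg G[p] = g$. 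Since $H \neq H_1$ are both of order $p^g$, the intersection has order at most $p^{g-1}$, so the Cartier-duality identity $\deg H' + \deg (H')^\vee = \log_p |H'|$ for $H'$ killed by $p$ gives $\deg(H \cap H_1) \leq g - 1$. Hence $\deg H + \deg H_1 \leq 2g - 1$, contradicting $\deg H + \deg H_1 > 2g - 1$ (resp.~$> 2g - \frac{2}{3} > 2g - 1$ for $p = 3$) provided by hypothesis (1) together with Step~1.

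The main obstacle is Step~1: the inequality $\deg H \leq g - \Hdg(G)$ encodes the optimal (maximally destabilizing) character of the canonical subgroup among all subgroups of rank $p^g$ in $G[p]$, and rests on Fargues' Harder--Narasimhan formalism for finite flat group schemes together with the comparison between the top slope of $G[p]$ and the Hasse invariant of $G$. By contrast, Step~2 is a clean and elementary consequence of degree additivity and Cartier duality.
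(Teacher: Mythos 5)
Your reduction of the converse to the inequality $\deg H \leq g - \Hdg(G)$ is the right one --- this is exactly the inequality the paper establishes --- but your justification of it in Step 1 is circular. You derive it from the fact that the canonical subgroup is the first break of the Harder--Narasimhan filtration of $G[p]$, hence computes the value of the concave HN polygon at abscissa $g$. But the canonical subgroup only exists once one knows $\Hdg(G) < \frac{1}{2}$ (resp.\ $\frac{1}{3}$), which is precisely what you are trying to deduce from the inequality; for a group $G$ of large Hodge height the HN formalism gives no a priori comparison between $\mathrm{HN}_{G[p]}(g)$ and $g-\Hdg(G)$. The real content of the proposition is to prove $\Hdg(G)\leq g-\deg H$ \emph{without} assuming the canonical subgroup exists, and the paper does this by a direct Verschiebung computation: one restricts to $\ocal_{K,1}$, shows that the surjection $\phi\colon \omega_{\bar{G}[p]}\to\omega_{\bar{H}}$ becomes an isomorphism after Frobenius twist (using $\deg(G[p]/H)=v<\frac{1}{2}$), so that $\Hdg(G)$ is computed as $\deg(\omega_{\bar{H}}^{(p)}/V^\ast\omega_{\bar{H}})$, and then filters $H$ by order-$p$ subgroups and applies Oort--Tate theory to each graded piece to get $\deg(\omega_{\Gr^k\bar{H}}^{(p)}/V^\ast\omega_{\Gr^k\bar{H}})=1-\deg\Gr^k H$, whence the bound. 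Nothing in your proposal substitutes for this computation, so Step 1 is a genuine gap and not a routine citation.

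Your Step 2, by contrast, is essentially correct and genuinely different from the paper, which instead invokes Fargues' Proposition 15 (any rank-$p^g$ subgroup of $G[p]$ of degree $>g-\frac{1}{2}$ is itself a break of the HN filtration) and then identifies the two breaks occurring at the same height. Your intersection/sum argument is more elementary; just be aware that over $\ocal_K$ one must take $H\cap H_1$ and $H+H_1$ to be the flat (schematic-closure) kernel and image of $H\oplus H_1\to G[p]$, and that degree additivity then yields only the inequality $\deg H+\deg H_1\leq \deg(H\cap H_1)+\deg(H+H_1)$ rather than an equality --- but this is the direction your contradiction needs, since the right-hand side is bounded by $(g-1)+g=2g-1$.
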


\begin{demo} In view of theorem \ref{thm_can_sg}, we only need to show that the first point implies the second.  Set $v = g- \deg H$.  It is enough to prove that $v< \frac{1}{2}$  if $p \neq 3$ or $v<  \frac{1}{3}$ if $p=3$ implies that $\Hdg(G) \leq v$. Indeed, by theorem \ref{thm_can_sg}, $G$ will admit a canonical subgroup of level $1$, which is a step of the Harder-Narasimhan filtration of $G[p]$. On the other hand, proposition 15 of \cite{Far} shows  that $H$ is a step of the Harder-Narasimhan filtration of $G[p]$. It follows that $H$ is the canonical subgroup of level $1$ of $G$.

Let $\bar{H}$ and $\bar{G}[p]$ denote the restrictions of $H$ and $G[p]$ to $\Spec~\ocal_{K,1}$. Note that there are   canonical identifications $\omega_{\bar{G}[p]}\simeq \omega_{G[p]} \simeq \omega_{G}/p \omega_{G} $ and $\omega_{H} \simeq \omega_{\bar{H}}$.
We use the superscripts $\underline{~}^{(p)}$ to denote base change  by the Frobenius map $\sigma:\ocal_{K,1}\rightarrow \ocal_{K,1}$.
We have a functorial Vershiebung morphism $V \colon \bar{H}^{(p)} \rightarrow \bar{H}$ and $V \colon \bar{G}[p]^{(p)} \rightarrow \bar{G}[p]$.
Taking the induced map on the co-normal sheaves   we obtain the following commutative diagram with exact rows:
\begin{eqnarray*}
\xymatrix{ 0 \ar[r] & \omega_{\bar{G}[p]/\bar{H}} \ar[r] \ar[d]^{V^\ast} & \omega_{\bar{G}[p]} \ar[r]^{\phi}  \ar[d]^{V^\ast} & \omega_{\bar{H}}
\ar[r] \ar[d]^{V^\ast} & 0 \\
& \omega_{\bar{G}[p]/\bar{H}}^{(p)} \ar[r]  & \omega_{\bar{G}[p]}^{(p)} \ar[r]^{\phi \otimes 1}  & \omega_{\bar{H}}^{(p)} \ar[r]  & 0}
\end{eqnarray*}

We have an isomorphism $\ocal_{K,1}^g \simeq \omega_{G[p]}$ and $\mathrm{Ker}\phi  \subset p^{1-v} \omega_{G[p]}$ since $\deg G[p]/H = v$ by \cite{Fa}, lem. 4. As a result, there is a surjective map $\omega_{\bar{H}} \rightarrow \ocal_{K,1-v}^g$. We thus obtain a surjective map $\omega_{\bar{H}}^{(p)} = \omega_{\bar{H}}\otimes_{\ocal_{K,1},\sigma} \ocal_{K,1} \rightarrow \ocal_{K,1-v}^g \otimes_{\ocal_{K,1},\sigma} \ocal_{K,1} \simeq \ocal_{K,1}^g$.  The map $\phi\otimes 1 \colon \omega_{\bar{G}[p]}^{(p)} \rightarrow \omega_{\bar{H}}^{(p)}$ is a surjective map between two  finite $\ocal_{K,1}$-modules which are isomorphic, so it is an isomorphism. As $\Hdg(G)$ can also be  computed as the truncated valuation of the determinant of $V^\ast$ on $\omega_{\bar{G}[p]}^{(p)}$, we conclude that  $\Hdg(G) =  \deg (\omega_{\bar{G}[p]}^{(p)}/V^\ast \omega_{\bar{G}[p]}) = \deg (\omega_{\bar{H}}^{(p)}/V^\ast \omega_{\bar{H}})$. We are thus reduced to compute the map $V^\ast$  at the level of the group $\bar{H}$.

After possibly extending $K$, we can find  an increasing filtration of $H_K$  by finite flat subgroups $\{\Fil_i H_K\}_{0 \leq i \leq g}$  where $\Fil_i H_K$ has rank $p^i$. Taking schematic closures we obtain an increasing filtration of $H$  by finite flat subgroups $\{\Fil_i H\}_{0 \leq i \leq g}$  where $\Fil_i H$ has rank $p^i$.  We set $\Gr_k  H= \Fil_k  H / \Fil_{k-1}H$. This is a finite flat group scheme of order $p$ for every $k$.  We let $\{\Fil_i \bar{H}\}_{0 \leq i \leq g}$ be the filtration  of $\bar{H}$ obtained via base change to $\Spec~\ocal_{K,1}$ and $\{\Fil_i \bar{H}^{(p)}\}_{0 \leq i \leq g}$ be the filtration of $\bar{H}^{(p)}$ induced by pull-back under $\sigma$. We obtain a decreasing filtration on the differentials by setting   $\Fil^i \omega_{\bar{H}} = \mathrm{Ker} ( \omega_{\bar{H}} \rightarrow \omega_{\Fil_i \bar{H}})$. Taking differentials in the exact sequence $0 \rightarrow \Fil_{k-1} H \rightarrow \Fil_{k} H \rightarrow \Gr_k H \rightarrow 0$  provides an isomorphism $\Gr^k \omega_{\bar{H}} := \Fil^{k-1} \omega_{\bar{H}}/\Fil^{k} \omega_{\bar{H}} \simeq \omega_{\Gr_k \bar{H}}$.  Similarly, we set $\Fil^i \omega_{\bar{H}}^{(p)} = \mathrm{Ker} ( \omega_{\bar{H}}^{(p)} \rightarrow \omega_{\Fil_i \bar{H}}^{(p)})$ and there is a surjective map  $\omega_{\Gr_k \bar{H}}^{(p)} \rightarrow \Gr^k (\omega_{\bar{H}}^{(p)})$. But as before, it is easy to see that both modules are isomorphic to $\ocal_{K,1}$ and this map is an isomorphism.

The map $V^\ast$ respects these filtrations and a straightforward calculation using Oort-Tate theory shows that $\deg (\omega_{\Gr^k \bar{H}}^{(p)}/V^\ast \omega_{\Gr^k \bar{H}}) = 1 - \deg (\Gr^k {H})$. Hence,  $\deg (\omega_{\bar{H}}^{(p)}/V^\ast \omega_{\bar{H}})\leq \sum_k \deg \Gr^k \omega_{\bar{H}}^{(p)} /V^\ast \Gr^k \omega_{\bar{H}} = g -\sum_k \deg(\Gr^k {H})$.  Since $\sum_k \deg(\Gr^k {H}) = \deg {H}$, we conclude that $\Hdg(G)=\deg (\omega_{\bar{H}}^{(p)}/V^\ast \omega_{\bar{H}}) \leq g- \deg H=v$ as claimed.
\end{demo}

\subsection{The Hodge-Tate maps for  $H_n$ and $G[p^n]$}

In this section we work under the hypothesis of theorem  \ref{thm_can_sg}, i.e let us recall that $G$ was a Barsotti-Tate group of dimension
$g$ such that $v:=\Hdg(G) < \frac{1}{2p^{n-1}}$ ( resp.~$ \frac{1}{3p^{n-1}}$ if $p=3$) and we denoted $H_n\subset G[p^n]$ its level $n$ canonical subgroup.
We now define
the  Hodge-Tate map for  $H_n^D$ (viewed as a map of abelian sheaves on the $fppf$-topology):
$$ \mathrm{HT}_{H_n^D} \colon H_n^D \rightarrow \omega_{H_n},$$by sending an $S$-valued point $x\in H_n^D(S)$, i.e.,
a homomorphism  of $S$-group schemes $x\colon H_{n,S} \to \mu_{p^n,S}$, to the pull--back
$x^\ast(dt/t)\in  \omega_{H_n}(S)$ of the invariant differential $dt/t$ of $\mu_{p^n,S}$.

Following the conventions of section \ref{sect--Fargues} we write  $\omega_{G[p^n], w}$, resp.~$\omega_{H_n, w}$ for
$\omega_{G[p^n]}\otimes_{\ocal_K} \ocal_{K,w}$, resp.~$\omega_{H_n}\otimes_{\ocal_K} \ocal_{K,w}$.

 \begin{prop}\label{prop-HdgT} \begin{enumerate}

 \item The  differential of the inclusion $H_n \hookrightarrow G[p^n]$ induces an isomorphism
     $$\omega_{G[p^n], n- v \frac{p^n-1}{p-1}}  \isolong \omega_{H_n, n- v \frac{p^n-1}{p-1}}.$$
 \item The linearized Hodge-Tate map  $$\mathrm{HT}_{H_n^D} \otimes 1 \colon H_n^D(\bar{K}) \otimes_\Z
\ocal_{\bar{K}}\rightarrow \omega_{H_n}\otimes_{ \ocal_K} \ocal_{\bar{K}}$$
 has cokernel of degree $\frac{v}{p-1}$.
\end{enumerate}
\end{prop}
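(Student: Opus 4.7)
My plan is to prove part (1) by a clean module-theoretic argument, and to approach part (2) by induction on $n$ starting from the classical $n=1$ case.

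For part (1), the short exact sequence $0 \to H_n \to G[p^n] \to G[p^n]/H_n \to 0$ induces the exact sequence of $\ocal_K$-modules
$$
0 \longrightarrow \omega_{G[p^n]/H_n} \longrightarrow \omega_{G[p^n]} \longrightarrow \omega_{H_n} \longrightarrow 0.
$$
Since $\omega_{G[p^n]}\simeq \omega_G/p^n\omega_G \simeq \ocal_{K,n}^g$ has degree $ng$ and $\deg \omega_{H_n} = ng - w$ with $w := v\frac{p^n-1}{p-1}$ by Theorem~\ref{thm_can_sg}(2), the additivity of degrees in \cite{Fa} yields $\deg \omega_{G[p^n]/H_n} = w$. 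Now apply the structure theorem for finitely presented torsion $\ocal_K$-modules: $\omega_{G[p^n]/H_n} \simeq \bigoplus_i \ocal_{K,a_i}$ with $\sum_i a_i = w$, so each $a_i \leq w$, and thus the whole module is annihilated by $\mfrak(w)$. Since the $\mfrak(w)$-torsion submodule of $\omega_{G[p^n]}\simeq \ocal_{K,n}^g$ is precisely $\mfrak(n-w)\omega_{G[p^n]}$, we conclude $\omega_{G[p^n]/H_n}\subset \mfrak(n-w)\omega_{G[p^n]}$, and the surjection $\omega_{G[p^n]}\twoheadrightarrow \omega_{H_n}$ becomes an isomorphism after base change to $\ocal_{K,n-w}$.

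For part (2), the base case $n=1$ is the classical Fargues/Abbes-Mokrane calculation: for a Barsotti-Tate group of Hodge height $v$, the linearized Hodge-Tate map of its level-one canonical subgroup has cokernel of degree $\frac{v}{p-1}$, proved by exploiting that $H_1\equiv \ker F\pmod{p^{1-v}}$ (Theorem~\ref{thm_can_sg}(4)) and that the Hasse invariant has truncated valuation $v$. For the inductive step, I dualize the filtration exact sequence $0\to H_{n-1}\to H_n\to H_n/H_{n-1}\to 0$ and form the commutative diagram of linearized Hodge-Tate maps for the three terms. By Theorem~\ref{thm_can_sg}(5), $H_n/H_{n-1}$ is the level-one canonical subgroup of $G/H_{n-1}$ whose Hodge height is $p^{n-1}v$, so the base case applied to this quotient gives cokernel of degree $\frac{p^{n-1}v}{p-1}$.

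The main obstacle is that the naive sum $\frac{v}{p-1} + \frac{p^{n-1}v}{p-1}$ exceeds the target value $\frac{v}{p-1}$; one must show that the image of the boundary map from $\mathrm{coker}(\HT_{(H_n/H_{n-1})^D}\otimes 1)$ to $\mathrm{coker}(\HT_{H_n^D}\otimes 1)$ is zero, equivalently that $\omega_{H_n/H_{n-1}}\otimes \ocal_{\bar K}$ lies in the image of $\HT_{H_n^D}\otimes 1$. The idea is that although the characters of $H_n$ that factor through $H_n/H_{n-1}$ alone do not span $\omega_{H_n/H_{n-1}}\otimes \ocal_{\bar K}$, the ``missing piece'' is filled in by $p^{n-1}$-multiples of lifts $\eta_i = \HT_{H_n^D}(e_i)$ of generators $e_i$ of $H_n^D(\bar K)$ (note that $p^{n-1}\eta_i$ automatically lies in $\omega_{H_n/H_{n-1}}\subset \omega_{H_n}$). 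A degree computation using part (1) then pins down $\deg\mathrm{coker}(\HT_{H_n^D}\otimes 1) = \frac{v}{p-1}$, closing the induction.
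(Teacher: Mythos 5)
Your part (1) is correct and is essentially the paper's own argument: the authors deduce it from the exact sequence of conormal modules together with $\omega_{G[p^n]}\simeq\ocal_{K,n}^g$ and $\deg G[p^n]/H_n = v\tfrac{p^n-1}{p-1}$; you have only made the module-theoretic step explicit.

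The gap is in part (2), exactly at the point you yourself flag as the main obstacle. Your induction needs the map $\mathrm{coker}\bigl(\HT_{(H_n/H_{n-1})^D}\otimes 1\bigr)\to\mathrm{coker}\bigl(\HT_{H_n^D}\otimes 1\bigr)$ to vanish, i.e.\ that $\omega_{H_n/H_{n-1}}\otimes\ocal_{\bar{K}}$ lie in the image of $\HT_{H_n^D}\otimes 1$. The mechanism you propose cannot deliver this: since $\HT_{H_n^D}$ is a homomorphism and $p^{n-1}H_n^D(\bar{K})=(H_n/H_{n-1})^D(\bar{K})$, the elements $p^{n-1}\eta_i=\HT_{H_n^D}(p^{n-1}e_i)$ are precisely the images of the generators $p^{n-1}e_i$ of $(H_n/H_{n-1})^D(\bar{K})$ under $\HT_{(H_n/H_{n-1})^D}$. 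Their $\ocal_{\bar{K}}$-span is therefore exactly $\mathrm{im}\bigl(\HT_{(H_n/H_{n-1})^D}\otimes 1\bigr)$, whose cokernel in $\omega_{H_n/H_{n-1}}\otimes\ocal_{\bar{K}}$ has degree $\tfrac{p^{n-1}v}{p-1}>0$ by your own base case applied to $G/H_{n-1}$; so these elements by construction do not fill the missing piece. What you would actually need are elements $\lambda\,\eta_i$ with $v(\lambda)<n-1$ landing in $\omega_{H_n/H_{n-1}}\otimes\ocal_{\bar{K}}$, and producing them amounts to controlling the elementary divisors of $\HT_{H_n^D}\otimes 1$ --- essentially the statement being proved. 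The closing "degree computation using part (1)" cannot rescue this: additivity of degrees in the six-term sequence only translates the desired equality $\deg\mathrm{coker}(\HT_{H_n^D}\otimes 1)=\tfrac{v}{p-1}$ into the vanishing statement above, it does not establish it. The paper avoids the filtration by graded pieces altogether: it compares $\HT_{H_n^D}$ with $\HT_{H_1^D}$ via the commutative square built from the surjections $H_n^D(\bar{K})\to H_1^D(\bar{K})$ and $\omega_{H_n}\to\omega_{H_1}$, and invokes the computation of the cokernel in the proof of theorem 4 of \cite{Far}. If you want a correct argument, follow that reduction to $H_1$ rather than inducting through $H_{n-1}$.
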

\begin{demo}  We have an exact sequence:

$$ 0 \rightarrow H_n \rightarrow G[p^n] \rightarrow G[p^n]/H_n \rightarrow 0$$ which induces an exact sequence:

$$ 0 \rightarrow \omega_{ G[p^n]/H_n} \rightarrow \omega_{G[p^n]} \rightarrow \omega_{H_n} \rightarrow 0$$

We know that $\omega_{G[p^n]} \simeq \ocal_{K,n}^g$ and that $\deg G[p^n]/H_n = \frac{p^n-1}{p-1} v$ so the first claim follows.

There is a  commutative diagram:
\begin{eqnarray*}
\xymatrix{  H_n^D(\bar{K})\otimes_\Z \ocal_{\bar{K}} \ar[r]^{\mathrm{HT}_{H_n^D}\otimes1} \ar[d]&  \omega_{H_n}\otimes_{\ocal_K} \ocal_{\bar{K}} \ar[d]\\
H_1^D(\bar{K})\otimes_\Z \ocal_{\bar{K}} \ar[r]^{\mathrm{HT}_{H_1^D}\otimes 1} & \omega_{H_1}\otimes_{\ocal_K} \ocal_{\bar{K}}}
\end{eqnarray*}
We know by the proof  of theorem 4 of \cite{Far} that $\HT_{H_1^D}\otimes 1$  has cokernel of degree $\frac{v}{p-1}$. The same holds for the Hodge-Tate map  $\mathrm{HT}_{H_n^D}\otimes1$.
\end{demo}

\bigskip

Although  the results of proposition \ref{prop-HdgT} are all that we need for later use, we'd like to go further and analyze the Hodge-Tate map
for the group $G[p^n]$:

$$\HT_{n} \colon  G[p^n](\ocal_{\bar{K}}) \rightarrow \omega_{G^D[p^n]}\otimes_{\ocal_K} \ocal_{\bar{K}}.$$ The following result is implicit in \cite{Far} (see also \cite{AG}, sect. 13.2  when $n=1$).

\begin{prop}\label{prop--HT} We use the notations of proposition \ref{prop-HdgT}. Assume that $v < \frac{p-1}{p(p^n-1)}$. Then the following natural sequence:
is exact.
$$0 \rightarrow  H_n(\bar{K}) \rightarrow G[p^n](\bar{K}) \stackrel{\mathrm{HT}_{G[p^n]}}{\rightarrow} \omega_{G^D[p^n]}
\otimes_{\ocal_K} \ocal_{\bar{K}}.$$

Furthermore, the cokernel of the map $\mathrm{HT}_{G[p^n]} \otimes 1\colon  G[p^n](\bar{K})\otimes_\Z \ocal_{\bar{K}} \rightarrow  \omega_{G^D[p^n]} \otimes_{\ocal_K} \ocal_{\bar{K}} $ is of degree $\frac{v}{p-1}$.
\end{prop}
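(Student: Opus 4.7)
The strategy is to exploit Cartier duality to reduce the computation to the Hodge--Tate map for the quotient $G[p^n]/H_n$. Since $\Hdg(G^D)=v$ and $H_n^\perp$ is the level-$n$ canonical subgroup of $G^D$ by theorem \ref{thm_can_sg}(6), proposition \ref{prop-HdgT} applies to both $G$ and $G^D$. The short exact sequence $0\to H_n\to G[p^n]\to (H_n^\perp)^D\to 0$ (Cartier dual of $0\to H_n^\perp\to G^D[p^n]\to H_n^D\to 0$) together with the induced sequence $0\to \omega_{H_n^D}\to \omega_{G^D[p^n]}\to \omega_{H_n^\perp}\to 0$ fit together with the Hodge--Tate maps into a commutative diagram of short exact sequences whose left, middle and right vertical arrows are $\HT_{H_n}$, $\HT_{G[p^n]}$ and $\HT_{(H_n^\perp)^D}$, respectively; commutativity follows from the functoriality of pulling back the invariant differential $dt/t$ along Cartier-dual homomorphisms to $\mu_{p^n}$.

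First I would control the right vertical via proposition \ref{prop-HdgT}(2) applied to the BT group $G^D$: its linearisation has cokernel of degree $v/(p-1)$, whence, since the source has degree $ng$ and the target has degree $ng-v(p^n-1)/(p-1)$, the linearised kernel has degree $vp^n/(p-1)$. The hypothesis $v<(p-1)/(p(p^n-1))$ forces $vp^n/(p-1)<1$, so no nonzero element of $(H_n^\perp)^D(\bar K)\simeq (\Z/p^n)^g$ can lie in this kernel; $\HT_{(H_n^\perp)^D}$ is therefore injective on $\bar K$-points. Chasing the diagram gives $\ker \HT_{G[p^n]}\subseteq H_n(\bar K)$.

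The reverse inclusion reduces, via the identification $\HT_{G[p^n]}\vert_{H_n} = \iota\circ \HT_{H_n}$ for the injection $\iota\colon \omega_{H_n^D}\hookrightarrow \omega_{G^D[p^n]}$, to showing that the left vertical $\HT_{H_n}$ vanishes on $\bar K$-points. This is automatic in the ordinary case (there $H_n^D$ is \'etale so $\omega_{H_n^D}=0$), and in general it uses the description of $H_n$ modulo $p^{1-\Hdg(G)}$ as the kernel of Frobenius from theorem \ref{thm_can_sg}(4), which forces the Cartier-dual homomorphism $\tilde y\colon H_n^D\to \mu_{p^n}$ attached to any $y\in H_n(\bar K)$ to pull back $dt/t$ trivially. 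This is the heart of the argument and the main obstacle.

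With these two inputs in hand, the snake lemma applied to the diagram yields both conclusions at once: the top sequence is exact (combining the vanishing of the left vertical with the injectivity of the right), and the cokernel of the middle vertical $\HT_{G[p^n]}\otimes 1$ is identified with the cokernel of the right vertical, which has degree $v/(p-1)$ by proposition \ref{prop-HdgT}(2) for $G^D$.
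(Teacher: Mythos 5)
Your global architecture --- the map from the short exact sequence $0\to H_n\to G[p^n]\to (H_n^\perp)^D\to 0$ to $0\to\omega_{H_n^D}\to\omega_{G^D[p^n]}\to\omega_{H_n^\perp}\to 0$ with the three Hodge--Tate maps as verticals --- is the right d\'evissage, and your degree count proving injectivity of $\HT_{(H_n^\perp)^D}$ on $\bar{K}$-points is correct. But the step you yourself flag as ``the heart of the argument'', namely the vanishing of $\HT_{H_n}$ on $\bar{K}$-points, is a genuine gap, and the mechanism you propose cannot work as stated. The kernel-of-Frobenius description of theorem \ref{thm_can_sg}(4) holds for every $v<\frac{1}{2p^{n-1}}$ (and concerns only $H_1$, only modulo $p^{1-\Hdg(G)}$), whereas the vanishing of $\HT_{H_n}$ genuinely requires the stronger hypothesis $v<\frac{p-1}{p(p^n-1)}$, which your sketch never invokes for this step. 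That hypothesis is exactly equivalent to $\deg H_n>ng-\frac{1}{p}$, i.e.\ to the condition that every order-$p$ graded piece of a filtration of $H_n$ has degree $\geq 1-\frac{1}{p}$; and $1-\frac{1}{p}$ is the true threshold. Indeed, for an order-$p$ group scheme $H=G_{a,b}$ in Oort--Tate coordinates, $\HT_H(y)\in\omega_{H^D}\simeq \ocal_{\bar{K}}/(b)$ has valuation roughly $\frac{v(a)}{p-1}$ against an annihilator of valuation $1-v(a)$, so it vanishes precisely when $v(a)\geq 1-\frac{1}{p}$; an \'etale $H$ (degree $0$) has $\HT_H\neq 0$. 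The paper's proof of this step is exactly this Oort--Tate computation followed by a filtration of $H_n$ with order-$p$ graded pieces and a d\'evissage; some quantitative input of this kind is unavoidable.

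There is a second, unacknowledged gap in your last step. The snake lemma identifies $\mathrm{coker}(\HT_{G[p^n]}\otimes 1)$ with $\mathrm{coker}(\HT_{(H_n^\perp)^D}\otimes 1)$ only if the connecting map $\ker(\HT_{(H_n^\perp)^D}\otimes 1)\to \omega_{H_n^D}\otimes_{\ocal_K}\ocal_{\bar{K}}$ is surjective, equivalently only if $\omega_{H_n^D}\otimes_{\ocal_K}\ocal_{\bar{K}}$ lies in the image of $\HT_{G[p^n]}\otimes 1$; you do not address this. Without it, additivity of degrees in the snake sequence only yields $\frac{v}{p-1}\leq \deg \mathrm{coker}(\HT_{G[p^n]}\otimes 1)\leq \frac{vp^n}{p-1}$, since $\omega_{H_n^D}\otimes_{\ocal_K}\ocal_{\bar{K}}$ has degree $\frac{v(p^n-1)}{p-1}\neq 0$ in the non-ordinary case. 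The paper does not attempt this diagram chase: for the cokernel statement it simply imports the computation from the proof of theorem 4 of \cite{Far}, exactly as it already did in proposition \ref{prop-HdgT}.
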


\begin{demo}  An easy calculation using Oort-Tate theory shows that for any group scheme $H \rightarrow \Spec~\ocal_{\bar{K}}$ of order $p$  and degree at least $1- \frac{1}{p}$, the Hodge-Tate map $\HT_H \colon H(\bar{K}) \rightarrow \omega_{H^D}$ is  zero.
By hypothesis we have $\deg H_n \geq ng- \frac{1}{p}$ and we can thus filter $H_n$ by group schemes such that each
graded quotient is of order $p$ and has degree at least $1- \frac{1}{p}$. A straightforward d\'evissage now proves that
the map $\mathrm{HT}_{H_n} \colon H_n(\bar{K}) \rightarrow \omega_{H_n^D}\otimes_{\ocal_K} \ocal_{\bar{K}}$ is zero and so
the map $\mathrm{HT}_{G[p^n]} \colon H_n(\bar{K}) \rightarrow \omega_{G^D[p^n]}$ is also zero. The rest of the proposition follows from the proof  of theorem 4 of \cite{Far} as in proposition \ref{prop-HdgT}.
\end{demo}

\bigskip

Applying this proposition to $G^D$ and using the fact that $H_n^\bot$ is the canonical subgroup of $G^D$ we obtain a map
$$\hat{ \mathrm{HT}}\colon  H_n^D(\bar{K}) \rightarrow \omega_{G[p^n]} \otimes_{\ocal_{K}}{\ocal_{\bar{K}}},$$which is a lift of the map $\HT_{H_n^D}$.

\begin{rem}\label{rem--HT} There is a rationality issue with the map $\hat{\HT}$. If $K'$ is the finite  extension of $K$ fixed by an open
sub-group $\Gamma$ of $\mathrm{Gal}(\bar{K}/K)$ we obtain an induced map:

$$\hat{ \mathrm{HT} }\colon H_n^D(K') \rightarrow (\omega_{G[p^n]} \otimes_{\ocal_{K}}\ocal_{\bar{K}})^{\Gamma}.$$

 There is an injection $\omega_{G[p^n]} \otimes_{\ocal_{K}}\ocal_{{K'}} \hookrightarrow (\omega_{G[p^n]} \otimes_{\ocal_{K}}\ocal_{\bar{K}})^{\Gamma}$ which may be strict and there is no reason for   $\hat{\mathrm{HT}}( H_n^D(K'))$ to lie in $\omega_{G[p^n]} \otimes_{\ocal_{K}}\ocal_{{K'}}$.
 But if we reduce modulo $p^{n- v\frac{p^n-1}{p-1}}$, then $\hat{\HT}$ coincides with $\HT_{H_n^D}$ in
$\omega_{G[p^n], n- v \frac{p^n-1}{p-1}} = \omega_{H_n, n- v \frac{p^n-1}{p-1}}$.

\end{rem}

\subsection{Canonical subgroups for semi-abelian schemes}
We will need to apply the results of the last section in the setting of semi-abelian schemes. Let $S$ be a noetherian  scheme and $U$ a dense open subset.  We will  use  the notions of $1$-motives  over $U$  and Mumford $1$-motives over
$U \hookrightarrow S$ as follows.
\begin{defi}[\cite{Del}, def. 10.1.1, \cite{Str}, def. 1.3.1]  A $1$-motive over $U$ is a complex of \emph{fppf}
abelian sheaves $[Y\rightarrow \tilde{G}] $ concentrated in degree $-1$ and $0$ where
 \begin{enumerate}
\item $\tilde{G} \rightarrow U $  is a semi-abelian scheme which  is an extension
$$ 0 \rightarrow T \rightarrow \tilde{G} \rightarrow A \rightarrow 0$$ with $T$ a torus and $A$ an abelian scheme,
\item  $Y \rightarrow U $ is an isotrivial sheaf.
\end{enumerate}

\noindent A Mumford $1$-motive over $U \hookrightarrow S$ is the data of:
\begin{enumerate}
\item A semi-abelian scheme $\tilde{G} \rightarrow S$  which is an extension
$$ 0 \rightarrow T \rightarrow \tilde{G} \rightarrow A \rightarrow 0$$ with $T$ a torus over $S$ and $A$ an abelian scheme over $S$,
\item $Y\rightarrow S$ an isotrivial sheaf,
\item $[Y_U \rightarrow \tilde{G}_U]$ a $1$-motive over $U$.
\end{enumerate}
\end{defi}

Given $M = [Y \rightarrow \tilde{G}] $ a  $1$-motive over $U$ and an integer $n$, we define the $n$-torsion of $M$ as the $\HH^{-1}$ of the cone of the multiplication by $n$ map $M \rightarrow M$. It comes with a filtration $\Fil_\bullet$. The group $\tilde{G}$ is an extension $ 0 \rightarrow T \rightarrow \tilde{G} \rightarrow A \rightarrow 0$, and    $\Fil_0 = T[n]$, $\Fil_1= \tilde{G}[n]$, $\Fil_2 =  M[n]$.

Assume that  $S = \Spec~\ocal_K$, that $U = \Spec~K$.  We say that a Mumford $1$-motive  $M = [Y \rightarrow \tilde{G}] $ over $U \hookrightarrow S$ has a canonical subgroup of level $n$ if $\tilde{G}$ has a canonical subgroup of level $n$.

\section{Canonical subgroups in families and applications}
\subsection{Families of canonical subgroups}
\label{sec-familiescansg}

We start by introducing some categories of $\ocal_K$-algebras. We let $\mathbf{Adm}$ be the category of admissible
$\ocal_K$-algebras, by which we mean   flat $\ocal_K$-algebras which are  quotients of  rings of restricted power series $\ocal_K\langle X_1,\ldots,X_r\rangle$,
for some $r\ge 0$.
 We let $\mathbf{NAdm}$ be the category of normal admissible $\ocal_K$-algebras.

Let $R$ be an object of $\mathbf{Adm}$.
We have a supremum semi-norm on $R[1/p]$ denoted by $\vert \ \vert$. If $R$ is in $\mathbf{NAdm}$ then $\vert \  \vert$ is a norm and
the unit ball for this norm is precisely $R$.

For any object $R$ in $\mathbf{Adm}$, we let $R-\mathbf{Adm}$ be the category of $R$-algebras which
are admissible as $\ocal_K$-algebras. We define similarly $R-\mathbf{NAdm}$.

If $w \in v(\ocal_K)$ we set as before $R_w = R \otimes_{\ocal_K} \ocal_{K,w}$ and for any $R$-module $M$, $M_w$ means $M \otimes_R R_w$.

\medskip

Till the end of this section  we fix an object  $R$ of $\mathbf{NAdm}$.
We set $S= \Spec~R$, and $S_{\rig}$ is the rigid analytic space associated to $R[\frac{1}{p}]$. We will study the
$p$-adic properties of certain semi-abelian schemes over $S$ and their canonical subgroups. We make  the following
assumptions.

Let  $U$ be a dense open sub-scheme  of $S$ and $G$ a semi-abelian scheme over $S$ such
that $G|_U$ is abelian. We assume that there exists  $\tilde{G}$, a semi-abelian scheme over $S$ with constant toric
rank, $Y$  an isotrivial sheaf over $S$ and $M = [Y \rightarrow \tilde{G}]$ a Mumford $1$-motive over
$U \hookrightarrow S$ such that
$M[p^n] \simeq G[p^n]$ over $U$ and that $\tilde{G}[p^n] \hookrightarrow G[p^n]$.  For  $x \in S_{\rig}$ we write $\mathrm{Hdg}(x)$ for  $\mathrm{Hdg}\bigl(\tilde{G}_x[p^\infty]\bigr)$.

 \begin{rem}  The group $G[p^n]$ is not  finite flat in general (unless $G$ has constant toric rank   over $S$)
but under the hypothesis above it has a finite
flat  sub-group  $\tilde{G}[p^n]$ which we  use as a good substitute.
 \end{rem}

\begin{rem} In our applications $R$ will come from the $p$-adic completion of an \'etale affine open sub-set of the
toroidal compactification of the Siegel variety. If this open sub-set does not meet   the boundary, then the
semi-abelian scheme $G$ will be abelian and the situation is simple. On the other hand we can cover the boundary
by \'etale affine open sub-sets such that $G$ comes by approximation from a semi-abelian scheme constructed out of
a $1$-motive $M$ (by Mumford's construction). In the approximation process it is possible to preserve the
$p^n$-torsion of $M$ as explained in \cite{Str}, section 2.3.
\end{rem}

Using the previous notations and assumptions on $S$ and $G$ we now make further assumptions on the Hodge height.   Let $v < \frac{1}{2p^{n-1}}$
(resp.~$v < \frac{1}{3p^{n-1}}$ if $p=3$)  such that for any $x \in S_{\rig}$, $\mathrm{Hdg}(x) < v$.  For any point $x \in S_{\rig}$, $G$
has a canonical sub-group of order $n$. By the properties of the Harder-Narasimhan filtration  there  is a finite flat sub-group $H_{n,K} \subset
G\vert_{S_{\rig}}$ interpolating the canonical sub-groups of level $n$ for all the points $x\in S_{\rig}$.

\begin{prop} The canonical subgroup extends to a finite flat subgroup scheme $H_n \hookrightarrow G[p^n]$ over $S$.
\end{prop}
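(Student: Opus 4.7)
The plan is to construct $H_n$ as the schematic closure of $H_{n,K}$ inside the finite flat group scheme $\tilde{G}[p^n]\to S$ and to verify that this closure is itself finite flat over $S$.

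First, since the canonical subgroup of $G$ at any $x\in S_{\rig}$ is by construction the canonical subgroup of $\tilde{G}_x$ (viewed in $G_x$ via the given inclusion $\tilde{G}[p^n]\hookrightarrow G[p^n]$), we already have $H_{n,K}\subset \tilde{G}[p^n]|_{S_{\rig}}$. Let $H_n$ denote the schematic closure of $H_{n,K}$ in $\tilde{G}[p^n]$. As a closed subscheme of a scheme finite over $S$, $H_n$ is finite over $S$; as the schematic closure of a subgroup scheme inside a flat ambient group scheme, it is automatically a closed subgroup scheme, since the multiplication, inverse and unit morphisms extend from the generic fiber to the closures.

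The main obstacle is showing that $H_n$ is flat over $S$. Writing $\tilde{G}[p^n]=\Spec\,B$ with $B$ a finite locally free $R$-algebra, let $J\subset B[1/p]$ be the ideal of $H_{n,K}$ and set $I:=J\cap B$. Then $H_n=\Spec\,B/I$, the quotient $B/I$ is $p$-torsion free, and $(B/I)[1/p]=B[1/p]/J$ is locally free of rank $p^{ng}$ over $R[1/p]$; what remains is to upgrade $R[1/p]$-flatness to $R$-flatness.

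My plan for this key step combines the uniform Hodge bound with the normality of $R$. The assumption $\Hdg<1/(2p^{n-1})$ on $S_{\rig}$ forces the Hasse invariant to be a unit at each generic point of $\Spec\,R/pR$; at such a point $\tilde{G}$ is ordinary and its canonical subgroup of level $n$ is the iterated kernel of Frobenius, of constant rank $p^{ng}$. By the uniqueness part of Theorem~\ref{thm_can_sg} this coincides with the fiber of $H_n$ at those generic points, so $B/I$ has the expected rank $p^{ng}$ at every generic point of $\Spec\,R$, in both residue characteristic $0$ and residue characteristic $p$. Since $R$ is normal, flatness of the finitely generated module $B/I$ can be tested at height-one primes, where $R_{\mathfrak{p}}$ is a discrete valuation ring; at each such point the constant generic rank $p^{ng}$ together with $p$-torsion freeness forces local freeness, and hence $B/I$ is $R$-flat. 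An alternative route is to apply Fargues' construction of the Harder--Narasimhan filtration in families over admissible formal $\ocal_K$-schemes: the uniform slope bound guarantees that the HN step realising the canonical subgroup lifts directly to a finite flat subgroup of $\tilde{G}[p^n]$ over $S$, yielding $H_n\hookrightarrow G[p^n]$ as claimed.
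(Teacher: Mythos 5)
Your construction of $H_n$ as the schematic closure of $H_{n,K}$ in $\tilde{G}[p^n]$ produces the right object, and you have correctly isolated the real difficulty: flatness of $B/I$ over $R$. But the step that is supposed to deliver it is false as a general principle. Local freeness of a finite, torsion-free module at all height-one primes of a normal noetherian domain does \emph{not} imply flatness once $\dim R\geq 2$: the ideal $(p,x)\subset \ocal_K\langle x\rangle$ is $p$-torsion free, of generic rank $1$, and free of rank $1$ at every height-one prime, yet its fiber rank jumps to $2$ at the closed point $(p,x)$, so it is not flat. The rings $R$ occurring here have dimension $\geq 2$, so this is exactly the failure mode you must exclude: the fiber rank of the schematic closure could jump in codimension $\geq 2$ in the special fiber, and nothing in your argument rules that out. (A secondary issue: your claim that $\Ha$ is a unit at every generic point of $\Spec R/pR$ is not justified on the blown-up models $\mathfrak{X}(v)$, where $p$ need not be a uniformizer of $R_{\mathfrak{P}}$; but this is harmless, since running theorem \ref{thm_can_sg} over the dvr $R_{\mathfrak{P}}$ gives the codimension-one statement anyway.)

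The paper closes the gap by a different mechanism. For $n=1$ it works inside the \emph{proper} $S$-scheme $Gr$ parametrizing finite flat rank-$p^g$ subgroups of $\tilde{G}[p]$, takes the closure $T$ of the section defined by $H_{1,K}$, and proves $T\to S$ is quasi-finite by showing that at \emph{every} point of the special fiber of $T$ --- not only at points of codimension one --- the parametrized subgroup is the kernel of Frobenius; this uses a chain of specializations terminating in a dvr together with theorem \ref{thm_can_sg}. Properness plus quasi-finiteness gives finiteness, and then $T=\Spec~B$ with $B$ finite over $R$, $\ocal_K$-torsion free and $B_K=R_K$, so normality of $R$ forces $B=R$: the section extends, and the resulting subgroup is finite flat because it is a point of $Gr(R)$. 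The case $n\geq 2$ follows by induction via the quotient $\tilde{G}[p^n]/H_1$. Note also that your proposed shortcut through a ``Harder--Narasimhan filtration in families over admissible formal schemes'' is not available in the cited references; producing such an integral extension is precisely what this proposition is proving.
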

\begin{demo} We first assume $n=1$.    Let $Gr \rightarrow S=\Spec~R$ be the proper scheme which parametrizes all finite
flat subgroups of $\tilde{G}[p]$  of rank $p^g$ over $S$. We have a section $s \colon S_K \rightarrow Gr_K$ given by the canonical
subgroup.  Let $T$ be the schematic closure of $s(S_K)$ in $Gr$. The map $T \rightarrow S$ is proper. We let
$H \rightarrow T$ be the universal subgroup. We first show that $T \rightarrow S$ is finite. Let $k$ be the residue field of $\ocal_K$.  It is enough to prove that for all $x \in T_k$, $H_x$ is the kernel of the Frobenius morphism; indeed  this will imply that $T \rightarrow S$ is quasi-finite, hence finite. So let $x \in T_k$ and let $x_1 \rightsquigarrow x_2 \ldots \rightsquigarrow x$ be a sequence of immediate specializations of maximal length. Clearly, $x_1\in T_K$ since $T$ is the closure of its generic fiber. So let $x_j$ and $x_{j+1}$ be such that $x_j \in T_K$ and $x_{j+1} \in T_k$. Let $V$ be the closure of $x_j$ in $T$, $V'$ be the localization of $V$ at $x_{j+1}$ and $V''$ be the normalization of $V'$. Then $V''$ is a discrete valuation ring of mixed characteristic. So $H_{V''}$ is generically the canonical subgroup and by the general theory over discrete valuation rings (see theorem \ref{thm_can_sg}), $H_{x_k}$ is the kernel of Frobenius. As a result $H_{x}$ is the kernel of Frobenius as well. Now set $T = \Spec~B$. By construction $B$ is torsion free, and hence it is  flat, as $\ocal_K$-module. Furthermore it is a finite $R$-module  and $R_K = B_K$. Since $R$ is normal, $B=R$.

By induction, we assume that the proposition is known for $n-1$ and prove it for $n\geq 2$.   We define $H_n$ by the cartesian square (where $H_n/H_1$ is the canonical subgroup of level $n-1$ for $\tilde{G}[p^n]/H_1$ by theorem \ref{thm_can_sg}):

\begin{eqnarray*}
\xymatrix{ H_n \ar[r] \ar[d] & \tilde{G}[p^n]   \ar[d]\\
H_n/H_1 \ar[r] & \tilde{G}[p^n]/H_1}
\end{eqnarray*}
all vertical maps are finite flat. Since $H_n/H_1$ is finite flat over $S$, we are done.
\end{demo}

\subsection{The Hodge-Tate map in families}
In this paragraph we investigate the properties of the map of  $fppf$ abelian sheaves $\HT_{H_n^D} \colon H_n^D \rightarrow \omega_{H_n}$.
We work using the notations and assumptions of section \S \ref{sec-familiescansg}.

\begin{prop}\label{prop:Filomega1}  Let $w \in v(\ocal_K)$ with $w<n-v\frac{p^n-1}{p-1}$. The morphism of coherent
sheaves  $\omega_{G} \rightarrow \omega_{H_n}$ induces an isomorphism $\omega_{G,w} \rightarrow \omega_{H_n,w}$.
\end{prop}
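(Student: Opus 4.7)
My plan is to reduce to the DVR statement of Proposition~\ref{prop-HdgT}(1) and then to globalize using that $R$ is normal. The Mumford 1-motive structure identifies $G$ formally/rigidly with the quotient $\tilde G/Y$ along the unit section, so $\tilde G\to G$ is \'etale there, giving a canonical identification $\omega_G\simeq\omega_{\tilde G}$. Since $[p^n]$ acts as multiplication by $p^n$ on $\omega_{\tilde G}$, we have $\omega_{\tilde G[p^n]}=\omega_{\tilde G}/p^n\omega_{\tilde G}$. Applying $\omega$ to the exact sequence of finite flat $R$-group schemes
$$0\to H_n\to \tilde G[p^n]\to \tilde G[p^n]/H_n\to 0$$
yields
$$0\to N:=\omega_{\tilde G[p^n]/H_n}\to \omega_{\tilde G[p^n]}\to \omega_{H_n}\to 0.$$
Since $w<n$, the natural map $\omega_{\tilde G,w}\to\omega_{\tilde G[p^n],w}$ is already an isomorphism, so the proposition reduces to showing that $N\subseteq\mathfrak{m}(w)\,\omega_{\tilde G[p^n]}$ for every $w<n-v\frac{p^n-1}{p-1}$.

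I would then work locally on $S$ so that $\omega_{\tilde G}$ is a free $R$-module of rank $g$. Since $R$ is $p$-torsion-free, one checks directly for the free $R/p^n$-module $M:=\omega_{\tilde G[p^n]}\simeq(R/p^n)^g$ that, for any $w\in v(\ocal_K)$ with $w\le n$, the submodule $\mathfrak{m}(w)M$ coincides with the annihilator in $M$ of $\mathfrak{m}(n-w)$. Hence it suffices to prove that $aN=0$ for every $a\in\ocal_K$ with $v(a)>v\frac{p^n-1}{p-1}$. At each rigid point $x\in S_{\rig}$, the specialization $\tilde G_x$ is a Barsotti--Tate group over a complete DVR of Hodge height at most $v$, and its canonical subgroup of level $n$ specializes to $H_{n,x}$; Proposition~\ref{prop-HdgT}(1) applied fiberwise therefore gives $aN_x=0$.

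The remaining, and main, step is to promote the pointwise vanishing to the global statement $aN=0$. Locally on $S$, after a faithfully flat cover that splits the composition series of the finite flat $R$-group scheme $\tilde G[p^n]/H_n$ into graded pieces of order $p$, each piece $L$ is described by Oort--Tate theory and satisfies $\omega_L\simeq R/bR$ for some $b\in R$; here $b$ is a non-zero-divisor in $R$ because $L_U$ is \'etale. The pointwise bound $v_x(b)\le v\frac{p^n-1}{p-1}<v(a)$ at every rigid $x$ shows that $a/b\in R[1/p]$ has absolute value at most $1$ everywhere on $S_{\rig}$; normality of $R$ then forces $a/b\in R$, so $b\mid a$ in $R$ and $a\,\omega_L=0$. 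A d\'evissage over the composition series concludes the proof. The hard part is precisely this final transfer from fiberwise valuation bounds on Oort--Tate parameters to integral divisibility in $R$, for which the normality hypothesis on $R$ is indispensable.
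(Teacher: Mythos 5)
Your overall strategy --- reduce to the complete-DVR statement of Proposition \ref{prop-HdgT}(1) and globalize via normality of $R$ --- is the right one, and several of your reductions are sound: the identification $\omega_G\simeq\omega_{\tilde G}$, the identity $\omega_{\tilde G[p^n]}=\omega_{\tilde G}/p^n\omega_{\tilde G}$, and the translation of the statement into ``$aN=0$ for every $a\in\ocal_K$ with $v(a)>v\frac{p^n-1}{p-1}$'' all check out. The gap is in the final step: you assume that, after a faithfully flat cover, the finite flat group scheme $\tilde G[p^n]/H_n$ admits a composition series by finite flat subgroup schemes with graded pieces of order $p$. Over a base of dimension $\geq 2$ this is not available in general: the schematic closure in $\tilde G[p^n]/H_n$ of a filtration of the (\'etale) generic fibre need not be $R$-flat, and the obstruction is geometric, not removed by flat or finite covers (compare $E[p]$ for a family of elliptic curves with both ordinary and supersingular fibres). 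The paper's proof of Proposition \ref{prop_laver} does perform exactly this filtration-plus-Oort--Tate computation, but only over a complete discrete valuation ring, where schematic closure preserves flatness. A secondary, fixable imprecision: even granting the filtration, d\'evissage from $a\,\omega_{L_i}=0$ on each graded piece only yields $a^mN=0$ for $m$ the length of the filtration; to get $aN=0$ you should instead track the product $\prod_i b_i$ of the Oort--Tate parameters, whose pointwise valuation is the \emph{sum} of the degrees of the graded pieces, i.e.\ $\deg_x\bigl(\tilde G[p^n]/H_n\bigr)\leq v\frac{p^n-1}{p-1}$.

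The repair is to deploy normality in the opposite order, which is what the paper does: since $R$ is normal, an element lies in $p^wR$ if and only if it lies in $p^wR_{\mathfrak{P}}$ for every height-one prime $\mathfrak{P}$ containing $(p)$, and $\widehat{R}_{\mathfrak{P}}$ is a complete discrete valuation ring to which Proposition \ref{prop-HdgT}(1) applies verbatim (the degree computation $\deg G[p^n]/H_n=\frac{p^n-1}{p-1}\Hdg(G)$ having been established there once and for all). Your ``bounded on $S_{\rig}$ implies integral'' use of normality is equivalent to $R=\bigcap_{\mathfrak{P}}R_{\mathfrak{P}}$, but localizing \emph{first} makes the global Oort--Tate analysis over $R$ --- and hence the problematic filtration --- unnecessary.
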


\begin{demo}  Possibly after replacing $R$ with an open affine formal covering, we may assume that $\omega_G$ is a free $R$-module. Fix an isomorphism $\omega_G\cong R^g$. Consider the surjective map $\alpha\colon R^g\cong \omega_{G} \rightarrow \omega_{H_n,w}$ given by the inclusion $H_n\subset G$. It suffices to show that any element $(x_1,\ldots,x_g)\in {\rm Ker}(\alpha)$ satisfies $x_i\in p^w R$ for every $i=1,\ldots,g$. As $R$ is normal, it suffices to show that for every codimension $1$ prime ideal  $\mathfrak{P}$ of $R$  containing  $(p)$ we have $x_i\in p^w R_{\mathfrak{P}}$ or equivalently $x_i\in p^w \widehat{R}_{\mathfrak{P}}$. Here, $R_{\mathfrak{P}}$ is a discrete valuation ring of mixed characteristic and $\widehat{R}_{\mathfrak{P}}$ is its $p$-adic completion. We are then reduced to prove the claim over a complete dvr and this is the content of proposition \ref{prop-HdgT}.
\end{demo}

\begin{prop}\label{prop-HTann} Assume that $H_n^D(R) \simeq (\Z/p^n\Z)^g$. The cokernel of the map

$$ \HT_{H_n^D} \otimes 1 \colon H_n^D(R) \otimes_\Z R \rightarrow \omega_{H_n}$$

is killed by $p^{\frac{v}{p-1}}$.

\end{prop}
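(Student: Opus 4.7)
My plan is to follow the strategy of Proposition \ref{prop:Filomega1}: reduce pointwise to the complete discrete valuation ring setting, where Proposition \ref{prop-HdgT}(2) directly provides the desired annihilation, and globalize using the normality of $R$. After replacing $\Spec R$ by an affine open cover I may assume that $\omega_G$ is a free $R$-module of rank $g$, so that $\omega_{H_n}$ is a $p^n$-torsion quotient of the free module $\omega_G\cong R^g$ via the natural surjection. Fixing a basis of the group $H_n^D(R)\simeq(\Z/p^n\Z)^g$ identifies $H_n^D(R)\otimes_\Z R$ with $(R/p^nR)^g$, so that $\HT_{H_n^D}\otimes 1$ becomes an explicit $R$-linear map $(R/p^nR)^g\to\omega_{H_n}$.

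The key reformulation is the following: let $N\subset R^g=\omega_G$ be the preimage of the image of $\HT_{H_n^D}\otimes 1$ under the surjection $\omega_G\twoheadrightarrow\omega_{H_n}$. Then the assertion that the cokernel of $\HT_{H_n^D}\otimes 1$ is annihilated by $p^{v/(p-1)}$ is equivalent to the inclusion $p^{v/(p-1)}R^g\subset N$ of submodules of the free $R$-module $R^g$. This is a membership condition between submodules of a free module, and by the normality of $R$---using the identity $R=\bigcap_\mathfrak{P} R_\mathfrak{P}$ over height one primes $\mathfrak{P}$ of $R$, exactly as in the proof of Proposition \ref{prop:Filomega1}---the inclusion can be verified after base change to each localization $R_\mathfrak{P}$, or equivalently after base change to the completion $\widehat{R}_\mathfrak{P}$.

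For height one primes $\mathfrak{P}$ not containing $p$, the inclusion is automatic, since $\omega_{H_n}$ vanishes after inverting $p$. For height one primes $\mathfrak{P}$ containing $p$, the completion $\widehat{R}_\mathfrak{P}$ is a complete discrete valuation ring of mixed characteristic $(0,p)$; the compatibility of canonical subgroups in families from section \ref{sec-familiescansg} shows that the base change of $H_n$ to $\widehat{R}_\mathfrak{P}$ is the canonical subgroup of level $n$ of the base-changed Barsotti--Tate group. After a further base change to the integer ring of an algebraic closure of $\mathrm{Frac}(\widehat{R}_\mathfrak{P})$, Proposition \ref{prop-HdgT}(2) applies and yields that the cokernel of $\HT_{H_n^D}\otimes 1$ has degree $v/(p-1)$ and is therefore annihilated by $p^{v/(p-1)}$, providing the required local inclusion $p^{v/(p-1)}\widehat{R}_\mathfrak{P}^g\subset N\otimes_R\widehat{R}_\mathfrak{P}$.

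The main obstacle will be the descent step: rephrasing the annihilation of a coherent torsion cokernel as a containment of submodules inside the free module $R^g$ is what makes the normality-based local-to-global principle cleanly applicable. There is no naive local-to-global principle for coherent torsion modules at height one primes alone, but for membership conditions between submodules of a free module over a normal Noetherian ring the principle is the standard tool used in the proof of Proposition \ref{prop:Filomega1}. A secondary, minor, verification is that the chosen basis of $H_n^D(R)$ remains a basis after base change to $\widehat{R}_\mathfrak{P}$ and to its algebraic closure---this is automatic since the subgroup $H_n^D(R)\simeq(\Z/p^n\Z)^g$ injects into $H_n^D$ over any extension, and by Theorem \ref{thm_can_sg}(1) the latter already has the maximal possible size $p^{ng}$ in the DVR setting.
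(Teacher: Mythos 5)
Your reduction to the complete discrete valuation ring case and the computation there via proposition \ref{prop-HdgT}(2) are sound and agree with the paper's endgame (a torsion module of degree $\tfrac{v}{p-1}$ is indeed killed by $p^{\frac{v}{p-1}}$, and one descends from $\ocal_{\bar K}$ to $\widehat{R}_{\mathfrak{P}}$ by faithful flatness). The gap is in the globalization step. Your reformulation of the statement as the inclusion $p^{\frac{v}{p-1}}R^g\subset N$ is correct, but the local-to-global principle you invoke is not: for an arbitrary finitely generated submodule $N$ of a free module over a normal noetherian ring, membership $x\in N$ cannot be tested at height-one primes. Take $R=\Z_p[[x]]$, $g=1$ and $N=(p,x)$: no height-one prime contains $(p,x)$, so $N_{\mathfrak{P}}=R_{\mathfrak{P}}$ for every such $\mathfrak{P}$, yet $1\notin N$; equivalently $R/(p,x)$ is a nonzero torsion module all of whose localizations at height-one primes vanish. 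What makes the argument of proposition \ref{prop:Filomega1} work is that the target submodule there is $p^wR^g$, so the condition becomes ``$x_i/p^w\in R$'' and the identity $R=\bigcap_{\mathrm{ht}\,\mathfrak{P}=1}R_{\mathfrak{P}}$ applies. Your $N=\pi^{-1}\bigl(\mathrm{Im}(\HT_{H_n^D}\otimes 1)\bigr)$ has no a priori reason to be divisorially closed in $R^g$: ruling out support of the quotient in codimension $\geq 2$ is essentially what the proposition asserts, so the argument as written is circular at this point.

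The paper closes this gap with a determinant trick that your proposal is missing. Fix the surjections $R^g\to H_n^D(R)\otimes_{\Z}R$ and $R^g\simeq\omega_G\to\omega_{H_n}$, lift $\HT_{H_n^D}\otimes 1$ to a matrix $\gamma\in\mathrm{M}_{g}(R)$, and set $d=\det\gamma$. By Cramer's rule $d$ annihilates $\mathrm{coker}(\gamma)$, hence the cokernel in question, and it then suffices to prove the element divisibility $p^{\frac{v}{p-1}}\in dR$. That \emph{is} a codimension-one condition in a normal ring, since $dR=\bigcap_{\mathfrak{P}}dR_{\mathfrak{P}}$ over height-one primes, and your localization-and-completion argument (together with the inequality $\tfrac{v}{p-1}<n-v\tfrac{p^n-1}{p-1}$ needed to lift the conclusion from $R_{\mathfrak{P},n-v\frac{p^n-1}{p-1}}$ to $R_{\mathfrak{P}}$) then goes through. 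With this modification your proof becomes the paper's.
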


\begin{demo}  Possibly after localization on $R$, we may assume that $\omega_G$ is a free $R$-module of rank $g$. We have a surjection $R^g \simeq \omega_{G} \rightarrow \omega_{H_n}$. If we fix a basis of $H_n^D(R)$, we also have a surjection $ R^g \rightarrow H_n^D(R) \otimes_\Z R \simeq R_n^g$.  In these presentations, the map $\HT_{H_n^D} \otimes 1$ is given by a matrix $\gamma \in \mathrm{M}_{g}(R)$.    Let ${d}\in R$ be the determinant of the matrix  $\gamma$ . Then, $d$ annihilates the cokernel of $\gamma$. It suffices to prove that $p^{\frac{v}{p-1}}\in d R$. As $R$ is normal, it suffices to prove that $p^{\frac{v}{p-1}}\in d R_{\mathfrak{P}}$ for every codimension $1$ prime ideal $\mathfrak{P}$ of $R$ containing $p$. It follows from proposition \ref{prop-HdgT} that $p^{\frac{v}{p-1}}\in d R_{\mathfrak{P},n-v\frac{p^n-1}{p-1}}$. As $\frac{v}{p-1}<n-v\frac{p^n-1}{p-1}$, we conclude that $p^{\frac{v}{p-1}} \in d R_{\mathfrak{P}}$ as wanted.
\end{demo}

\subsection{The locally free sheaf  $\mathcal{F}$}
\label{sec-sheafF}

We work in the hypothesis of section \S \ref{sec-familiescansg}, i.e. let us recall that
we have fixed $R\in \mathbf{NAdm}$ and a semi-abelian scheme $G$ over $S:=\mathrm{Spec}(R)$
such that the restriction of $G$ to a dense open sub-scheme $U$ of $S$ is abelian.
We also fix  a rational number $v$ such that $v < \frac{1}{2p^{n-1}}$
(resp.~$v < \frac{1}{3p^{n-1}}$ if $p=3$)  with the property that for any $x \in S_{\rig}$, $\mathrm{Hdg}(x) < v$.
Here $\mathrm{Hdg}(x):=\mathrm{Hdg}\bigl(G_x[p^{\infty}]\bigr)$. Let $H_n$ denote the canonical
subgroup of $G$ of level $n$ over $S$.
From now on, we also assume that $H_n^D(R) \simeq (\Z/p\Z)^g$. We then have the following fundamental proposition.

\begin{prop}\label{prop-FHT} There is a  free sub-sheaf of $R$-modules $\mathcal{F}$  of
$\omega_{G}$ of rank $g$ containing $p^{\frac{v}{p-1}} \omega_{G}$  which is equipped,
for all $w \in ]0, n-v\frac{p^n}{p-1} ]$, with a map

$$ \mathrm{HT}_{w}\colon    H_n^D(R[1/p]) \rightarrow \mathcal{F}\otimes_{R} R_w$$ deduced from $\HT_{H_n^D}$
which induces  an isomorphism:

$$ \mathrm{HT}_{w} \otimes 1\colon    H_n^D(R[1/p])\otimes_\Z R_{w} \rightarrow \mathcal{F}\otimes_{R} R_w.$$



\end{prop}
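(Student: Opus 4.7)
The plan is to construct $\mathcal{F}$ by lifting the Hodge--Tate images of a basis of $H_n^D(R)$ to $\omega_G$, then verify the listed properties by combining the cokernel bound of Proposition \ref{prop-HTann} with the kernel bound implicit in Proposition \ref{prop:Filomega1}. Concretely, I would fix a basis $e_1,\dots,e_g$ of $H_n^D(R)$ (read at level $n$); using that $\omega_G\twoheadrightarrow \omega_{H_n}$ by Proposition \ref{prop:Filomega1}, choose lifts $f_i\in\omega_G$ of $\HT_{H_n^D}(e_i)$, and set $\mathcal{F}:=\sum_{i=1}^g R f_i\subseteq\omega_G$.

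The heart of the argument is the inclusion $p^{v/(p-1)}\omega_G\subseteq\mathcal{F}$. Write $w_0:=n-v\frac{p^n-1}{p-1}$ and $c:=w_0-v/(p-1)$; by the hypotheses on $v$ both are positive. Proposition \ref{prop-HTann} ensures $p^{v/(p-1)}\omega_{H_n}\subseteq \mathrm{image}(\mathcal{F}\to\omega_{H_n})$, while Proposition \ref{prop:Filomega1} gives $\ker(\omega_G\to \omega_{H_n})\subseteq p^{w_0}\omega_G$. Together they yield the one--step approximation $p^{v/(p-1)}\omega_G\subseteq \mathcal{F}+p^{w_0}\omega_G$. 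Multiplying by $p^c$ and resubstituting the one--step inclusion into the error term $k$ times produces $p^{v/(p-1)}\omega_G\subseteq \mathcal{F}+p^{v/(p-1)+kc}\omega_G$ for every $k\geq 0$. Since $R$ is noetherian and $p$-adically complete, Krull's intersection theorem applied to the finitely generated module $\omega_G/\mathcal{F}$ eliminates the error and gives $p^{v/(p-1)}\omega_G\subseteq \mathcal{F}$.

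With this in hand the remaining claims become formal. For freeness, the tautological surjection $R^g\twoheadrightarrow \mathcal{F}$ becomes an isomorphism after inverting $p$ (because $\mathcal{F}[1/p]=\omega_G[1/p]$ is locally free of rank $g$), so its kernel is $p$-power torsion; since $R^g$ is $p$-torsion free by $\ocal_K$-flatness of $R$, the kernel vanishes and $f_1,\dots,f_g$ form an $R$-basis of $\mathcal{F}$. For $\HT_w$, first extend $\HT_{H_n^D}$ from $H_n^D(R)$ to $H_n^D(R[1/p])$ using that the triviality assumption on $H_n^D(R)$ forces $H_n^D$ to be constant over $\Spec R$, so the two sets of sections coincide. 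Define $\HT_w(e)$ as the reduction modulo $p^w\mathcal{F}$ of any lift to $\mathcal{F}$ of $\HT_{H_n^D}(e)$; two such lifts differ by an element of $\ker(\omega_G\to\omega_{H_n})\cap\mathcal{F}\subseteq p^{w_0}\mathcal{F}$, so the construction is canonical as soon as $w\leq w_0$. Finally $\HT_w\otimes 1$ sends the $R_w$-basis $\{e_i\otimes 1\}$ of $H_n^D(R)\otimes_{\Z} R_w$ to the $R_w$-basis $\{f_i\otimes 1\}$ of $\mathcal{F}\otimes_R R_w$, hence is an isomorphism.

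The main obstacle is precisely the bootstrap in the middle paragraph: a single application of Propositions \ref{prop-HTann} and \ref{prop:Filomega1} controls $\mathcal{F}$ only modulo an error of order $p^{w_0}$, and iteration in the $p$-adic topology is needed to eliminate it. The quantitative constraint $w\leq n-v\frac{p^n}{p-1}$ in the statement is exactly what guarantees both that $c>0$ (so the iteration converges) and that the lift defining $\HT_w$ is canonical modulo $p^w$.
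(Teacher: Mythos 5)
Your construction is the same as the paper's: $\mathcal{F}$ is the $R$-span of lifts $f_i$ of $\HT_{H_n^D}(e_i)$, and $\HT_w$ is the reduction of such a lift, well defined because of the interplay between propositions \ref{prop:Filomega1} and \ref{prop-HTann}. Two sub-arguments differ, and both of yours are sound. For the containment $p^{\frac{v}{p-1}}\omega_G\subseteq\mathcal{F}$ the paper just cites proposition \ref{prop-HTann}; your bootstrap (one-step approximation modulo $p^{w_0}\omega_G$, iterated, then Krull's intersection theorem on $\omega_G/\mathcal{F}$, legitimate since $R$ is noetherian and $p$-adically complete) is exactly the detail being elided, so this is a genuine improvement in rigor. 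For freeness the paper argues directly: a nontrivial relation $\sum\lambda_i f_i=0$ with some $\lambda_{i_0}\notin p^{w_0-\frac{v}{p-1}}R$ reduces, via the identification $\omega_{G,w_0}\simeq\omega_{H_n,w_0}$, to a contradiction with the cokernel bound of proposition \ref{prop-HTann}; your route (generic-fibre rank plus $p$-torsion-freeness of $R^g$) is an acceptable substitute, but note it consumes the containment $p^{\frac{v}{p-1}}\omega_G\subseteq\mathcal{F}$, so your order of proof (containment first) is forced, whereas the paper's relation argument is independent of it. One local slip in your third paragraph: two lifts of $\HT_{H_n^D}(e)$ differ by an element of $\mathcal{F}\cap p^{w_0}\omega_G$, and canonicity modulo $p^w\mathcal{F}$ requires $\mathcal{F}\cap p^{w_0}\omega_G\subseteq p^w\mathcal{F}$; this follows from $p^{w_0}\omega_G\subseteq p^{w_0-\frac{v}{p-1}}\mathcal{F}$ and therefore needs $w\le w_0-\frac{v}{p-1}=n-v\frac{p^n}{p-1}$, not merely $w\le w_0$ as you write there. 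You state the correct bound, with the correct reason, in your closing paragraph, so this is an internal inconsistency rather than a gap, but the inequality in paragraph three should be corrected.
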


\begin{demo}  Set $w_0 = n-v\frac{p^n-1}{p-1}$. Let $x_1,\ldots,x_g$ be a $\Z/p^n\Z$-basis of $H_n^D(R[1/p])$. Let $\tilde{{\HT}}_{H_n^D}(x_i)$ be
lifts to $\omega_{G}$ of ${\HT}_{H_n^D}(x_i) \in \omega_{H_n}$. We set $\mathcal{F}$ to be the sub-module of $\omega_G$ generated by $$\{\tilde{{\HT}}_{H_n^D}(x_1),\ldots,\tilde{{\HT}}_{H_n^D}(x_g) \}.$$ This module is free of rank $g$. Indeed, let $\sum_{i=1}^g  \lambda_i \tilde{{\HT}}_{H_n^D}(x_i) = 0 $ be a
non-zero relation with coefficients in $R$. We may assume that there is an index $i_0$ such that $\lambda_{i_0} \notin p^{{w_0}-\frac{v}{p-1}} R$. Projecting this relation in $\omega_{G,{w_0}} = \omega_{H_n,w_0}$ (see proposition \ref{prop:Filomega1}) we  contradict the proposition \ref{prop-HTann}. By  proposition \ref{prop-HTann}, $p^{\frac{v}{p-1}} \omega_{G} \subset \mathcal{F}$ and the  module $\mathcal{F}$ is independent of the choice of a particular  lifts $\tilde{\HT}_{H_n^D}(x_i)$. Let $r \colon \omega_{H_n} \rightarrow \omega_{G,w_0}$ denote the projection.  The map ${\HT}_{H_n^D}\circ r  $ factors through $\mathcal{F}/\mathcal{F} \cap p^{w_0} \omega_G$. For all $w \in ]0, n-v\frac{p^n}{p-1}]$, we have $\mathcal{F} \cap p^{w_0}  \omega_G \subset p^w \mathcal{F}$.  We can thus define $\HT_w$ as the composite of ${\HT}_{H_n^D}\circ r$ and the projection  $ \mathcal{F}/\mathcal{F} \cap p^{w_0} \omega_G \rightarrow \mathcal{F}/p^{w}\mathcal{F}$.  Finally, the last claim follows because the map $\HT_w \otimes 1$ is a surjective map between two free modules of rank $g$ over $R_w$ and so has to be an isomorphism.
\end{demo}

\begin{rem}\label{rmk-FHT}  The sheaf $\mathcal{F}$ is  independent   of $n \geq 1$, it is functorial in $R$ and  it coincides with the
sheaf constructed using $p$-adic Hodge theory in  \cite{AIS}, prop. 2.6. where it was  denoted $F_0$.
\end{rem}
\begin{rem} Let $\Omega$ be an algebraic closure of ${\rm Frac}(R)$. Let $\bar{R}$ be the inductive limit of all finite,
\'etale  $R$-algebras contained in $\Omega$ and let  $\widehat{\bar{R}}$ denote its $p$-adic completion. Assume
that $G$ is ordinary. Let $H_{\infty} \subset  G$ be the
canonical subgroup of order ``$\infty$" and  $T_p(H_\infty^D)(\widehat{\bar{R}})$   be the  Tate module of its dual $H_\infty^D$.
We  have an isomorphism:
$$ \HT_{H_{\infty}^D} \otimes 1 \colon T_p(H_\infty^D)\bigl(\widehat{\bar{R}}\bigr)\otimes_\Z \widehat{\bar{R}} \rightarrow \omega_{G}
\otimes_R \widehat{\bar{R}}.$$
The proposition \ref{prop-FHT} is a good substitute for this isomorphism in the non ordinary case.
\end{rem}

\subsection{Functoriality in $G$}\label{sect-functo}

We assume the hypothesis of section \S \ref{sec-sheafF}.

 Moreover we suppose that we have an isogeny $\phi\colon  G \rightarrow G'$ over $S$, where $G'$ is a second semi-abelian scheme over $S$
satisfying the same assumptions as $G$,  i.e. for all $x \in S_{\rig}$, $\Hdg(G_x), \Hdg(G'_x) \leq v$.

By functoriality of the Harder-Narasimhan filtration the isogeny induces a map
$\phi\colon  H_n \rightarrow H'_n$ where $H_n$ and $H'_n$ are the canonical subgroups of level $n$ of $G$ and $G'$.

We  assume further that $H_n^D(R[1/p]) \simeq (\Z/p^n\Z)^g$ and that $ {H_n'}^D(R[1/p]) \simeq (\Z/p^n\Z)^g$.  We let $\mathcal{F}$ and $\mathcal{F'}$ be the sub-sheaves of $\omega_{G}$ and $\omega_{G'}$ constructed in proposition \ref{prop-FHT}.

\begin{prop}\label{prop-compatt}  Let $w \in ]0, n- v \frac{p^n}{p-1}]$. The isogeny $\phi$ gives rise to the following diagram:
\begin{eqnarray*}
\xymatrix{ \omega_{G'} \ar[r]^{\phi^\ast} & \omega_{G} \\
\mathcal{F'} \ar[r]\ar[u]\ar[d] & \mathcal{F} \ar[u] \ar[d] \\
\mathcal{F'}/p^w\mathcal{F'} \ar[r] & \mathcal{F}/p^w\mathcal{F} \\
{H'_n}^D(R[1/p]) \ar[r]^{\phi^D} \ar[u]^{\HT_w} & H_n^D(R[1/p]) \ar[u]^{\HT_w} }
\end{eqnarray*}
\end{prop}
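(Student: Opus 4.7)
The plan is to build the diagram bottom-up using the naturality of the Hodge-Tate map, leveraging the fact that all the ingredients ($\HT$, the construction of $\mathcal{F}$, the kernel estimates) have been set up to be functorial.

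\textbf{Step 1 (functoriality of $\HT$).} For any homomorphism $\psi\colon K\to K'$ of finite flat commutative group schemes killed by $p^n$ over $R$, one has $\psi^{\ast}\circ \HT_{{K'}^D}=\HT_{K^D}\circ \psi^D$, as is immediate from the definition of $\HT$ via pullback of $\mathrm{d}t/t$ along characters valued in $\mu_{p^n}$. Applied to the map $\phi\colon H_n\to H'_n$ coming from the functoriality of the Harder--Narasimhan filtration, and combined with the tautological compatibility of pullback of differentials along the inclusions $H_n\hookrightarrow G$ and $H'_n\hookrightarrow G'$, this yields commutativity of the outer square
\begin{equation*}
\xymatrix{{H'_n}^D(R[1/p]) \ar[r]^-{\HT_{{H'_n}^D}} \ar[d]_{\phi^D} & \omega_{H'_n} \ar[d]^{\phi^\ast} & \omega_{G'} \ar[l] \ar[d]^{\phi^\ast} \\ H_n^D(R[1/p]) \ar[r]_-{\HT_{H_n^D}} & \omega_{H_n} & \omega_G \ar[l].}
\end{equation*}

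\textbf{Step 2 ($\phi^\ast$ sends $\mathcal{F}'$ into $\mathcal{F}$, i.e.\ the middle row exists).} Fix $\Z/p^n\Z$-bases $\{x_i\}$ of $H_n^D(R[1/p])$ and $\{y_j\}$ of ${H'_n}^D(R[1/p])$, together with corresponding lifts $\widetilde{\HT}_{H_n^D}(x_i)\in \mathcal{F}$ and $\widetilde{\HT}_{{H'_n}^D}(y_j)\in \mathcal{F}'$ as in the proof of Proposition~\ref{prop-FHT}. Writing $\phi^D(y_j)=\sum_i a_{ij}x_i$ with $a_{ij}\in \Z/p^n\Z$, Step~1 shows that $\phi^\ast\widetilde{\HT}_{{H'_n}^D}(y_j)$ and $\sum_i a_{ij}\widetilde{\HT}_{H_n^D}(x_i)$ have the same image in $\omega_{H_n}$. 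By Proposition~\ref{prop:Filomega1}, the kernel of $\omega_G\to \omega_{H_n}$ is annihilated modulo $p^{w}$ for every $w<w_0:=n-v\frac{p^n-1}{p-1}$; in particular this difference lies in the kernel and hence, since $\mathcal{F}$ is independent of the choice of lifts (proof of Proposition~\ref{prop-FHT} and Remark~\ref{rmk-FHT}), it suffices to ensure that this kernel lands inside $\mathcal{F}$. Since $v<\tfrac{1}{2p^{n-1}}$ (resp.\ $\tfrac{1}{3p^{n-1}}$ if $p=3$) forces $\tfrac{v}{p-1}<w_0$, and Proposition~\ref{prop-FHT} gives $p^{\frac{v}{p-1}}\omega_G\subset \mathcal{F}$, we conclude that $\phi^\ast\widetilde{\HT}_{{H'_n}^D}(y_j)\in \mathcal{F}$ and hence that $\phi^\ast(\mathcal{F}')\subset \mathcal{F}$.

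\textbf{Step 3 (the bottom square).} By construction in Proposition~\ref{prop-FHT}, $\HT_w$ factors as $\HT_{H_n^D}$ followed by the inverse of the identification $\omega_{H_n,w_0}\cong\omega_{G,w_0}$ (Proposition~\ref{prop:Filomega1}), followed by the projection onto $\mathcal{F}/p^w\mathcal{F}$ for $w\in ]0,n-v\tfrac{p^n}{p-1}]\subset ]0,w_0[$. Reducing the outer square of Step~1 modulo $p^w$ and restricting along the inclusions $\mathcal{F}'\subset \omega_{G'}$ and $\mathcal{F}\subset \omega_G$ produced in Step~2 yields the commutativity of the bottom square.

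The technical heart of the argument is Step~2: one must use the precise description of $\mathcal{F}$ via lifts of Hodge--Tate images together with the bound $p^{\frac{v}{p-1}}\omega_G\subset \mathcal{F}$ to absorb the ambiguity in the choice of lifts. Once this inclusion $\phi^\ast(\mathcal{F}')\subset \mathcal{F}$ is in hand, both the top and bottom squares follow purely formally from the naturality of the Hodge--Tate map for the finite flat group scheme homomorphism $\phi\colon H_n\to H'_n$.
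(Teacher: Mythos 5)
Your proposal is correct and follows essentially the same route as the paper: the whole content is the inclusion $\phi^\ast(\mathcal{F}')\subset\mathcal{F}$, which the paper checks (very tersely) by observing that $\phi^\ast\omega \bmod p^{w_0}$ lands in the span of $\HT_{w_0}(\phi^D({H'_n}^D(R[1/p])))$, and which you check equivalently by combining the naturality of the Hodge--Tate map with proposition \ref{prop:Filomega1} and the containment $p^{\frac{v}{p-1}}\omega_G\subset\mathcal{F}$ to absorb the ambiguity in the lifts. Once that inclusion is established, the commutativity of the remaining squares is formal in both treatments.
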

\begin{demo}  Set $w_0 = n- v \frac{p^n-1}{p-1}$. We  check that $\phi^\ast( \mathcal{F}') \subset \mathcal{F}$. Let $\omega \in \omega_{G'}$ such that $\omega \mod p^{w_0}$ belongs to the $R$-span $\langle\HT_{w_0}( {H'_n}^D(R[1/p]) \rangle$.
Then $\phi^\ast \omega \mod p^{w_0}$ belongs to the $R$-span $\langle\HT_{w_0}(\phi^D{H'_n}^D(R[1/p]) \rangle$. The rest now follows easily.
\end{demo}

\subsection{The main construction}\label{sect-main}
In this section we work in the hypothesis of section \S \ref{sec-sheafF} and
  we make the further assumptions that $v < \frac{1}{2p^{n-1}}$ (resp.~$v< \frac{1}{3p^{n-1}}$ if $p=3$),
that $H_n^D(R[1/p]) \simeq (\Z/p^n\Z)^g$ and that $w \in ]0, n- v \frac{p^n}{p-1}]$.

Let $\mathcal{GR}_{\mathcal{F}} \rightarrow S$ be the Grassmannian parametrizing all flags
$\Fil_0 \mathcal{F} = 0 \subset \Fil_1 \mathcal{F} \ldots \subset \Fil_g \mathcal{F} = \mathcal{F}$ of the free module
$\mathcal{F}$; see \cite[\S I.1.7]{Ko} for the construction.  Let $\mathcal{GR}^+_{\mathcal{F}}$ be the $\T$-torsor over $\mathcal{GR}_{\mathcal{F}}$ which parametrizes flags $\Fil_\bullet \mathcal{F}$ together with basis $\omega_i$ of the graded pieces $\Gr_i \mathcal{F}$.

We fix an isomorphism $ \psi\colon (\Z/p^n\Z)^g  \simeq H_n^D(R[1/p])$ and call $x_1,\ldots,x_g$ the $\Z/p^n\Z$-basis of $H_n^D(R[1/p])$ corresponding to the canonical basis of  $(\Z/p^n\Z)^g$. Out of $\psi$, we obtain a flag $\Fil_\bullet^{\psi}  =
\{ 0 \subset \langle x_1\rangle \subset \langle x_1,x_2\rangle \ldots \subset \langle x_1,\ldots,x_g\rangle =
H_n^D(R[1/p]) \}$.  We also have a basis $x_i \mod \Fil_{i-1}^\psi$ of the graded piece $\Gr_i^\psi$.

Let $R'$ be an object in $R-\mathbf{Adm}$. We say that an element  $\Fil_\bullet \mathcal{F}\otimes_R R' \in \mathcal{GR}_{\mathcal{F}}(R')$ is $w$-compatible with $\psi$ if
$ \Fil_\bullet \mathcal{F} \otimes_R R'_w = \HT_w ( \Fil_\bullet^{\psi}) \otimes_\Z R'_w$.

We say that an element  $(\Fil_\bullet \mathcal{F}\otimes_R R', \{w_i\}) \in \mathcal{GR}^+_{\mathcal{F}}(R')$ is
$w$-compatible with $\psi$ if
$ \Fil_\bullet \mathcal{F} \otimes_R R'_w = \HT_w ( \Fil_\bullet^{\psi}) \otimes_\Z R'_w$ and
$w_i \mod p^w\mathcal{F} \otimes_R R' + \Fil_{i-1} \mathcal{F} \otimes_R R' = \HT_w (x_i \mod \Fil_{i-1}^\psi)$.

We now define  functors
\begin{eqnarray*} \mathfrak{IW}_w: R-\mathbf{Adm} &\rightarrow& SET \\
R' & \mapsto & \big \{ w-\textrm{compatible}~\Fil_\bullet \mathcal{F} \otimes_R R' \in \mathcal{GR}_{\mathcal{F}}(R')\}\\
\mathfrak{IW}^+_w: R-\mathbf{Adm} &\rightarrow& SET \\
R' & \mapsto & \big \{ w-\textrm{compatible}~(\Fil_\bullet \mathcal{F} \otimes_R R', \{w_i\}) \in \mathcal{GR}_{\mathcal{F}}^+(R')\}
\end{eqnarray*}

These two functors are representable by affine formal schemes which can be described as follows. Let $f_1,\ldots,f_g$ be an
$R$-basis of $\mathcal{F}$ lifting the vectors $\HT_w(x_1),\ldots,\HT_w(x_g)$.

The given basis identifies $\mathcal{GR}_{\mathcal{F}}$ with $\mathrm{GL}_g/\B\times S$ and $\mathfrak{IW}_w$   with the set of matrices:
   $$  \begin{pmatrix} 
       1& 0 &\ldots & 0\\
      p^w \mathfrak{B}(0,1) & 1 & \ldots& 0 \\
   \vdots & \vdots & \ddots & \vdots \\
     p^w\mathfrak{B}(0,1) & p^w\mathfrak{B}(0,1) &\ldots& 1\\

   \end{pmatrix} \times_{\Spf~\ocal_K} \Spf~R$$
where we have denoted by $\mathfrak{B} (0,1) = \Spf~\ocal_K\langle X\rangle$ the formal unit ball.

Similarly, the given basis identifies $\mathcal{GR}^+_{\mathcal{F}}$ with $\mathrm{GL}_g/\U \times S$ and
$\mathfrak{IW}^+_w$   with the set of matrices:
   $$  \begin{pmatrix} 
       1 + p^w \mathfrak{B}(0,1)& 0 &\ldots & 0\\
      p^w \mathfrak{B}(0,1) & 1 + p^w \mathfrak{B}(0,1)& \ldots& 0 \\
   \vdots & \vdots & \ddots & \vdots \\
     p^w\mathfrak{B}(0,1) & p^w\mathfrak{B}(0,1) &\ldots& 1 + p^w \mathfrak{B}(0,1)\\

   \end{pmatrix} \times_{\Spf~\ocal_K} \Spf~R.$$

We let $\mathfrak{T} \rightarrow \Spf~\ocal_K$ be the formal completion of $\T$ along its special fiber.
Let $\mathfrak{T}_w$ be the formal torus  defined by
$$\mathfrak{T}_w (R') = \mathrm{Ker} \big( \T(R') \rightarrow \T( R'/p^wR') \big) $$ for any object $R' \in \mathbf{Adm}$.  The formal scheme $\mathfrak{IW}^+_w$ is a torsor over  $\mathfrak{IW}_w$ under $\mathfrak{T}_w$.

All these constructions are functorial in $R$. They do not depend on $n$ but only on $w$.
We denote by $\mathcal{IW}_w$ and $\mathcal{IW}_w^+$ the rigid analytic generic fibers of these formal schemes.
They are admissible opens of the rigid spaces associated to $\mathcal{GR}_{\mathcal{F}}$ and $\mathcal{GR}^+_{\mathcal{F}}$
respectively.

\section{The overconvergent modular sheaves}\label{sect-ovmods}

\subsection{Classical Siegel modular schemes and modular  forms}
\label{sec-classicalmf}

We fix an integer $N \ge 3$  such that $(p,N)=1$. Recall that $K$ denotes  a finite extension of  $\qq_p$,
$\ocal_K$ its ring of integers and $k$ its residue field. The valuation $v$ of $K$ is normalized such that $v(p)=1$.

{\em The Siegel variety of prime to $p$ level.}
Let $Y$ be the moduli space of principally polarized abelian
schemes $(A,\lambda)$ of dimension $g$ equipped with a  principal level $N$ structure $\psi_N$ over
$\Spec~\ocal_K$. Let $X$ be a toroidal compactification of $Y$ and $G \rightarrow X$ be the semi-abelian scheme
extending the universal abelian scheme (see \cite{FC}).

{\em The Siegel variety of Iwahori level.}
Let $Y_{\mathrm{Iw}} \rightarrow \Spec~\ocal_K$ be the moduli space parametrizing principally polarized abelian schemes $(A,\lambda)$ of dimension $g$, equipped with a level $N$ structure $\psi_N$ and an Iwahori structure at $p$: this is the data of a full  flag $\Fil_\bullet A[p]$ of the group $A[p]$ satisfying $\Fil_\bullet^\bot = \Fil_{2g-\bullet}$. Let   $\XI$ be a toroidal compactification of this moduli space (see \cite{Str}). We choose the polyhedral decompositions occurring in the constructions of   $X$ and $\XI$ in such a way that the forgetful map $Y_{\mathrm{Iw}} \rightarrow Y$ extends to a   map $ \XI \rightarrow X$.

{\em The classical modular sheaves.}
Let $\omega_G$ be the co-normal sheaf of $G$ along its unit section,
$\mathcal{T} =\mathrm{Hom}_X ( \oscr_{X}^g, \omega_G)$ be the space of $\omega_G$ and $\mathcal{T}^\times = \mathrm{Isom}_X ( \oscr_{X}^g, \omega_G)$ be the $\GL$-torsor of trivializations of $\omega_G$. We define a left action  $\GL \times \mathcal{T} \rightarrow \mathcal{T}$ by sending $\omega\colon  \oscr_{X}^g \rightarrow \omega_G$ to $\omega\circ h^{-1}$ for any $h \in \GL$.

We define an automorphism $\kappa \mapsto \kappa'$ of $X(\T)$ by sending any $\kappa = (k_1,\ldots,k_g) \in X(\T)$ to $\kappa'=(-k_g, - k_{g-1},\ldots, -k_1) \in X(\T)$. This automorphism stabilizes the dominant cone $X^+(\T)$. Let $\pi\colon  \mathcal{T}^\times \rightarrow X$ be the projection. For any $\kappa \in X^+(\T)$, we let $\omega^{\kappa}$ be the sub-sheaf of $\pi_\ast\oscr_{\mathcal{T}^\times}$ of $\kappa'$-equivariant functions for the action of $\B$ (with $\GL$ acting on the left on $\pi_\ast\oscr_{\mathcal{T}^\times}$ by $f (\omega) \mapsto f (\omega g)$ for any section $f$ of  $ \pi_\ast\oscr_{\mathcal{T}^\times} $ viewed has a function over the trivializations $\omega$, and any $g \in \GL$). The global sections $\HH^0(X,\omega^\kappa)$ form the module of Siegel modular forms of weight $\kappa$ over $X$.

\subsection{Application of the main construction: the sheaves $\omega^{\dagger,\kappa}_w$}\label{sec:blowup}
We denote by $\mathfrak{X}$ the formal scheme obtained by completing $X$ along its special fiber $X_{k}$ and by $X_{\rig}$ the associated rigid space. We have a Hodge height  function $\Hdg\colon  X_{\rig} \rightarrow [0,1]$
(see section \ref{sect--Fargues}). Let  $v \in [0,1]$, we set  $\cX(v) = \{ x\in X^{\rig},~\Hdg (x) \leq v\}$, this is an open subset of $X_{\rig}$.
Let  $v \in v(\ocal_K)$. Consider the blow-up $\tilde{\mathfrak{X}}(v)={\bf Proj} \oscr_{\mathfrak{X}}[X,Y]/\bigl(\Ha X+ p^v Y\bigr)$ of $\mathfrak{X}$ along the ideal $(\Ha, p^v)$.  Let $\mathfrak{X}(v)$ be the $p$-adic completion of the normalization of the  greatest open formal
sub-scheme of $\tilde{\mathfrak{X}}(v)$ where the ideal $(\Ha, p^v)$ is generated by $\Ha$. This is a formal model
of $\cX(v)$.


Let $n \in \N_{>0}$ and  $v < \frac{1}{2p^{n-1}} \in v(\ocal_K)$ (rep. $v< \frac{1}{3p^{n-1}} \in v(\ocal_K)$ if $p=3$). We have a  canonical subgroup $H_n$ of level $n$ over $\cX(v)$. Let $\cX_1(p^n)(v) = \mathrm{Isom}_{\cX(v)} ( (\Z/p^n\Z)^g,H_n^D) $ be the  finite \'etale cover of $\cX(v)$ parametrizing trivializations of $H_n^D$. We let $\psi$ be the universal trivialization over $\cX(v)$. Let $\mathfrak{X}_1(p^n)(v)$ be the normalization of $\mathfrak{X}(v)$ in $\cX_1(p^n)(v)$. The group $\GL(\Z/p^n\Z)$ acts on $\mathfrak{X}_1(p^n)(v)$. We let $\mathfrak{X}_{\mathrm{Iw}}(p^n)(v)$ be the quotient  $\mathfrak{X}_1(p^n)(v)/ \B(\Z/p^n\Z)$. It is also the normalization of $\mathfrak{X}(v)$ in $\cX_1(p^n)(v)/\B(\Z/p^n\Z)$.  We also denote by $\mathfrak{X}_{\mathrm{Iw}^+}(p^n)(v)$ the quotient $\mathfrak{X}_1(p^n)(v)/ \U(\Z/p^n\Z)$.


\subsubsection{Modular properties} The formal schemes $\mathfrak{X}_1(p^n)(v)$ and $\mathfrak{X}_{\mathrm{Iw}}(p^n)(v)$
have nice modular interpretations away from the boundary.  Let $\mathfrak{Y}_1(p^n)(v)$ and  $\mathfrak{Y}_{\mathrm{Iw}}(p^n)(v)$ be the open formal sub-schemes that are the complements of the boundaries in $\mathfrak{X}_1(p^n)(v)$ and $\mathfrak{X}_{\mathrm{Iw}}(p^n)(v)$ respectively.

\begin{prop2} For any object $R \in \mathbf{NAdm}$,
\begin{enumerate}
\item $\mathfrak{Y}_1(p^n)(v) (R)$ is the set of isomorphism classes of quadruples $(A,\lambda, \psi_N, \psi)$ where $(A \rightarrow \Spf~R, \lambda)$ is a  principally polarized  formal abelian scheme of dimension $g$ such that for all rig-point $x$ in $R$, we have $\Hdg\bigl(A_x[p^\infty]\bigr) \leq v$; $\psi_N$ is a principal level $N$ structure; $\psi\colon  \Z/p^n\Z^g \rightarrow H_n^D$ is a trivialization of the dual canonical subgroup of level $n$ over $R[1/p]$.
\item  $\mathfrak{Y}_{\mathrm{Iw}}(p^n)(v) (R)$ is the set of isomorphism classes of quadruples $(A,\lambda, \psi_N, \Fil_\bullet)$ where $(A \rightarrow \Spf~R, \lambda)$ is a  principally polarized formal abelian scheme of dimension $g$ such that for all rig-point $x$ in $R$, we have $\Hdg\bigl(A_x[p^\infty]\bigr) \leq v$; $\psi_N$ is a principal level $N$ structure; $\Fil_\bullet$ is a full flag of locally free $\Z/p^n\Z$-modules of  the dual canonical subgroup of level $n$ over $R[1/p]$.
\end{enumerate}
\end{prop2}
\begin{demo} The proof is similar to the proof of  \cite{AIS}, lemma 3.1.
\end{demo}

\subsubsection{The modular sheaves $\omega^{\dagger,\kappa}_w$}\label{sec:ThesheafcalF} Let $ w \in v(\ocal_K) \cap ] n-1 + \frac{v}{p-1}, n-v \frac{p^n}{p-1}]$.
By proposition \ref{prop-FHT} there is a rank $g$ locally free sub-sheaf  $\mathcal{F}$ of $\omega_{G}/_{\mathfrak{X}_1(p^n)(v)}$. It is equipped with an  isomorphism:

$$ (\HT_w \circ \psi ) \otimes 1\colon   (\Z/p^n\Z)^g \otimes_\Z \oscr_{\mathfrak{X}_1(p^n)(v)}/p^w \oscr_{\mathfrak{X}_1(p^n)(v)}\simeq \mathcal{F}
\otimes_{\ocal_K} \ocal_{K,w}.$$

\begin{rem2} The  hypothesis  $w\in ] n-1 + \frac{v}{p-1}, n-v \frac{p^n}{p-1}]$ is motivated by proposition
\ref{prop-rig-int}. All this paragraph would make sense under the hypothesis $0 < w  < n- v\frac{p^n}{p-1}$
but in this way we
normalize $n$ and our construction only depends on $\kappa$, $w$ and $v$.  Remark  that if  $w < n-1 - v \frac{p^{n-1}}{p-1}$ we could use $\mathfrak{X}_1(p^{n-1})(v)$ as a base.
\end{rem2}

By section \ref{sect-main} we have  a chain of  formal schemes:

$$ \mathfrak{IW}^+_w  \stackrel{\pi_1}{\rightarrow} \mathfrak{IW}_w  \stackrel{\pi_2} {\rightarrow} \mathfrak{X}_1(p^n)(v) \stackrel{\pi_3}{\rightarrow} \mathfrak{X}_{\mathrm{Iw}}(p^n)(v) \stackrel{\pi_4} \rightarrow \mathfrak{X}_{\mathrm{Iw}}(p)(v).$$

We recall that $ \mathfrak{IW}_w$ parametrizes flags in the locally free sheaf $\mathcal{F}$  that are $w$-compatible with $\psi$ and that $\mathfrak{IW}^+_w$
 parametrizes  flags and bases of the graded pieces that are $w$-compatible with $\psi$.

We recall that $\mathfrak{IW}^+_w$ is a torsor over  $\mathfrak{IW}_w$ under the formal torus $\mathfrak{T}_w$. We also have an action of the group $\B(\Z/p^n\Z)$ on  $\mathfrak{X}_1(p^n)(v)$ over $\mathfrak{X}_{\mathrm{Iw}}(p^n)(v)$.  We let $\mathfrak{B}_w$ be the formal group defined by

$$ \mathfrak{B}_w (R) = \mathrm{Ker} \big(\B(R) \rightarrow \B(R/p^wR)\big) $$ for all
$R \in \mathbf{Adm}$.

There is a surjective map $\mathfrak{B}_w \rightarrow \mathfrak{T}_w$ with kernel the ``unipotent radical" $\mathfrak{U}_w$. All these actions fit together in an action of $B(\Z_p)\mathfrak{B}_w$ on $\mathfrak{IW}_w^+$ over $\mathfrak{X}_{\mathrm{Iw}}(p^n)(v)$ (the unipotent radical $\mathfrak{U}_w$ acts trivially).

The morphisms $\pi_1, \pi_2, \pi_3$ and $\pi_4$ are affine. Set $\pi = \pi_1 \circ \pi_2 \circ \pi_3 \circ \pi_4$. Let $\kappa \in \mathcal{W}(K)$ be a $w$-analytic character. The involution $\kappa \mapsto \kappa'$ of $X(\T)$ extends to an involution of $\mathcal{W}$, mapping $w$-analytic characters to $w$-analytic characters. The character $\kappa'\colon  \T(\Z_p) \rightarrow \ocal_K^\times$ extends  to a character $\kappa'\colon  \T(\Z_p)\mathfrak{T}_w \rightarrow \widehat{\mathbb{G}_m}\ocal_K^\times$ and to a character $\kappa'\colon  \B(\Z_p)\mathfrak{B}_w \rightarrow \widehat{\mathbb{G}_m}$ with $\U(\Z_p) \mathfrak{U}_w$ acting trivially.

Using the notion of formal Banach sheaf given in definition \ref{def:formalBanach}, we give the following:

\begin{defi2}\label{def:formalintegralsheaf} The formal Banach sheaf of $w$-analytic, $v$-overconvergent modular forms of weight $\kappa$ is
$$\mathfrak{w}^{\dag\kappa}_w = \pi_\ast \oscr_{\mathfrak{IW}^+_w}[\kappa'].$$
\end{defi2}

\subsubsection{Integral overconvergent modular forms}
\begin{defi2}The space of  integral $w$-analytic, $v$-overconvergent modular forms of  genus $g$,
weight $\kappa$, principal level $N$    is
$$ \M^{\dag\kappa}_w(\mathfrak{X}_{\mathrm{Iw}}(p)(v)) = \HH^0(\mathfrak{X}_{\mathrm{Iw}}(p)(v), \mathfrak{w}_w^{\dag\kappa})$$
\end{defi2}

An element $f$ of the module $$\HH^0(\mathfrak{Y}_{\mathrm{Iw}}(p)(v), \mathfrak{w}_w^{\dag\kappa}),$$
called {\em weakly modular form} is a  rule which
associates functorially to an object $R$ in $\mathbf{NAdm}$, a quadruple $(A,\lambda,
\psi_N, \psi) \in \mathfrak{Y}_{1}(p^n)(v)(R)$, a $w$-compatible flag $\Fil_\bullet \mathcal{F}_R$ and a
$w$-compatible basis $w_i$ of each $\Gr_i \mathcal{F}_R$ an element

$$ f ( R, A,\lambda, \psi_N, \psi, Fil_\bullet \mathcal{F}_R, \{\omega_i\} ) \in R$$

satisfying the functional equation:
$$ f ( R, A,\lambda, \psi_N, b.\psi, Fil_\bullet \mathcal{F}_R,  b\cdot\{  \omega_i\} ) = \kappa'(b) f ( R, A,\lambda, \psi_N, \psi, Fil_\bullet \mathcal{F}_R, \{  w_i\} )$$
for all $b \in \B(\Z_p)\mathfrak{B}_w(R).$

\subsubsection{Locally analytic overconvergent modular forms}

Let $\kappa \in \mathcal{W}(K)$ and $v, w >0$. Suppose that  $\kappa$ is $w$-analytic. If there is $n \in \N$ satisfying
$ v < \frac{1}{2p^{n-1}}$ (resp.~$v< \frac{1}{3p^{n-1}}$ if $p=3$) and $w \in ]n -1 + \frac{v}{p-1}, n - v\frac{p^n}{p-1}]$, then we have
constructed a sheaf $\mathfrak{w}^{\dag \kappa}_w$ on $\mathfrak{X}_{\mathrm{Iw}}(v)$.
As a result for all $\kappa$, if we take  $v>0$ sufficiently small and  $w \notin \N$ big enough (so that $\kappa$ is $w$-analytic),
there is  a unique $n \in \N$ satisfying the conditions above and so the modular  sheaf $\mathfrak{w}^{\dag \kappa}_w$ exists.
Let $\kappa, n, v, w$ satisfying all the required conditions for the existence of the sheaf $\mathfrak{w}^{\dag \kappa}_w$. Clearly, if $v' < v$
then $\kappa, n , v', w$ satisfy also the conditions and the sheaf $\mathfrak{w}^{\dag \kappa}_w$
on
$\mathfrak{X}_{\mathrm{Iw}}(v')$ is the restriction of the sheaf on $\mathfrak{X}_{\mathrm{Iw}}(v)$.

 If $\kappa$ is $w$-analytic, it is also $w'$ analytic for any $w' > w$. Let $n' \in \N$ and $v > 0$ such that $\kappa, n, v, w$ and
$\kappa, n', v, w'$ satisfy the conditions, so that we have two sheaves $\mathfrak{w}^{\dag \kappa}_w$
and $\mathfrak{w}^{\dag \kappa}_{w'}$ over $\mathfrak{X}_{\mathrm{Iw}}(v)$.

 There is a natural inclusion:

 $$\mathfrak{IW}^+_{w'} \hookrightarrow \mathfrak{IW}^+_w \times_{ \mathfrak{X}_1(p^n)(v)} \mathfrak{X}_1(p^{n'})(v)$$

 which follows from the fact that $w'$-compatibility implies $w$-compatibility.

This induces a natural  map   $\mathfrak{w}^{\dag \kappa}_w \rightarrow \mathfrak{w}^{\dag \kappa}_{w'}$ and thus a map
$\mathrm{M}^{\dag\kappa}_{w}(\mathfrak{X}_{\mathrm{Iw}}(v)) \rightarrow \mathrm{M}^{\dag\kappa}_{w'}
(\mathfrak{X}_{\mathrm{Iw}}(v))$. We are  led to the following definition.

\begin{defi2} Let $\kappa \in \mathcal{W}$. The space of integral locally analytic overconvergent modular forms of
weight $\kappa$ and principal level $N$ is the inductive limit:
$$\mathrm{M}^{\dag\kappa}(\mathfrak{X}_{\mathrm{Iw}}(v)) = \lim_{v \rightarrow 0, w \rightarrow \infty} \mathrm{M}^{\dag\kappa}_{w}(\mathfrak{X}_{\mathrm{Iw}}(p)(v)).$$
\end{defi2}

\subsection{Rigid analytic interpretation }\label{sect-rigide}

We let $\mathcal{T}_{\an}$, $\mathcal{T}^{\times}_{\an}$, $\mathrm{GL}_{g,\an}$ be the rigid analytic spaces associated to the $\ocal_K$-schemes  $\mathcal{T}$, $\mathcal{T}^{\times}$ and $\GL$. We also let $\mathfrak{T}$, $\mathfrak{T}^\times$ and $\widehat{\GL}$ be the completions of  $\mathcal{T}$, $\mathcal{T}^{\times}$ and $\GL$  along their special fibers and $\mathcal{T}_{\rig}$, $\mathcal{T}_{\rig}^\times$ and $\mathrm{GL}_{g,rig}$ be their rigid analytic fibers. We have actions $\mathrm{GL}_{g,\an} \times \mathcal{T}_{\an} \rightarrow \mathcal{T}_{\an}$, $\mathrm{GL}_{g, rig} \times \mathcal{T}_{\rig} \rightarrow \mathcal{T}_{\rig}$ and $\widehat{\GL} \times \mathfrak{T} \rightarrow \mathfrak{T}$. When the context is clear, we just write $\GL$ instead of $\mathrm{GL}_{g,\an}$, $\mathrm{GL}_{g,rig}$ or $\widehat{\GL}$. For example, we have $\mathcal{T}_{\rig}^\times / \B = \mathcal{T}_{\an}^\times/\B$, because $\mathcal{T}^\times/\B$ is complete. Over $\mathcal{T}_{\rig}^\times/\B$, we have a diagram:
\begin{eqnarray*}
\xymatrix{ \mathcal{T}_{\rig}^\times/\U \ar[r]\ar[d] & \mathcal{T}_{\an}^\times/\U \ar[ld]\\
 \mathcal{T}_{\rig}^\times/\B & }
\end{eqnarray*}
where $\mathcal{T}_{\rig}^\times/\U$ is a torsor under $\U_{\rig}/\B_{\rig} = \T_{\rig}$ and $\mathcal{T}_{\an}^\times/\U$ is a torsor under $\U_{\an}/\B_{\an} = \T_{\an}$. 

We let $\mathcal{IW}_w^+$ and $\mathcal{IW}_w$ be the rigid spaces associated to $\mathfrak{IW}_w^+$ and $\mathfrak{IW}_w$ respectively.  We have a chain of rigid spaces:

$$ \mathcal{IW}_w^+ \rightarrow \mathcal{IW}_w \rightarrow {\cX}_1(p^n)(v) \rightarrow {\cX}_{\mathrm{Iw}}(p) (v) $$

The natural injection $\mathcal{F} \hookrightarrow {\omega_{G}}_{ \mathfrak{X}_1(p^n)(v)}$ is an isomorphism on the rigid fiber. More precisely, we can cover $\mathfrak{X}_1(p^n)(v)$ by affine open formal schemes $\Spf~R$ such that $\mathcal{F}$ and $\omega_G$ are free $R$ modules of rank $g$. We choose a  basis for $\mathcal{F}$ compatible with $\psi$ and a basis for $\omega_{G}$ such that  the inclusion is given by an upper triangular  matrix $M \in \mathrm{M}_g(R)$ with diagonal given by  $\mathrm{diag}(\beta_1,\ldots, \beta_g)$ where $\beta_i \in R$ and $v(\prod_i \beta_i(x)) = \frac{1}{p-1}\Hdg (G_x)$ for all closed points $x$ of $\Spec~R[1/p]$ (see proposition \ref{prop-HdgT}).

We thus obtain an open immersion:

$$ \mathcal{IW}_w \hookrightarrow \mathcal{T}^\times_{\rig}/\B_{ \cX_1(p^n)(v)}.$$

This immersion is  locally isomorphic to the inclusion:

$$  M \begin{pmatrix} 
      1  & 0 & \cdots & 0   \\
   p^w \B(0,1) &   1 & \cdots & 0\\
 \vdots & \vdots & \ddots & 0 \\
p^w B(0,1) & \cdots & p^w \B(0,1) & 1 \\

   \end{pmatrix} \hookrightarrow (\GL)_{\rig}/\B.$$

 Similarly, we have an open immersion:

$$ \mathcal{IW}^+_w \hookrightarrow \mathcal{T}^\times_{\an}/\B_{ \cX_1(p^n)(v)}$$

which is locally isomorphic to the inclusion

$$ M \begin{pmatrix} 
      1 + p^w \B(0,1)  & 0 & \cdots & 0   \\
  p^w \B(0,1) & 1 + p^w\B(0,1) & \cdots & 0\\
 \vdots & \vdots & \ddots & 0 \\
p^w B(0,1) & \cdots & p^w \B(0,1) & 1 +  p^w \B(0,1) \\

   \end{pmatrix}\hookrightarrow (\GL)_{\an}/\U$$

Let $\cXI(p^n)(v) = \cX_1(p^n)(v)/\B(\Z/p^n\Z)$ be the rigid space associated to $\mathfrak{X}_{\mathrm{Iw}}(p^n)(v)$ and let $\cX_{\mathrm{Iw}^+}(p^n)(v) = \cX_1(p^n)(v)/\U(\Z/p^n\Z)$. The map $\cX_{\mathrm{Iw}^+}(p^n)(v) \rightarrow \cXI(p^n)(v)$ is an \'etale cover with group $\T(\Z/p^n\Z)$.
The action of $\B(\Z/p^n\Z)$ on $\cX_1(p^n)(v) $ lifts to an action on the
rigid space  $\mathcal{IW}_w \rightarrow \cX_1(p^n)(v) $ since the notion of $w$-compatibility of the flag only depends on $\psi \mod \B(\Z/p^n\Z)$. Taking quotients we get a rigid space $\mathcal{IW}^o_w \rightarrow \cXI(p^n)(v)$.  Similarly, the action of $\U(\Z/p^n\Z)$ on $\cX_1(p^n)(v) $ lifts to an
action on the rigid space  $\mathcal{IW}^+_w \rightarrow \cX_1(p^n)(v) $  since the notion of $w$-compatibility of the flag and the bases of the
graded pieces only depends on $\psi \mod \U(\Z/p^n\Z)$. Taking quotients we obtain  a rigid space $\mathcal{IW}^{o+}_w \rightarrow
\cX_{\mathrm{Iw}^+}(p^n)(v)$.
From the open immersions above  we get  open immersions:

$$ \mathcal{IW}^o_w \hookrightarrow \mathcal{T}^\times_{\rig}/\B_{ \cXI(p^n)(v)} ~\textrm{and} ~\mathcal{IW}^{o+}_w \hookrightarrow
\mathcal{T}^\times_{\an}/\U_{ \cX_{\mathrm{Iw}^+}(p^n)(v)},$$where $\B_{ \cXI(p^n)(v)}$ and $\U_{ \cX_{\mathrm{Iw}^+}(p^n)(v)}$ are the
base changes of the algebraic groups $\B$ and $\U$ to $\cX_{\mathrm{Iw}^+}(p^n)(v)$.

\begin{prop}\label{prop-rig-int} Since $w > n-1 + \frac{v}{p-1}$, the compositions $\mathcal{IW}^o_w \hookrightarrow \mathcal{T}^\times_{\rig}/\B_{ \cXI(p^n)(v)} \rightarrow \mathcal{T}^\times_{\rig}/\B_{ \cXI(p)(v)}$ and $\mathcal{IW}^{o+}_w \hookrightarrow \mathcal{T}^\times_{\an}/\U_{ \cXI(p^n)(v)} \rightarrow \mathcal{T}^\times_{\an}/\U_{ \cX_{\mathrm{Iw}}(p)(v)}$ are open immersions.
\end{prop}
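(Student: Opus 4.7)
The plan is to prove that each composition is \'etale and radicial (universally injective on points), since an \'etale, universally injective morphism of rigid analytic spaces is an open immersion. The open immersions $\mathcal{IW}^o_w \hookrightarrow \mathcal{T}^\times_{\rig}/\B_{\cXI(p^n)(v)}$ and $\mathcal{IW}^{o+}_w \hookrightarrow \mathcal{T}^\times_{\an}/\U_{\cXI(p^n)(v)}$ have already been established; composing with the base change (along $\mathcal{T}^\times_{\rig}/\B \to \cX(v)$, resp.\ $\mathcal{T}^\times_{\an}/\U \to \cX(v)$) of the finite \'etale cover $\cXI(p^n)(v) \to \cXI(p)(v)$ yields \'etale morphisms. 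Hence the task reduces to checking injectivity on points.

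For the first composition, I would work locally over $R \in \mathbf{NAdm}$ and suppose that two $R$-sections of $\mathcal{IW}^o_w$ have the same image in $\mathcal{T}^\times_{\rig}/\B_{\cXI(p)(v)}$. After pulling back along the \'etale cover $\cX_1(p^n)(v) \to \cXI(p^n)(v)$, these sections are represented by two flags $\Fil_\bullet^\psi$ and $\Fil_\bullet^{\psi'}$ of $H_n^D(R[1/p])$ with the same reduction modulo $p$ (giving the given flag of $H_1^D$) together with a common flag $\Fil_\bullet \mathcal{F}$ that is $w$-compatible with each. By the very definition of $w$-compatibility, one has the equality
\begin{equation*}
\HT_w(\Fil_\bullet^\psi) \otimes_\Z R_w \;=\; \Fil_\bullet \mathcal{F} \otimes_R R_w \;=\; \HT_w(\Fil_\bullet^{\psi'}) \otimes_\Z R_w
\end{equation*}
inside $\mathcal{F}/p^w \mathcal{F}$, and applying the isomorphism $\HT_w \otimes 1$ of Proposition~\ref{prop-FHT} yields $\Fil_\bullet^\psi \otimes_\Z R_w = \Fil_\bullet^{\psi'} \otimes_\Z R_w$ in $H_n^D(R[1/p]) \otimes_\Z R_w$.

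The decisive arithmetic input is that $w > n-1 + \tfrac{v}{p-1}$ implies in particular $\lceil w \rceil \geq n$, so the natural map $\Z/p^n\Z \to R_w = R/p^w R$ is injective. One then argues that a complete flag of direct-summand submodules of the free $\Z/p^n\Z$-module $H_n^D(R[1/p]) \simeq (\Z/p^n\Z)^g$ is uniquely recovered from its image in $R_w^g$: after localizing on $R$, each graded piece of the flag is generated by a $\Z/p^n\Z$-primitive vector, and such a vector is determined up to a $\Z/p^n\Z$-unit by its image in $R_w^g$ precisely because $\Z/p^n\Z \hookrightarrow R_w$ is injective (a Nakayama-type argument). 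Hence $\Fil_\bullet^\psi = \Fil_\bullet^{\psi'}$, giving the required injectivity.

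The $\mathcal{IW}^{o+}_w$ case proceeds in the same manner: injectivity of the underlying flag is obtained as above, and the extra data of the bases $\{w_i\}$ of the graded pieces is pinned down by the same mechanism, since their equality modulo $p^w$ together with the injectivity $\Z/p^n\Z \hookrightarrow R_w$ forces equality on the nose. The main technical obstacle I anticipate is the bookkeeping needed to recover a complete flag (or a flag plus basis of graded pieces) of a free $\Z/p^n\Z$-module from its image in $R_w^g$; this is essentially an application of Nakayama's lemma for flags of direct-summand submodules of free modules over a non-reduced coefficient ring, and the hypothesis $w > n-1 + \tfrac{v}{p-1}$ provides the needed margin beyond the bare requirement $w > n-1$ that makes $\Z/p^n\Z \hookrightarrow R_w$ injective, while still keeping $w \leq n - v\tfrac{p^n}{p-1}$ so that Proposition~\ref{prop-FHT} applies.
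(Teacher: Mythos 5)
Your proof is correct, but it takes a genuinely different route from the paper's. The paper argues entirely through the explicit local matrix description of \S\ref{sect-rigide}: \'etale--locally over $\cXI(p)(v)$ the composite is identified with the map $h\colon \coprod_{\gamma\in S} M\cdot(\mathrm{Id}_g+p^w\mathfrak{B}(0,1)^{\ldots})\cdot\gamma \to (\GL)_{\rig}/\B$, where $S$ is a set of representatives of $\I(\Z/p^n\Z)/\B(\Z/p^n\Z)$ and $M$ is the upper-triangular matrix of the inclusion $\mathcal{F}\hookrightarrow\omega_G$; injectivity is then checked after left-multiplying by the matrix $M'$ with $M'M=p^{\frac{v}{p-1}}\mathrm{Id}_g$, and it is precisely in absorbing this factor $p^{\frac{v}{p-1}}$ that the full lower bound $w>n-1+\frac{v}{p-1}$ is consumed. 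You instead argue intrinsically: \'etaleness of the composite is formal (open immersion followed by a base change of the finite \'etale cover $\cXI(p^n)(v)\to\cXI(p)(v)$), and injectivity is extracted from the definition of $w$-compatibility, the isomorphism $\HT_w\otimes 1$ of Proposition~\ref{prop-FHT}, and the injectivity of $\Z/p^n\Z\to R_w$; your Nakayama-type recovery of a flag of free direct summands of $(\Z/p^n\Z)^g$ from its image in $R_w^g$ does work (a primitive vector whose components outside a summand die in $R_w$ has those components in $p^{\lceil w\rceil}\Z/p^n\Z=0$ once $\lceil w\rceil\geq n$). Your route is arguably cleaner and shows that the bare inequality $w>n-1$ suffices for this proposition (the upper bound $w\leq n-v\frac{p^n}{p-1}$ needed for Proposition~\ref{prop-FHT} is an independent constraint, so your closing sentence suggesting the $\frac{v}{p-1}$ margin ``keeps'' that upper bound is a non sequitur, though harmless); what the paper's computation buys is the explicit coordinate picture that is reused later, e.g.\ in the Hecke-operator sections. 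Two small points you should make explicit in a final write-up: justify that an \'etale monomorphism of rigid spaces is an open immersion (true, but it is the rigid-analytic analogue of EGA~IV~17.9.1 and deserves a reference), and phrase the injectivity check on field-valued (or arbitrary affinoid) points rather than on ``$R$-sections'' of a formal model, since the spaces involved are rigid fibers.
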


\begin{demo} We can work \'etale locally over $\cX_{\mathrm{Iw}}(p)(v)$. Let $S$ be a set of representatives in the Iwahori subgroup $\I(\Z_p) \subset \GL(\Z_p)$ of $\I(\Z/p^n\Z)/\B(\Z/p^n\Z)$. Over a suitable open affinoid $U$ of $\cX_1(p^n)(v)$, the map  $(\mathcal{IW}^o_w)\vert_U \rightarrow (\mathcal{T}^\times_{\rig}/\B_{ \cXI(p)(v)})\vert_U$  is isomorphic to the following  projection:

$$h \colon   \coprod_{\gamma \in S} M \begin{pmatrix} 
      1   & 0 & \cdots & 0   \\
       p^w \B(0,1) &  1 & \cdots & 0\\
 \vdots & \vdots & \ddots & 0 \\
p^w B(0,1) & \cdots & p^w \B(0,1) & 1 \\

   \end{pmatrix}.\gamma \rightarrow (\GL)_{\rig}/\B$$ There is a matrix $M'$ with integral coefficients such that $M'\cdot M = p^{\frac{v}{p-1}} \mathrm{Id}_g$.  It is trivial to check  that $M' \circ h$ is injective and so $h$ is injective.
The proof of the second part of the proposition is similar.
\end{demo}

\medskip

We have a diagram:

\begin{eqnarray*}
\xymatrix{ \mathcal{T}^\times_{\an}/\U_{ \cXI(p)(v)} \ar[d]& \mathcal{IW}^{o+}_w \ar[r]^{f_1} \ar[d]^{g_1} \ar[l]_{i_1}  &    \cX_{\mathrm{Iw}^+}(p^n)(v) \ar[d] \\
\mathcal{T}^\times_{\rig}/\B_{ \cXI(p)(v)} \ar[rd] &\mathcal{IW}^o_w \ar[r]^{f_2} \ar[d]^{g_2}  \ar[l]_{i_2} &  \cX_{\mathrm{Iw}}(p^n)(v) \ar[ld]  \\
&\cXI(p)(v) & }
\end{eqnarray*}

The maps $i_1$ and $i_2$ are open immersions, the maps $f_1$ and $f_2$ have geometrically connected fibers. The torus $\T(\Z_p)$ acts on $\mathcal{IW}^{o+}_w$ over $\mathcal{IW}^o_w$. This action is compatible with the maps $f_1$ and $f_2$ and the action of $\T(\Z/p^n\Z)$ on $\cX_{\mathrm{Iw}^+}(p^n)(v) $ over $\cX_{\mathrm{Iw}}(p^n)(v)$. It is also compatible with the maps $i_1$ and $i_2$ and the action of $\T_{\an}$ on  $\mathcal{T}^\times_{\an}/\U_{ \cXI(p)(v)}$ over $\mathcal{T}^\times_{\rig}/\B_{ \cXI(p)(v)}$.

Set $g = g_2 \circ g_1$. Let $\kappa$ be a $w$-analytic character. Then $\omega^{\dag\kappa}_w = g_\ast \oscr_{\mathcal{IW}^{o+}_w}[\kappa']$ is the projective
Banach sheaf of $w$-analytic, $v$-overconvergent weight $\kappa$ modular forms over $\cXI(p)(v)$ (this notion is defined in the appendix). It is the Banach sheaf
associated to the formal Banach sheaf $\mathfrak{w}^{\dag\kappa}_w$.

\begin{rem} Let $\T_w$ be the rigid-analytic torus which is the rigid analytic fiber of $\mathfrak{T}_{w}$. For example, we have $\T_w(\C_p) = (1 + p^w \ocal_{\C_p})^g$. The rigid space $\mathcal{IW}^{o+}_w$ is a $\T(\Z_p)\T_w$-torsor over $\mathcal{IW}^o_w$. By lemma  2.1 of \cite{PiTW}, it makes no difference in the definition of $\omega^{\dag\kappa}_w$ to take $\kappa'$-equivariant functions for the action of $\T(\Z_p)$ or of the bigger group $\T(\Z_p)\T_w$.
\end{rem}

\begin{defi}\label{def:overconvmodforms} Let $\kappa \in \mathcal{W}$. The space of $w$-analytic, $v$-overconvergent modular forms of weight $\kappa$ is:
$$\mathrm{M}_w^{\dag\kappa}(\cXI(p)(v)) = \HH^0(\cXI(p)(v), \omega^{\dag\kappa}_w).$$

The space of locally analytic overconvergent modular forms of weight $\kappa$ is:

$$\mathrm{M}^{\dag\kappa}(\XI(p)) = \colim_{v \rightarrow 0, w \rightarrow \infty} \mathrm{M}^{\dag\kappa}_{w}(\cXI(p)(v)).$$
\end{defi}

The space $\mathrm{M}_w^{\dag\kappa}(\cXI(p)(v))$ is a Banach space, for the norm induced by the supremum norm on $\mathcal{IW}^{o+}_w$. Its unit ball is the space $\mathrm{M}_w^{\dag\kappa}(\mathfrak{X}_{\mathrm{Iw}}(p)(v))$ of integral forms.

\begin{prop}\label{prop-faisceau-induction} If $\kappa \in X_+(\T)$, then there is a canonical restriction map:

$$\omega^\kappa\vert_{\cXI(p)(v)} \hookrightarrow \omega^{\dag\kappa}_{w}$$ induced by the open immersion:
$\mathcal{IW}^{o+}_w \hookrightarrow \mathcal{T}_{\an}/\U_{ \cX_{\mathrm{Iw}}(p)(v)}$.

This map is locally for the \'etale topology isomorphic to the inclusion

$$ V_{\kappa'} \hookrightarrow  V_{\kappa'}^{w-\an}$$ of the algebraic induction into the analytic induction.

\end{prop}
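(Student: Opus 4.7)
The strategy is to unfold the two sheaf definitions and trace them through the open immersion $i_1 \colon \mathcal{IW}^{o+}_w \hookrightarrow \mathcal{T}^\times_{\an}/\U_{\cXI(p)(v)}$ constructed in section \ref{sect-rigide}. First I would realize the classical sheaf $\omega^\kappa|_{\cXI(p)(v)}$ analytically as the push-forward to $\cXI(p)(v)$ of the $\kappa'$-equivariant functions on $\mathcal{T}^\times_{\an}/\U_{\cXI(p)(v)}$ under the action of the rigid analytic torus $\T_{\an}$ (this is just the analytification of the definition in section \ref{sec-classicalmf}, using that $\kappa'$ factors through $\B/\U=\T$ so that $\B$-equivariance on $\mathcal{T}^\times$ descends to $\T$-equivariance on $\mathcal{T}^\times/\U$). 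On the overconvergent side, $\omega^{\dag\kappa}_w=g_\ast\oscr_{\mathcal{IW}^{o+}_w}[\kappa']$ is by definition the push-forward of $\kappa'$-equivariant functions for the action of $\T(\Z_p)\T_w$, which is a subgroup of $\T_{\an}$.

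Next I would check that $i_1$ is equivariant for $\T(\Z_p)\T_w \subset \T_{\an}$. This is built into the construction of $\mathfrak{IW}^+_w$ in section \ref{sect-main} as a $\mathfrak{T}_w$-torsor over $\mathfrak{IW}_w$ with $\T(\Z_p)$-action coming from the trivialization $\psi$: in both cases the action simply scales the chosen basis $(\omega_i)$ of the graded pieces of $\Fil_\bullet\mathcal{F}$, which is compatible with the $\T_{\an}$-action on the ambient space $\mathcal{T}^\times_{\an}/\U$. Pull-back along $i_1$ therefore sends $\T_{\an}$-$\kappa'$-equivariant sections to $\T(\Z_p)\T_w$-$\kappa'$-equivariant sections, producing the canonical map $\omega^\kappa|_{\cXI(p)(v)}\to\omega^{\dag\kappa}_w$. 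Injectivity follows because $\mathcal{IW}^{o+}_w$ surjects onto $\cXI(p)(v)$ and the map $i_1$ is an open immersion into a geometrically connected fiber bundle.

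Finally, for the local description, I would work étale-locally over $\cX_1(p^n)(v)$ with $\omega_G$ and $\mathcal{F}$ trivialized and the matrix $M$ of section \ref{sect-rigide} fixed. In such a chart $\mathcal{T}^\times_{\an}/\U$ identifies with $(\mathrm{GL}_g)_{\an}/\U$, while $\mathcal{IW}^{o+}_w$ identifies with the explicit admissible open $M\cdot\I_w/\U$ written out after proposition \ref{prop-rig-int}, where $\I_w$ is the $w$-thickening of the Iwahori $\I$. Since $\kappa\in X^+(\T)$ implies $\kappa'\in X^+(\T)$, the $\kappa'$-equivariant algebraic functions on $\mathrm{GL}_g/\U$ are by definition the module $V_{\kappa'}$ of section \ref{sec:algrep}, whereas the $\kappa'$-equivariant rigid analytic functions on the admissible open $M\cdot\I_w/\U$ are, via the Iwahori decomposition $\I=\B(\Z_p)\cdot\NU^0$ and the fact that $M\cdot\I_w$ is precisely the $w$-neighbourhood of $\NU^0$ translated to the open cell, the $w$-analytic induction $V^{w-\an}_{\kappa'}$ of section \ref{sec:analrep}. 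The restriction map induced by $i_1$ then becomes the tautological inclusion $V_{\kappa'}\hookrightarrow V^{w-\an}_{\kappa'}$.

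The main thing to keep straight is the bookkeeping between $\kappa$ and $\kappa'$, and between the left $\B$-action on the classical torsor $\mathcal{T}^\times$ and the right action implicit in the Iwahoric analytic induction; once the equivariance of $i_1$ is pinned down, the rest is unwinding definitions, so I do not expect any substantive obstacle beyond what is already furnished by the preceding sections.
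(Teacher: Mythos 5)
Your proposal is correct and follows exactly the route the paper intends: the paper in fact states this proposition without proof, treating it as an immediate consequence of the constructions and equivariances already established in sections \ref{sect-main} and \ref{sect-rigide} (in particular the compatibility of $i_1$ with the torus actions and the explicit local matrix description of $\mathcal{IW}^{o+}_w$), and your argument supplies precisely those details. The only point worth stating slightly more carefully is injectivity, which is cleanest deduced from the local model $V_{\kappa'}\hookrightarrow V^{w-\an}_{\kappa'}$ via Zariski density of the Iwahori neighbourhood in $\GL$, rather than from connectedness of the fibers alone.
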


\begin{coro} For any $\kappa \in X_+(\T)$, we have an inclusion:

$$ \HH^0(\XI, \omega^\kappa) \hookrightarrow \mathrm{M}^{\dag,\kappa}_w( \cXI(p)(v))$$
from the space of classical forms of weight $\kappa$ into  the space of $w$-analytic, $v$-overconvergent modular forms of weight $\kappa$.
\end{coro}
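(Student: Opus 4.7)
\begin{demo}[Proof plan for the corollary]
The plan is to factor the desired map as the composition of two injections: first, restriction of global sections of the classical sheaf $\omega^\kappa$ on $\XI$ (more precisely, its analytification) to the admissible open $\cXI(p)(v)$; second, the inclusion of sheaves provided by Proposition \ref{prop-faisceau-induction} applied on $\cXI(p)(v)$.

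\medskip
\noindent\textbf{Step 1: analytification and restriction to the strict neighbourhood.} By rigid GAGA on the proper $\ocal_K$-scheme $\XI$, there is a natural identification between $\HH^0(\XI,\omega^\kappa)$ and the global sections of the analytification $\omega^{\kappa,\an}$ on $\XI^{\rig}$. Restriction to the admissible open $\cXI(p)(v)\subset \XI^{\rig}$ then yields a map
$$\HH^0(\XI,\omega^\kappa)\longrightarrow \HH^0\bigl(\cXI(p)(v),\omega^\kappa\vert_{\cXI(p)(v)}\bigr).$$
To see that this map is injective, one uses that $\omega^\kappa$ is locally free on the reduced scheme $\XI$, and that $\cXI(p)(v)$ meets every irreducible component: indeed $\cXI(p)(v)$ contains the multiplicative ordinary locus, which is known to be non-empty and dense in every geometric connected component of the Siegel variety at Iwahori level. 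Hence any classical form vanishing on $\cXI(p)(v)$ vanishes on $\XI^{\rig}$, and therefore on $\XI$.

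\medskip
\noindent\textbf{Step 2: apply Proposition \ref{prop-faisceau-induction}.} That proposition furnishes, for $\kappa\in X^+(\T)$, a canonical injection of sheaves
$$\omega^\kappa\vert_{\cXI(p)(v)}\hookrightarrow \omega^{\dag\kappa}_w$$
on $\cXI(p)(v)$, which étale-locally realizes the inclusion of the algebraic induction $V_{\kappa'}$ into the $w$-analytic induction $V_{\kappa'}^{w\text{-}\an}$. Taking $\HH^0$ over $\cXI(p)(v)$ preserves injectivity and produces an injection
$$\HH^0\bigl(\cXI(p)(v),\omega^\kappa\vert_{\cXI(p)(v)}\bigr)\hookrightarrow \HH^0\bigl(\cXI(p)(v),\omega^{\dag\kappa}_w\bigr)=\mathrm{M}^{\dag,\kappa}_w(\cXI(p)(v)).$$

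\medskip
\noindent\textbf{Step 3: composition.} Composing the maps from Steps 1 and 2 gives the desired inclusion. The only non-formal point is the injectivity of the restriction map in Step 1; this is where one needs the geometric input that the multiplicative ordinary locus is dense in every component of $\XI$, and it is the mild obstacle to making the argument fully self-contained. Everything else reduces to rigid GAGA and to the sheaf-theoretic inclusion that has already been established.
\end{demo}
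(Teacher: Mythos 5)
Your proposal is correct and is essentially the argument the paper intends: the corollary is the composition of the restriction map $\HH^0(\XI,\omega^\kappa)\to\HH^0(\cXI(p)(v),\omega^\kappa)$ with the global sections of the sheaf inclusion from Proposition \ref{prop-faisceau-induction} (exactly the maps $r_2\circ r_1$ written out at the start of the classicity section). Your extra care about injectivity of the restriction, via the ordinary locus meeting every connected component and analytic continuation, is the right (and standard) justification for the hooked arrow.
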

\subsection{Overconvergent and $p$-adic modular forms}

We compare the notion $p$-adic modular forms introduced by Katz and used by Hida (\cite{Hi}) to construct ordinary eigenvarieties to the notion of overconvergent
locally analytic modular forms. Let $\mathfrak{X}_1(p^\infty)(0)$ be the projective limit of the formal schemes $\mathfrak{X}_1(p^n)(0)$. It is a  pro-\'etale cover
of $\mathfrak{X}_{\mathrm{Iw}}(p)(0)$ with group the Iwahori subgroup of $\GL(\Z_p)$, denoted by $\I$. In particular we have an action of $\B(\Z_p)$ on the space
$\HH^0(\mathfrak{X}_1(p^\infty)(0),\oscr_{\mathfrak{X}_{1}(p^\infty)(0)})$. Any character $\kappa \in \mathcal{W}$ can be seen as a character of $\B(\Z_p)$, trivial
on the unipotent radical.

\begin{defi}  Let $\kappa \in \mathcal{W}(K)$ be an $\ocal_K$-valued  character of the group $\T(\Z_p)$. The space of $p$-adic modular forms of weight $\kappa$ is:
$$\M^{\infty \kappa} := \HH^0(\mathfrak{X}_1(p^\infty)(0),\oscr_{\mathfrak{X}_{1}(p^\infty)(0)}) [\kappa']$$
\end{defi}
\medskip

Over $\mathfrak{X}_1(p^\infty)(0)$, we have a universal trivialization $\psi \colon \Z_p^g \simeq T_p\big(G^D[p^\infty]\big)^{\rm et}$ of the
$p$-adic \'etale Tate module of $G^D[p^\infty]$ and   a  comparison theorem:
$$ \HT_{G_\infty^D} \colon \Z_p^g \otimes_{\Z} \oscr_{\mathfrak{X}_{1}(p^\infty)(0)} \stackrel{\sim}{\rightarrow} \omega_{G} \otimes_{\oscr_{\mathfrak{X}}} \oscr_{\mathfrak{X}_{1}(p^\infty)(0)}.$$
As a result, for all $w\in ]n-1,n]$, we have a diagram:

\begin{eqnarray*}
\xymatrix{\mathfrak{X}_1(p^\infty)(0) \ar[r]^{i}\ar[d] & \mathfrak{IW}^+_w \ar[ld]\\
\mathfrak{X}_1(p^n)(0)&}
\end{eqnarray*}

Let $\mathfrak{U}_n(\Z_p)$ be the sub-group of $\U(\Z_p)$ of matrices which are trivial modulo $p^n\Z_p$. The map $i$ factorizes through an immersion $\mathfrak{X}_1(p^\infty)(0)/\mathfrak{U}_n(\Z_p) \hookrightarrow \mathfrak{IW}^+_w$ which is equivariant under the action of $\B(\Z_p)$.
This provides a map:
$$ \M^{\dag\kappa}_w(\mathfrak{X}_{Iw}(p)(0)) \rightarrow \M^{\infty \kappa}.$$

\begin{rem} A space analogue to $\M^{\dag\kappa}_w(\mathfrak{X}_{\mathrm{Iw}}(p)(0))$ appears in the work  \cite{SU} of Skinner-Urban for $\mathrm{GSp}_4$ . The space of semi-ordinary modular forms is a direct factor of $\M^{\dag\kappa}_w(\mathfrak{X}_{\mathrm{Iw}}(p)(0))$ cut out by a projector.
\end{rem}

\begin{prop} There is a natural injective map:
$$ \M^{\dag\kappa}(\mathfrak{X}_{\mathrm{Iw}}(p)) \hookrightarrow \M^{\infty\kappa}.$$
As a result, locally analytic overconvergent modular forms are $p$-adic modular forms.
\end{prop}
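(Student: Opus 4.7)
The plan is first to upgrade the map $\M^{\dag\kappa}_w(\mathfrak{X}_{\mathrm{Iw}}(p)(0)) \to \M^{\infty\kappa}$ constructed in the preceding paragraph to the full inductive limit, and then to establish injectivity using density of the ordinary locus. For each admissible pair $(v,w)$, composing restriction to the ordinary locus with the map above gives
$$\M^{\dag\kappa}_w(\cXI(p)(v)) \longrightarrow \M^{\infty\kappa};$$
these arrows are compatible with shrinking $v$ trivially, and with enlarging $w$ because the transition maps $\mathfrak{w}^{\dag\kappa}_w \to \mathfrak{w}^{\dag\kappa}_{w'}$ arise from $\mathfrak{IW}^+_{w'} \hookrightarrow \mathfrak{IW}^+_w \times_{\mathfrak{X}_1(p^n)(v)}\mathfrak{X}_1(p^{n'})(v)$, through which the immersion $i_{w'}$ factors. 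Passing to the filtered colimit yields the desired arrow.

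For injectivity, since the colimit is filtered it suffices to argue at each finite stage. Let $f \in \HH^0(\cXI(p)(v), \omega^{\dag\kappa}_w)$ map to zero in $\M^{\infty\kappa}$, and let $\bar f$ be its restriction to $\cXI(p)(0)$. Viewing $\bar f$ as a $\kappa'$-equivariant analytic function on the preimage of the ordinary locus in $\mathcal{IW}^{o+}_w$, the hypothesis is that $\bar f$ pulls back to zero under the $\B(\Z_p)$-equivariant embedding $\mathfrak{X}_1(p^\infty)(0)/\mathfrak{U}_n(\Z_p) \hookrightarrow \mathfrak{IW}^+_w$. Over the ordinary locus the canonical subgroup extends to infinite level and the Hodge--Tate map yields a canonical isomorphism of the universal $\Z_p^g$-trivialization with $\omega_G$, as in the final remark of \S\ref{sec-sheafF}; this exhibits the image of $i$ as a section of the $\B(\Z_p)\mathfrak{B}_w$-torsor structure on $\mathfrak{IW}^+_w$ over the ordinary part of $\cX_{\mathrm{Iw}^+}(p^n)$, and $\kappa'$-equivariance then forces $\bar f \equiv 0$ on the whole ordinary locus.

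It remains to deduce $f = 0$ from vanishing on $\cXI(p)(0)$. On any admissible affinoid cover, $\omega^{\dag\kappa}_w$ is isomorphic to $\oscr \hat\otimes_K V^{w-\an}_{\kappa',K}$ by the local description in \S\ref{sect-rigide}, hence torsion-free; while $\cXI(p)(v)$ is reduced and $\cXI(p)(0)$ is Zariski dense in it, the complement being cut out (modulo $p^v$) by the Hasse invariant. Restriction of sections of a torsion-free sheaf from a reduced rigid space to a Zariski dense admissible open is therefore injective, so $f = 0$, and the map on the colimit is injective. The main technical point I expect is the torsor-section identification of the image of $i$ over the ordinary locus; once that is granted, the proof reduces to the two density statements above.
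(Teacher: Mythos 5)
Your reduction to the two injectivity statements (restriction from $\cXI(p)(v)$ to the ordinary locus, and then from the ordinary locus into $\M^{\infty\kappa}$) is the same skeleton as the paper's argument, and the first of these is handled correctly: it is the analytic continuation principle, which requires exactly that every connected component of $\cXI(p)(v)$ meet $\cXI(p)(0)$, i.e.\ the surjectivity of $\Pi_0(\cXI(p)(0))\rightarrow \Pi_0(\cXI(p)(v))$ that the paper invokes. (One should also note, as the paper does, that the integral modules are $\ocal_K$-torsion free, so injectivity may be checked after inverting $p$.)

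The gap is in the step from $\M^{\dag\kappa}_w(\mathfrak{X}_{\mathrm{Iw}}(p)(0))$ into $\M^{\infty\kappa}$. You assert that the image of $i$ is ``a section of the $\B(\Z_p)\mathfrak{B}_w$-torsor structure on $\mathfrak{IW}^+_w$'' and that $\kappa'$-equivariance then forces $\bar f\equiv 0$. But $\mathfrak{IW}^+_w$ is \emph{not} a $\B(\Z_p)\mathfrak{B}_w$-torsor over $\mathfrak{X}_{\mathrm{Iw}}(p^n)(0)$: its fibres are polydiscs of radius $p^{-w}$ in the lower-triangular (unipotent) directions as well as in the torus directions, and the right action of $\B(\Z_p)\mathfrak{B}_w$ cannot move a point in the lower-unipotent directions. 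The image of $\mathfrak{X}_1(p^\infty)(0)/\mathfrak{U}_n(\Z_p)$, even after saturating by the $\B(\Z_p)\mathfrak{B}_w$-action, consists fibrewise only of the $\Z_p$-points (congruent to $1$ mod $p^n$) of these polydiscs --- a Zariski-dense but not admissible-open subset. So equivariance alone does not propagate the vanishing of $\bar f$ to all of $\mathcal{IW}^{o+}_w$ over the ordinary locus. The missing input is precisely the one the paper isolates: working locally, the map in question is the restriction $V^{w-\an}_{\kappa'}\rightarrow \mathcal{F}^{0}(\I)$ of a $w$-analytic function on the thickened Iwahori to the honest $\Z_p$-points, and one must invoke the (elementary but essential) fact that an analytic function on a polydisc vanishing on its $\Z_p$-points vanishes identically. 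With that lemma inserted in place of the torsor claim, your argument closes up and coincides with the paper's.
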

\begin{demo} This map is obtained as the limit  of the maps  $\M^{\dag\kappa}_w(\mathfrak{X}_{\mathrm{Iw}}(p)(v)) \rightarrow \M^{\dag\kappa}_w(\mathfrak{X}_{\mathrm{Iw}}(p)(0)) \rightarrow \M^{\infty\kappa}$. All spaces are torsion free $\ocal_K$-module so the injectivity can be checked after inverting $p$.  The injectivity of $\M^{\dag\kappa}_w(\cX_{\mathrm{Iw}}(p)(v)) \rightarrow \M^{\dag\kappa}_w(\cX_{\mathrm{Iw}}(p)(0))$ for $v$ small enough follows from the  the surjectivity  of the map  on the connected components $\Pi_0(\cX_{\mathrm{Iw}}(p)(0)) \rightarrow \Pi_0(\cX_{\mathrm{Iw}}(p)(v))$. The injectivity of the map $\M^{\dag\kappa}_w({\cX}_{\mathrm{Iw}}(p)(0)) \rightarrow \M^{\infty\kappa}[1/p]$ can be checked locally over $\cX_{\mathrm{Iw}}(p)(0)$. This boils down to the injectivity of the restriction map: $V^{w-\an}_{\kappa'}  \rightarrow \mathcal{F}^{0} (\I)$ where $\mathcal{F}^{0}(\I)$ is the space of $\mathcal{C}^0$, $\ocal_K$-valued  functions on $\I$.
\end{demo}

\subsection{Independence of the compactification}\label{part_indep_compact}
We will see that our modules of overconvergent modular forms are in fact independent of the compactifications.

If $S$ is a   rigid space, we say that a function $f$ on $S$ is bounded if the supremum norm  $\sup_{x \in S} \vert f(x) \vert$ is finite. If $S$ is
quasi-compact, then this property is automatically satisfied (see \cite{Bos}, p. 23).  We now recall the following  result:
\begin{thm}[\cite{Luk}, thm 1.6.1]\label{lucky} Let  $S$ be a smooth, quasi-compact
rigid space and $Z$ a co-dimension $\geq 1$ Zariski-closed subspace. Then any bounded function on
$S\backslash Z$ extends uniquely to $S$.
\end{thm}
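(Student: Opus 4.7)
My plan is to reduce the statement to a purely local computation on an affinoid polydisc and then invoke a Weierstrass-preparation / Laurent-expansion argument, which is the rigid-analytic analogue of Riemann's removable singularities theorem. Since $S$ is quasi-compact, I can cover it by finitely many affinoid opens and, because uniqueness of extension is automatic, I only have to produce the extension locally. So I reduce immediately to the case $S = \mathrm{Sp}(A)$ smooth affinoid and to proving that any bounded $f \in \oscr_S(S \setminus Z)$ lies in the image of $\oscr_S(S) \to \oscr_S(S \setminus Z)$.

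Next I would reduce $Z$ to a principal divisor. Smoothness of $A$ together with the codimension $\geq 1$ hypothesis on $Z$ means that, after further shrinking of $S$, the ideal $I \subset A$ defining $Z$ contains a non-zero-divisor $h \in A$, i.e.\ $Z \subset V(h)$. Since extending across $V(h)$ is a stronger statement than extending across $Z$, it suffices to prove the theorem with $Z = V(h)$. After passing to a finite surjective map to a polydisc (Noether normalization for affinoid algebras) and applying Weierstrass preparation, I may further assume that $A$ is a free module over a Tate algebra $T_{n-1}\langle X_n\rangle$ and that $h$ is a Weierstrass polynomial in $X_n$; this puts the problem in a standard form where $V(h)$ is finite flat over $\mathrm{Sp}(T_{n-1})$.

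The core computation is then as follows. On the admissible open $S \setminus V(h)$, any section $f$ expands as a convergent Laurent series in $h$, with coefficients that are sections of $\oscr_S$. Concretely, writing $S$ as the union of the admissible opens $\{|h| \geq p^{-n}\}$ and using the annular description of a neighbourhood of $V(h)$, the maximum modulus principle applied to the assumed bound $\sup |f| < \infty$ forces all coefficients of strictly negative powers of $h$ to vanish. The remaining non-negative part converges on all of $S$ and provides the desired extension. Uniqueness holds because $S \setminus Z$ is Zariski dense in $S$ (as $S$ is smooth, hence reduced, and $Z$ has codimension $\geq 1$).

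The step I expect to be the main obstacle is the reduction of $Z$ to a principal divisor: one needs to know that on a smooth rigid space any Zariski-closed subset of codimension $\geq 1$ is, locally, cut out (up to enlarging) by a single function, together with the correct rigid-analytic dimension theory to make ``codimension'' behave as in the algebraic case. Once this is in place, the Weierstrass-preparation argument and the Laurent expansion are essentially formal; the glueing of the local extensions is automatic by uniqueness, and smoothness ensures there are no pathologies at generic points of $Z$.
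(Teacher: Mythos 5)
This theorem is not proved in the paper at all: it is imported verbatim from L\"utkebohmert's article (the reference \cite{Luk} in the bibliography), so there is no internal argument to compare yours against. Your sketch follows what is essentially the classical line of proof of the non-archimedean Riemann removable-singularity theorem: localize to a smooth affinoid, replace $Z$ by a hypersurface $V(h)$ with $h$ a non-zero-divisor in the ideal of $Z$, put $h$ in Weierstrass form, expand in negative powers of $h$ on the annuli $\{|h|\geq \varepsilon\}$, and kill the principal part using boundedness and the maximum modulus principle; uniqueness and glueing come from the identity theorem on a reduced space. That architecture is sound and is the right one.

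The step you should not wave through is the reduction to the Weierstrass situation. Noether normalization gives a finite map $\mathrm{Sp}(A)\to \mathrm{Sp}(T_n)$, but it does not transport the extension problem to the polydisc: $f$ and $h$ live upstairs, and a bounded function on $S\setminus V(h)$ has no obvious pushforward. What one actually does is either (a) replace $h$ by its norm $N(h)\in T_n$, so that $V(h)$ is contained in the preimage of the Weierstrass hypersurface $V(N(h))$, and then run the Laurent expansion for $N(h)\circ\pi$ using that $A$ is finite free over $T_{n-1}\langle X_n\rangle$ (so that $A\langle \varepsilon/w\rangle$ admits a normal form $f=f_0+\sum_{k\geq 1}b_k w^{-k}$ with $b_k$ of bounded degree in $X_n$, via Weierstrass division), or (b) work \'etale-locally where $S$ is a polydisc. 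In either case the uniqueness of the expansion and the uniformity in $\varepsilon$ of the bound $\sup|f|<\infty$ are what force the $b_k$, $k\geq 1$, to vanish; you need both, since the expansions on $\{|h|\geq\varepsilon\}$ for varying $\varepsilon$ must be shown compatible before one can let $\varepsilon\to 0$. With that step made precise, your argument is a correct reconstruction of the cited result; for the purposes of this paper, however, the theorem is simply being quoted, and only its statement (applied in section \ref{part_indep_compact} to extend bounded sections of $\omega^{\dag\kappa}_w$ across the boundary) is used.
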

Let $Y_{\mathrm{Iw}}^{\an}$ be the analytification of $Y_{\mathrm{Iw}}$ (see for example \cite{Bos}, sect. 1.13).
Set $\cXI(p)(v)\cap Y^{\an}_{\mathrm{Iw}} = \cY_{\mathrm{Iw}}(v)$. The space  $\cY_{\mathrm{Iw}}(v)$ does not depend on the compactification.  We say that $f \in  \HH^0( \cY_{\mathrm{Iw}}(v), \omega_{{w}}^{\dag\kappa})$ is bounded if it is bounded  when considered as a   function on the rigid space $\mathcal{IW}^{o+}_w \times_{\cX_{\mathrm{Iw}}(v)} \cY_{\mathrm{Iw}}(v)$.

\begin{prop} The module of $w$-analytic and $v$-overconvergent modular forms  is  exactly the submodule  of bounded sections of
 $ \HH^0( \cY_{\mathrm{Iw}}(v), \omega_{{w}}^{\dag\kappa})$.  In particular, $\M^{\dag\kappa}_w(\cXI(p)(v))$ is independent on the choice
of the toroidal compactification.
\end{prop}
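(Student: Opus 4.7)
\begin{demo}[Proof proposal]
The plan is to prove the two inclusions and then extract the independence statement as an immediate corollary.

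\medskip

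\textbf{The easy inclusion.} If $f\in \mathrm{M}_w^{\dag\kappa}(\cXI(p)(v)) = \HH^0(\cXI(p)(v), \omega_w^{\dag\kappa})$, then $f$ corresponds by definition to a $\kappa'$-equivariant rigid analytic function on the rigid space $\mathcal{IW}_w^{o+}$. Since $\cXI(p)(v)$ is quasi-compact and the map $g\colon \mathcal{IW}_w^{o+}\rightarrow \cXI(p)(v)$ has rigid-analytic fiber described as a product of closed unit polydiscs (up to the action of the compact torsor structure group $\T(\Z_p)\T_w$), the space $\mathcal{IW}_w^{o+}$ is itself quasi-compact. By the remark following Theorem~\ref{lucky}, any global function on a quasi-compact rigid space is automatically bounded, so the restriction of $f$ to $\mathcal{IW}_w^{o+}\times_{\cXI(v)} \cY_{\mathrm{Iw}}(v)$ is bounded.

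\medskip

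\textbf{The harder inclusion.} Conversely, let $f\in \HH^0(\cY_{\mathrm{Iw}}(v), \omega_w^{\dag\kappa})$ be bounded; view it as a bounded rigid analytic function on $U:=\mathcal{IW}_w^{o+}\times_{\cXI(v)} \cY_{\mathrm{Iw}}(v)$. I want to apply Theorem~\ref{lucky} to the inclusion $U\hookrightarrow \mathcal{IW}_w^{o+}$. For this, three things must be checked.  First, $\mathcal{IW}_w^{o+}$ is quasi-compact (as above) and smooth: indeed, a sufficiently fine toroidal compactification $\XI$ can be chosen to be smooth, the canonical subgroup cover $\mathfrak{X}_1(p^n)(v)\rightarrow \mathfrak{X}(v)$ is finite \'etale in the rigid fiber, and $\mathcal{IW}_w\rightarrow \cX_1(p^n)(v)$ and $\mathcal{IW}_w^+\rightarrow \mathcal{IW}_w$ are open inside smooth torsors over Grassmannian / torus bundles, so all the maps are smooth. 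Second, the complement $Z:=\mathcal{IW}_w^{o+}\setminus U$ is the preimage under $g$ of the boundary divisor $\cXI(p)(v)\setminus \cY_{\mathrm{Iw}}(v)$, which is Zariski-closed of codimension one (it is the analytification of a divisor in the toroidal compactification), and hence $Z$ itself is Zariski-closed of codimension $\geq 1$ in $\mathcal{IW}_w^{o+}$ since $g$ is smooth. Third, by hypothesis $f$ is a bounded function on $U$. Theorem~\ref{lucky} then produces a unique extension $\tilde{f}\in \HH^0(\mathcal{IW}_w^{o+}, \oscr)$. The $\kappa'$-equivariance of $\tilde{f}$ follows from that of $f$ by uniqueness of the extension applied to the difference $b^\ast\tilde{f}-\kappa'(b)\tilde{f}$ for $b\in \B(\Z_p)\mathfrak{B}_w$, whose restriction to $U$ vanishes. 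Therefore $\tilde{f}\in \mathrm{M}_w^{\dag\kappa}(\cXI(p)(v))$ and restricts to $f$.

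\medskip

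\textbf{Independence of the compactification and the main obstacle.} Given two toroidal compactifications $\XI$ and $\XI'$ there is, by the theory of \cite{FC}, a third one dominating both. The open $\cY_{\mathrm{Iw}}(v)$ and the notion of being a bounded section of $\omega_w^{\dag\kappa}$ on it depend only on the non-compactified Siegel variety with Iwahori structure, not on the compactification; the characterization just proved identifies $\mathrm{M}_w^{\dag\kappa}(\cXI(p)(v))$ with this intrinsic object, which yields the independence. The main obstacle is the verification that the hypotheses of Theorem~\ref{lucky} apply, and specifically the smoothness of $\mathcal{IW}_w^{o+}$ together with the fact that the boundary pulled back from $\XI$ is a genuine Zariski-closed subspace of codimension $\geq 1$; once this geometric input is in place, the extension and the $\kappa'$-equivariance follow formally.
\end{demo}
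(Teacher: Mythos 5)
Your proposal is correct and follows essentially the same route as the paper: restriction is injective and lands in bounded sections because $\mathcal{IW}^{o+}_w$ is quasi-compact, and conversely a bounded section extends across the boundary by Theorem~\ref{lucky} and is seen to be equivariant (for the torus action $\T(\Z_p)$, which is the relevant group here) by uniqueness of the extension. The paper's proof is just a terser version of yours, leaving the verification of the hypotheses of Theorem~\ref{lucky} implicit.
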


\begin{demo}  The map $\M^{\dag\kappa}_w(\cXI(p)(v)) \rightarrow \HH^0( \cY_{\mathrm{Iw}}(v), \omega_{{w}}^{\dag\kappa})$ is clearly injective.
Let $$f \in \HH^0( \cY_{\mathrm{Iw}}(v), \omega_{{w}}^{\dag\kappa})$$ be a bounded section. This is a bounded function on
$\mathcal{IW}^{o+}_w \times_{\cX_{\mathrm{Iw}}(v)} \cY_{\mathrm{Iw}}(v)$, homogeneous for the action of the torus $\T(\Z_p)$.
By theorem \ref{lucky}, it extends to  a function on $\mathcal{IW}^{o+}_w$, which is easily seen to be homogeneous of the same weight.
\end{demo}

\bigskip

\begin{rem} The module $\M^{\dag\kappa}_w(\cXI(p)(v))$ could thus have been defined without reference  to any compactification. Nevertheless,
compactifications will turn out to be quite useful in the next section allowing us to prove properties of these modules.
\end{rem}

\subsection{Dilations}\label{sec:dilations}

For our study of Hecke operators it is useful to define slight generalizations of the spaces $\mathcal{IW}^{o+}_w$. This section is technical and may
be skipped at
the first reading.  Let $n \in \N, v < \frac{1}{2p^{n-1}}$ (resp.~$v< \frac{1}{3p^{n-1}}$ if $p=3$) and   let $\underline{w} = (w_{i,j})_{1 \leq j\leq i \leq g} \in
]\frac{v}{p-1}, n- v\frac{p^n}{p-1}]^{\frac{g(g+1)}{2}}$ satisfying $w_{i+1, j} \geq w_{i,j}$ and $w_{i, j-1} \geq w_{i,j}$.  We will call $(w_{i,j})$ a dilation
parameter. Let  $\mathcal{IW}^{o+}_{\underline{w}}$ be the open subset of $\mathcal{T}_{\an}/\U_{ \cXI(p)(v)}$ such that for any finite extension $L$ of $K$, an element
in $\mathcal{IW}^{o+}_{\underline{w}}(L)$ is the data of:
 \begin{itemize}
\item an $\ocal_L$-point of $\cXI(p)(v)$, coming from  a semi-abelian scheme $G \rightarrow \ocal_L$, with $H_n$ its canonical subgroup of level $n$,  and a flag
$\Fil_\bullet H_n$,

\item a flag of differential forms $\Fil_\bullet \mathcal{F} \in \mathcal{T}_{\rig}/\B(\ocal_L)$ and  for all $1 \leq j \leq g$, an element $\omega_j \in \Gr_j \mathcal{F}$ such that  there is   a trivialization $\psi\colon  H^D_n(\bar{K}) \simeq \Z/p^n\Z^g$  where $\psi$ is compatible with  $\Fil_\bullet H_1$,  and the following holds:

Denote by $e_1,\ldots,e_g$ the canonical basis of $\Z/p^n\Z^g$, set $w_0 = n- v\frac{p^n}{p-1}$   and by abuse of notation set  $\HT_{w_0}$  for the map $\HT_{w_0} \circ \psi$; then for all $1 \leq i \leq g$, we have:
$$ \omega_i  \mod \Fil_{i-1} \mathcal{F} + p^{w_0} \mathcal{F}   = \sum_{ j \geq i} a_{j,i} \HT_{w_0}(e_j) $$  where $a_{j,i} \in \ocal_L$ and $v(a_{j,i}) \geq w_{j,i}$    if $j > i $ and $v(a_{i,i}-1) \geq w_{i,i}$.

\end{itemize}

\medskip

When $w_{i,j} = w$ for all $1 \leq j \leq i \leq g$ and there exists $n \in \N$ such that $w \in ]n -1 + \frac{v}{p-1}, n - v\frac{p^n}{p-1}]$, we have $\mathcal{IW}^{o+}_{\underline{w}} = \mathcal{IW}^{o+}_{w}$. The spaces $\mathcal{IW}^{o+}_{\underline{w}}$ are dilations of the space $\mathcal{IW}^{o+}_{w_0}$, in the sense that we relax the $w_0$-compatibility with $\psi$ and impose a weaker condition.
The rigid space  $\mathcal{IW}^{o+}_{\underline{w}}$ is locally  for the \'etale topology over $\cXI(p)(v)$  isomorphic to

$$   \begin{pmatrix} 
     1 + p^{w_{1,1}} \B(0,1)  & 0 & \cdots & 0   \\
      p^{w_{2,1}} \B(0,1) &  1 + p^{w_{2,2}}\B(0,1) & \cdots & 0\\
 \vdots & \vdots & \ddots & 0 \\
p^{w_{g,1}} \B(0,1) & \cdots & p^{w_{g,g-1}} \B(0,1) & 1 + p^{w_{g,g}} \B(0,1) \\

   \end{pmatrix}.\I/\U(\Z_p). $$

If $\kappa$ is a $\inf_i \{w_{i,i}\}$-analytic character, we  can define Banach sheaves $\omega^{\dag\kappa}_{\underline{w}}$ and the space $\M^{\dag\kappa}_{\underline{w}}\big(\cXI(p)(v)\big)$ of $\underline{w}$-analytic, $v$-overconvergent modular forms as  in section \ref{sect-rigide}.

\begin{rem} If $\underline{w}$ and $\underline{w'}$ are two dilation parameters satisfying $w_{i,j} = w'_{i,j}$ as soon as $i \neq j$, and if $\kappa$ is a $\inf_{i,j} \{ w_{i,i}, w'_{j,j}\}$ analytic character then the sheaves $\omega_{\underline{w}}^{\dag\kappa}$ and $\omega_{\underline{w}'}^{\dag\kappa}$ are canonically isomorphic.
\end{rem}

\section{Hecke operators}\label{sect_Hecke}
In this section we define  an action of the Hecke operators on the space of overconvergent modular forms and we single out
one of these operators  which is  compact.
\subsection{Hecke operators outside $p$}\label{part_Hecke_p_prime}
Let $q$ be a prime integer with $(q,p)=1$ and let $\gamma \in \gsp(\qq_q) \cap \mathrm{M}_{2g}(\Z_q)$.
Let $C_{\gamma} \subset Y_{\mathrm{Iw}} \times Y_{\mathrm{Iw}}\times \Spec~K$ be the moduli space over $K$ classifying pairs $(A, A')$ of principally polarized abelian schemes of dimension $g$, equipped with  level $N$ structures $(\psi_N, \psi_N')$, flags $\Fil_\bullet A[p]$ and $\Fil_\bullet A'[p]$ of $A[p]$ and $A'[p]$,  and an isogeny $\pi\colon  A \rightarrow A'$ of type $\gamma$, compatible with the level structures, the flags and the polarizations (see \cite{FC}, chapter 7). We have two finite \'etale projections $p_1$, $p_2\colon C_{\gamma} \rightarrow Y_{\mathrm{Iw},K}$. They extend to projections on the analytifications $p_1$, $p_2\colon C_{\gamma}^{\an} \rightarrow Y_{\mathrm{Iw},K}^{\an}$. There is an issue with the boundary: in general
it is not possible to find toroidal compactifications for $C_{\gamma}$ and $Y_{\mathrm{Iw}}$ in such a way that the
projections $p_1$ and $p_2$ extend to finite morphisms.  Moreover if one varies $\gamma$ it is not possible to find
toroidal compactifications of $Y$ and the $C_{\gamma}$'s  such that all projections extend to the compactifications.
Therefore we will  define Hecke operators on  $ \HH^0( \cY_{\mathrm{Iw}}(v), \omega_{{w}}^{\dag\kappa})$ and show  that
these Hecke operators  map bounded functions to bounded functions thus defining an action on
$\M^{\dag\kappa}_w(\cXI(p)(v))$ (see section \S\ref{part_indep_compact}).

The isogeny $\pi$ induces a map $\pi^\ast\colon  \omega_{A} \rightarrow \omega_{A'}$, hence a map $\pi^\ast\colon  p_2^\ast \mathcal{T}_{\an}^\times/\U \rightarrow p_1^\ast \mathcal{T}_{\an}^\times/\U$ which is an isomorphism.   Let $n\in \N$, $v < \frac{1}{2p^{n-1}}$ (resp $< \frac{1}{3p^{n-1}}$ if $p=3$. Set $C_{\gamma}^{\an}(v) = C_{\gamma}^{\an} \times_{p_1} \cY_{\mathrm{Iw}}(v) = C_{\gamma}^{\an} \times_{p_2} \cY_{\mathrm{Iw}}(v) $. The last equality follows from the facts that level prime-to-$p$ isogenies are \'etale in characteristic $p$ and that the Hodge height is preserved under \'etale isogenies. Let $w \in ]n-1 +\frac{v}{p-1}, n-v \frac{p^n}{p-1}]$.
\begin{lem}\label{lem_pi_B} The map $\pi^\ast$ induces an isomorphism  $$\pi^\ast\colon    p_2^\ast \mathcal{IW}^{o+}_w\vert_{\cY_{\mathrm{Iw}}(v)} \simeq  p_1^\ast \mathcal{IW}^{o+}_w\vert_{\cY_{\mathrm{Iw}}(v)}.$$
\end{lem}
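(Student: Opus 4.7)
The plan is to leverage the fact that $\pi$ is prime-to-$p$, so $\pi$ is étale at $p$ and all the $p$-adic structures (Hodge height, canonical subgroup, dual canonical subgroup, sheaf $\mathcal{F}$, Hodge-Tate map) transfer isomorphically.

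First I would record the basic facts: since $(q,p)=1$ the isogeny $\pi\colon A\to A'$ has kernel étale in the neighbourhood of $p$, so $\pi^\ast\colon \omega_{A'}\to \omega_{A}$ is an isomorphism and $\pi\colon A[p^n]\to A'[p^n]$ is an isomorphism of finite flat group schemes. Because $\pi$ preserves the flag $\Fil_\bullet A[p]$, it is compatible with the Harder-Narasimhan filtration at $p$, and therefore it identifies canonical subgroups $\pi\colon H_n\stackrel{\sim}{\to} H'_n$; dually, $\pi^D\colon (H'_n)^D\stackrel{\sim}{\to} H_n^D$, so that a trivialization $\psi$ of $H_n^D$ on one side corresponds to a trivialization $\psi'=\pi^D\circ\psi$ of $(H'_n)^D$ on the other (and these trivializations agree modulo $\B(\Z/p^n\Z)$ since the flag data at $p$ are compatible by hypothesis).

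Next I would apply the functoriality statement of proposition \ref{prop-compatt} to the isogeny $\pi$. The key point is that, since $\pi$ is étale, the vertical isogeny map $\pi^\ast$ in the diagram of that proposition is an isomorphism on $\omega_G$, hence (being compatible with $\HT_w$ on both sides by naturality) it restricts to an isomorphism $\pi^\ast\colon \mathcal{F}'\stackrel{\sim}{\to}\mathcal{F}$ of the canonical rank-$g$ sub-sheaves constructed in section \ref{sec-sheafF}, and reduces modulo $p^w$ to the identification induced by $\pi^D$ on trivializations. Consequently the isomorphism $\pi^\ast\colon p_2^\ast\mathcal{T}^\times_{\an}/\U\stackrel{\sim}{\to} p_1^\ast\mathcal{T}^\times_{\an}/\U$ sends the $w$-compatibility condition (defined in \S\ref{sect-main} and lifted to the rigid Iwahori covering in \S\ref{sect-rigide}) on the $A'$-side to the corresponding condition on the $A$-side. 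This yields the claimed isomorphism $\pi^\ast\colon p_2^\ast\mathcal{IW}^{o+}_w|_{\cY_{\mathrm{Iw}}(v)}\stackrel{\sim}{\to} p_1^\ast\mathcal{IW}^{o+}_w|_{\cY_{\mathrm{Iw}}(v)}$, and the inverse is constructed symmetrically from the dual isogeny.

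The one point requiring care, which I expect to be the main obstacle, is the book-keeping of trivializations versus flags: the $\mathcal{IW}^{o+}_w$-condition is formulated in terms of a trivialization $\psi$ of $H_n^D$ (i.e.~after pulling back further to $\cX_{\mathrm{Iw}^+}(p^n)(v)$), whereas $C_\gamma$ only records compatibility of the flags $\Fil_\bullet A[p]$ and $\Fil_\bullet A'[p]$. One has to check that pulling back via $p_1$ and $p_2$ to the level-$p^n$ cover makes $\pi^D$ send the universal trivialization on one side to the universal trivialization on the other modulo $\B(\Z/p^n\Z)$, which is exactly the descent compatibility needed so that $\mathcal{IW}^{o+}_w$—defined on the quotient $\cXI(p)(v)$—is preserved. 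Since $\pi^D$ is an isomorphism of étale $\Z/p^n$-modules lifting the given flag compatibility, this check is routine and the lemma follows.
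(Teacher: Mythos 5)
Your proposal is correct and follows essentially the same route as the paper: the paper also checks the statement at the level of points, uses that an isogeny of type $\gamma$ with $(q,p)=1$ induces an isomorphism $H_n(L)\tilde{\rightarrow}H'_n(L)$ on canonical subgroups (hence on their duals), and then invokes the commutative diagram of proposition \ref{prop-compatt} relating $\pi^\ast$ on $\mathcal{F}'\rightarrow\mathcal{F}$ to $\pi^D$ via $\HT_w$, concluding that since the bottom row and the maps $\HT_w\otimes 1$ are isomorphisms, $\pi^\ast$ is an isomorphism preserving $w$-compatibility. Your additional remarks on the book-keeping of trivializations versus flags are a reasonable elaboration of what the paper leaves implicit.
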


\begin{proof} We can check the equality at the level of points. Let $L$ be a finite extension of $K$, and $A$, $A'$ be two semi-abelian schemes over $\Spec~\ocal_K$ with canonical subgroup of level $n$, say $H_n$ and $H_n'$. Enlarging
$L$, we may assume that $H_n(L) \simeq H_n'(L) \simeq \Z/p^n\Z^g$.  Let $\pi\colon  A \rightarrow A'$ be an isogeny of type $\gamma$. It induces a group isomorphism  $H_n({L}) \tilde{\rightarrow} H_n'({L})$. We have a commutative diagram (see section \ref{sect-functo}):
\begin{eqnarray*}
\xymatrix{ \mathcal{F}' \ar[r]^{\pi^\ast}\ar[d]  & \mathcal{F} \ar[d] \\
\mathcal{F}'/p^w \ar[r] & \mathcal{F}/p^w \\
(H_n')^D(L) \ar[r]^{\pi^D}\ar[u]^{\mathrm{HT}_w}& H_n^D(L)\ar[u]^{\mathrm{HT}_w}}
\end{eqnarray*}

Since the bottom line is an isomorphism and the maps $\HT_w \otimes 1$ are isomorphisms it follows that $\pi^\ast$ is an isomorphism.

\end{proof}

We let $\pi^{\ast-1}$ be the inverse of the isomorphism given by the proposition. We can now define the Hecke operator $T_{\gamma}$ as the composition:

$$T_\gamma\colon   \HH^0(\cY_{\mathrm{Iw}}(v), \omega_{w}^{\dag\kappa}) \stackrel{p_2^\ast}\rightarrow \HH^0( C_\gamma(v), p_2^\ast \omega_{w}^{\dag\kappa}) \stackrel{\pi^{\ast-1}} \rightarrow \HH^0( C_\gamma(v), p_1^\ast \omega_{w}^{\dag\kappa}) \stackrel{\mathrm{Tr}p_1}\rightarrow \HH^0(\cY_{\mathrm{Iw}}(v), \omega_{w}^{\dag\kappa}).$$
\subsection{Hecke operators at $p$}
We now define an action of the dilating Hecke algebra at $p$. For $i=1,\ldots,g$, let $C_i$ be the moduli scheme
over $K$ parametrizing principally polarized abelian schemes $A$, a level $N$ structure $\psi_N$, an self-dual  flag $\Fil_\bullet A[p]$ of subgroups  of $A[p]$ and a lagrangian sub-group $L \subset A[p^2]$ if $i=1,\ldots, g-1$ or $L\subset A[p]$ if $i=g$, such that $L[p] \oplus \Fil_i A[p]= A[p]$. There are two projections $p_1$, $p_2\colon C_i \rightarrow Y_{\mathrm{Iw},K}$. The first projection is defined by forgetting $L$. The second projection is defined by mapping $(A, \psi_N, \Fil_\bullet A[p])$ to $(A/L, \psi_N', \Fil_\bullet A/L[p])$ where $\psi_N'$ is the image of the level $N$ structure and $\Fil_\bullet A/L[p] $ is defined as follows:

\begin{itemize}
\item For $j = 1,\ldots, i$, $\Fil_j A/L[p]$ is simply the image of $\Fil_j A[p]$ in $A/L$,
\item For $j= i+1,\ldots, g $, $\Fil_j A/L[p]$ is the image in $A/L$ of  $\Fil_j A[p] + p^{-1}( \Fil_j A[p] \cap pL)$.
\end{itemize}
As before we consider the analytifications $p_1$, $p_2\colon C_i^{\an} \rightarrow Y_{\mathrm{Iw}}^{\an}$.
\subsubsection{The operator $U_{p,g}$}\label{sec:Upg}
We start by recalling the following result:
\begin{prop2}[\cite{Far}, prop. 17]\label{prop_U_pppp} Let $G$ be a semi-abelian scheme of dimension $g$ over $\ocal_K$, generically abelian. Assume that $\Hdg(G) < \frac{p-2}{2p-2}$. Let $H_1$ be the canonical sub-group of level $1$ of $G$ and let $L$ be a sub-group of $G_K[p]$ such that $H_1 \oplus L = G_K[p]$. Then $\Hdg(G/L) = \frac{1}{p} \Hdg(G)$, and $G[p]/L$ is the canonical sub-group of level $1$ of $G/L$.
\end{prop2}
Let $\mathcal{C}_g(v) = C_g^{\an} \times_{p_1, Y_{\mathrm{Iw}}^{\an}} \cY_{\mathrm{Iw}}(v)$. If $v <  \frac{p-2}{2p-2}$, by the previous proposition, we have a diagram:
\begin{eqnarray*}
\xymatrix{& \mathcal{C}_g(v)\ar[ld]^{p_1} \ar[rd]_{p_2} & \\
\cY_{\mathrm{Iw}}(v) &  & \cY_{\mathrm{Iw}}(\frac{v}{p})}
\end{eqnarray*}
 Let  $\pi\colon  A \rightarrow A'$ be the universal isogeny over $\mathcal{C}_g(v)$. It induces a map $\pi^\ast\colon  \omega_{A'} \rightarrow \omega_{A}$ and a map $\pi^\ast\colon  p_2^\ast \mathcal{T}_{\an}^\times/\U \rightarrow p_1^\ast \mathcal{T}_{\an}^\times/\U$. This map is an isomorphism.  Let $n \in \N$, and $v < \inf\{ \frac{1}{4p^{n-1}}\}$. Let $w \in ]n -1 + v\frac{1}{p-1}, n-v \frac{p^n}{p-1}]$. We have the lemma whose proof is identical to the proof of lemma \ref{lem_pi_B}:
\begin{lem2} The map $\pi^\ast$ induces an isomorphism  $$\pi^\ast\colon    p_2^\ast \mathcal{IW}^{o+}_w\vert_{\cY_{\mathrm{Iw}}(v)}  \simeq  p_1^\ast \mathcal{IW}^{o+}_w\vert_{\cY_{\mathrm{Iw}}(v)}.$$
\end{lem2}

We let $\pi^{\ast-1}$ be the inverse of this map. Let $\kappa$ be a $w$-analytic character. We now define the Hecke operator $U_{p,g}$ as the composition:
$$  \HH^0(\cY_{\mathrm{Iw}}(\frac{v}{p}), \omega_{w}^{\dag\kappa}) \stackrel{p_2^\ast}\rightarrow \HH^0( \mathcal{C}_g(v), p_2^\ast \omega_{w}^{\dag\kappa}) \stackrel{\pi^{\ast-1}} \longrightarrow \HH^0( \mathcal{C}_g(v), p_1^\ast \omega_{w}^{\dag\kappa}) \stackrel{{p}^\frac{-g(g+1)}{2}\mathrm{Tr}p_1}\longrightarrow \HH^0(\cY_{\mathrm{Iw}}(v), \omega_{w}^{\dag\kappa}).$$
The operator $U_{p,g}$ hence improves the radius of overconvergence. Remark also that we normalize the trace of the map $p_1$ by a factor $p^{-\frac{g(g+1)}{2}}$
which is an inseparability degree (see \cite{PiH}, sect. A.1). By a slight abuse of notation we also denote by $U_{p,g}$
the endomorphism of  $\HH^0(\cY_{\mathrm{Iw}}(v), \omega_{w}^{\dag\kappa})$ defined as the composition of the operator we just defined with the restriction map
$\HH^0(\cY_{\mathrm{Iw}}(v), \omega_{w}^{\dag\kappa}) \rightarrow \HH^0(\cY_{\mathrm{Iw}}(\frac{v}{p}), \omega_{w}^{\dag\kappa})$.

\subsubsection{The operators $U_{p,i}$, $i=1,\ldots,g-1$}\label{sec:Upi}

Let $F$ be a finite extension of $K$ and  $( A,\psi_N, \Fil_\bullet A[p], L)$ be an $F$-point of $C_i$. Let $(A'= A/L, \psi'_N, \Fil_\bullet A'[p])$ be the image by $p_2$ of  $( A,\psi_N, \Fil_\bullet A[p], L)$. Set $\pi\colon  A \rightarrow A/L$ for the isogeny.

\begin{prop2}\label{laverdun} If $\Hdg\bigl(A[p^\infty]\bigr) < \frac{p-2}{2p^2-p}$ and $ \Fil_g A[p]$ is the canonical sub-group of level $1$, then $\Hdg\bigl(A[p^\infty]/L) \leq  \Hdg\bigl(A[p^\infty]\bigr)$ and $\Fil_g A'[p]$ is the canonical sub-group of level $1$ of $A'$.
\end{prop2}
\begin{proof}  We assume that $\Hdg(A[p^\infty])\leq \frac{p-2}{2p^2-p}$ and we are reduced by  proposition  \ref{prop_laver} to show that $\Fil_g A'[p]$ has   degree  greater or equal to  $g-\Hdg(A[p^\infty])$. Let $H_2$ be the canonical subgroup of level $2$ in $A$ and  $x_1,\ldots,x_g$ be a basis of $H_2(\bar{K})$ as a $\Z/p^2\Z$-module. We complete it to a basis $x_1,\ldots,x_{2g}$  of $A[p^2](\bar{K})$. We can assume that $\Fil_\bullet A[p]$ is given by
$0 \subset \langle px_1\rangle \subset \ldots \subset \langle px_1,\ldots, px_g\rangle = H_1$ and that $L$ is given by
$\langle px_{i+1},\ldots, px_{2g-i}, x_{2g-i+1},\ldots, x_{2g}\rangle$. Set $\tilde{H}_1= \langle p x_1,\ldots,p x_i, x_{i+1},\ldots,
x_{g}\rangle$.  With these notations $\Fil_g A'[p] = \tilde{H}_1/L$.  We will show that $\deg~\tilde{H}_1/L \geq
g-\Hdg(A[p^\infty])$. We have a generic isomorphism $$H_2/H_1\stackrel{ \mathrm{diag}(p1_i, 1_{g-i})} \rightarrow
\tilde{H}_1/\langle px_{i+1},\ldots, px_g\rangle,$$ which implies  that $\deg \tilde{H}_1/\langle
px_{i+1},\ldots, px_g\rangle \geq g-p\Hdg(A[p^\infty])$. By proposition \ref{prop_laver}, $$\Hdg(A[p^\infty]/\langle
px_{i+1},\ldots, px_g\rangle) \leq p \Hdg(A[p^\infty])$$ and $\tilde{H}_1/\langle px_{i+1},\ldots, px_g\rangle$ is the canonical
sub-group of level $1$ of $A/\langle px_{i+1},\ldots, px_g\rangle$. At the level of the generic fiber, we have
$$A/\langle px_{i+1},\ldots, px_g\rangle [p] =  \langle px_{g+1},\ldots, px_{2g}\rangle \oplus \tilde{H}_1/
\langle px_{i+1},\ldots, px_g\rangle.$$ By proposition \ref{prop_U_pppp} we obtain $\deg \tilde{H}_1/\langle px_{i+1},\ldots,
p x_{2g}\rangle  \geq g- \Hdg(A[p^\infty])$. We conclude, since the map $\tilde{H}_1/\langle px_{i+1},\ldots, p x_{2g}\rangle
\rightarrow \tilde{H}_1/L$ is a generic isomorphism.
\end{proof}

We  set $\mathcal{C}_i(v) = C_i^{\an} \times_{p_1, Y_{\mathrm{Iw}}^{\an}} \cY_{\mathrm{Iw}}(v)$. If $v \leq v_1$, we have a diagram:
\begin{eqnarray*}
\xymatrix{& \mathcal{C}_i(v)\ar[ld]^{p_1} \ar[rd]_{p_2} & \\
\cY_{\mathrm{Iw}}(v) &  & \cY_{\mathrm{Iw}}(v)}
\end{eqnarray*}

Let $\pi\colon  A \rightarrow A/L$ be the universal isogeny over $\mathcal{C}_i(v)$. We  have a map $\pi^\ast\colon  \omega_{A/L} \rightarrow \omega_{A}$. It induces a map $\tilde{\pi}^\ast\colon  p_2^\ast \mathcal{T}_{\an}^\times \rightarrow p_1^\ast\mathcal{T}_{\an}^\times $ which sends a basis $\omega'_1,\ldots, \omega'_g$ of $\omega_{A/L}$ to $p^{-1}\pi^\ast \omega'_1,\ldots,p^{-1}\pi^\ast \omega_{g-i},  \pi^\ast \omega'_{g-i+1}, \ldots,  \pi^\ast \omega'_{g}$. This map is an isomorphism, we call $\tilde{\pi}^{\ast-1}$ its inverse, and denote by the same symbol the quotient map  $\tilde{\pi}^{\ast-1}\colon  p_1^\ast \mathcal{T}_{\an}^\times/\U\rightarrow p_2^\ast\mathcal{T}_{\an}^\times/\U.$

Let $n\in \N$, $v < \inf\{\frac{1}{3p^{n-1}}, \frac{p-2}{2p^2-p}\}$ and  $\underline{w}=(w_{i,j})_{1\leq j \leq k \leq g}$ be a dilation parameter such that $w_{k,j} \in ]0, n-2-v\frac{ p^{n}}{p-1}]$.
\begin{prop2}\label{prop_Upi} We have $\tilde{\pi}^{\ast-1}  p_1^\ast \mathcal{IW}^{o+}_{\underline{w}} \subset p_2^\ast \mathcal{IW}^{o+}_{\underline{w'}}$ where
\begin{eqnarray*}
w'_{k,j} &= &w_{k,j} ~\textrm{if}~ j\leq k  \leq i, \\
 w'_{k,j} &= &1 + w_{k,j} ~\textrm{if}~ j\leq i~\textrm{and} ~k  \geq i+1,\\
 w'_{k,j}& =& w_{k,j}~\textrm{if}~j \geq i+1.
 \end{eqnarray*}
\end{prop2}
\begin{proof}  Let $(A,\Fil_\bullet A[p],\psi_N,L)$ be an $F$-point of $\mathcal{C}_i(v)$. We set $A'= A/L$ and  assume that
$F$ is large enough to trivialize the  group schemes $H_n$, $H_n^D$, $H'_n$ and ${H'_n}^D$.    There are
$\Z/p^n\Z$-basis  $e_1,\ldots,e_g$ for $H_n(F)$ and $e'_1,\ldots,e'_g$ for $H_n'(F)$ such that the flags on $H_1(F)$
and $H'_1(F)$ are given by $\Fil_j = <p^{n-1}e_g,p^{n-1}e_{g-1},\ldots,p^{n-1}e_{g-j+1}>$ and $\Fil'_j = <p^{n-1}e'_g, p^{n-1}e'_{g-1},\ldots,p^{n-1}e'_{g-j+1}>$ and  the isogeny $\pi$ induces a map $H_n(F) \rightarrow H'_n(F)$ given by $\mathrm{diag} ( p \mathrm{Id}_{g-i}, \mathrm{Id}_i)$ in the basis.  Let $x_1,\ldots,x_g$ and $x'_1,\ldots,x'_g$ be the dual basis of $H_n^D(F)$ and ${H'_n}^D(F)$ (for a choice of a primitive $p^{n}$-th root of unity). The flags on $H_1^D(F)$ and ${H'_1}^D(F)$ are given by $\Fil_j = <x_1,x_{2},\ldots,x_{j}>$ and   $\Fil'_j = <x'_1,x'_{2},\ldots,x'_{j}>$ and the map $\pi^D\colon  {H'_n}^D(F) \rightarrow H_n^D(F)$ is given by $\mathrm{diag} ( p \mathrm{Id}_{g-i}, \mathrm{Id}_i)$.  Set $w_0 = n- v \frac{p^n}{p-1}$. We  have a commutative diagram:

\begin{eqnarray*}
\xymatrix{ \mathcal{F}' \ar[r]^{\pi^\ast}\ar[d]  & \mathcal{F}\ar[d] \\
\mathcal{F}'/p^{w_0} \ar[r] & \mathcal{F}/p^{w_0} \\
(H_n')^D(L) \ar[r]^{\pi^D}\ar[u]^{\mathrm{HT}_{w_0}}& {H}_n^D(L)\ar[u]^{\mathrm{HT}_{w_0}}}
\end{eqnarray*}

Let $(\Fil_\bullet \mathcal{F}', \{\omega_i' \in \Gr_i \mathcal{F}' \})$ be an element of $p_2^\ast \mathcal{T}_{\an}^\times/\U$ over $A'$. We assume that $\tilde{\pi}^{\ast} (\Fil_\bullet \mathcal{F}', \{\omega_i' \in \Gr_i \mathcal{F}' \}) =  (\Fil_\bullet \mathcal{F}, \{\omega_i \in \Gr_i \mathcal{F} \})  \in p_1^\ast\mathcal{IW}^{o+}_{\underline{w}}$.

This means that:

\begin{eqnarray*}
p^{-1}\pi^\ast \omega_j' &=& \sum_{k= j}^{g} a_{k,j} \mathrm{HT}_{w_0}(x_k) \mod p^{w_0}\mathcal{F} + \Fil_{j-1} \mathcal{F} ~\textrm{for}~ 1 \leq j \leq g- i,  \\
\pi^\ast \omega_j'  &=& \sum_{k= j}^{g} a_{k,j} \mathrm{HT}_{w_0}(x_k) \mod p^{w_0}\mathcal{F} + \Fil_{j-1} \mathcal{F} ~\textrm{for} ~g-i+1\leq j \leq g.
\end{eqnarray*}

where $(a_{k,j})_{1\leq j \leq k \leq g} \in \ocal_L^{\frac{g(g+1)}{2}}$ satisfy $v(a_{k,k}-1) \geq w_{k,k}$ and $v(a_{k,j}) \geq w_{k,j}$ for $k> j$.  We obtain:

\begin{eqnarray*}
\pi^\ast \omega_j' &=& \sum_{k= j}^{g-i} a_{k,j} \mathrm{HT}_{w_0}(\pi^Dx'_k) + \sum_{k= g-i+1}^g pa_{k,j} \mathrm{HT}_{w_0}(\pi^Dx'_k)
 \mod p^{w_0}\mathcal{F}+ \Fil_{j-1} \mathcal{F}   \\ \textrm{for}~ 1 \leq j \leq g- i,&& \\
\pi^\ast \omega_j' &= &\sum_{k= j}^{g} a_{k,j} \mathrm{HT}_{w_0}(\pi^Dx'_k)  \mod p^{w_0}\mathcal{F} + \Fil_{j-1} \mathcal{F}~\textrm{for} ~g-i+1\leq j \leq g.
\end{eqnarray*}

Since $p\mathcal{F} \subset \pi^\ast \mathcal{F'}$, we now get that:

\begin{eqnarray*}
 \omega_j' &=& \sum_{k= j}^{g-i} a_{k,j} \mathrm{HT}_{w_0}(x'_k) + \sum_{k= g-i+1}^g pa_{k,j} \mathrm{HT}_{w_0}(x'_k) \mod p^{w_0-1}\mathcal{F}' + \Fil_{j-1} \mathcal{F}'  \\
\textrm{for}~ 1 \leq j \leq g- i, && \\
 \omega_j' &= &\sum_{k= j}^{g} a_{k,j} \mathrm{HT}_{w_0}(x'_k)  \mod p^{w_0-1}\mathcal{F}' + \Fil_{j-1} \mathcal{F}'~\textrm{for} ~g-i+1\leq j \leq g.
\end{eqnarray*}

\end{proof}

Let $\underline{w}$ and $\underline{w}'$ be as in the proposition. Let $\kappa$ be a $\inf_j\{w_{j,j}\}$-analytic character.  We now define the Hecke operator $U_{p,i}$ as:
$$U_{p,i}\colon   \HH^0(\cY_{\mathrm{Iw}}(v), \omega_{\underline{w}'}^{\dag\kappa}) \stackrel{p_2^\ast}\rightarrow \HH^0( C_\gamma(v), p_2^\ast \omega_{\underline{w}'}^{\dag\kappa}) \stackrel{\tilde{\pi}^{-1\ast}} \rightarrow \HH^0( C_\gamma(v), p_1^\ast \omega_{\underline{w}}^{\dag\kappa}) \stackrel{p^{-i(g+1)}\mathrm{Tr}p_1}\rightarrow \HH^0(\cY_{\mathrm{Iw}}(v), \omega_{\underline{w}}^{\dag\kappa}).$$

This Hecke operator improves analyticity.  Note the normalization of the trace map by a factor $p^{-(g+1)i}$ which is an inseparability degree.  We also denote by $U_{p,i}$ the endomorphism of $\M^{\dag\kappa}_{\underline{w}}\big(\cXI(p)(v)\big)$ obtained by composing the above operator with the restriction $$\M^{\dag\kappa}_{\underline{w}}\big(\cXI(p)(v)\big) \rightarrow \M^{\dag\kappa}_{\underline{w'}}\big(\cXI(p)(v)\big).$$

\subsubsection{The relationship between  $U_{p,i}$ and $\delta_i$}

In this paragraph we establish the relationship between the operators $U_{p,i}$ and the operators $\delta_i$ of section \S\ref{sect_class_GL}.
Let $1 \leq i \leq g-1$  and consider the correspondence $p_1$, $p_2\colon \mathcal{C}_i(v) \rightarrow  \cXI(v)$.

\begin{prop2}\label{prop_Ud} Let $L$ be a finite extension of $K$,  $x,y \in \cY_{\mathrm{Iw}}(v)(L)$ such that  $y \in p_2 (p_1^{-1}) \{x\}$. Let $w >0$ and $\kappa$ be a $w$-analytic character.  There exists a commutative diagram where the vertical maps
are isomorphisms:

\begin{eqnarray*}
\xymatrix{ (\omega_{{w}}^{\dag\kappa})_y \ar[r]^{\tilde{\pi}^{\ast-1}} & (\omega_{{w}}^{\dag\kappa})_x\\
V_{\kappa',L}^{w-\an} \ar[r]^{\delta_i}\ar[u] & V_{\kappa',L}^{w-\an} \ar[u]}
\end{eqnarray*}
\end{prop2}
\begin{proof} This follows from the definition (see also lemma 5.1 of \cite{PiH}).
\end{proof}
\subsubsection{A compact operator}
Let $n\in \N$,  $v< \inf\{\frac{1}{3p^{n-1}}, \frac{p-2}{2p^2-p}\}$,  $w \in ]\frac{v}{p-1}, n-g -1 - v \frac{p^{n}}{p-1}]$ and $\kappa$ a $w$-analytic character.

The  composite  operator $\prod_{i=1}^g U_{p,i}$ induces  a map from $ \M^{\dag\kappa}_{\underline{w'}}(\cXI(\frac{v}{p})) \rightarrow   \M^{\dag\kappa}_{{w}}(\cXI(v))$ where $\underline{w}'= (w_{i,j}')$ is defined by:
$$ w'_{i,j} = i-j + w .$$
The natural restriction map $res\colon   \M^{\dag\kappa}_{{w}}(\cXI(v)) \rightarrow \M^{\dag\kappa}_{\underline{w}'}(\cXI(\frac{v}{p}))$ is compact. We let $U =  \prod_{i=1}^g U_{p,i}\circ res$. This is a compact endomorphism of $\M^{\dag\kappa}_{{w}}(\cXI(v))$.

\subsection{Summary}
For all $q \nmid pN$, let $\mathbb{T}_q$ be the spherical Hecke algebra $$\Z[ \mathrm{GSp}_{2g}(\qq_q)/\mathrm{GSp}_{2g}(\Z_q)].$$ Let
$\mathbb{T}^{Np}$ be the restricted tensor product of the algebras $\mathbb{T}_q$. We have defined an action of $\mathbb{T}^{Np}$ on the Frechet
space $\mathrm{M}^{\dag\kappa}(\XI)$. Consider the dilating Hecke algebra, $\mathbb{U}_p$, defined as the polynomial algebra over $\Z$ with indeterminates
$X_1,\ldots, X_g$. We have also defined an action of  $\mathbb{U}_p$,  sending $X_i$ to  $U_{p,i}$.   We proved that the operator $U= \prod_i U_{p,i}$ is compact.  Let us denote  by $\mathbb{T}^{\dag\kappa}$ the image of $\mathbb{T}^{Np}\otimes_\Z \mathbb{U}_p$ in $ \mathrm{End}\big(\mathrm{M}^{\dag\kappa}(\XI)\big)$ and  call it the overconvergent Hecke algebra of weight $\kappa$.

\section{Classicity}

\subsection{Statement of the main result}

 Let $\kappa = (k_1,\ldots, k_g)\in X^+(\T)$. We have a series of natural restriction maps:

$$ \HH^0(\XI,\omega^\kappa) \stackrel{r_1}\rightarrow \HH^0(\cXI(p)(v), \omega^\kappa) \stackrel{r_2}\rightarrow \HH^0(\cXI(p)(v), \omega^{\dag\kappa}_{w})$$
and we establish a criterion for an element in $\HH^0(\cXI(p)(v), \omega^{\dag\kappa}_{w})$ to be in the image of $r_2 \circ r_1$.
Let $\underline{a}=(a_1,\ldots, a_g) \in \R_{\geq 0}^g$. We set $\mathrm{M}^{\dag\kappa}_{w}(\cXI(p)(v))^{< \underline{a}}$ for the union of the generalized eigenspaces where $U_{p,i}$ has slope $<a_i$ for $1 \leq i \leq g$.

\begin{thm}\label{thm-class} Let  $\underline{a} = (a_1,\ldots, a_g)$ with   $a_i = k_{g-i}-k_{(g-i)+1} + 1$ when $1 \leq i \leq g-1$ and $a_g = k_g - \frac{g(g+1)}{2}$. Then we have:

$$ \mathrm{M}^{\dag\kappa}_{w}(\cXI(v))^{< \underline{a}} \subset \HH^0(\XI, \omega^\kappa).$$
\end{thm}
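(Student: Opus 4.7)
The plan is to globalize the representation-theoretic classicity criterion of section \ref{sect_class_GL} to the sheaves $\omega^{\dag\kappa}_w$ over $\cXI(p)(v)$, and then invoke the analytic continuation results of \cite{PiS2} to pass from overconvergent classical sections to honest sections over $\XI$.

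First, I would sheafify the BGG exact sequence (\ref{exactsequence}). By proposition \ref{prop-faisceau-induction}, the sheaves $\omega^\kappa$ and $\omega^{\dag\kappa}_w$ are, locally in the \'etale topology over $\cXI(p)(v)$, identified with the $\I$-representations $V_{\kappa'}$ and $V_{\kappa',L}^{w-\an}$ attached to the twist $\kappa'=(-k_g,\dots,-k_1)$. Because the maps $d_0$ and $\Theta_\alpha$ of the BGG resolution are intrinsic (given by $U(\gl)$-action and inclusion), they glue to maps of sheaves on $\cXI(p)(v)$, giving an exact sequence
\begin{equation*}
0\longrightarrow \omega^{\kappa}\vert_{\cXI(p)(v)} \stackrel{d_0}{\longrightarrow} \omega^{\dag\kappa}_w \stackrel{d_1}{\longrightarrow} \bigoplus_{\alpha\in\Delta} \omega^{\dag s_\alpha\bullet\kappa}_w.
\end{equation*}
Taking global sections yields an injection $\HH^0(\cXI(p)(v),\omega^\kappa)\hookrightarrow \HH^0(\cXI(p)(v),\omega^{\dag\kappa}_w)$ whose cokernel embeds into $\bigoplus_\alpha \HH^0(\cXI(p)(v),\omega^{\dag s_\alpha\bullet\kappa}_w)$.

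Next I would use proposition \ref{prop_Ud}, which identifies $U_{p,i}$ at a stalk with the operator $\delta_i$ on $V_{\kappa',L}^{w-\an}$, together with the variance formula $\delta_i\Theta_\alpha=\alpha(d_i)^{\langle\kappa',\alpha^\vee\rangle+1}\Theta_\alpha\delta_i$ of section \ref{sect_class_GL}. If $\alpha$ is the simple root $(t_1,\dots,t_g)\mapsto t_{i}t_{i+1}^{-1}$ (for $\kappa'$ this corresponds to indices $g-i$, $g-i+1$), then $\Theta_\alpha$ shifts the $U_{p,g-i}$-slope by $k_i-k_{i+1}+1 = a_{g-i}$ in absolute value, so any section $f\in\mathrm{M}^{\dag\kappa}_w(\cXI(v))^{<\underline{a}}$ is killed by each $\Theta_\alpha$ because $\Theta_\alpha f$ would be a generalized eigenvector for $U_{p,g-i}$ of negative slope while the operator has norm $\le 1$. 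This shows $d_1(f)=0$, so $f$ lies in $\HH^0(\cXI(p)(v),\omega^\kappa)$, i.e.\ $f$ is \emph{overconvergent classical}.

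Finally, I need to show that such an overconvergent classical form extends to a section of $\omega^\kappa$ over all of $\XI$. This is precisely the content that is promised to follow from \cite{PiS2}: a section of $\omega^\kappa$ defined on the strict neighborhood $\cXI(p)(v)$ of the multiplicative ordinary locus, which is a generalized eigenvector for $U_{p,g}$ of slope strictly less than $k_g-g(g+1)/2$, extends analytically to $\XI$. This uses the dynamics of $U_{p,g}$, whose correspondence improves the radius of overconvergence by a factor of $p$ (cf.\ section \ref{sec:Upg}), combined with Koecher-type and analytic continuation arguments on the minimal compactification to handle the boundary and the non-affinoid nature of $\cXI(p)(v)$. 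I expect this last step to be the main obstacle and is the reason the theorem appeals to \cite{PiS2}; the BGG part is formal, but globalizing $U_{p,g}$-analytic continuation on a Siegel variety of arbitrary genus is what forces the slope bound $a_g=k_g-g(g+1)/2$ and is the genuinely non-trivial input.
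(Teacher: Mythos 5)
Your proposal is correct and follows essentially the same route as the paper: the relative BGG exact sequence for the sheaves $\omega^{\dag\kappa}_w$ combined with the commutation relation between $\Theta_\alpha$ and $U_{p,g-i}$ to kill $d_1(f)$ for small-slope sections, followed by the appeal to \cite{PiS2} for the $U_{p,g}$-slope condition $a_g=k_g-\frac{g(g+1)}{2}$. The only detail you elide is that the exactness of the relative BGG sequence requires $w\in ]\frac{v}{p-1},1-v\frac{p}{p-1}]$, which the paper arranges by first applying $\prod_{i=1}^{g-1}U_{p,i}$ (which improves analyticity in $w$) before running the slope argument.
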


The proof of this theorem is split in two parts. We first show that $ \mathrm{M}^{\dag\kappa}_{w}(\cXI(p)(v))^{< \underline{a}} \subset \HH^0(\cXI(p)(v), \omega^\kappa)$. This is a classicity statement  at the level of sheaves and it is   easily deduced from the results of section \ref{sect_GL}, see proposition
\ref{prop_BGGrel}.

We  conclude by applying    the main theorem of \cite{PiS2} as follows. Since $U_{p,g}$ is a compact operator on $\HH^0(\cXI(p)(v), \omega^\kappa)$,
for all $a_g \in \R_{\geq 0}$ we can define $\HH^0(\cXI(p)(v), \omega^\kappa)^{<a_g}$  which is the sum of generalized eigenspaces for $U_{p,g}$
with eigenvalues of slope less that $a_g$.

\begin{thm}[\cite{PiS2}]  Let $a_g = k_g - \frac{g(g+1)}{2}$. Then $\HH^0(\cXI(p)(v), \omega^\kappa)^{<a_g}$ is a space of classical forms.
\end{thm}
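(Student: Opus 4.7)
The plan is to apply the analytic continuation method of Buzzard--Taylor--Kassaei in the Iwahori-level Siegel setting. Given $f\in\HH^0(\cXI(p)(v),\omega^\kappa)$ with $U_{p,g} f = \lambda f$ and $v(\lambda) < k_g - g(g+1)/2$, I would extend $f$ progressively to strict neighborhoods $\cXI(p)(v')$ for $v'$ arbitrarily close to $1$, glue these local extensions, and then combine the Luk\'acs extension theorem \ref{lucky} with rigid GAGA to identify $f$ with an element of $\HH^0(\XI,\omega^\kappa)$.

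The engine of the continuation is the eigenvalue identity rewritten as $f = \lambda^{-1} U_{p,g} f$. Concretely, the Hecke correspondence $p_1, p_2\colon \mathcal{C}_g(v) \rightrightarrows \cY_{\mathrm{Iw}}(v)$ of section \ref{sec:Upg} parametrizes isogenies $\pi\colon A \to A/L$ with $L$ a Lagrangian supplementing $\Fil_g A[p]$, and proposition \ref{prop_U_pppp} shows that every such $A/L$ has Hodge height $v/p$. Running the correspondence backwards, the identity expresses $f(x)$ at a point of Hodge height up to $pv$ as a sum, over the $p^{g(g+1)/2}$ preimages $\tilde x = (A, L)$, each of Hodge height $v$, of terms weighted by $\lambda^{-1}\, p^{-g(g+1)/2}\,\pi^{\ast -1}$. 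The scaling of $\pi^{\ast -1}$ on the weight-$\kappa$ automorphic sheaf contributes, on the most problematic component, a factor whose valuation is $k_g$; the combined valuation $v(\lambda^{-1}) - g(g+1)/2 + k_g$ is strictly positive by the small-slope hypothesis, and this is exactly the estimate required to iterate the extension from $\cXI(p)(v)$ outward to $\cXI(p)(v')$ for any $v'<1$.

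After gluing, the extension produces a bounded section of $\omega^\kappa$ on the complement of a Zariski-closed subset of codimension $\geq 1$ in the smooth rigid space $\XI^{\an}$; boundedness is automatic, since $f$ is a generalized eigenvector of the compact operator $U_{p,g}$ acting on a Banach space. Theorem \ref{lucky} then extends this section uniquely to all of $\XI^{\an}$, and rigid GAGA identifies the extension with a classical section in $\HH^0(\XI,\omega^\kappa)$.

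The main technical obstacle is the precise bookkeeping of the dynamics of $U_{p,g}$ at the two boundaries of the overconvergence locus: near the supersingular locus and at the toroidal cusps. On the supersingular side, one must verify that each iteration genuinely enlarges the domain in a controlled way uniformly over all $p^{g(g+1)/2}$ correspondence components, that successive extensions are compatible on their overlaps, and that convergence is preserved in the rigid-analytic norm; this is where the precise slope threshold $k_g - g(g+1)/2$ enters. On the cuspidal side, one needs to check that the extension is compatible with the Fourier--Jacobi expansions at the boundary of the toroidal compactification, which reduces to the extension of the $U_{p,g}$-correspondence across the toroidal boundary via the Mumford construction --- the same formalism used in section \ref{sec-familiescansg} to handle canonical subgroups of semi-abelian schemes.
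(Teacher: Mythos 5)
This theorem is not proved in the paper at all: it is quoted as a black box from \cite{PiS2}, so there is no internal argument to compare yours against. Your proposal does sketch the strategy that \cite{PiS2} actually follows (Buzzard--Kassaei analytic continuation driven by $f=\lambda^{-1}U_{p,g}f$), so the overall approach is the right one in spirit; but as written it contains a genuine gap that is precisely the hard part of the cited reference.

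The gap is in the continuation step. First, the iteration via proposition \ref{prop_U_pppp} only propagates $f$ from Hodge height $\leq v$ to Hodge height $\leq pv$ while the canonical subgroup of level $1$ exists, i.e.\ only for Hodge height $<\frac{p-2}{2p-2}$; you cannot reach ``$v'$ arbitrarily close to $1$'' by this mechanism. Second, and more seriously, even the union over all admissible $v'$ of the loci $\cXI(p)(v')$ is \emph{not} the complement of a codimension $\geq 1$ Zariski-closed subset of $\XI^{\an}$: at Iwahori level the space $\cXI(p)(v')$ only sees the points where the flag $\Fil_\bullet A[p]$ is induced by the canonical subgroup, and $\XI^{\an}$ contains entire open strata where $\Fil_\bullet A[p]$ is in a different relative position (for instance the tube of the locus where $\Fil_g A[p]\cap H_1=0$). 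These strata have codimension $0$, so theorem \ref{lucky} cannot be invoked after your continuation. The actual content of \cite{PiS2} --- and the only place where the bound $v(\lambda)<k_g-\frac{g(g+1)}{2}$ is genuinely used --- is the extension of $f$ across these other Iwahori strata, carried out by decomposing the $U_{p,g}$-correspondence according to the relative position of the Lagrangian $L$ with the canonical filtration, controlling the dynamics by degree functions $\deg\Fil_i$ in the sense of Fargues, and summing series of the shape $\sum_n\lambda^{-n}\bigl(U_{p,g}^{\mathrm{good}}\bigr)^nf$ whose convergence is exactly what the slope bound guarantees. By contrast, on the locus where the canonical subgroup exists the continuation works for any nonzero $\lambda$, so your placement of the slope condition in that step is also misattributed. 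Finally, the assertion that boundedness of the extension is ``automatic'' because $f$ is a generalized eigenvector of a compact operator is not an argument; in the actual proof boundedness comes from the explicit convergent series and the same degree estimates.
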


\subsection{Relative BGG resolution}

We now take  $w\in ]\frac{v}{p-1}, 1-v\frac{p}{p-1} ]$. We remark that for such a $w$, the fibers of  the morphism $\pi\colon  \mathcal{IW}^o_w \rightarrow \cXI(p)(v)$ are connected. Consider the cartesian  diagram:
\begin{eqnarray*}
\xymatrix{ \mathcal{IW}^{o+}_w\times \mathcal{T}_{\an}^\times \ar[d]^{\pi_1}\ar[r]&\mathcal{T}_{\an}^\times \ar[d]\\
\mathcal{IW}^{o+}_w \ar[r] \ar[rd]^{\pi_2}&\mathcal{T}_{\an}^\times/\U\ar[d] \\
&\cXI(p)(v)}
\end{eqnarray*}
We have an action of the Iwahori sub-group $\I$ on $\mathcal{IW}^{o+}_w\times \mathcal{T}_{\an}^\times$ and by differentiating we obtain an action of the enveloping
algebra $U(\gl)$ on $$(\pi_2\circ \pi_1)_\ast\oscr_{\mathcal{IW}^{o+}_w\times \mathcal{T}_{\an}^\times }$$ denoted $\star$. We have already defined an inclusion
$d_0\colon  \omega^\kappa \rightarrow \omega^{\dag\kappa}_{w}$. For all $\alpha \in \Delta$ we now define a map $\Theta_\alpha\colon  \omega^{\dag\kappa}_w
\rightarrow    \omega^{\dag s_\alpha \bullet \kappa}_w$. We first define an endomorphism of  $$(\pi_2\circ \pi_1)_\ast\oscr_{\mathcal{IW}^{o+}_w\times
\mathcal{T}_{\an}^\times }$$  by sending a section $f$  to $X_{-\alpha}^{<\kappa,\alpha^\vee>+1}\star f$. It follows from    section \ref{subsect_BGGGL} that this map
restricted to $\omega^{\dag\kappa}_w$ produces the expected map $\Theta_\alpha$. We then set $d_1\colon  \oplus \Theta_\alpha\colon  \omega^{\dag\kappa}_w
\rightarrow \bigoplus_{\alpha \in \Delta}   \omega^{\dag s_\alpha \bullet \kappa}_w$. We have the following   relative BGG resolution, which is a relative version
of the theory recalled in section \ref{subsect_BGGGL}:

\begin{prop}\label{prop_BGGrel} There is an exact sequence of sheaves over $\cXI(p)(v)$:

$$ 0\rightarrow \omega^\kappa \stackrel{d_0}\rightarrow \omega^{\dag\kappa}_w \stackrel{d_1} \rightarrow \bigoplus_{\alpha \in \Delta}   \omega^{\dag s_\alpha \bullet \kappa}_w$$
\end{prop}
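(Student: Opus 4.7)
The plan is to reduce the proposition to the absolute BGG exact sequence of Owens recalled in Section \ref{subsect_BGGGL}, by étale-local trivialization of the relevant torsors over $\cXI(p)(v)$. Since exactness of a sequence of Banach sheaves can be checked on an étale cover and is preserved under restriction to affinoids (where Banach sheaves are acyclic, cf. the appendix), it suffices to produce, on a suitable étale cover $\{U_i = \Spm R_i\}$ of $\cXI(p)(v)$, identifications under which $d_0$ and $d_1$ correspond to the absolute BGG differentials.

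First I would select étale opens $U = \Spm R$ of $\cXI(p)(v)$ over which $\mathcal{IW}^{o+}_w$, $\mathcal{T}^\times_{\an}/\U$, and $\cX_1(p^n)(v)$ all trivialize simultaneously. The hypothesis $w\in]\tfrac{v}{p-1},\, 1 - v\tfrac{p}{p-1}]$ corresponds to the case $n=1$ and guarantees that $\pi\colon \mathcal{IW}^o_w \to \cXI(p)(v)$ has connected fibers, as noted in the excerpt; this ensures that $\pi_\ast$ of a $\kappa'$-equivariant function is determined by its restriction to a single fiber. Under the trivialization and via evaluation at a chosen section, $\omega^{\dag\kappa}_w|_U$ becomes identified with $R \,\widehat{\otimes}_K\, V^{w-\an}_{\kappa',K}$ and, when $\kappa \in X^+(\T)$, $\omega^\kappa|_U$ with $R \otimes_K V_{\kappa',K}$, in a way compatible with $d_0$ (Proposition \ref{prop-faisceau-induction}). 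As for $d_1 = \bigoplus_\alpha \Theta_\alpha$, it is defined through the $\gl$-action obtained by differentiating the right $\I$-action on functions on $\mathcal{IW}^{o+}_w \times_{\cXI(p)(v)} \mathcal{T}^\times_{\an}$; under the local trivialization this right action matches the absolute right $\I$-action used in Section \ref{subsect_BGGGL}, so each $\Theta_\alpha$ becomes the corresponding Owens differential on $V^{w-\an}_{\kappa',K}$. The well-definedness of $\Theta_\alpha$ (that the image has the correct shifted weight $s_\alpha \bullet \kappa$) is already established in Section \ref{subsect_BGGGL} and is a pointwise statement on the base.

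With these identifications in place, the restriction of the claimed sequence to $U$ becomes $R \,\widehat{\otimes}_K$ of the absolute sequence
\[ 0 \to V_{\kappa',K} \to V^{w-\an}_{\kappa',K} \to \bigoplus_{\alpha\in\Delta} V^{w-\an}_{s_\alpha\bullet\kappa',K}, \]
whose exactness is Owens' theorem, and tensoring with the affinoid algebra $R$ over the base field $K$ preserves exactness (the sequence consists of $K$-Banach spaces and $R$ is $K$-flat). The main subtlety will be verifying the BGG resolution in the $w$-analytic setting rather than the $p^{-1}$-analytic setting explicitly stated in Section \ref{subsect_BGGGL}: however the argument given there, resting on the weight computation and the commutation relation $[X_\alpha, X_{-\alpha}^m] = m X_{-\alpha}^{m-1}(H_\alpha - (m-1))$, depends only on the Lie-algebraic structure and $X_{-\alpha}$ preserves the radius of analyticity, so it extends verbatim. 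A secondary technical point is that the étale-local identifications glue on overlaps; this is automatic because the absolute BGG complex is $\I$-equivariant and the transition functions of the trivializations act through $\I$.
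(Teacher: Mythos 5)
Your strategy is the same as the paper's: via Proposition \ref{prop-faisceau-induction} one identifies the sequence, locally for the \'etale topology over $\cXI(p)(v)$, with the absolute ($w$-analytic) BGG sequence (\ref{exactsequence}) completed-tensored with the structure sheaf, and the whole content is then to show that this completed tensor product is still exact. That is exactly where your argument has a gap. You justify exactness by saying that ``the sequence consists of $K$-Banach spaces and $R$ is $K$-flat'', but flatness only gives exactness of the \emph{algebraic} tensor product $-\otimes_K R$; the local model of $\omega^{\dag\kappa}_w$ is $V^{w-\an}_{\kappa',K}\,\hat{\otimes}_K\, R$, and the separated completion of an exact sequence of seminormed modules is not exact in general. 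What is needed is that the maps be strict, in particular that the image of $d_1$ be closed in $\bigoplus_{\alpha\in\Delta} V^{w-\an}_{s_\alpha\bullet\kappa',K}$: if the image were dense but not closed, the completion of $\mathrm{Im}(d_1)\otimes_K R$ for the induced seminorm would be strictly larger than the image of $d_1\hat{\otimes}1$, and exactness in the middle would fail. This closedness is a nontrivial input, part of the main theorem of \cite{Ow}, and the paper's proof explicitly combines it with lemma \ref{lem-completion} (separation-completion preserves exactness when each term carries the induced seminorm) to conclude. With that ingredient supplied, the rest of your argument --- the simultaneous trivializations, the matching of $d_0$, $d_1$ with the absolute differentials via the $\I$-action, the $\I$-equivariance guaranteeing gluing, and the observation that Jones's argument is insensitive to the radius of analyticity --- is correct and coincides with what the paper does.
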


\begin{demo} Tensoring-completing the exact sequence \ref{exactsequence} (or more precisely its $w$-analytic version, see the remark below) by $\oscr_{\cXI(p)(v)}$ we get a  sequence :

\begin{eqnarray*}  0 \longrightarrow V_{\kappa'} \hat{\otimes} \oscr_{\cXI(p)(v)}\stackrel{d_0\otimes 1}{\longrightarrow} V_{\kappa'}^{w-\an}  \hat{\otimes} \oscr_{\cXI(p)(v)} \stackrel{d_1\otimes 1}{\longrightarrow }\bigoplus_{\alpha \in \Delta} V_{s_\alpha\bullet \kappa'}^{w-\an}  \hat{\otimes} \oscr_{\cXI(p)(v)}
\end{eqnarray*}

The exactness of this sequence  follows from lemma \ref{lem-completion}, noting that the image of $d_1$  is closed in $\bigoplus_{\alpha \in \Delta} V_{s_\alpha\bullet \kappa'}^{\an}$ by the main theorem of  \cite{Ow}.
By proposition \ref{prop-faisceau-induction}, this exact sequence is locally for the \'etale topology isomorphic to the
sequence of the proposition which is exact.
\end{demo}

\begin{rem} The definition of the map $\Theta_\alpha$ does not require the condition  $w\in ]\frac{v}{p-1}, 1-v\frac{p}{p-1}]$ but it is needed for the exactness of the sequence.
\end{rem}
The maps $\Theta_\alpha$ do not commute with the action of the Hecke operators $U_{p,i}$, for $i=1,\ldots, g-1$. Precisely, we have the following

\begin{prop} For $1 \leq i \leq g-1$ we have a commutative diagram
\begin{eqnarray*}
\xymatrix{ \HH^0(\cXI(p)(v), \omega^{\dag\kappa}_{w}) \ar[d]^{U_{p,i}}\ar[r]^{\Theta_\alpha} & \HH^0(\cXI(p)(v), \omega^{\dag s_\alpha \bullet\kappa}_{w})\ar[d]^{\alpha(d_{g-i})^{<\kappa, \alpha^\vee> +1}U_{p,i}} \\
\HH^0(\cXI(p)(v), \omega^{\dag\kappa}_{w}) \ar[r]^{\Theta_\alpha} & \HH^0(\cXI(p)(v), \omega^{\dag s_\alpha \bullet\kappa}_{w})}
\end{eqnarray*}
\end{prop}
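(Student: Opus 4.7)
The statement is the sheaf-theoretic globalization of the local variance formula
$\delta_i\,\Theta_\alpha = \alpha(d_i)^{\langle\kappa,\alpha^\vee\rangle+1}\,\Theta_\alpha\,\delta_i$
established in Section~\ref{sect_class_GL}, and my plan is to derive it from that identity together with Proposition~\ref{prop_Ud}, exploiting the fact that $\Theta_\alpha$ is a morphism of $\oscr_{\cXI(p)(v)}$-modules.

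First I would unwind the definition
\[ U_{p,i}=p^{-i(g+1)}\,\mathrm{Tr}(p_1)\circ\tilde{\pi}^{\ast-1}\circ p_2^\ast \]
and observe that the outer two factors $p_2^\ast$ and $\mathrm{Tr}(p_1)$ pose no obstruction. The differential operator $\Theta_\alpha$ is defined via the enveloping-algebra action $X_{-\alpha}^{\langle\kappa,\alpha^\vee\rangle+1}\star(-)$ in the fibre direction of $\pi\colon \mathcal{IW}^{o+}_w\to\cXI(p)(v)$, so it is $\oscr_{\cXI(p)(v)}$-linear: it differentiates only along the fibres of $\pi$. Consequently, when pulled back along either leg of the correspondence $p_1,p_2\colon\mathcal{C}_i(v)\to\cXI(p)(v)$, it remains the same operator on the pulled-back sheaf; this gives both $p_2^\ast\Theta_\alpha=\Theta_\alpha p_2^\ast$ and, by adjunction for a finite flat map, $\mathrm{Tr}(p_1)\Theta_\alpha=\Theta_\alpha\mathrm{Tr}(p_1)$.

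The substantive content is thus concentrated in the middle factor $\tilde{\pi}^{\ast-1}$. By Proposition~\ref{prop_Ud}, after pulling back to an \'etale chart trivializing $\mathcal{IW}^{o+}_w$ over $\cXI(p)(v)$, the operator $\tilde{\pi}^{\ast-1}$ is intertwined with the operator $\delta_i$ of Section~\ref{sect_class_GL} (equivalently $\delta_{g-i}$ in the symmetric labeling that makes the scalar come out as $\alpha(d_{g-i})$) acting on the local model $V^{w-\an}_{\kappa',L}$ of the stalk of $\omega^{\dag\kappa}_w$. The local variance formula then yields, stalk by stalk and hence globally over $\mathcal{C}_i(v)$,
\[ \tilde{\pi}^{\ast-1}\circ\Theta_\alpha=\alpha(d_{g-i})^{\langle\kappa,\alpha^\vee\rangle+1}\,\Theta_\alpha\circ\tilde{\pi}^{\ast-1}. \]
Since $\alpha(d_{g-i})^{\langle\kappa,\alpha^\vee\rangle+1}$ is a power of $p$, it is a central scalar and pulls out of both $p_2^\ast$ and $\mathrm{Tr}(p_1)$; combining with the normalization factor $p^{-i(g+1)}$ gives
\[ U_{p,i}\circ\Theta_\alpha=\alpha(d_{g-i})^{\langle\kappa,\alpha^\vee\rangle+1}\,\Theta_\alpha\circ U_{p,i}, \]
which is the asserted commutative square.

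The main potential obstacle is a bookkeeping one: confirming unambiguously that the index convention in Proposition~\ref{prop_Ud}, translated into the formalism of Section~\ref{sect_class_GL}, produces precisely $\alpha(d_{g-i})$ (and not $\alpha(d_i)$), which amounts to carefully matching the labeling of the correspondence $\mathcal{C}_i$ against the definitions of the $d_j$ and $\delta_j$. Once the indexing is pinned down, the remainder of the proof is formal, reducing entirely to the $\oscr_{\cXI(p)(v)}$-linearity of $\Theta_\alpha$, the compatibility of pullback and trace with $\oscr$-linear sheaf morphisms, and the already-established local variance identity.
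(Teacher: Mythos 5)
Your proposal is correct and follows essentially the same route as the paper, whose entire proof is the one-line reduction via Proposition \ref{prop_Ud} to the local variance formula $\delta_i\Theta_\alpha=\alpha(d_i)^{\langle\kappa,\alpha^\vee\rangle+1}\Theta_\alpha\delta_i$ of Section \ref{sect_class_GL}; you merely spell out the $\oscr_{\cXI(p)(v)}$-linearity of $\Theta_\alpha$ and its compatibility with $p_2^\ast$ and $\mathrm{Tr}(p_1)$, which the paper leaves implicit. Your caveat about matching $d_i$ versus $d_{g-i}$ is well placed: the flip is absorbed by the involution $\kappa\mapsto\kappa'$ reversing the order of the weight entries, consistent with the identity $U_{p,g-i}\Theta_\alpha(f)=p^{k_{i+1}-k_i-1}\Theta_\alpha U_{p,g-i}(f)$ used in the subsequent classicity proposition.
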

\begin{proof} Let $f \in \HH^0(\cXI(p)(v), \omega^{\dag\kappa}_{w})$. We need to check that $$ \Theta_\alpha U_{p,i} f = \alpha(d_{g-i})^{<\kappa, \alpha^\vee> +1}U_{p,i} \Theta_\alpha f.$$  We  apply  proposition \ref{prop_Ud} to reduce to the results of section \ref{sect_class_GL}.
\end{proof}
\subsection{Classicity at the level of the sheaves}
We now assume only that $v$ is small enough  for the operators $U_{p,i}$ to be defined. We make no particular assumption on $w$.

\begin{prop} The submodule of $  \mathrm{M}^{\dag\kappa}_{w}(\cXI(p)(v))$ on which $U_{p,i}$ acts with slope strictly less than
$k_{g-i}-k_{g-i+1}+1$ for $1\leq i \leq g-1$ and $U_{p,g}$ acts with finite slope is contained in $\HH^0(\cXI(p)(v),\omega^\kappa)$.
\end{prop}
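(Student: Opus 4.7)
The plan is to combine the relative BGG resolution of Proposition~\ref{prop_BGGrel} with the interaction formula between the differentials $\Theta_\alpha$ and the Hecke operators $U_{p,i}$ established in the preceding proposition, reproducing at the level of global sections the representation-theoretic classicity criterion at the end of section~\ref{sect_class_GL}. By Proposition~\ref{prop_BGGrel} there is an exact sequence
$$0 \longrightarrow \omega^\kappa \longrightarrow \omega^{\dag\kappa}_{w} \stackrel{d_1}{\longrightarrow} \bigoplus_{\alpha \in \Delta} \omega^{\dag s_\alpha \bullet \kappa}_{w}$$
of sheaves on $\cXI(p)(v)$, and since $\HH^0$ is left-exact, it suffices to verify that $\Theta_\alpha(f) = 0$ for every simple root $\alpha$.

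To each simple root $\alpha_j$ sending $(t_1,\ldots,t_g)\mapsto t_j t_{j+1}^{-1}$ one associates the unique index $i \in \{1,\ldots,g-1\}$ for which the commutation factor appearing in
$$\Theta_{\alpha_j}\, U_{p,i} \;=\; c\cdot U_{p,i}\, \Theta_{\alpha_j}$$
is a non-trivial negative power of $p$; a direct matrix computation identifies $v_p(c) = -(k_{g-i} - k_{g-i+1} + 1)$, which matches precisely the slope bound assumed on $U_{p,i}$. Hence if $f$ is a generalized $U_{p,i}$-eigenvector with eigenvalue $\lambda$ satisfying $v(\lambda) < k_{g-i} - k_{g-i+1} + 1$, then $\Theta_{\alpha_j}(f)$ is a generalized $U_{p,i}$-eigenvector with eigenvalue $c\lambda$ of strictly negative valuation.

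The proof is then concluded by observing that $U_{p,i}$ has operator norm at most $1$ on the Banach module $\mathrm{M}^{\dag s_{\alpha_j}\bullet \kappa}_{w}(\cXI(p)(v))$; this reflects the inseparability normalization $p^{-i(g+1)}$ built into the definition of $U_{p,i}$ in section~\ref{sec:Upi}, combined with the integrality of $\tilde\pi^{\ast-1}$ guaranteed by Proposition~\ref{prop_Upi} and of the trace along the finite flat projection $p_1$. A norm-at-most-$1$ operator cannot possess a nonzero generalized eigenvector with eigenvalue of negative valuation, so $\Theta_{\alpha_j}(f) = 0$; letting $\alpha_j$ range over the simple roots yields $d_1(f) = 0$, and therefore $f \in \HH^0(\cXI(p)(v), \omega^\kappa)$ as claimed. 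The main technical hurdle is verifying this integrality for the \emph{shifted}, non-dominant weight $s_{\alpha_j}\bullet\kappa$: this forces one to revisit the dilation analysis of section~\ref{sec:Upi} for weights of the form $s_\alpha\bullet\kappa$ and check that the inseparability normalizations remain adequate. Once that point is granted, the argument is a direct transposition to overconvergent modular forms of the representation-theoretic proof at the end of section~\ref{sect_class_GL}.
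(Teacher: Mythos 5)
Your overall strategy is the one the paper uses (relative BGG resolution, the commutation relation between $\Theta_\alpha$ and the $U_{p,i}$, and a norm-at-most-$1$ argument to kill $\Theta_\alpha(f)$), but there are two genuine gaps. First, the proposition is stated with \emph{no assumption on} $w$, whereas the exactness of the relative BGG sequence in Proposition~\ref{prop_BGGrel} is only proved for $w\in ]\frac{v}{p-1}, 1-v\frac{p}{p-1}]$ (see the remark following it: the maps $\Theta_\alpha$ exist for general $w$, but exactness does not). You apply the resolution directly for arbitrary $w$. The paper first reduces to this range: since $f$ has nonzero $U_{p,i}$-eigenvalues $a_i$ and the operators $U_{p,i}$ ($i<g$) strictly improve analyticity, one writes $f=\prod_{i=1}^{g-1}a_i^{-1}\prod_{i=1}^{g-1}U_{p,i}f$ and may therefore assume $w$ lies in the required interval. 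Without this step your appeal to left-exactness of $\HH^0$ applied to Proposition~\ref{prop_BGGrel} is not justified.

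Second, your route to the bound $\|U_{p,i}\|\leq 1$ on $\HH^0(\cXI(p)(v),\omega^{\dag s_\alpha\bullet\kappa}_w)$ — integrality of $\tilde\pi^{\ast-1}$ plus integrality of the trace normalized by the inseparability degree, checked for the shifted non-dominant weight $s_\alpha\bullet\kappa$ — is exactly the point you concede as ``the main technical hurdle'' and do not carry out, so the proof is not complete as written. The paper avoids this entirely by changing the norm: it equips $\HH^0(\cXI(p)(v),\omega^{\dag\kappa}_w)$ and $\HH^0(\cXI(p)(v),\omega^{\dag s_\alpha\bullet\kappa}_w)$ with the supremum norm over the \emph{ordinary locus} (this is a norm by analytic continuation, even though the spaces need not be complete for it), and for that norm the bound $\|U_{p,i}\|\leq 1$ is immediate; a norm need not be complete for the generalized-eigenvector argument to apply. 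You should either adopt this device or actually prove the integrality statement you invoke. Finally, a small bookkeeping point: with your convention $\Theta_{\alpha_j}U_{p,i}=c\,U_{p,i}\Theta_{\alpha_j}$, the eigenvalue of $\Theta_{\alpha_j}(f)$ is $c^{-1}\lambda$, not $c\lambda$, and correspondingly $v_p(c)=+(k_{g-i}-k_{g-i+1}+1)$; your two sign slips cancel, so the conclusion that the new eigenvalue has strictly negative valuation is still correct, but the intermediate statements as written are inconsistent with each other.
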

\begin{proof} Let $f \in   \mathrm{M}^{\dag\kappa}_{w}(\cXI(p)(v))$. For simplicity let us assume that $f$ is an eigenvector for  all operators $U_{p,i}$ with  corresponding eigenvalue $a_{i}$. The operator $\prod_{i=1}^{g-1} U_{p,i}$ increases analyticity. Since we have $f = \prod_{i=1}^{g-1} a_{p,i}^{-1} \prod_{i=1}^{g-1} U_{p,i} f$ we can assume that $w \in ]\frac{v}{p-1}, 1-v\frac{p}{p-1}]$. We  endow the space $\HH^0(\cXI(p)(v),\omega^{\dag\kappa}_{w})$ and $\HH^0(\cXI(p)(v),\omega^{\dag s_\alpha\bullet \kappa}_{w})$ for all simple positive  root $\alpha$ with the supremum norm  over the ordinary locus. This is indeed a norm by the analytic continuation principle (but of course $\HH^0(\cXI(p)(v),\omega^{\dag\kappa}_{w})$ may not be complete for this norm). For this  choice, the $U_{p,i}$ operators have norm less or equal to $1$. By the relative BGG exact sequence of proposition \ref{prop_BGGrel} it is enough to prove that $\Theta_{\alpha} f$ is $0$. Let  $\alpha$  be the character $(t_1,\ldots, t_g)\mapsto t_i.t_{i+1}^{-1}$. Since $U_{p,{g-i}} \Theta_\alpha(f) =p^{k_{i+1}-k_i-1} \Theta_\alpha U_{p,g-i}(f)$, we see that $\Theta_{\alpha}(f) $ is an eigenvector for $U_{p,g-i}$ for an  eigenvalue of negative valuation. Since    the norm of $U_{p,g-i}$ is less than $1$, $\Theta_{\alpha}(f)$ has to be zero.
\end{proof}

\section{Families}
Recall that the weight space $\mathcal{W} = \Hom (\T(\Z_p), \C_p^\times)$ was defined in section \ref{sect_ws}. For any  affinoid
open subset $\mathcal{U} $ of $\mathcal{W}$, by proposition \ref{prop_cara}, there exists $w_U >0$ such that the universal
character $\kappa^{\un}\colon
\T(\Z_p) \times \mathcal{W} \rightarrow \C_p^\times$ restricted to $\mathcal{U}$ extends to an analytic character
$\kappa^{\un}\colon  \T(\Z_p)(1+p^{w_\mathcal{U}} \ocal_{\C_p}) \times \mathcal{U}
\rightarrow \C_p^\times$.

\subsection{Families of overconvergent modular forms}
\subsubsection{The universal sheaves $\omega^{\dag\kappa^{\un}}_{w}$}\label{sec:universalomega}  Let $n \in \N, v \leq \frac{1}{2p^{n-1}}$
(resp.~ $\frac{1}{3p^{n-1}}$ if $p=3$)  and  $w \in ]n-1+ \frac{v}{p-1},  n- v \frac{p^n}{p-1}]$ satisfying $w\geq w_\mathcal{U}$.  We deduce
immediately from proposition \ref{prop_cara} that the construction given in section \ref{sect-ovmods} works in families:

\begin{prop2}  There exists a sheaf  $\omega^{\dag\kappa^{\un}}_{w}$ on  $\cXI(p)(v) \times \mathcal{U}$ such that  for any weight
$\kappa \in \mathcal{U}$, the fiber of $\omega^{\dag\kappa^{\un}}_w$ over $\cXI(p)(v) \times \{\kappa\}$ is $\omega^{\dag \kappa}_{w}$.
\end{prop2}

\begin{proof} We  consider the projection $\pi \times 1 \colon \mathcal{IW}^{o+}_w \times \mathcal{U} \rightarrow \cXI(p)(v)
\times \mathcal{U}$. We  take  $\omega^{\dag\kappa^{\un}}_w$ to be the  sub-sheaf of $(\pi\times 1)_\ast\oscr_{ \mathcal{IW}^{o+}_w \times
\mathcal{U} }$ of $(\kappa^{\un})'$-invariant sections for the action of $\T(\Z_p)$.
\end{proof}

Let $A$ be the ring of rigid analytic functions on $\mathcal{U}$. Let $M_{v,w}$ be the $A$-Banach module
$\HH^0(\cXI(v)\times \mathcal{U}, \omega^{\dag\kappa^{\rm un}}_w)$. Passing to the limit on $v$ and $w$  we get the $A$-Frechet space
$M^\dag=\displaystyle \lim_{v\rightarrow 0, w \rightarrow \infty} M_{v,w}$.

It is clear that the geometric definition of Hecke operators given in section \ref{sect_Hecke} works in families. We thus have an action of the
Hecke algebra of level prime to $Np$, $\mathbb{T}^{Np}$ on the space $M_{v,w}$. We also have an action of $\mathbb{U}_p$, the dilating Hecke
algebra at $p$, on $M_{v,w}$ for $v$ small enough.

Let $D$  be the boundary in $\cXI(v)$.  We let $\omega^{\dag\kappa^{\un}}_w(-D)$  be the  cuspidal sub-sheaf of $\omega^{\dag\kappa^{\un}}_w$ of
sections  vanishing along $D$. Let $M_{v,w,\cusp}$ be the $A$-Banach module $\HH^0(\cXI(p)(v)\times \mathcal{U}, \omega^{\dag\kappa}_w(-D))$ and
$M^\dag_{\cusp} = \displaystyle  \lim_{v\rightarrow 0, w \rightarrow \infty} M_{v,w,\cusp}$. All these modules are stable under the action of
the Hecke algebra.  We wish to construct an eigenvariety out of this data.

\subsubsection{Review of Coleman's Spectral theory}
\label{sec:colemantheory}

A convenient reference for the material in this section is \cite{Buzz}. The data we are given is:

\begin{itemize}
\item A reduced, equidimensional affinoid $\Spm~A$ and a continuous character $\kappa:\Z_p^\times A^\times$ (e.g. $\mathcal{U}=\Spm~A$ is an admissible
affinoid open of the weight space $\mathcal{W}$ and $\kappa$ is the universal character $\kappa^{\rm un}$),
\item A Banach $A$-module $M$ (e.g. the $A$- module of
$p$-adic families of modular forms $M_{v,w}$ defined above for suitable $v,w$),
\item A commutative endomorphism algebra $\mathbb{T}$ of $M$ over $A$ (e.g. the Hecke algebra),
\item A compact operator $U\in \mathbb{T}$ (e.g. the operator
$\prod_i U_{p,i}$).
\end{itemize}

\begin{defi2} \begin{enumerate}

\item Let $I$ be a set. Let $C(I) $ be  the $A$-module of functions
$ \{ f \colon I \rightarrow A,~ \lim_{i \rightarrow \infty} f(i) = 0\}$, where the limit is with respect to the filter
of complements of finite subsets of $I$. The module $C(I)$ is equipped with its supremum norm.
\item  A Banach $A$-module $M$ is orthonormalizable if  there is a set $I$ such that $M \simeq C(I)$.
\item A Banach-$A$ module $M$ is projective if there is a set $I$ and a Banach $A$-module $M'$  such that $M \oplus M' = C(I)$.
\end{enumerate}
\end{defi2}

The following lemma follows easily from the universal property of projective Banach modules given in \cite{Buzz}, p. 18.

\begin{lem2} Let $$0 \rightarrow M \rightarrow M_1 \rightarrow \cdots \rightarrow M_n \rightarrow 0$$ be an exact sequence  of Banach $A$-modules,
where the differentials are continuous and for all $1\leq i \leq n$, $M_i$ is projective. Then $M$ is projective.
\end{lem2}

We suppose now that $M$ is projective. Since $U$ is a compact operator, the following power series
$P(T):= \det(1-TU\vert M) \in A[\![T]\!]$ exists. It is known that $P(T) = 1 + \sum_{n\geq 1} c_n T^n$ where $c_n \in A$ and $\vert c_n\vert r^n
\rightarrow 0 $ when $n \rightarrow \infty$ for all positive $r \in \R$.  As a result, $P(T)$ is a rigid analytic function on
$\Spm~A\times \mathbb{A}^1_{\an}$.

\begin{defi2} The spectral variety   $\mathcal{Z} $ is the closed rigid sub-space of $\Spm~A\times \mathbb{A}^1_{\an}$ defined by the equation $P(T) = 0$.
\end{defi2}

 A pair $(x, \lambda) \in \Spm~A \times \mathbb{A}^1_{\an}$ is in $\mathcal{Z}$ if and only if there is an element
$m \in M\otimes_A \bar{k(x)}$ such that $U\cdot m = \lambda^{-1} m$.

\begin{prop2}[\cite{Buzz}, thm. 4.6] The map $p_1 \colon \mathcal{Z} \rightarrow \Spm~A$ is locally finite flat. More precisely, there is an
admissible cover of $\mathcal{Z}$ by open affinoids $\{ \cU_i\}_{i \in I}$  with the property that
the map $\cU_i  \rightarrow p_1(\cU_i)$ is finite flat.
 \end{prop2}

Let $i \in I$ and let $B$ be the ring of functions on $p_1(\cU_i)$. To $\cU_i$ is associated  a factorization $P(T) = Q(T)R(T)$ of $P$ over $B[\![T]\!]$
where $Q(T)$ is a polynomial and $R(T)$ is a power series co-prime to $Q(T)$. Moreover, $\cU_i$ is defined by the equation $Q(T) =0$ in $p_1(\cU_i) \times
\mathbb{A}^1_{\an}$.  To $\cU_i$, one can associate a direct factor  $\mathcal{M}(\cU_i)$ of $M$. This is the generalized eigenspace
of $M\otimes_A B$ for the eigenvalues of $U$ occurring in $Q(T)$. The rule $\cU_i \mapsto \mathcal{M}(\cU_i)$ gives a  coherent
sheaf $\mathcal{M}$ of $\oscr_{\mathcal{Z}}$-modules  which can be viewed as  the universal generalized eigenspace.

\begin{defi2} The eigenvariety $\cE$ is the affine rigid space  over $\mathcal{Z}$  associated to the  coherent $\oscr_{\mathcal{Z}}$-algebra
generated by the image of $\mathbb{T}$ in  $\mathrm{End}_{\oscr_{\mathcal{Z}}} \mathcal{M}$.
\end{defi2}

The map $w \colon \cE \rightarrow \Spm~A$ is locally finite and $\cE$ is equidimensional.  For each $x \in \Spm~A$, the geometric points of $w^{-1}(x)$ are in bijection with the set of eigenvalues of $\mathbb{T}$ acting on $M\otimes_A \bar{k(x)}$, which are of finite slope for $U$ (i.e the eigenvalue of $U$ is non-zero).

The space $\cE$ parametrizes eigenvalues. One may sometimes ask for a family of eigenforms. We have the following:

\begin{prop2} Let $x \in \cE$  and $f \in \mathcal{M}\otimes_Ak(x)$ be an eigenform corresponding to $x$. Assume that $w$ is unramified at $x$.
Then there is a family of eigenforms $F$ passing through $f$. More precisely, there
exist
$\Spm~B$, an admissible open affinoid   of $\cE$ containing $x$ and $F \in M \otimes_A B$ such that:
\begin{itemize}

\item $ \forall \phi \in \mathbb{T}, \phi \cdot F  = F \otimes \phi$,
\item the image of $F$ in $M \otimes_A k(x)$ is $f$.
\end{itemize}
\end{prop2}

\begin{demo} Let $\Spm~B$ be an admissible open affinoid of $\cE$ containing $x$ such that $w  \colon \Spm~B \rightarrow w(\Spm~B)$ is finite unramified.
Let $C$ be the ring of rigid analytic functions of $w(\Spm~B)$.  Let $e$ be the projector in $B\otimes_C B$ corresponding to the diagonal.
The projective $B$-module $e \mathcal{M}(B) \otimes_C B$ is the sub-module of $ \mathcal{M}(B) \otimes_C B$ of elements $m$ satisfying
$  b.m =  m \otimes b $ for all $b \in B$. We have a reduction map:
$$ \mathcal{M}(B) \otimes_C B \rightarrow \mathcal{M}(B) \otimes_A k (x)$$
and $f$ is in the image of $e\mathcal{M}(B) \otimes_C B$. Any element  $F$ of $e \mathcal{M}(B) \otimes_C B$, mapping to $f$ is a family of
eigenforms passing through $f$.
\end{demo}

\subsubsection{Properties of the module $M_{v,w,\cusp}$ }
\label{sec:moduleM}
In proposition \ref{prop:mainspecializationresult}  we will prove the following structure result about the
module $M_{v,w,\cusp}$:

\begin{prop2} a) The  Banach $A$-module $M_{v,w, \cusp}$ is projective.\smallskip

b) For any $\kappa \in U$,  the specialization map
$$M_{v,w,\cusp} \rightarrow \HH^0(\cXI(p)(v), \omega^{\dag\kappa}_w(-D)) $$ is surjective.
\end{prop2}

Granted this proposition, one can apply Coleman's spectral theory as described in section \S \ref{sec:colemantheory}
to construct an equidimensional
eigenvariety over the weight space. Thanks to theorem \ref{thm-class} we also get precise information
about the points of this eigenvariety.  This is enough to prove theorems \ref{thm1} and \ref{mainthm2} of the introduction.
\medskip

The rest of this chapter will be devoted to the proof of proposition \ref{prop:mainspecializationresult}.
Let us point out two main differences between the case $g=1$ treated in \cite{AIS} and  \cite{PiTW} and the
case $g \geq 2$ treated in the present article. First of all, the
ordinary locus in modular curves over a $p$-adic field is an affinoid,
whereas it is not an affinoid in the toroidal compactification of Siegel modular varieties  of genus $g \geq 2$.
Secondly, for  modular curves the classical modular sheaves are interpolated by coherent sheaves,
whereas for $g \geq 2$, the sheaves $\omega^{\dag\kappa}_w$ are only Banach sheaves.

In the modular curve case, because of the two reasons mentioned above, it is easy to see by a cohomological
argument that the proposition holds even in the
non-cuspidal case (see \cite{PiTW}, cor. 5.1). We believe that the cuspidality assumption is necessary when
$g \geq 2$.

\medskip

Because the proof of proposition \ref{prop:mainspecializationresult} is quite involved  we will first  explain the strategy of the proof
(for technical
reasons the actual proof of the proposition follows a slightly different line of arguments then the one
sketched below, but the ideas are presented faithfully). Let $\XI(p)^\star$ be the minimal compactification
of $Y_{\mathrm{Iw}}(p)$. Let
$X_{\mathrm{Iw}}(p)_{\rig}^\star$ be the rigid analytic fiber of $\XI(p)^\star$ and
$\xi \colon X_{\mathrm{Iw}}(p)_{\rig} \rightarrow X_{\mathrm{Iw}}(p)_{\rig}^\star$ be
the projection.   We define $\cXI(p)^\star(v)$ as the image of $\cXI(p)(v)$ in $\XI(p)_{\rig}^\star$.
If $v \in \qq$, this is an affinoid.   We have a Banach sheaf
$\omega^{\dag\kappa^{\un}}_w$ on $\cXI(p)(v)\times \mathcal{U}$ and we consider the sheaves
$(\xi\times 1)_\ast \omega^{\dag\kappa^{\un}}_w$ and  $(\xi\times 1)_\ast
\omega^{\dag\kappa^{\un}}_w(-D)$ on $\cXI(p)^\star(v)\times \mathcal{U}$. We will show that
$(\xi\times 1)_\ast \omega^{\dag\kappa^{\un}}_w(-D)$ is a Banach sheaf.   By
theorem \ref{thm_Khiel} of the appendix $(\xi\times 1)_\ast \omega^{\dag\kappa^{\un}}_w(-D)$ is the sheaf associated to
its global sections $M_{v,w,\cusp}$ that we
denote by $M$ for simplicity.  By the acyclicity property for Banach sheaves (proposition \ref{prop_acy}),  we have, for any affinoid
covering $\mathfrak{U}= (\cX_i)_{i\in I}$ of
$\cXI^\star(v)$, a  Cech exact sequence:

$$0 \rightarrow M  \rightarrow \oplus_{i\in I} \HH^0\big(\cX_i \times U,(\xi\times 1)_\ast \omega^{\dag\kappa^{\un}}_w(-D)\big) \rightarrow \oplus_{i,j \in I} \HH^0\big(\cX_{i,j}\times \mathcal{U},(\xi\times 1)_\ast \omega^{\dag\kappa^{\un}}_w(-D) \big) \cdots $$

To prove that $M$ is a projective $A$-module, it is enough to show that the Banach modules
$\HH^0\big(\cX_{\underline{i}} \times \mathcal{U},(\xi\times 1)_\ast
\omega^{\dag\kappa^{\un}}_w(-D)\big)$ are projective for all multi indexes $\underline{i} \in I^{t}$ for $1 \leq t \leq \sharp I$.   In other words,
the first part
of the proposition can be checked locally on the minimal compactification.  Of course, locally on the
toroidal compactification the projectivity holds. We are
reduced to study the sheaf $(\xi\times 1)_\ast \omega^{\dag\kappa^{\un}}_w(-D)$ along the boundary and we conclude
by some   explicit computations. The second point
of the proposition follows by similar arguments. In conclusion, the main step is to prove that
$(\xi\times 1)_\ast \omega^{\dag\kappa^{\un}}_w(-D)$ is a Banach
sheaf. We don't know in general if  direct images of Banach sheaves by proper
morphisms are Banach sheaves (see open problem \ref{opprobB} in the
appendix). Our approach to prove this property in this special situation is via formal models as
discussed in section \ref{sect_formal_ban}.

\subsubsection{An integral family}\label{sec:anintegralfamily}

We consider the map defined in section \ref{sec:ThesheafcalF}:

$$\upsilon=\pi_1 \circ \pi_2 \circ \pi_3 \colon  \mathfrak{IW}_w^+ \rightarrow \mathfrak{X}_1(p^n)(v).$$

There is an action of the torus $\mathfrak{T}_w$ on  $\mathfrak{IW}_w^+$ over $\mathfrak{X}_1(p^n)(v)$.
For all $\kappa^o \in \mathcal{W}(w)^o(K)$ we set $\tilde{\mathfrak{w}}^{\dag\kappa^o}_w =
\upsilon_\ast \oscr_{\mathfrak{IW}_w^+}[\kappa^{o'}]$.

Let $\kappa \in \mathcal{W}(w)$ mapping to $\kappa^o$.
Let $\pi_4 \colon \mathfrak{X}_1(p^n)(v) \rightarrow \mathfrak{X}_{\mathrm{Iw}}(p)(v)$ be the finite projection.
One recovers $\mathfrak{w}^{\dag\kappa}_w$ by taking the direct image $\pi_{4,\ast}\tilde{\mathfrak{w}}^{\dag\kappa^o}_w$
and the $\kappa'$-equivariant sections for the action of $\B(\Z_p)\mathfrak{B}_w$ or equivalently the
invariants under the action
$\B(\Z/p^n\Z)$ of the sheaf $\pi_{4,\ast}\tilde{\mathfrak{w}}^{\dag\kappa^o}_w(-\kappa')$
with twisted action by $-\kappa'$   (after this twist, the action of $\B(\Z_p)\mathfrak{B}_w$ factors
through its quotient $\B(\Z/p\Z)$). The group $\B(\Z/p^n\Z)$ is of order divisible by $p$ and has
higher  cohomology on $\Z_p$-modules. For this reason, we will implement the strategy of section \ref{sec:moduleM}
at the level of $\mathfrak{X}_1(p^n)$ for a while, and at the very end  invert $p$ and take into account the action of  $\B(\Z/p^n\Z)$; see proposition \ref{prop:mainspecializationresult}.

\medskip

The sheaves $\tilde{\mathfrak{w}}^{\dag\kappa^o}_w$ can be interpolated.  Consider the projection
$$ \upsilon\times 1 \colon \mathfrak{IW}_w^+ \times \mathfrak{W}(w)^o
\rightarrow \mathfrak{X}_1(p^n)(v) \times \mathfrak{W}(w)^o$$
and the family of formal Banach sheaves
$$\tilde{\mathfrak{w}}^{\dag\kappa^{o\un}}_w =\big( \upsilon\times
1\big)_{\ast} \oscr_{\mathfrak{IW}_w^+ \times \mathfrak{W}(w)^o}[{\kappa^{o\un}}'].$$

\subsubsection{Description of the sections}

We denote by $\Spf R$  an open affine sub-formal scheme of $\mathfrak{X}_1(p^n)$.   We let
$\psi \colon (\Z/p^n\Z)^g \rightarrow H_n^D(R[1/p])$ be the pull back of the
universal trivialization. We have an isomorphism:

$$ (\HT_w \circ \psi) \otimes 1 \colon \Z/p^n\Z^g \otimes_\Z R/p^wR \rightarrow \mathcal{F}/p^w\mathcal{F}.$$

We denote by  $e_1,\ldots,e_g$ the canonical basis of $(\Z/p^n\Z)^g$. Let $f_1,\ldots,f_g$ be a basis of $\mathcal{F}$ lifting the vectors
$\HT_w \circ \psi (e_1),
\ldots, \HT_w \circ \psi (e_g)$.  With these choices, $\mathfrak{IW}_w^+\vert_{\Spf R}$ is identified with   the set of matrices:
   $$  \begin{pmatrix} 
      1 & 0 &\ldots & 0\\
      p^w \mathfrak{B}(0,1) & 1  & \ldots& 0 \\
   \vdots & \vdots & \ddots & \vdots \\
     p^w\mathfrak{B}(0,1) & p^w\mathfrak{B}(0,1) &\ldots& 1\\
  \end{pmatrix}\times \begin{pmatrix} 
      1+ p^w \mathfrak{B}(0,1) \\
      1 +p^w \mathfrak{B}(0,1)  \\
   \vdots\\
     1+ p^w \mathfrak{B}(0,1)\\
   \end{pmatrix}\times_{\Spf~\ocal_K} \Spf~R.$$

where the first $g \times g$ matrix parametrizes the position of the flag and the second column vector the basis of the graded pieces.

For $1 \leq j < i \leq g$,  we let $X_{i,j}$ be the coordinate of the ball on the $i$-th line and $j$-th column in the $g\times g$ matrix and we let $X_1,\ldots,X_g$ be the coordinates on the column vector.

A function $f$ on $\mathfrak{IW}_w^+\vert_{\Spf~R}$  is a power series:
$$ f(X_{i,j}, X_k) \in R\langle\langle X_{i,j}, X_k, 1 \leq j < i \leq g, 1 \leq k \leq g \rangle\rangle.$$

Let $\kappa^o \in \mathfrak{W}(w)^o$. Then  $f \in \tilde{\mathfrak{w}}^{\dag \kappa^o}_{w}(R)$ if and only if:

$$ f(  X_{i,j}, \lambda. X_k )  = \kappa^{o'}(\lambda) f( X_{i,j}, X_k )  ~\forall \lambda \in
\mathfrak{T}_w(R').$$

We deduce the lemma:

\begin{lem2}\label{lem--1}  A section $f \in \tilde{\mathfrak{w}}^{\dag \kappa^o}_{w}(R)$ has a unique decomposition:

$$ f (X_{i,j}, X_k ) = g( X_{i,j}) \kappa^{o'} ( 1 + p^wX_{1},\ldots,1+ p^wX_{g}),$$

where $g(X_{i,j}) \in R\langle\langle X_{i,j}, 1\leq i < j \leq g \rangle\rangle$. This decomposition sets a
bijection:

$$\tilde{\mathfrak{w}}^{\dag \kappa^o}_{w}(R) \simeq R\langle\langle X_{i,j}, 1\leq i < j \leq g\rangle\rangle.$$

\end{lem2}

Similarly:

\begin{lem2}\label{lem--2}  A section $f \in \tilde{\mathfrak{w}}^{\dag \kappa^{o\un}}_{w}(R\hat{\otimes}
\ocal_K\langle\langle S_1,\ldots,S_g\rangle\rangle)$ has a unique decomposition:

$$ f ( X_{i,j}, X_k) = g( X_{i,j}) {\kappa^{o\un}}' ( 1 + p^wX_{1},\ldots,1+ p^wX_{g}),$$

where $g(X_{i,j}) \in R\langle\langle S_1,\ldots,S_g,X_{i,j}\rangle\rangle$. This decomposition sets a bijection:

$$\tilde{\mathfrak{w}}^{\dag \kappa^{o\un}}_{w}(R) \simeq  R\langle\langle S_1,\ldots,S_g,X_{i,j}\rangle\rangle.$$
\end{lem2}

\begin{lem2}\label{lem--3} Let $\varpi \in \ocal_K$ be the uniformizing element. We have
 $${\kappa^{o\un}} \big( (1+ p^w{{X}_{i}})_{1\leq i \leq g} \big)  \in 1 + \varpi \ocal_K\langle\langle
S_1,\ldots,S_g, X_{1}, \ldots, X_g \rangle\rangle^{\times}$$
 \end{lem2}
\begin{demo} We have $$  (1 + p^wX_{i})^{S_ip^{-w + \frac{2}{p-1}}} = \sum_{k\geq 0} \frac{S_ip^{-w + \frac{2}{p-1}}
(S_ip^{-w + \frac{2}{p-1}}-1)\cdots (S_ip^{-w + \frac{2}{p-1}}-k+1)}{k!} (p^wX_{i})^k .$$The constant term of this
series is $1$. Recall that for any integer $k\geq
1$, $v(k!)\leq  \frac{k -1}{p-1}$. As a result the $k$-th coefficient of the series, for $k>0$ has valuation at least $kw- kw + \frac{2k}{p-1}-
\frac{k-1}{p-1}>0$.
\end{demo}
\bigskip

For all $m \in \N$, we let $X_1(p^n)(v)_m$, $\mathcal{W}(w)^o_m$ be the schemes over
$\ocal_K/\varpi^m\ocal_K$ obtained by reduction modulo $\varpi^m$  from
$\mathfrak{X}_1(p^n)(v)$ and $\mathfrak{W}(w)^o$. We let $\tilde{\mathfrak{w}}^{\dag \kappa^{o\un}}_{w,m}$
(resp.~$\tilde{\mathfrak{w}}^{\dag \kappa^o}_{w,m}$),  be
the quasi-coherent sheaf over $X_1(p^n)(v)_m\times \mathcal{W}(w)^o_m$ (resp.~$X_1(p^n)(v)_m$)  obtained by
pull back.

\begin{coro2}\label{cor-triv}
The quasi-coherent family of sheaves $\tilde{\mathfrak{w}}^{\dag\kappa^{o\un}}_{w, 1}$ over
$X_1(p^n)(v)_1\times \mathcal{W}(w)^o_1$ is   constant: the sheaf
$\tilde{\mathfrak{w}}^{\dag\kappa^{o\un}}_{w,1}$ is the inverse image on $X_1(p^n)(v)_1\times \mathcal{W}(w)^o_1$
of a sheaf defined on $X_1(p^n)(v)_1$.
\end{coro2}

\begin{demo} In view of lemmas \ref{lem--1},  \ref{lem--2} and   \ref{lem--3}, the sheaf
$\tilde{\mathfrak{w}}^{\dag\kappa^{o\un}}_{w,1}$ equals the pull back  of the sheaf
$\omega^{\dag\kappa^o}_{w,1}$ for any $\kappa^o \in \mathcal{W}(w)^o(K)$.
\end{demo}

\subsubsection{D\'evissage of the sheaves}\label{sect-devissage}

 We have just given  a description of the local sections of $\tilde{\mathfrak{w}}^{\dag\kappa^o}_{w}$. This
description depends on the choice of a basis $f_1,\ldots,f_g$ of $\mathcal{F}$ lifting the universal basis
$e_1,\ldots,e_g$ of $H_1^D(R'[1/p])$.   We'd now like to investigate the dependence   on the choice of the basis $f_1,\ldots,f_g$.

Let $(f'_1,\ldots,f'_g)$ be an other compatible choice of basis for $\mathcal{F}$ with  $P = \mathrm{Id}_g + p^wM \in \GL(R')$
be the base change matrix from $f_1,\ldots,f_g$ to $f'_1,\ldots,f'_g$. This second trivialization of $\mathcal{F}$ determines new
coordinates $X'_{i,j}, X'_k$ on $\mathfrak{IW}_w^+\vert_{\Spf~R}$.

\begin{lem2}\label{lem-Pisic}  We have the congruences:
\begin{eqnarray*}
 X'_{i,j} &= & X_{i,j} + m_{i,j}  \mod p^w R\langle\langle X_{s,t}, X_u\rangle\rangle\\
 X'_k & = & X_k + m_{k} \mod p^w R\langle\langle X_{s,t}, X_u\rangle\rangle
 \end{eqnarray*}

for all $1 \leq j < i \leq g$ and all $1 \leq k \leq g$ where $m_{i,j}$ is the coefficient of $M$ on the $i$-th line and $j$-th column and $m_k$
the coefficient on the $k$-th diagonal entry.
\end{lem2}

\begin{demo}
Let $\underline{X}$ and  $\underline{X}'$ be the lower triangular  matrices with $X_{i,j}$ and $X'_{i,j}$ on the $i$-th line and
$j$-th column and $X_k$, $X'_k$ on the $k$-th diagonal entry.
We have $$(\mathrm{Id}_g + p^w M)(\mathrm{Id}_g + p^w\underline{X} )  = \mathrm{Id}_g + p^w (M + \underline{X})  + O(p^{2w}).$$
There is a unique  upper triangular matrix $N$ with $0$ on the diagonal  such that
$$(\mathrm{Id}_g + p^w M)(\mathrm{Id}_g + p^w\underline{X} )(\mathrm{Id}_g + p^w N) = \mathrm{Id}_g + p^w\underline{X}'.$$
We have $$(\mathrm{Id}_g + p^w M)(\mathrm{Id}_g + p^w\underline{X} )(\mathrm{Id}_g + p^w N) =  \mathrm{Id}_g + p^w (M + \underline{X} + N)  + O(p^{2w}).$$
We deduce that $N = (-m_{i,j})_{1 \leq i < j \leq g} + O(p^w)$ and that $M + \underline{X} + N = \underline{X}' \mod p^w$.
\end{demo}

\bigskip

\begin{coro2}\label{coro-dev}Let $\kappa^o \in \mathcal{W}(w)^o(K)$.
The quasi-coherent sheaf $\tilde{\mathfrak{w}}^{\dag\kappa^o}_{w,1}$ is  an inductive limit of coherent sheaves which are extensions of the  trivial sheaf.
\end{coro2}

\begin{demo}
Covering $\mathfrak{X}_1(p^n)(v)$ by affine open formal sub-schemes $\Spf~R$ and choosing a basis
$(f_1,\ldots,f_g)$ of $\mathcal{F}$ compatible with $\psi $, we can expand the
sections of $\tilde{\mathfrak{w}}^{\dag\kappa^o}_{w,1}\vert_{\Spf~R}$ as polynomials in  the variables $(X_{i,j})_{1 \leq j<i \leq g}$. By  lemma
\ref{lem-Pisic},
the total degree of a section is independent of the choice of the basis, so we can write
$\tilde{\mathfrak{w}}^{\dag\kappa^o}_{w,1}$ as the inductive limit as $r \in
\N$ grows of the sub-sheaves $\tilde{\mathfrak{w}}^{\dag\kappa^o}_{w,1}\vert^{\leq r}$ of sections of degree bounded by $r$. In
$\tilde{\mathfrak{w}}^{\dag\kappa^o}_{w,1}\vert^{\leq r}$, we can consider for all $1 \leq k, l \leq g$, the sub-sheaf
$\tilde{\mathfrak{w}}^{\dag\kappa^o}_{w,1}\vert^{\leq r, k,l}$ locally generated  by the polynomials of degree less than $r$ in the variables $X_{i,j}$ for $i \geq
k$ and $j \leq l$. This sub-sheaf is well defined by lemma \ref{lem-Pisic}. The sheaves
$$\tilde{\mathfrak{w}}^{\dag\kappa^o}_{w,1}\vert^{\leq r, k,l} ~ \mathrm{mod}~\tilde{\omega}^{\dag\kappa^o}_{w,1}\vert^{\leq r, k-1,l} +
\tilde{\mathfrak{w}}^{\dag\kappa^o}_{w,1}\vert^{\leq r, k,l-1}$$ are  isomorphic to the trivial sheaf.
\end{demo}

\bigskip

In general, one can always write $\tilde{\mathfrak{w}}^{\dag\kappa}_{w,m}$ as an inductive limit of coherent sheaves in a reasonable way as follows. Let $\cup_i
\mathfrak{U}_i$  be a finite Zariski cover of $\mathfrak{X}_1(p^n)$ by affine formal sub-schemes $\mathfrak{U}_i = \Spf~R_i$ such that over each $\mathfrak{U}_i$ we
have the description of the sections of  $\tilde{\mathfrak{w}}^{\dag \kappa^o}_{w}(R_i)$ as in lemma \ref{lem--1}.  We let $R_{i,m}$ be the reduction modulo $\varpi^m$
of $R_i$ and $U_{i,m} = \Spec~R_{i,m}$.  We have that $\tilde{\mathfrak{w}}^{\dag \kappa^o}_{w,m}(U_{i,m}) \simeq R_{i,m} [X_{i,j}, 1 \leq i < j \leq g ]$. We let
$\tilde{\mathfrak{w}}^{\dag \kappa^o\leq r}_{w,i,m}$ be the  coherent sheaf over $U_{i,m}$ associated to the sub-module of $\tilde{\mathfrak{w}}^{\dag
\kappa^o}_{w,m}(U_{i,m})$ of polynomials of degree bounded by $r$. We  also let $\tilde{\mathfrak{w}}^{\dag \kappa^o\leq r}_{w,m}$ be the sub-sheaf of
$\tilde{\mathfrak{w}}^{\dag \kappa^o}_{w,m}$ defined as the kernel of $$ \prod_i \tilde{\mathfrak{w}}^{\dag \kappa^o\leq r }_{w,i,m} \longrightarrow \prod_{i,j}
\tilde{\mathfrak{w}}^{\dag \kappa^o}_{w,m}\vert_{U_{i,m}\cap U_{j,m}}.$$It is a quasi-coherent sheaf as it is the kernel of a morphism of quasi-coherent sheaves.
Furthermore, for every $i$ the natural map $\tilde{\mathfrak{w}}^{\dag \kappa^o\leq r}_{w,m}\vert_{U_{i,m}}\to \tilde{\mathfrak{w}}^{\dag \kappa^o\leq r }_{w,i,m}$ is
injective. As $\tilde{\mathfrak{w}}^{\dag \kappa^o\leq r }_{w,i,m}$ is a coherent sheaf over $U_{i,m}$ for every $i$, we deduce that  $\tilde{\mathfrak{w}}^{\dag
\kappa^o\leq r}_{w,m}\vert_{U_{i,m}}$ is a coherent sheaf as well. Furthermore, as $\colim_{r}\tilde{\mathfrak{w}}^{\dag \kappa^o\leq
r}_{w,j,m}=\tilde{\mathfrak{w}}^{\dag \kappa^o}_{w,m}\vert_{U_{j,m}}$ for every $j$, we conclude that $\tilde{\mathfrak{w}}^{\dag \kappa^o }_{w,m} = \colim_{r}
\tilde{\mathfrak{w}}^{\dag \kappa^o \leq r}_{w,m}$. Of course, the sheaves $\tilde{\mathfrak{w}}^{\dag \kappa^o \leq r}_{w,m}$ depend on the choice of the cover
$\cup_i\mathfrak{U}_i$ and on the choice of a basis of $\mathcal{F}\vert_{\mathfrak{U}_i}$ if $m \geq 2$.  If we fix two choices of cover and basis, we get two
inductive limits:
$$\tilde{\mathfrak{w}}^{\dag \kappa^o }_{w,m} = \colim_{r} \tilde{\mathfrak{w}}^{\dag \kappa^o \leq r, 1}_{w,m} =  \colim_{r}
\tilde{\mathfrak{w}}^{\dag \kappa^o \leq r, 2}_{w,m}.$$ It
is easy to check that in any case, for all $r$, there is $r'\leq  r''$ such that

$$ \tilde{\mathfrak{w}}^{\dag \kappa^o \leq r', 2}_{w,m} \hookrightarrow \tilde{\mathfrak{w}}^{\dag \kappa^o \leq r, 1}_{w,m}
\hookrightarrow \tilde{\mathfrak{w}}^{\dag \kappa^o \leq r'', 2}_{w,m}.$$

The preceding  discussion still makes sense for $\tilde{\mathfrak{w}}^{\dag \kappa^{o\un}}_{w,m}$.

\subsection{The base change theorem}

\subsubsection{ The boundary of the compactification}

Let $V = \oplus_{i=1}^{2g} \Z e_i$ be a free $\Z$-module of rank $2g$ equipped with the symplectic form of matrix $J=\left(\begin{matrix} 0 & 1_g \cr -1_g & 0 \end{matrix} \right)$.
For all totally isotropic direct factor $V'$ we consider $C(V/{V'}^\bot)$ the cone of symmetric semi-definite bilinear
forms on $V/{V'}^\bot \otimes \R$ with rational radical.  If $V' \subset V''$ we have an inclusion
$C(V/{V''}^\bot) \subset C(V/{V'}^\bot)$. We let $\mathfrak{C}$ be the set of all totally isotropic direct factors
$V' \subset V$ and  $\mathcal{C}$ be the quotient of the disjoint union: $$ \coprod_{V' \in \mathfrak{C}} C(V/{V'}^\bot)$$
by the equivalence relation induced by the inclusions $C(V/{V''}^\bot) \subset C(V/{V'}^\bot)$.

The given basis of $V$ gives a ``principal level $N$ structure": $\psi_N\colon  \Z/N\Z^{2g} \simeq  V/NV $. The vectors $e_1,\ldots,e_g$ give a ``Siegel principal
level $p^n$ structure":
$$ \psi \colon (\Z/p^n\Z)^g \hookrightarrow V/p^nV.$$

Let $\Gamma$ be the congruence sub-group of $\G(\Z)$ stabilizing $\psi_N$ and $\Gamma_1(p^n)$ be the congruence subgroup stabilizing $\psi$ and $\psi_N$.
Let $\mathcal{S}$ be a rational polyhedral decomposition of $\mathcal{C}$ which is $\Gamma$-admissible
(see \cite{FC}, section IV. 2).

We now recall some facts about the toroidal compactifications following the presentation adopted in \cite{Str}
(see for example sections 1.4.3, 2.1 and 2.2). For any $V' \in \mathfrak{C}$ and $\sigma \in \mathcal{S}$ lying  in
the interior of $C(V/{V'}^\bot)$  there is a diagram:

\begin{eqnarray*}
\xymatrix{ \mathcal{M}_{V'} \ar[r] \ar[rd] & \mathcal{M}_{V',\sigma} \ar[r]\ar[d] & {\mathcal{M}}_{V',\mathcal{S}} \ar[ld] \\
& \mathcal{B}_{V'} \ar[d] &  \\
& Y_{V'}& }
\end{eqnarray*}
Where:
\begin{itemize}

\item $Y_{V'}$ is the moduli space of principally polarized abelian schemes of dimension $g-r$, with $r = \mathrm{rk}_\Z V'$, with principal level $N$ structure;
\item Let $A_{V'}$ is the universal abelian scheme over $Y_{V'}$. The scheme $\mathcal{B}_{V'} \rightarrow Y_{V'}$ is an abelian scheme. Moreover, there is an
isogeny  $i \colon \mathcal{B}_{V'} \rightarrow A_{V'}^{r}$ of degree a power of $N$.  Over $\mathcal{B}_{V'}$, there is a  universal semi-abelian scheme
$$ 0 \rightarrow \T_{V'} \rightarrow  \tilde{G}_{V'} \rightarrow A_{V'} \rightarrow 0$$
where $\T_{V'}$ is the torus  $V'\otimes_\Z \mathbb{G}_m$;
\item $\mathcal{M}_{V'}$ is a moduli space of principally polarized  $1$-motives with principal level $N$ structure, the map $\mathcal{M}_{V'} \rightarrow \mathcal{B}_{V'}$ is a torsor under a torus  with character group $S_{V'}$, isogenous to $\mathrm{Hom}(\mathrm{Sym}^2 V/{V'}^\bot, \mathbb{G}_m)$;
\item $\mathcal{M}_{V'} \rightarrow\mathcal{M}_{V',\sigma}$ is an affine toroidal embedding attached to the cone $\sigma \in C(V/{V'}^\bot)$ and
the $\Z$-module $S_{V'}$;
\item $\mathcal{M}_{V'} \rightarrow \mathcal{M}_{V',\mathcal{S}}$ is a toroidal embedding, locally of finite type associated to the polyhedral decomposition $\mathcal{S}$. The scheme $\mathcal{M}_{V',\sigma}$ is open affine in
$\mathcal{M}_{V',\mathcal{S}}$.
\end{itemize}

We shall denote by $Z_{\sigma}$ the closed stratum in $\mathcal{M}_{V',\sigma}$ and by $Z_{V'}$ the closed stratum in $\mathcal{M}_{V',\mathcal{S}}$. We let
$\Gamma_{V'}$ be  the stabilizer of $V'$ in $\Gamma$, it acts on $C(V/{V'}^\bot)$ and on the toroidal embedding  ${\mathcal{M}}_{V',\mathcal{S}}$. We assume that
$\{\mathcal{S}\}$ is a smooth and projective admissible polyhedral decomposition. The existence is guaranteed by the discussion in \cite[\S V.5]{FC}. Let $Y$ be the moduli space classifying principally polarized abelian varieties over $\ocal_K$ with principal level $N$ structure. Let $Y\subset X$ be
the toroidal compactification  associated to $\mathcal{S}$. The hypothesis that $\mathcal{S}$ is projective guarantees that $X$ is a projective scheme and not simply
an algebraic space.

\begin{thm2}[\cite{FC}]\label{FC}
\begin{enumerate}

\item The toroidal compactification $X$ carries a stratification indexed by $\mathfrak{C}/\Gamma$. For all $V' \in \mathfrak{C}$  the completion of $X$ along the
$V'$-stratum is isomorphic to the space $\widehat{\mathcal{M}}_{V',\mathcal{S}}/\Gamma_{V'}$  where $\widehat{\mathcal{M}}_{V',\mathcal{S}}$ is the completion of
$\mathcal{M}_{V',\mathcal{S}}$ along the strata $Z_{V'}$.

\item The toroidal compactification $X$ carries a finer stratification indexed by  $\mathcal{S}/\Gamma$. Let $\sigma \in \mathcal{S}$. The corresponding
stratum in $X$ is $Z_{\sigma}$. Let $Z$ be an affine open subset of $Z_{\sigma}$. Then the henselization of $X$ along $Z$ is  isomorphic to the
henselization of $\mathcal{M}_{V,\sigma}$ along $Z$.
\end{enumerate}
\end{thm2}

\medskip

 The Hasse invariant of the semi-abelian scheme on the special fiber of
$\widehat{\mathcal{M}}_{V',\mathcal{S}}$ is the Hasse invariant of  the abelian part of the semi-abelian scheme. We
can thus identify $\Ha$ with the Hasse invariant defined over the special fiber of $Y_{V'}$.

Recall from section \ref{sec:blowup} that we have define a formal scheme $\mathfrak{X}(v)$ with a morphism $\mathfrak{X}(v)\to \mathfrak{X}$ to the formal completion $\mathfrak{X}$ of $X$ along its special fiber. We can now describe the boundary of $\mathfrak{X}(v)$, namely the complement of the inverse image $\mathfrak{Y}(v)$ of the formal completion $\mathcal{Y}\subset \mathfrak{X}$ of $Y$. We will need some notations:
\begin{itemize}
\item $ \mathfrak{Y}_{V'}$ is the completion of $Y_{V'}$ along its special fiber;
\item  $\mathfrak{Y}_{V'}(v)$ is largest formal open subset of the formal admissible blow-up of
$\mathfrak{Y}_{V'}$ along the ideal $(p^v, \Ha)$ where the ideal $(p^v, \Ha)$ is generated by $\Ha$ (see  \ref{sec:blowup});

\item $\mathfrak{B}_{V'}$ is the completion of $\mathcal{B}_{V'}$ along its special fiber and
$\mathfrak{B}_{V'}(v)$ is the fiber product $\mathfrak{B}_{V'}\times_{\mathfrak{Y}_{V'}} \mathfrak{Y}_{V'}(v)$.
We define similarly  $\mathfrak{M}_{V'}(v)$, $\mathfrak{M}_{V',\sigma}(v)$ and $\mathfrak{M}_{V',\mathcal{S}}(v)$, $\mathfrak{Z}_{\sigma}(v)$, $\mathfrak{Z}_{V'}(v)$.
\end{itemize}

\begin{prop2} The formal scheme $\mathfrak{X}(v)$ has a fine stratification   indexed by $\mathcal{S}/\Gamma$
over a coarse stratification indexed by $\mathfrak{C}/\Gamma$.  For all $\sigma \in \mathcal{S}$ the corresponding strata is $\mathfrak{Z}_{V',\sigma}(v)$. For any
open affine sub-scheme $\mathfrak{Z}$ of $\mathfrak{Z}_{V',\sigma}(v)$ the henselization of $\mathfrak{X}(v)$ along $\mathfrak{Z}$ is isomorphic to the henselization
of $\mathfrak{M}_{V',\sigma}(v)$ along $\mathfrak{Z}$.  For all $V' \in \mathfrak{C}'$ the completion of $\mathfrak{X}(v)$ along the $V'$-strata  of
$\mathfrak{X}(v)$ is isomorphic to the completion  $\widehat{\mathfrak{M}}_{V',\mathcal{S}}(v)$ of $\mathfrak{M}_{V',\mathcal{S}}(v)$ along $\mathfrak{Z}_{V'}(v)$.
\end{prop2}
\begin{demo} As admissible blow-ups commute with flat base change, this follows easily from theorem \ref{FC}.
\end{demo}

\medskip

In section \ref{sec:blowup}  we have introduced a covering $\mathfrak{X}_{1}(p^n)(v) \to \mathfrak{X}(v)$. We now describe the boundary of  $\mathfrak{X}_{1}(p^n)(v)$, i.e., the complement of the inverse image of $\mathcal{Y}(v)\subset \mathfrak{X}(v)$. Let $\mathfrak{C}'$ be the subset of $\mathfrak{C}$ of totally isotropic spaces satisfying
$\psi((\Z/p^n\Z)^g ) \subset {V'}^\bot$.  We let $\mathcal{C}'$ be the   quotient of the disjoint union: $$ \coprod_{V' \in \mathfrak{C}'} C(V/{V'}^\bot)$$ by the
equivalence relation induced by the inclusions $C(V/{V''}^\bot) \subset C(V/{V'}^\bot)$.  Clearly, $\Gamma_1(p^n)$ acts on $\mathfrak{C}'$ and the polyhedral
decomposition $\mathcal{S}$ induces a polyhedral decomposition $\mathcal{S}'$ of $\mathcal{C}'$ which is $\Gamma_1(p^n)$-admissible.

Let $V' \in \mathfrak{C}'$ of rank $r$. We have an exact sequence
$$ 0 \rightarrow V'/p^nV'\rightarrow {V'}^{\bot}/p^n{V'}^\bot \rightarrow V'^{\bot}/V' + p^n{V'}^{\bot} \rightarrow 0.$$
The image of $\psi (\Z/p^n\Z^g )$ in $V'^{\bot}/V' + p^n{V'}^{\bot}$ is a totally isotropic subspace of rank $p^{g-r}$, that we denote by $W$. The map $\psi$ also
provides a section $s \colon W \hookrightarrow {V'}^{\bot}/p^n{V'}^{\bot}$. By duality, $\psi$ provides an isomorphism that we again denote by $\psi$:
$$ \psi \colon (\Z/p^n\Z)^g \simeq W^{\vee} \oplus V/(p^nV + {V'}^\bot)$$

To describe the local charts, we need some notations.

\begin{itemize}
\item $\cY_{V'}(v)$ is the rigid fibre of $\mathfrak{Y}_{V'}(v)$.
\item We let $H_{n, V'}$ be the canonical subgroup of the universal abelian scheme $\mathfrak{A}_{V'}$ over $\mathfrak{Y}_{V'}(v)$ and we denote  by  ${\cY}_1(p^n)_{V'}(v)$  the torsor $\mathrm{Isom}_{\cY_{V'}(v)}(W^{\vee} , H_{n,V'}^D)$. We let $\psi_{V'}$ be the universal trivialisation.

\item $ \mathfrak{Y}_1(p^n)_{V'}(v)$ is the normalization of  $\mathfrak{Y}_{V'}(v)$ in $\cY_1(p^n)_{V'}(v)$.

\item  Recall that there is an isogeny $i \colon  \mathfrak{B}(v) \rightarrow \mathfrak{A}_{V'}^r$ of degree a power of $N$. We let $i_{c\an} \colon  \mathfrak{A}_{V'}
\rightarrow \mathfrak{A}_{V'}/H_{n,V'}$ be the canonical projection.   We set $$\mathfrak{B}_1(p^n)_{V'} = \mathfrak{B}(v) \times_ {i,\mathfrak{A}_{V'}^r,
i_{c\an}^{D}} (\mathfrak{A}_{V'}/H_{n,V'})^r.$$ The abelian scheme  $\mathfrak{B}_1(p^n)_{V'} \rightarrow \mathfrak{Y}_1(p^n)_{V'}$ carries a universal diagram:
\begin{eqnarray*}
\xymatrix{ {V'}^{\vee} \ar[r] &  \mathfrak{A}_V'\\
{V'}^{\vee} \ar[r]\ar[u]^{\mathrm{Id}} &  \mathfrak{A}_{V'}/H_{n,V'}\ar[u]^{i_{c\an}^D}}
\end{eqnarray*}
which is  equivalent to the diagram:
\begin{eqnarray*}
\xymatrix{ 0 \ar[r] & \mathfrak{T}_{V'} \ar[r]\ar[d]^{\mathrm{Id} }& \tilde{\mathfrak{G}}_{V'}\ar[d]^{j} \ar[r] & \mathfrak{A}_{V'}\ar[d]^{i_{c\an} }\ar[r] & 0 \\
0 \ar[r] & \mathfrak{T}_{V'} \ar[r] & \tilde{\mathfrak{G}}'_{V'} \ar[r] & \mathfrak{A}_{V'}/H_{n,V'} \ar[r] & 0}
\end{eqnarray*}

The group $\mathrm{Ker} j $ is a lift to $\tilde{\mathfrak{G}}_{V'}[p^n]$ of $H_{n,V'}$.

Over the rigid fiber, we thus have $H_n = \T_{V'}[p^n] \oplus \mathrm{Ker} j $ and $H_n^D = V/(p^nV + {V'}^\bot) \oplus \mathrm{Ker} j^D$.  The map $\psi_{V'}$
provides an isomorphism $W^{\vee} \simeq {Ker} j^D$. The map $\psi$ now provides an isomorphism $\psi \colon \Z/p^n\Z^g \simeq H_n^D$.

\item We define $\mathfrak{M}_1(p^n)_{V'}(v)$, $\mathfrak{M}_1(p^n)_{V',\sigma}(v)$ and $\mathfrak{M}_1(p^n)_{V',\mathcal{S}'}(v)$,
$\mathfrak{Z}_1(p^n)_{\sigma}(v)$, $\mathfrak{Z}_1(p^n)_{V'}(v)$ by fiber product of  $\mathfrak{M}_{V'}(v)$, $\mathfrak{M}_{V',\sigma}(v)$,
$\mathfrak{M}_{V',\mathcal{S}'}(v)$, $\mathfrak{Z}_{\sigma}(v)$, $\mathfrak{Z}_{V'}(v)$ with $\mathfrak{B}_1(p^n)_{V'}$ over $\mathfrak{B}_{V'}$.
\end{itemize}

\begin{prop2}\label{desc-toro}
The formal scheme $\mathfrak{X}_1(p^n)(v)$ has a fine stratification   indexed by $\mathcal{S}'/\Gamma_1(p^n)$ over a coarse stratification indexed by
$\mathfrak{C}'/\Gamma_1(p^n)$.  For all $\sigma \in \mathcal{S}'$ the corresponding strata is $\mathfrak{Z}_1(p^n)_{V',\sigma}(v)$. For any open affine sub-scheme
$\mathfrak{Z}$ of $\mathfrak{Z}_1(p^n)_{V',\sigma}(v)$ the henselization of $\mathfrak{X}_1(p^n)(v)$ along $\mathfrak{Z}$ is isomorphic to the henselization of
$\mathfrak{M}_1(p^n)_{V',\sigma}(v)$ along $\mathfrak{Z}$.  For all $V' \in \mathfrak{C}'$ the completion of $\mathfrak{X}_1(p^n)(v)$ along the $V'$-strata  of
$\mathfrak{X}_1(p^n)(v)$ is isomorphic to the completion $\widehat{\mathfrak{M}_1(p^n)}_{V',\mathcal{S}'}(v)$ of $\mathfrak{M}_1(p^n)_{V',\mathcal{S}'}(v)$ along $\mathfrak{Z}_1(p^n)_{V'}(v)$.
\end{prop2}

\begin{demo} Over the rigid fiber, this is  a variant of theorem \ref{FC}.
In particular we know that the rigid fiber of the local charts of level $\Gamma_1(p^n)$ are correctly described. It is now easy to check that our formal  local
charts of level $\Gamma_1(p^n)$ are normal, and as a result,  they are the normalization of the formal  local charts of level $\Gamma$. Since normalization commutes
with \'etale localization, we conclude.
\end{demo}
\begin{rem2} The process of obtaining toroidal compactifications by normalization is studied in \cite{FC} p. 128,  in a different situation.
\end{rem2}

\subsubsection{Projection to the minimal compactification}\label{sec:mimimalcompact}

There is a projective scheme $X^\star$ called minimal compactification and a  proper morphism
$\xi\colon  X \rightarrow X^\star$. Let us recall some properties of the minimal compactification:

\begin{thm2}[\cite{FC},  Thm.~V.2.7]\label{thm-mini} The minimal compactification $X^\star$ is stratified by
$\mathfrak{C}/\Gamma$ and the morphism $\xi$ is compatible with the stratification.
\begin{itemize}
\item for any $V' \in \mathfrak{C}$ the $V'$-strata is $Y_{V'}$,
\item For a geometric point $\bar{x}$ of the $V'$ strata,  we have
$$ \widehat{\oscr_{X^\star,\bar{x}}} =\Big( \prod_{\lambda\in S_{V'} \cap C(V/{V'}^\bot)^\vee}
\HH^0\big(\widehat{\mathcal{B}_{V', \bar{x}}}, \mathcal{L}(\lambda)\big)\Big)^{\Gamma_{V'}}$$
where
\begin{itemize}
\item[$\bullet$] $ \widehat{\oscr_{X^\star,\bar{x}}}$ is the completion of the strict henselization of
$\oscr_{X^\star}$ along $\bar{x}$,
\item[$\bullet$] $\widehat{\mathcal{B}_{V', \bar{x}}}$ is the formal completion of $\mathcal{B}_{V'}$ along its fiber over $\bar{x}$,
\item[$\bullet$] $\mathcal{L}(\lambda)$ is the invertible sheaf over $\mathcal{B}_{V'}$ of $\lambda$-homogeneous  functions on $\mathcal{M}_{V'}$.
\end{itemize}
\end{itemize}
\end{thm2}

The Hasse invariant $\Ha$ descends to a function on the special fiber of $X^\star$. We let $\mathfrak{X}^\star$ be the completion of $\mathfrak{X}$ along its special fiber. We denote by $\mathfrak{X}^\star(v)$
the $p$-adic completion of the normalization of the greatest  open sub-scheme of the blow-up of
$\mathfrak{X}^\star$ along the ideal $(p^v,\Ha)$, on which this ideal is generated by $\Ha$.

\begin{prop2}
 For all $V' \in \mathfrak{C}'$ the $V'$-stratum of $\mathfrak{X}^\star(v)$ is $\mathfrak{Y}_{V'}(v)$.
\end{prop2}
\begin{demo}  This follows from the fact that  $(\Ha, p^v)$ is a regular sequence in $Y_{V'}$ and in $X^\star$. This implies that the blow-up  along $(\Ha, p^v)$ is
in both cases a closed sub-scheme of a relative $1$-dimensional projective space with equation $T \Ha - Sp^v$ (where $T$, $S$ are homogeneous coordinates).
\end{demo}
\bigskip

We have a diagram:

\begin{eqnarray*}
\xymatrix{ \mathfrak{X}_1(p^n)(v) \ar[rd]^{\eta}\ar[r]^{\pi_4} & \mathfrak{X}(v) \ar[d]^{\xi}\\
 & \mathfrak{X}^\star(v)}
 \end{eqnarray*}

We  let $X_1(p^n)(v)_m$, $X^\star(v)_m$,  $\mathcal{M}_1(p^n)_{V',\sigma}(v)_m$, $\mathcal{B}_1(p^n)_{V'}(v)_m$, $\ldots$   be the schemes  obtained by reduction modulo
$\varpi^m$ from $\mathfrak{X}_1(p^n)(v)$, $\mathfrak{M}_1(p^n)_{V',\sigma}(v)$, $\mathfrak{B}_1(p^n)_{V'}(v)$, $\ldots$. We will also consider the projection $\eta\times
1\colon \mathfrak{X}_1(p^n)(v)\times \mathfrak{W}(w)^o \rightarrow \mathfrak{X}^\star(v)\times \mathfrak{W}(w)^o$. Finally, we use $D$ to denote the boundary  in
$\mathfrak{X}_{1}(p^n)$, $\mathfrak{X}_1(p^n)(v)$, $\ldots$

\begin{thm2}\label{thm:baschange} Consider the following diagram for $l$, $m\in \N$ and $m\ge l$:
\begin{eqnarray*}
\xymatrix{X_1(p^n)(v)_{l} \ar[r]^{i}\ar[d]^{\eta_l} & X_1(p^n)(v)_{m} \ar[d]^{\eta_m}\\
X^\star(v)_l \ar[r]^{i'} & X^\star(v)_m}
\end{eqnarray*}
 Then we have the base change property:
$$ {i'}^\ast (\eta_m)_{\ast} \tilde{\mathfrak{w}}^{\dag\kappa^o}_{w,m}(-D) = \eta_{l,\ast} \tilde{\mathfrak{w}}^{\dag\kappa^o}_{w,l} (-D).$$
In particular,  $\bigl(\eta_\ast \tilde{\mathfrak{w}}^{\dag\kappa^o}_w(-D)\bigr)$ is a formal Banach sheaf over $\mathfrak{X}^\star (v)$. Similarly, also $(\eta\times
1)_\ast \bigl(\tilde{\mathfrak{w}}^{\dag \kappa^{o\un}}_w(-D)\bigr)$ is a  formal Banach sheaf over $\mathfrak{X}^\star (v)\times \mathfrak{W}(w)^o$.
\end{thm2}

\begin{demo} The property is local for the $fppf$-topology on $X^\star(v)_m$.  Let $\bar{x} \in X^\star(v)_m$ be a geometric point. We can write
$\tilde{\mathfrak{w}}^{\dag\kappa^o}_{w,m}$ as an inductive limit of coherent sheaves $\colim \tilde{\mathfrak{w}}^{\dag\kappa^o\leq r}_{w,m}$ by the discussion at
the end of section \ref{sect-devissage}.  By the theorem on formal functions \cite[\S4]{EGAIII}, and because direct images commute with inductive limits,
we have that
$$ \eta_{m,\ast} \tilde{\mathfrak{w}}^{\dag\kappa^o}_{w,m}(-D) \big( \widehat{X^\star(v)_{m,\bar{x}}}\big) =
\colim_r \HH^0\big( \widehat{X_1(p^n)(v)_{m,\bar{x}}}, \tilde{\mathfrak{w}}^{\dag\kappa^o\leq r}_{w,m}(-D)\big)$$ where
$\widehat{X^\star(v)_{m,\bar{x}}}$ is the
completion of the strict henselization of  $X^\star(v)_{m}$ at $\bar{x}$ and $\widehat{X_1(p^n)(v)_{m,\bar{x}}}$ is the completion of
$X_1(p^n)(v)_{m}$ along
$\eta^{-1}_m(\bar{x})$.  This completion is isomorphic to a finite disjoint union of spaces
$\widehat{\mathcal{M}_1(p^n)_{V',\mathcal{S}'}(v)}_{m,\bar{y}}/\Gamma_1(p^n)_{V'}$ where $\bar{y}$ is a geometric point in $Y_1(p^n)_{V'}(v)_m$.  This space fits in
the following  diagram:
\begin{eqnarray*}
\xymatrix{ \widehat{\mathcal{M}_1(p^n)_{V',\mathcal{S}'}}(v)_{m,\bar{y}} \ar[r]^{h_2} \ar[d]^{h_1} &
\widehat{\mathcal{M}_1(p^n)_{V',\mathcal{S}'}}(v)_{m,\bar{y}}/\Gamma_1(p^n)_{V'}\ar[d]_{h_3} \ar[r] & \widehat{X_1(p^n)(v)_{m,\bar{x}}}\\
\widehat{\mathcal{B}_1(p^n)_{V'}(v)}_{m,\bar{y}}\ar[r]^{h_4}& \widehat{Y_1(p^n)_{V'}(v)_{m,\bar{y}}}}
\end{eqnarray*}
where $\widehat{\mathcal{B}_1(p^n)_{V'}(v)}_{m,\bar{y}}$, $\widehat{\mathcal{M}_1(p^n)_{V',\mathcal{S}}(v)}_{m,\bar{y}}$ are the formal completions of
$\mathcal{B}_1(p^n)_{V'}(v)_{m}$ and $\mathcal{M}_1(p^n)_{V',\mathcal{S}}(v)_{m}$ over their  fibers at $\bar{y}$. We are thus reduced to prove the
following:\smallskip

 {\em Claim:}  the formation of the module $$\colim_r \HH^0\big( \widehat{\mathcal{M}_1(p^n)_{V',\mathcal{S}'}}(v)_{m,\bar{y}}/\Gamma_1(p^n)_{V'}
, \tilde{\mathfrak{w}}^{\dag\kappa^o\leq r}_{w,m}(-D)\big)$$commutes with reduction modulo $\varpi^l$ for $l \leq m$.\smallskip

\noindent We provide two proofs.

\smallskip

\noindent {\bf First Proof:}\enspace We identify the module in the claim with $$ \HH^0\Big(\Gamma_1(p^n)_{V'}, \colim_r \HH^0\big(
\widehat{\mathcal{M}_1(p^n)_{V',\mathcal{S}'}}(v)_{m,\bar{y}},h_2^\ast \tilde{\mathfrak{w}}^{\dag\kappa^o\leq r}_{w,m}(-D)\big)\Big).$$
We remark that we have a
formal semi-abelian scheme $\tilde{\mathfrak{G}}_{V'}$ over $\widehat{\mathcal{B}_1(p^n)_{V'}(v)}_{m,\bar{y}}$, extension of the
universal $g-r$-dimensional formal
abelian scheme $\mathfrak{A}_{V'}$ by the $r$-dimensional formal torus $\widehat{T}_{V'}:=V'\otimes \widehat{\mathbb{G}}_m$. It admits a canonical subgroup scheme
$H_n\subset \tilde{\mathfrak{G}}_{V'}[p^n]$ extension of the canonical subgroup $H_{n,V'}\subset \mathfrak{A}_{V'}[p^n]$ by $\widehat{T}_{V'}[p^n]$ and, as
explained in propositions \ref{prop:Filomega1} and \ref{prop-HTann}, the tautological principal $p^n$-level structure and the Hodge-Tate morphism  for $H_n$ provide
a morphism $\HT \colon (\Z/p^n \Z)^g \to \omega_{\tilde{\mathfrak{G}}_{V'}}/p^w$. Thus, proceeding as in proposition \ref{prop-FHT} we obtain a sheaf $\mathcal{F}\subset
\omega_{\tilde{\mathfrak{G}}_{V'}} $ and an isomorphism $\HT_w\colon (\Z/p^n \Z)^g\otimes \oscr_{\widehat{\mathcal{B}_1(p^n)_{V'}(v)}_{m,\bar{y}}} \longrightarrow
\mathcal{F}_w$. The Levi quotient of  $\Gamma_1(p^n)_{V'}$  is a subgroup of $\mathrm{GL}_g(V') \times \mathrm{GSp}({V'}^\bot/V')$. We let $\Gamma'_1(p^n)_{V'}$ be
the projection of $\Gamma_1(p^n)_{V'}$ onto its  $\mathrm{GL}_g(V')$ factor. As ${\widehat{\mathcal{B}_1(p^n)_{V'}(v)}_{m,\bar{y}}}$ classifies extensions by
$\mathfrak{A}_{V'}$ and $\widehat{T}_{V'}$ with a level $N$-structure, the group $\Gamma'_1(p^n)_{V'}$ acts on $\widehat{T}_{V'}$, on
${\widehat{\mathcal{B}_1(p^n)_{V'}(v)}_{m,\bar{y}}}$ and on $\mathcal{F}$ and we get an induced action of the group $\Gamma_1(p^n)_{V'}$ through the natural
morphism $\Gamma_1(p^n)_{V'} \to \Gamma'_1(p^n)_{V'}$. We thus obtain an action of $\Gamma_1(p^n)_{V'}$ on $\mathcal{F}$ so that $\HT_w$ is
$\Gamma_1(p^n)_{V'}$-equivariant. The functoriality of $\mathcal{F}$ and $\HT_w$ implies that their base change  via $h_1$ coincide with the base change via $h_2$
of the sheaf $\mathcal{F}$ and the map $ \HT_w$ for the universal degenerating semi-abelian scheme over $\widehat{X_1(p^n)(v)_{m,\bar{x}}}$. As in \S\ref{sect-main}
we get an affine formal scheme $\mathfrak{IW}^+_{w,m}\longrightarrow \widehat{\mathcal{B}_1(p^n)_{V'}(v)}_{m,\bar{y}}$, with an equivariant action of
$\Gamma_1(p^n)_{V'}$, such that its base change via $h_1$ is the reduction modulo $\varpi^m$ of the base change via $h_2$ of the formal scheme $\mathfrak{IW}^+_{w}$
naturally defined over $\widehat{X_1(p^n)(v)_{m,\bar{x}}}$. Taking $\kappa'$ invariant functions on $\mathfrak{IW}^+_{w,m}$ as in definition \ref{def:formalintegralsheaf} we
introduce a quasi-coherent sheaf $\underline{\tilde{\mathfrak{w}}}^{\dag\kappa^o}_{w,m}$ over $\widehat{\mathcal{B}_1(p^n)_{V'}(v)}_{m,\bar{y}}$, with an equivariant
action of $\Gamma'_1(p^n)_{V'}$ and hence of $\Gamma_1(p^n)_{V'}$. As explained in section \ref{sect-devissage} we can write
$\underline{\tilde{\mathfrak{w}}}^{\dag\kappa^o}_{w,m}$ as an inductive limit of coherent sheaves $\colim_r \underline{\tilde{\mathfrak{w}}}^{\dag\kappa^o\leq
r}_{w,m}$. Then, each $\colim_r h_1^\ast \underline{\tilde{\mathfrak{w}}}^{\dag\kappa^o\leq r}_{w,m}$ is naturally a $\Gamma_{1}(p^n)_{V'}$-equivariant sheaf
through the diagonal action of $\Gamma_{1}(p^n)_{V'}$ on $\colim_r \underline{\tilde{\mathfrak{w}}}^{\dag\kappa^o\leq r}_{w,m}$ and on
$\oscr_{\widehat{\mathcal{M}_1(p^n)_{V',\mathcal{S}}(v)}_{m,\bar{y}}} $. Due to the definition of $\tilde{\mathfrak{w}}^{\dag\kappa^o\leq r}_{w,m}$ in
section \ref{sect-devissage} it follows that we have a $\Gamma_1(p^n)_{V'}$-equivariant isomorphism   of quasi-coherent sheaves over
$\widehat{\mathcal{M}_1(p^n)_{V',\mathcal{S}'}}(v)_{m,\bar{y}}$:

$$\colim_r  h_2^\ast\left(\tilde{\mathfrak{w}}^{\dag\kappa^o\leq r}_{w,m}\right) = \colim_r
h_1^\ast \left(\underline{\tilde{\mathfrak{w}}}^{\dag\kappa^o\leq r}_{w,m}\right).$$By the projection formula, we have that

$$\colim_r \HH^0\big( \widehat{\mathcal{M}_1(p^n)_{V',\mathcal{S}'}}(v)_{m,\bar{y}}/\Gamma_1(p^n)_{V'} ,
\tilde{\mathfrak{w}}^{\dag\kappa^o\leq r}_{w,m}(-D)\big) ~~~~~~~~~~~~~~~$$

\begin{eqnarray*}
~~~~~~~~~& = & \HH^0\Big(\Gamma_1(p^n)_{V'}, \colim_r \HH^0\big( \widehat{\mathcal{M}_1(p^n)_{V',\mathcal{S}'}}(v)_{m,\bar{y}},
h_1^\ast \underline{\tilde{\mathfrak{w}}}^{\dag\kappa^o\leq r}_{w,m}(-D)\big)\Big) \\
&= & \Big( \colim_r \prod_{\lambda \in S_V' \cap C(V/{V'}^\bot)^\vee, \lambda >0} \HH^0\big(\widehat{\mathcal{B}_1(p^n)_{V'}(v)_{m,\bar{y}}},
\mathcal{L}(\lambda)\otimes \underline{\tilde{\mathfrak{w}}}^{\dag\kappa^o\leq r}_{w,m} \big)\Big)^{\Gamma_1(p^n)_{V'}}.
\end{eqnarray*}

The action of $\Gamma_1(p^n)_{V'}$ on $S_{V'}$ and on the product above factors via $\Gamma'_1(p^n)_{V'}$. Furthermore, $\Gamma'_1(p^n)_{V'}$ acts freely on  the
elements $\lambda \in S_V' \cap C(V/{V'}^\bot)^\vee$ which are definite positive (indeed, the stabilizer of an element would be a compact group, hence finite, but
$\Gamma'_1(p^n)_{V'}$ has no finite subgroups because  of the principal level $N$ structure). Let $S_0$ be a set of representative of the orbits. We then have

$$\Big( \colim_r \prod_{\lambda \in S_V' \cap C(V/{V'}^\bot)^\vee, \lambda >0} \HH^0\big(\widehat{\mathcal{B}_1(p^n)_{V'}(v)_{m,\bar{y}}}, \mathcal{L}(\lambda)\otimes
\underline{\tilde{\mathfrak{w}}}^{\dag\kappa^o\leq r}_{w,m} \big)\Big)^{\Gamma_1(p^n)_{V'}}~~~~~~$$$$~~~~~~ =  \colim_r \prod_{\lambda \in S_0}
\HH^0\big(\widehat{\mathcal{B}_1(p^n)_{V'}(v)_{m,\bar{y}}}, \mathcal{L}(\lambda)\otimes \underline{\tilde{\mathfrak{w}}}^{\dag\kappa^o\leq r}_{w,m} \big).$$

So it remains to see that the formation of $\HH^0\big(\widehat{\mathcal{B}_1(p^n)_{V'}(v)_{m,\bar{y}}}, \mathcal{L}(\lambda)\otimes
\underline{\tilde{\mathfrak{w}}}^{\dag\kappa^o\leq r}_{w,m} \big)$
 commutes with reduction modulo $\varpi^l$ for $l\leq m$.  We have an exact sequence of sheaves over  $\widehat{\mathcal{B}_1(p^n)_{V'}(v)_{m,\bar{y}}}$:

$$ 0 \rightarrow \mathcal{L}(\lambda)\otimes \underline{\tilde{\mathfrak{w}}}^{\dag\kappa^o\leq r}_{w,m-l} \stackrel{\varpi^{l}}\rightarrow
\mathcal{L}(\lambda)\otimes \underline{\tilde{\mathfrak{w}}}^{\dag\kappa^o\leq r}_{w,m} \rightarrow \mathcal{L}(\lambda)\otimes
\underline{\tilde{\mathfrak{w}}}^{\dag\kappa^o\leq r}_{w,l} \rightarrow 0$$

By induction, we may assume that $l= m-1$. It is enough to  show that  $$\HH^1\big(\widehat{\mathcal{B}_1(p^n)_{V'}(v)_{1,\bar{y}}}, \mathcal{L}(\lambda)\otimes
\underline{\tilde{\mathfrak{w}}}^{\dag\kappa^o\leq r}_{w,1} \big) \cong 0$$

Note that $\mathcal{L}(\lambda)$,  for $\lambda \in S_0$, is a very ample sheaf on the abelian scheme $\mathfrak{B}_1(p^n)_{V'}(v)$, due to the principal level
$N$-structure with $N\geq 3$; see the proof of \cite[Thm.~V.5.8]{FC}. The vanishing of the cohomology  follows  by  the vanishing theorem of \cite[\S III.16]{Mum},
the theorem of formal functions \cite[\S4]{EGAIII} and the fact that $\underline{\tilde{\mathfrak{w}}}^{\dag\kappa^o\leq r}_{w,1}$ is an iterated extension of the trivial sheaf  as
seen by corollary \ref{coro-dev} and its proof.

\

{\bf Second Proof:}\enspace  In order to prove the claim, it suffices to consider the case $l=m-1$. From the local description of the sheaves
$\tilde{\mathfrak{w}}^{\dag\kappa^o \leq r}_{w,m}(-D)$, see section \ref{sect-devissage}, it follows that the kernel of $\colim_r \tilde{\mathfrak{w}}^{\dag\kappa^o \leq
r}_{w,m} \to \colim_r \tilde{\mathfrak{w}}^{\dag\kappa^o\leq r}_{w,m-1}$ is isomorphic to $\tilde{\mathfrak{w}}^{\dag\kappa^o}_{w,1}$. The latter is  an inductive
limit of coherent sheaves which are extensions of the trivial sheaf $\oscr_{X_1(p^n)(v)_1}$ by corollary \ref{coro-dev}. To prove the claim it then suffices to show that
$R^1\eta_\ast \oscr_{X_1(p^n)(v)_1} (-D)=0$ and this follows from proposition \ref{prop:vanishinO(-D)} below.

The proof of the second part of the proposition goes exactly along the same lines since the family of sheaves $\tilde{\mathfrak{w}}^{\dag
\kappa^{o\un}}_{w,1}$ is
trivial over the weight space by corollary \ref{cor-triv}.

\end{demo}

Recall that  $\cX_1(p^n)(v)$ is defined by the choice of a smooth and projective admissible polyhedral decomposition in the sense of
\cite[Def.~V.5.1]{FC}. In particular for every $V'$ as above there exists a $\Gamma_1(p^n)_{V'}$-admissible {\em polarization function} $h\colon  C_{V'} \to \R$, i.e. a function satisfying:

(i) $h(x)>0$ if $x\neq 0$ and $h(tx)=t h(x)$ for all $t\in \R_{\geq 0}$ and every $x\in C_{V'} $;

(ii) $h$ is upper convex namely $h\bigl(t x +(1-t) y \bigr) \geq t h(x) + (1-t) h(y)$ for every $x$ and $y\in C_{V'}$ and every $0\leq t \leq 1$;

(iii) $h$ is $\mathcal{S}$-linear, i.e., $h$ is linear on each $\Sigma \in \mathcal{S}$;

(iv) $h$ is strictly upper convex for $\mathcal{S}$, i.e, $\mathcal{S}$  is the coarsest among the fans $\mathcal{S}'$ of $C_{V'}$ for which $h$ is
$\mathcal{S}'$-linear. Equivalently, the closure of the top dimensional cones of $\mathcal{S}$ are exactly the maximal polyhedral cones of $C_{V'}$ on which $h$ is
linear.

(v) $h$ is $\Z$-valued on the set of $N$ times the subset of $C_{V'}$ consisting of symmetric semi-definite bilinear and integral valued forms on
$V/{V'}^\bot$.\smallskip

Consider the morphism $\eta\colon \cX_1(p^n)(v) \to \cX^\star(v)$ from a toroidal to the minimal compactification. Then,

\begin{prop2}\label{prop:vanishinO(-D)}  We have $R^q \eta_{\ast} \oscr_{\cX_1(p^n)(v)}(-D)=0$ for every $q\geq 1$.

\end{prop2}

\begin{demo}
We use the notations of the proof of theorem \ref{thm:baschange}. We write $\widehat{Z}_{V'}:=\widehat{\mathcal{M}_1(p^n)_{V',
\mathcal{S}'}}(v)_{1,\bar{y}}$ and
$\widetilde{{Z}}_{V'}:=\widehat{Z}_{V'} /\Gamma_1(p^n)_{V'}$ to simplify the notation. By the theorem of formal functions
\cite[\S4]{EGAIII} it suffices to prove that
$\HH^q\big(\widetilde{{Z}}_{V'}, \oscr_{\widehat{Z}_{V'}}(-D)\big)=0$ for every $q\geq 1$.

We recall the construction of $\widehat{Z}_{V'}$. We have fixed a smooth $\Gamma_1(p^n)_{V'}$-admissible polyhedral decomposition
$\mathcal{S}$ of the cone
$C_{V'}:=C(V/{V'}^\bot)$ of symmetric semi-definite bilinear forms on $V/{V'}^\bot \otimes \R$ with rational radical. Every $\Sigma \in \mathcal{S}$ defines an
affine relative torus embedding $Z_{\Sigma}$ over the abelian scheme ${\mathcal{B}_1(p^n)_{V'}(v)_{1,\bar{y}}}$ which we view over the spectrum of the local ring
underlying $\widehat{Y_1(p^n)_{V'}(v)_{1,\bar{y}}}$. The $Z_{\Sigma}$'s glue to define a relative torus embedding $Z_{V'}$ stable for the action of
$\Gamma_1(p^n)_{V'}$. For every $\Sigma$ we let $W_\Sigma:= \sum_{\rho\in \Sigma(1)^o} D_\rho$ be the relative Cartier divisor defined by the set $\Sigma(1)^o$ of $1$-dimensional faces
of $\Sigma$ contained in the interior $C_{V'}^o$ of the cone $C_{V'}$. Put $W_{V'}:=\cup_\Sigma W_\Sigma$. Write $\widehat{Z}_{\Sigma}$ (resp.~$\widehat{Z}_{V'}$)
for the formal scheme given by  the completion of $Z_{\Sigma}$ (resp.~$Z_{V'}$) with respect to the ideal $\oscr_{Z_{\Sigma}}\bigl(-W_{\Sigma}\bigr)$
(resp.~$\oscr_{Z_{V'}}\bigl(-W_{V'}\bigr)$). Then, $\widehat{Z}_{V'}=\cup_{\Sigma} \widehat{Z}_{\Sigma}$.

Fix a $\Gamma_1(p^n)_{V'}$-admissible polarization function $h\colon C_{V'}\to \R_{\geq 0}$. As in \cite[Def.~V.5.6]{FC} we define
$$D_{\Sigma,h}':=-\sum_{\rho\in \Sigma(1)} a_\rho
D_\rho,\qquad D_h':=\cup_\Sigma D_{\Sigma,h},$$where the sum is over the set $\Sigma(1)$ of all $1$-dimensional faces  of $\Sigma$ (not simply over the set $\Sigma(1)^o$ as in the definition of $W_\Sigma$) . More explicitly, for every $\Sigma\in
\mathcal{S}$ and every $\rho\in \Sigma(1)$ there exists a unique primitive integral element $n(\rho)\in \Sigma$ such that $\rho=\R_{\geq 0} n_\rho$. We then set
$a_\rho:=h\big(n(\rho)\big)$. As $h(x)\neq 0$ if $x\neq 0$ and $h(x)\in \Z$ for integral elements $x\in C_{V'}$, we deduce that $a_\rho$ is a positive integer for
every $\Sigma$ and every $\rho\in \Sigma(1)$. Moreover, the Cartier divisor $D_h'$  of $Z_{V'}$ is $\Gamma_1(p^n)_{V'}$-invariant.

Take a positive integer $s$. As the set of integers $\{a_\rho\}$ is $\Gamma_1(p^n)_{V'}$-invariant, it is finite.  Thus there exists $\ell\in \Z$ such that $0< s
a_\rho < \ell$ for every $\rho$. Recall that given a $\qq$-divisor $E:=\sum_\rho e_\rho D_\rho$, with $e_\rho\in\qq$,  one defines the ``round down" Cartier divisor
$\lfloor E \rfloor:=\sum_\rho \lfloor e_\rho \rfloor D_\rho$ by setting $\lfloor e_\rho \rfloor$ to be the smallest integer $\leq e_\rho$. In particular, we compute
$$\lfloor \ell^{-1} s D'_h\rfloor=\sum_{\rho\in \Sigma(1)}-D_\rho:=-D,$$where $D$ defines the boundary of $Z_\Sigma$. Multiplication by $\ell$ on the cone $C_{V'}$
preserves the polyhedral decomposition $\mathcal{S}$ and for every $\Sigma\in \mathcal{S}$ defines a finite and flat morphism $\Phi_{\ell,\Sigma}\colon Z_\Sigma \to
Z_\Sigma$ over ${\mathcal{B}_1(p^n)_{V'}(v)_{1,\bar{y}}}$. As $\Phi_{\ell,\Sigma}^\ast(W_\Sigma)= \ell W_\Sigma$, the morphism $\Phi_{\ell,\Sigma}$ induces a
morphism on the completions with respect to the ideal $\mathcal{O}_{Z_\Sigma}(-W_\Sigma)$ and we get finite and flat morphisms $\widehat{\Phi}_{\ell,\Sigma}\colon
\widehat{Z}_\Sigma \longrightarrow \widehat{Z}_\Sigma$. They glue to provide a finite flat, $\Gamma_1(p^n)_{V'}$-equivariant morphism of formal schemes
$\widehat{\Phi}_\ell\colon \widehat{Z}_{V'} \rightarrow \widehat{Z}_{V'}$ over $\widehat{\mathcal{B}_1(p^n)_{V'}(v)_{1,\bar{y}}}$. After passing to the quotients by
$\Gamma_1(p^n)_{V'}$ we get a finite and flat morphism of formal schemes $$\widetilde{\Phi}_\ell\colon \widetilde{Z}_{V'}\to \widetilde{Z}_{V'}.$$As $s D_h'-
\Phi_\ell^\ast(-D)=\sum_\rho (\ell-s a_\rho) D_\rho$ is an effective Cartier divisor, we have by adjunction natural inclusions of invertible sheaves $$\iota_\ell
\colon \oscr_{Z_{V'}}(-D) \to {\Phi}_{\ell,\ast}\big(\oscr_{{Z}_{V'}}(s D_h')\big),\qquad \widehat{\iota}_\ell \colon \oscr_{\widehat{Z}_{V'}}(-D) \to
{\widehat{\Phi}}_{\ell,\ast}\big(\oscr_{\widehat{Z}_{V'}}(s D_h')\big).$$In \cite[Lemmas 9.2.6 and 9.3.4]{CS} a canonical splitting of $\iota_\ell$ as
$\oscr_{{Z}_{V'}}$-modules is constructed in terms of the cone  $C_{V'}$ and the integers $\{a_\rho\vert \rho\in \Sigma(1)\}$. In particular it is
$\Gamma_1(p^n)_{V'}$-equivariant and it defines a $\Gamma_1(p^n)_{V'}$-equivariant splitting of $\widehat{\iota}_\ell$ as $\oscr_{\widehat{Z}_{V'}}$-modules after
passing to completions. Taking the quotient under $\Gamma_1(p^n)_{V'}$ we get a split injective map $\widetilde{\iota}_{\ell} \colon
\oscr_{\widetilde{{Z}}_{V'}}(-D) \to \widetilde{{\Phi}}_{\ell,\ast} \oscr_{\widetilde{{Z}}_{V'}}(s D_h')$ of $\oscr_{\widetilde{{Z}}_{V'}}$-modules. Taking
cohomology for every $q\in \N$ we get a split injective map

$$\HH^q\bigl(\widetilde{{Z}}_{V'},\oscr_{\widetilde{{Z}}_{V'}}(-D)\big)
\to  \HH^q\bigl(\widetilde{{Z}}_{V'}, \widetilde{{\Phi}}_{\ell,\ast} \big(\oscr_{\widetilde{Z}_{V'}}(s D_h') \bigr)\cong
\HH^q\bigl(\widetilde{{Z}}_{V'},\oscr_{\widetilde{Z}_{V'}}(s D_h') \bigr).$$If we show that there exists $s\in\N$ such that
$\HH^q\bigl(\widetilde{{Z}}_{V'},\oscr_{\widetilde{Z}_{V'}}(s D_h') \bigr)=0$ for every $q\geq 1$, we are done. This follows if we prove that there exists $s$ such
that $\oscr_{\widetilde{Z}_{V'}}(s D_h') $ is a very ample invertible sheaf.

It follows from \cite[Thm.~V.5.8]{FC} that the map $\eta\colon \cX_1(p^n)(v) \to \cX^\star(v)$ is the normalization of the blow-up of $\cX^\star(v)$ defined by a sheaf of
ideals $\mathcal{J}$ such that $\eta^\ast(\mathcal{J})$ restricted to $\widetilde{{Z}}_{V'}$ is $\oscr_{\widetilde{Z}_{V'}}(d D_h')$ for a suitable $d$. In
particular, as $\eta$ is the composite of a finite map and a blow-up, the sheaf $\eta^\ast(\mathcal{J})$ is ample relatively to $\eta$. We conclude that
$\oscr_{\widetilde{Z}_{V'}}(d D_h')$ is an ample sheaf on  $\widetilde{Z}_{V'}$. In particular, there exists a large enough multiple $s$  of $d$ so that
$\oscr_{\widetilde{Z}_{V'}}(s D_h')$ is very ample as claimed.

Alternatively, to prove that $\oscr_{\widetilde{Z}_{V'}}(D_h')$ is ample, it suffices to prove that its restriction to the boundary $\partial  \widetilde{Z}_{V'}$
is ample. As $\partial  \widetilde{Z}_{V'}=W_{V'}/\Gamma_1(p^n)_{V'}$ is proper over $\widehat{Y_1(p^n)}_{V'}(v)_{m,\bar{y}}$ it suffices to prove ampleness after
passing to the residue field $k(\bar{y})$ of $\widehat{Y_1(p^n)}_{V'}(v)_{m,\bar{y}}$. It then follows from the Nakai-Moishezon criterion for ampleness \cite{Kl} that it
suffices to prove that the restriction of $\oscr_{\widehat{Z}_{V'}}(D_h')$ to the fiber of the boundary $W_{V'}$ of $\widehat{Z}_{V'}$ over $\bar{y}$ is ample in
the sense that the global sections of $\oscr_{\widehat{Z}_{V'}\otimes k(\bar{y})}(dD_h')$ for $d\geq 1$ form a basis of the topology of $W_{V'}\otimes k(\bar{y})$
(see the footnote to \cite[Def.~2.1, Appendix]{FC}). This follows if we prove the stronger statement that $\oscr_{Z_{V'}}(dD_h')$ is very ample for every $d\geq 1$,
i.e., that the elements of $\HH^0\big(Z_{V'},\oscr_{Z_{V'}}(dD_h')\big)$ form a basis of the Zariski topology of $Z_{V'}$. If $f\colon Z_{V'}\to
\mathcal{B}_1(p^n)_{V'}(v)_{1,\bar{y}}$ is the structural morphism, then $f_\ast\big(\oscr_{Z_{V'}}(d D_h')\big)= \oplus_{\lambda \geq d h} \mathcal{L}(\lambda)$,
where the sum is taken over all integral elements $\lambda\in S_{V'} \cap C(V/{V'}^\bot)^\vee$ such that for every $\Sigma\in \mathcal{S}$ and every $\rho\in
\Sigma$ we have $\lambda(\rho) \geq d h(\rho)$, see \cite[\S IV.4]{Dem}. In particular,
$$\HH^0\big(Z_{V'},\oscr_{Z_{V'}}(dD_h')\big)= \oplus_{\lambda \geq d  h} \HH^0\big({\mathcal{B}_1(p^n)_{V'}(v)_{1,\bar{y}}},
\mathcal{L}(\lambda)\big).$$Assuming conditions (i), (iii) and (v) in the definition of polarization function given above then condition
\cite[Cor.~IV.4.1(iii)]{Dem} is equivalent to conditions (ii) and (iv) above. As $dh$ is also a polarization function, we conclude from \cite[Cor.~IV.4.1(i) and Pf.~Thm.~IV.4.2]{Dem} that the morphism $Z_{V'} \to \mathbf{Proj} \left(\oplus_s f_\ast\big(\oscr_{Z_{V'}}( d D_h')\big)^{\otimes^s}\right) $ of schemes  over
${\mathcal{B}_1(p^n)_{V'}(v)_{1,\bar{y}}}$ defined by $f_\ast\big(\oscr_{Z_{V'}}( dD_h')\big)$ is a closed immersion. The sheaf $\mathcal{L}(\lambda)$ is very
ample on the abelian scheme ${\mathcal{B}_1(p^n)_{V'}(v)_{1,\bar{y}}}$ for every integral, non-zero element $\lambda\in S_{V'} \cap C(V/{V'}^\bot)^\vee$ due to the
principal level $N$-structure with $N\geq 3$; see the proof of \cite[Thm.~V.5.8]{FC}. As the condition $\lambda \geq d h$ implies $\lambda> 0$, the very ampleness
of $\oscr_{Z_{V'}}(d D_h')$ follows.

\end{demo}

\subsubsection{Applications of the base change theorem: the proof of proposition \ref{prop:mainspecializationresult}}
We let $\mathfrak{U} = ( \mathfrak{V}_i)_{1\leq i \leq r}$ be an affine covering of $\mathfrak{X}^\star(v)$. We let $\underline{i}= (i_1,i_2,\ldots,i_{n'})$ be a
multi index with $1\leq i_1 < \ldots< i_{n'} \leq r$. We let $\mathfrak{V}_{\underline{i}}$ be the intersection of $\mathfrak{V}_{i_1}$, $\mathfrak{V}_{i_2}$,
$\ldots$, $\mathfrak{V}_{i_{n'}}$. This is again an affine formal scheme. We denote by $V_{\underline{i},m}$ the scheme obtained by reduction modulo $\varpi^m$.  We
let $M_{\underline{i},m} =  \HH^0(V_{\underline{i},m}\times \mathcal{W}(w)^o_m, (\eta\times 1)_\ast \tilde{\mathfrak{w}}^{\dag\kappa^{o\un}}_{w,m}(-D))$ and
$M_{\underline{i},\infty} =  \HH^0(\mathfrak{V}_{\underline{i}}\times \mathfrak{W}(w)^o, (\eta\times 1)_\ast \tilde{\mathfrak{w}}^{\dag\kappa^{o\un}}_{w}(-D)) =
\lim_m M_{\underline{i},m}$. Finally let $A $ be the algebra of  $\mathfrak{W}(w)^o$.

\begin{coro2} The module $M_{\underline{i},\infty}$ is isomorphic to the $p$-adic completion of a free  $A$-module.
\end{coro2}
\begin{demo} The module $M_{\underline{i},\infty}$ is $p$-torsion free and the reduction map $M_{\underline{i},\infty}
\rightarrow M_{\underline{i},1}$ is surjective. The $A/\varpi A$-module $M_{\underline{i},1}$ is free by corollary \ref{cor-triv}.
Fix a basis $(\bar{e}_i)_{i\in I}$ for this module. We lift the vectors $\bar{e}_i$ to vectors $e_i$ in
$M_{\underline{i},\infty}$. We let $\widehat{A^I}$ be the $p$-adic completion of the module $A^I$. Now consider the map
$\widehat{A^I} \rightarrow M_{\underline{i},\infty}$ which sends $(a_i)_{i\in I}\in \widehat{A^I}$ to $\sum_i a_i e_i$.
We claim that this map is an isomorphism. It is surjective by the topological Nakayama lemma. It is injective, for if
$\sum_i a_i e_i$ is  $0$ and $(a_i)_{i\in I} \neq 0$, there is $n\in \N$, $(a_i' )_{i\in I}\in \widehat{A^I}$  such
that $\varpi^n a'_i=a_i$  and  an index $i_0$ such that $a_{i_0}' \notin \varpi A$. Since
$ M_{\underline{i},\infty} $ is $\varpi$-torsion free we have $\sum_i a'_i e_i=0$ and reducing this relation modulo
$\varpi$ we get a contradiction.
\end{demo}
\medskip

We set $M = \HH^0\left(\cX_1(p^n)(v) \times \mathcal{W}(w)^o, \tilde{\mathfrak{w}}^{\dag\kappa^{o\un}}_{w}(-D)\right)[p^{-1}]$, and $M_{\underline{i}} =
M_{\underline{i},\infty}[p^{-1}]$.

\begin{coro2}\label{cor:Mproj} The module $M$ is a projective Banach-$A[\frac{1}{p}]$-module. For any $\kappa \in\mathcal{W}(w)^o$,
the specialization map $M \rightarrow  \HH^0\bigl(\cX_1(p^n)(v), \tilde{\mathfrak{w}}^{\dag\kappa^o}_{w}(-D)\bigr)[p^{-1}]$ is surjective.
\end{coro2}

\begin{demo} Since $\cX^\star(v)\times \mathcal{W}^o$ is affinoid and $(\eta\times 1)_\ast \tilde{\mathfrak{w}}^{\dag\kappa^{o\un}}_{w}(-D)$ is a formal
Banach sheaf, we can apply  the acyclicity theorem to  the Chech complex associated to the rigid analytic fiber $\mathcal{U} = (\mathcal{V}_{\underline{i}})$ of the covering $\mathfrak{U}$. We thus get a resolution of
the module $M$ by the projective modules $M_{\underline{i}}$ and as a result $M$ is projective.

We now prove the surjectivity of the specialization map.   Let $P_{\kappa^o}$ be the maximal ideal of $\kappa^o$ in $A[p^{-1}]$. We consider the   Koszul resolution
of $A[p^{-1}]/P_{\kappa^o}$:
$$\mathrm{Ko}(\kappa^o)\colon   0 \rightarrow  A[p^{-1}] \rightarrow A[p^{-1}]^g  \cdots \rightarrow A[p^{-1}]^g\rightarrow A[p^{-1}] \rightarrow
A[p^{-1}]/P_{\kappa^o} \rightarrow 0.$$ The tensor product $\mathrm{Ko}(\kappa^o) \otimes (\eta \times 1)_\ast \tilde{\mathfrak{w}}^{\dag \kappa^{o\un}}_{w}(-D)$ is a
resolution of $\eta_\ast \tilde{\mathfrak{w}}^{\dag\kappa^o}_w(-D)[p^{-1}]$ by  sheaves isomorphic to direct sums of the sheaves $(\eta \times 1)_\ast
\tilde{\mathfrak{w}}^{\dag \kappa^{o\un}}_{w}(-D)[p^{-1}]$.

We consider the following double complex, obtained by taking the Cech complex of  $\mathrm{Ko}(\kappa) \otimes (\eta \times 1)_\ast \tilde{\mathfrak{w}}^{\dag
\kappa^{o\un}}_{w}(-D)$ attached to the covering $\mathcal{U} = (\mathcal{V}_{\underline{i}})$   (we think of $\mathrm{Ko}(\kappa) \otimes (\eta \times 1)_\ast \tilde{\mathfrak{w}}^{\dag \kappa^{o\un}}_{w}(-D)$
as a vertical complex of sheaves):

$$0 \rightarrow  \HH^0\left(\cX^\star(v)\times \mathcal{W}(w)^o, \mathrm{Ko}(\kappa^o) \otimes (\eta \times 1)_\ast\tilde{\mathfrak{w}}^{\dag \kappa^{o\un}}_{w}(-D)\right)
 \rightarrow $$
$$ ~~~~~~~~~~~~~~~~ \oplus_i \HH^0\left(\mathcal{V}_{\underline{i}}\times \mathcal{W}(v)^o, \mathrm{Ko}(\kappa^o)
\otimes (\eta \times 1)_\ast\tilde{\mathfrak{w}}^{\dag \kappa^{o\un}}_{w}(-D)\right) \rightarrow \cdots .$$For any multi-index $\underline{i}$ the complex
$\HH^0(\mathcal{V}_{\underline{i}} \times \mathcal{W}(w)^o , \mathrm{Ko}(\kappa^o) \otimes (\eta \times 1)_\ast \tilde{\mathfrak{w}}^{\dag \kappa^{o\un}}_{w}(-D))$ is exact. All the rows
of the double complex are exact by the acyclicity theorem. It follows that the first column is also exact.
\end{demo}

\bigskip

We now prove  proposition \ref{prop:mainspecializationresult}. Let $B$ be the algebra of rigid analytic functions on $\mathcal{W}(w)$.

\begin{prop2}\label{prop:mainspecializationresult}
a) The module $\HH^0(\cXI(v)\times \mathcal{W}(w), \omega^{\dag\kappa^\un}_w(-D))$ is a projective Banach-$B$-module.

and

b) For every $\kappa \in \mathcal{W}(w)$ the specialization map

$$ \HH^0(\cXI(v)\times \mathcal{W}(w), \omega^{\dag\kappa^\un}_w(-D)) \rightarrow \HH^0(\cXI(v), \omega^{\dag\kappa}_w(-D))$$is surjective.
\end{prop2}

\begin{demo}  We use the notations of the corollary \ref{cor:Mproj}. By definition,

$$\HH^0(\cXI(v)\times \mathcal{W}(w), \omega^{\dag\kappa^\un}_w(-D)) = \big(M \otimes_{A[\frac{1}{p}]} B(-\kappa^{\un'})\big)^{\B(\Z/p^n\Z)}$$

Here, $B(-\kappa^{\un'})$ is the free $B$-module with action of $B(\Z/p^n\Z)\mathcal{B}_w$ via the character $-\kappa^{\un'}$. Then, $M \otimes_{A} B(-\kappa^{\un'})$
is viewed as a $B(\Z/p^n\Z)\mathcal{B}_w$-module with diagonal action. This action factors through the group $\B(\Z/p^n\Z)$ and the invariants are precisely
$\HH^0(\cXI(v)\times \mathcal{W}(w), \omega^{\dag\kappa^{\un}}_w(-D))$. Thus, this module is a direct factor in a projective $B$-module by corollary \ref{cor:Mproj} so it
is projective. Now, let $\kappa \in \mathcal{W}(w)$. We let $\kappa^o$ be its image in $\mathcal{W}(w)^o$. Let $m_\kappa$ be the maximal ideal of $\kappa$ in $B$. Set
$M_{\kappa^o} = \HH^0\bigl(\cX_1(p^n)(v), \tilde{\mathfrak{w}}^{\dag\kappa^o}_{w}(-D)\bigr)[p^{-1}]$.  The specialization map $M \rightarrow M_{\kappa^o}$ is surjective
thanks to corollary \ref{cor:Mproj}. The map $M \otimes_A B(-\kappa^{\un}) \rightarrow M_{\kappa^o} \otimes_A B/m_\kappa(-\kappa)$ is  surjective and  the map
$$\big(M \otimes_A B(-\kappa^{\un'})\big)^{\B(\Z/p^n\Z)} \rightarrow \big(M_{\kappa^o} \otimes_A B/m_\kappa(-\kappa')\big)^{\B(\Z/p^n\Z)}$$
is still surjective since $\B(\Z/p^n\Z)$ has no higher cohomology on $\qq_p$-modules. As the $\HH^0(\cXI(v), \omega^{\dag\kappa}_w(-D))=\big(M_{\kappa^o}
\otimes_A B/m_\kappa(-\kappa')\big)^{\B(\Z/p^n\Z)}$, the claim concerning the specialization follows.
\end{demo}

\appendix

\section{Banach sheaves}

\subsection{Definitions and Acyclicity}
Let $K$ be a complete field for a non archimedean valuation $\vert. \vert$, $A$  a $K$-affinoid algebra
equipped with a quotient norm $\vert. \vert$ and let $M$ be an $A$-module. We say that $M$ is a
normed $A$ module if
there is a  norm function $\vert. \vert\colon  M \rightarrow \R_{\geq 0}$ such that:
\begin{enumerate}
\item $\vert m \vert = 0  \Leftrightarrow m=0$,
\item $\vert m + n \vert  \leq \sup \{ \vert m \vert, \vert n\vert\}$,
\item $\vert a m \vert \leq \vert a\vert \vert m \vert$, for $a \in A$ and $m\in M$.
\end{enumerate}
If $\vert . \vert$ only satisfies $2.$ and $3.$, we call it a semi-norm. We say that $M$ is a Banach $A$-module if $M$
is a complete normed $A$-module. The open mapping theorem will be  frequently used:

\begin{thm}[\cite{Bou}, Chap. I, sect. 3.3, thm. 1]A surjective continuous map $\phi\colon  M\rightarrow N$  between Banach $A$-modules is
open.
\end{thm}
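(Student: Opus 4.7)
The plan is to follow the classical Baire-category proof of the open mapping theorem, adapted to the non-archimedean setting (which is essentially the argument in Bourbaki). Because the statement is purely topological-linear, the $A$-module structure plays no role beyond ensuring scalar multiplication by elements of $K$ is contractive; the proof reduces to the Banach space case over $K$.

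\textbf{Step 1 (Baire argument).} The first step is to show that the closure $\overline{\phi(B_M)}$ of the image of the open unit ball $B_M = \{m \in M : |m| < 1\}$ contains some open ball $B_N(0,r)$. Since $\phi$ is surjective, $N = \bigcup_{n\ge 1} \phi(nB_M)$, and $N$ is a complete metric space, hence Baire. Therefore some $\overline{\phi(n_0 B_M)}$ has nonempty interior, containing a ball $B_N(y_0,r)$. Using additivity together with the ultrametric triangle inequality $|y-y'|\le \sup\{|y|,|y'|\}$, symmetrize to conclude that $\overline{\phi(B_M)}\supseteq B_N(0,r')$ for some $r'>0$.

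\textbf{Step 2 (Approximation and completeness).} The second step is to upgrade from closure to actual image. Given $y\in B_N(0,r')$, I would recursively build a sequence $x_k\in M$ with $|x_k|\le c q^k$ for a fixed constant $c$ and some $0<q<1$, such that $|y - \phi(x_0+\cdots+x_k)|$ tends to $0$. At each stage this is possible by Step~1 applied to the residual error. The ultrametric inequality makes the series $\sum_k x_k$ automatically Cauchy, and the completeness of $M$ produces a limit $x\in M$ with $|x|\le c$. Continuity of $\phi$ gives $\phi(x)=y$, so $\phi(cB_M) \supseteq B_N(0,r')$, and hence $\phi$ is open at $0$.

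\textbf{Step 3 (Translation).} Finally, linearity of $\phi$ and the fact that translations are homeomorphisms immediately upgrade openness at $0$ to openness everywhere: for any open $U\subseteq M$ and any $x\in U$, write $U=x+V$ with $V$ an open neighbourhood of $0$, so $\phi(U)=\phi(x)+\phi(V)$, which is open in $N$.

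\textbf{Main obstacle.} The only subtlety compared with a textbook exposition is keeping the geometric estimates non-archimedean: the series constructed in Step~2 must be controlled in the ultrametric, not the archimedean, sense. In fact this is an advantage rather than an obstacle, since $|x_k|\le c q^k$ with $q<1$ already forces convergence without any $\sum |x_k|$-type estimate. Also one should check that the quotient norm on $A$ and the compatibility $|am|\le |a|\cdot |m|$ survive all scalings used, but this is automatic from the definition of normed $A$-module given earlier. Hence no new ideas beyond the classical Baire/Banach argument are required.
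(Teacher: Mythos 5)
The paper gives no proof of this statement: it is quoted verbatim from Bourbaki, and Bourbaki's proof is exactly the Baire-category argument you outline, so your approach is the intended one and is correct in substance. One ultrametric caveat in Step 1: for an integer $n$ one has $|n|\le 1$, so the dilate $nB_M=\{nm: m\in B_M\}$ is \emph{contained} in $B_M$ and the union $\bigcup_{n}\phi(nB_M)$ need not exhaust $N$; you should instead write $N=\bigcup_{n}\phi\bigl(\varpi^{-n}B_M\bigr)$ for some $\varpi\in K$ with $0<|\varpi|<1$ (equivalently, use the balls $\{m:|m|<n\}$), which is available because the valuation on the affinoid base field is non-trivial. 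With that reading, the scaling in Step 2 should likewise be by powers of $\varpi$, and the successive-approximation and translation steps go through exactly as you describe, the ultrametric inequality indeed making the convergence of $\sum_k x_k$ automatic.
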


In the situation of the theorem, the topology on $N$ is  the image of the topology on $M$, and the norm on $N$ is
equivalent to the image norm defined by: $\vert n \vert = \inf_{m \in \phi^{-1}(n)} \vert m\vert$.

If $(M, \vert. \vert)$ is a Banach $A$-module then any other norm $\vert. \vert '$ on $M$ inducing the same topology on $M$ is  equivalent to  $\vert.\vert$.

The following lemma will also be useful:
\begin{lem}\label{lem-completion} Let $$ 0 \rightarrow N \rightarrow M \rightarrow L \rightarrow 0$$ be an exact sequence of
$A$-modules. We assume that $M$ is equipped with a semi-norm $\vert. \vert $,  and that $N$ and $L$ are equipped with the induced semi-norms respectively.
Then there is a natural exact sequence of the
separation-completions:
$$0 \rightarrow \hat{N} \rightarrow \hat{M} \rightarrow \hat{L} \rightarrow 0$$
\end{lem}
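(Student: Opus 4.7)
The plan is to verify the three pieces of exactness separately, using standard non-archimedean completion techniques. Throughout, I will exploit the fact that on $N$ the semi-norm is the restriction of $|\cdot|$ from $M$, so $N\hookrightarrow M$ is isometric, and on $L$ the semi-norm is $|l| = \inf_{m\mapsto l} |m|$.

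For injectivity of $\hat{N}\to\hat{M}$, since the inclusion $N\hookrightarrow M$ preserves semi-norms, it extends to an isometric (hence injective) map of separated-completions. For surjectivity of $\hat{M}\to\hat{L}$, I would start from $\hat{l}\in\hat{L}$, pass to a representative Cauchy sequence $(l_n)$ in $L$ which, after extracting a subsequence, satisfies $|l_{n+1}-l_n|\leq 2^{-n}$. By the very definition of the quotient semi-norm, I can lift each difference to $m'_n\in M$ with $|m'_n|\leq 2^{-n+1}$, fix an arbitrary lift $m_1$ of $l_1$, and form the partial sums $m_n := m_1 + \sum_{k<n} m'_k$. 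This gives a Cauchy sequence in $M$ whose image in $L$ is $(l_n)$, and whose class in $\hat{M}$ maps to $\hat{l}$.

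For exactness in the middle, I would take $\hat{m}\in\hat{M}$ whose image in $\hat{L}$ vanishes, represent it by a Cauchy sequence $(m_n)\subset M$, and note that the image sequence $(l_n)$ in $L$ tends to $0$ in the quotient semi-norm. After extracting a subsequence with $|l_n|\leq 2^{-n}$, I use the quotient semi-norm once more to pick lifts $m'_n\in M$ of $l_n$ with $|m'_n|\leq 2^{-n+1}$. Then $m_n-m'_n$ lies in $N$, is Cauchy in $M$ (hence Cauchy in $N$ because the semi-norm is the restriction), and still represents $\hat{m}$ in $\hat{M}$ since $m'_n\to 0$. Its class in $\hat{N}$ therefore maps to $\hat{m}$.

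The arguments are essentially soft, and none of the three steps presents a genuine obstacle; the only point requiring care is consistent use of the two definitions of induced semi-norm (isometric inclusion on $N$, infimum formula on $L$), and a standard extraction-of-subsequence trick to turn approximate lifts into honest Cauchy lifts. No completeness hypothesis on $A$ beyond what is implicit in the semi-normed setup is needed, and no special structure of the ambient $A$-module category enters the argument.
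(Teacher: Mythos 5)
Your proof is correct and follows essentially the same route as the paper's: both arguments rest on the observation that the quotient semi-norm on $L$ permits lifting any element to $M$ with semi-norm arbitrarily close to the infimum, which is used once for surjectivity (lifting the terms of a convergent series, resp.\ the successive differences of a Cauchy sequence) and once for exactness in the middle (correcting a Cauchy sequence in $M$ by small lifts so that it lands in $N$). The only differences are cosmetic — you work with Cauchy sequences and $2^{-n}$ bounds where the paper uses convergent series and $1/n$ fudge factors.
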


\begin{demo} The map $M \rightarrow \hat{L}$ is bounded so it extends uniquely to a map
$p\colon  \hat{M} \rightarrow \hat{L}$. Let $l \in \hat{L}$. We can write $l = \sum_n l_n$ with $l_n \in L$ and $\vert l_n\vert \rightarrow 0 $
when $n \rightarrow \infty$. Take $m_n \in M$ such that $p(m_n)=l_n$ and $\vert m_n\vert \leq \vert l_n \vert + \frac{1}{n}$. Then
$m= \sum_n m_n \in \hat{M}$ maps to $l$. The completion of $N$ is clearly isomorphic to   the closure in $\hat{M}$ of the image of $N$. Moreover the
composition
$\hat{N} \rightarrow \hat{M} \rightarrow \hat{L}$ is clearly $0$. Let us take $x \in \hat{M}$ such that $p(m)=0$. We have $x = \lim_{n \rightarrow \infty} x_n$ where $x_n \in M$, and $\lim_{n \rightarrow \infty} p(x_n) =0$. There is $y_n \in M$ such that $\vert y_n \vert \leq \vert p(x_n) \vert + \frac{1}{n}$ and $p(y_n) = p(x_n)$. So we have $x_n = x_n -y_n + y_n$ where $x_n -y_n \in \hat{N}$. Since $\lim_{n \rightarrow \infty} y_n = 0$, we deduce that $x \in \hat{N}$.
 \end{demo}

\bigskip

Let $\cU$ be an admissible affinoid open  of $\cX = {\rm Spm} ~A$, and let $A_\cU$ be the ring of rigid
analytic functions on $\cU$,
equipped with a residue norm. Let $(M,\vert\_ \vert)$ be a Banach $A$-module. We define the complete
localization $M_\cU = M\hat{\otimes}_A A_\cU$. It is the separation and completion of the semi-normed $A_\cU$-module $M\otimes_A A_\cU$ where the semi-norm of an element $x$ is the infimum over all the expressions  $x= \sum_i m_i \otimes a_i$ of the supremum $\sup_i \vert a_i\vert \vert m_i \vert $.

Let $M$ be a Banach $A$-module. We  define the pre-sheaf $\mathcal{M}$ on the category of affinoid open subsets of $\cX$ by $\mathcal{M} (\cU) = M\hat{\otimes}_A A_\cU$.

We now have the proposition:

\begin{prop}\label{prop_acy} Let $\mathfrak{U}= \{\cU_j\}_{j\in J}$ be a finite admissible affinoid covering  of $\cX$. The augmented Cech complex:

$$0 \rightarrow \mathcal{M}(\cX) \rightarrow \oplus_j \mathcal{M}(\cU_j) \rightarrow \oplus_{k,j} \mathcal{M}(\cU_{k,j}) \rightarrow \ldots$$
 is exact.
 \end{prop}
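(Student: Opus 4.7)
The plan is to adapt the classical Tate--Kiehl acyclicity theorem, which handles the case $M=A$, to the setting of an arbitrary Banach $A$-module $M$. The strategy has two stages: first reduce to the special Laurent covering $\cX = \cU_f \cup \cU_{1/f}$ by a single element $f \in A$, then use an explicit continuous $A$-linear splitting of the Tate sequence to derive exactness for $M$.

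For the reduction, one uses the standard fact that every finite admissible affinoid covering of $\cX = \Spm~A$ admits a Laurent refinement. A \v{C}ech-to-\v{C}ech comparison then shows that acyclicity for the refinement implies acyclicity for the original covering, provided that the restriction maps $\mathcal{M}(\cU) \to \mathcal{M}(\cV)$ are the natural completed-tensor-product maps (which holds by construction of $\mathcal{M}$). An induction on the number of generators of the Laurent covering, using a Mayer--Vietoris-type excision at each step, then reduces the problem to the single-element case.

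In that case the \v{C}ech complex reads
\begin{equation*}
0 \to M \to M \hat{\otimes}_A A\langle T\rangle/(T-f) \,\oplus\, M \hat{\otimes}_A A\langle S\rangle/(Sf-1) \to M \hat{\otimes}_A A\langle T,T^{-1}\rangle/(T-f) \to 0.
\end{equation*}
Tate's theorem furnishes exactness of the analogous sequence for $M=A$, and crucially that short exact sequence of Banach $A$-modules is \emph{split}: a continuous $A$-linear section of the surjection is given by decomposing a Laurent series $\sum_{n \in \Z} a_n f^n$ into its non-negative-degree part, which lies in $A_{\cU_f}$, and its strictly-negative-degree part, which lies in $A_{\cU_{1/f}}$. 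A topological splitting of the surjection in a short exact sequence of Banach $A$-modules yields a direct sum decomposition of the middle term, so applying the functor $M \hat{\otimes}_A (-)$ preserves exactness and produces the desired \v{C}ech exact sequence.

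The main technical obstacle, and where the Banach structure on $M$ really enters, will be to identify the completed tensor products correctly and track their interaction with the quotient maps. Using that $A\langle T\rangle$, $A\langle S\rangle$ and $A\langle T,T^{-1}\rangle$ are orthonormalizable Banach $A$-modules with the obvious monomial bases, one has $M \hat{\otimes}_A A\langle T\rangle \cong \{\sum_{n \ge 0} m_n T^n : |m_n| \to 0\}$ and similarly for the other two. Verifying that the natural quotients by the closed ideals $(T-f)$ and $(Sf-1)$ commute with $M \hat{\otimes}_A (-)$ is the essential topological point, for which the open mapping theorem and lemma \ref{lem-completion} are the key tools; once this bookkeeping is in place, the rest of the argument is purely formal.
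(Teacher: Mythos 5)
The reduction to a one-generator Laurent covering is standard and unobjectionable, but the step your whole argument rests on --- that the Tate sequence $0 \rightarrow A \rightarrow A_{\cU_+}\oplus A_{\cU_-} \rightarrow A_{\cU} \rightarrow 0$ admits a continuous $A$-linear splitting given by separating a Laurent series into its non-negative and strictly negative parts --- does not hold as stated, and this is a genuine gap. An element of $A_\cU = A\langle f,f^{-1}\rangle \cong A\langle T,S\rangle/(T-f,\,ST-1)$ is a coset, not a Laurent series: the representation $g=\sum_{n\in\Z}c_nf^n$ is highly non-unique (already $f\cdot f^{-1}=1$ is a relation), so ``the non-negative-degree part of $g$'' is not a well-defined function of $g$. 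Your recipe splits the sequence of presentation rings $A\langle T\rangle\oplus A\langle S\rangle\rightarrow A\langle T,T^{-1}\rangle$, not the sequence of localizations. What Tate's argument actually furnishes is only a norm-controlled \emph{set-theoretic} lift: for every $\beta>1$, each $g$ can be written as $g_+-g_-$ with $\vert g_\pm\vert\leq\beta\vert g\vert$ (this is exactly what the paper uses, and has to work hard with, in the proof of theorem \ref{thm_Khiel}). Combined with the right-exactness of $\hat{\otimes}$ coming from lemma \ref{lem-completion}, this does give surjectivity of $\mathcal{M}(\cU_+)\oplus\mathcal{M}(\cU_-)\rightarrow\mathcal{M}(\cU)$, but it says nothing about exactness at the first two terms, which is the real content of the proposition: $-\hat{\otimes}_AM$ is not left exact, and absent a genuine continuous $A$-linear splitting (which I do not believe exists for general $f$) your argument stalls precisely there.

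The paper avoids this issue entirely and works with an arbitrary finite covering, invoking Tate's theorem only as a black box for $\oscr_\cX$. It first proves the proposition for orthonormalizable modules $C(I)$: tensoring Tate's exact augmented complex with the free module on $I$ preserves exactness, and lemma \ref{lem-completion} shows exactness survives separation-completion. For a general Banach module $M$ it chooses a surjection $C(I)\rightarrow M$ with closed kernel $K$, checks that $0\rightarrow K\hat{\otimes}_AA_\cU\rightarrow C(I)\hat{\otimes}_AA_\cU\rightarrow M\hat{\otimes}_AA_\cU\rightarrow 0$ stays exact (flatness of $A_\cU$ over $A$ plus lemma \ref{lem-completion}), and then runs a descending induction on the \v{C}ech degree using the long exact cohomology sequence, the vanishing for $C(I)$, and the finiteness of the covering. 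If you want to rescue your route, you must prove that the Tate complex is universally exact under $\hat{\otimes}_AM$, and the resolution-by-orthonormalizable-modules argument is essentially the standard way to do that; I recommend adopting it.
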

\begin{demo} Let $I$ be a set. We denote by $C(I)$ the $A$-module of functions $f\colon  I \rightarrow A$ with the property that $\lim_{i \rightarrow \infty } f(i) =0$, where the convergence is with respect to the filter
of complements of finite sub-sets of $I$. The module $C(I)$ is endowed with the norm
$\vert f \vert = \sup_i \vert f(i)\vert $ for which it is a Banach module. We remark that $C(I)$ is the
completion of the free  $A$-module with basis indexed by $I$. An othonormalizable Banach module is a
module isomorphic (with its  norm) to $C(I)$ for some $I$.   We first establish the proposition for
orthonormalizable Banach modules. By Tate's acyclicity theorem \cite[Thm.~8.2]{TateRAS}, the sequence
$$ 0 \rightarrow \oscr_\cX(\cX) \rightarrow \oplus_j \oscr_\cX(\cU_j) \rightarrow \ldots$$ is exact. If we tensor
by the free module with basis indexed by $I$, we still get an exact sequence, and by lemma
\ref{lem-completion}, the sequence remains exact after taking the separation-completion.

Let $q \geq 1$. Assume that  $\HH^t (\mathfrak{U}, \mathcal{N})=0$ for all $t > q$ and for all Banach modules
$N$. We show that the vanishing holds for $q$. Let $M$ be a Banach module. We can find a surjection
$p\colon  C(I) \rightarrow M$ obtained by taking a set of bounded topological generators of $M$.
We can also replace the norm on $M$ by the image norm because of the open mapping theorem. Let $K$ be the
kernel of the map, it is a closed Banach sub-module of $C(I)$. We now claim that for all $\cU$ affinoid
open, the sequence

$$0 \rightarrow K \hat{\otimes}_A A_\cU  \rightarrow C(I) \hat{\otimes}_A A_\cU \rightarrow  M \hat{\otimes}_{A} A_\cU \rightarrow 0$$ is exact.
The sequence of localizations (without completion) is exact because $A_\cU$ is flat over $A$ and we can
apply  lemma \ref{lem-completion}.
As a result we get  a long exact  sequence in Cech cohomology:
$$ \cdots \rightarrow \HH^q(\mathfrak{U}, \mathcal{K}) \rightarrow \HH^q( \mathfrak{U}, \mathcal{C}(I)) \rightarrow \HH^q ( \mathfrak{U}, \mathcal{M}) \rightarrow 0 \cdots$$
and we have $\HH^q( \mathfrak{U}, \mathcal{C}(I))=0$ so we get $\HH^q ( \mathfrak{U}, \mathcal{M})=0$.

\end{demo}

We are led to the following definition:

\begin{defi} Let $\cX$ be a rigid space and $\mathscr{F}$ be a sheaf on $\cX$. We say that
$\mathscr{F}$ is a Banach sheaf if:
\begin{enumerate}
\item for all affinoid open sub-set $\cU$ of $\cX$, $\mathscr{F}(\cU)$ is a Banach $\oscr_\cX(\cU)$-module,
\item the restriction maps are continuous,
\item there exists an  admissible affinoid covering $\mathfrak{U} = \{\cU_i\}_{i\in I}$ of $\cX$ such that
for all $i \in I$ and all affinoid  $\mathcal{V} \subset \cU_i$, the map:
$$ \oscr_\cX(\mathcal{V}) \hat{\otimes}_{\oscr_\cX(\cU_i)} \mathscr{F}(\cU_i) \rightarrow \mathscr{F}(\mathcal{V})$$ is an isomorphism.
\end{enumerate}
\end{defi}

 A coherent sheaf is a Banach sheaf and thus  Banach sheaves  generalize  coherent sheaves. We also
introduce a special  sub-category of Banach sheaves. We  say that a Banach module $M$ over $A$ is
projective if it is a direct factor of an orthonormalizable Banach module. We say that a Banach sheaf
$\mathscr{F}$ over a rigid space $\cX$ is  a projective Banach sheaf if $\mathscr{F}$ is  a Banach sheaf
and if there is an admissible affinoid covering $\mathfrak{U}$ of $\cX$ such that for
each $\cU\in \mathfrak{U}$, the $\oscr_\cX(\cU)$-module $\HH^0(\cU,\mathscr{F})$ is a  projective $\oscr_\cX(\cU)$-Banach module.

\subsection{Kiehl's theorem for Banach sheaves}

We can now prove the analogue of Kiehl's theorem (see \cite{Bos}, p. 93, thm. 4) for Banach sheaves.

\begin{thm}\label{thm_Khiel} Let $\cX = \Spm~A$ be an  affinoid  over $K$ and $\mathscr{F}$  a
Banach sheaf
on $\cX$. Then $\mathscr{F}$ is the Banach sheaf associated to the Banach module $\HH^0(\cX,\mathscr{F})$.
\end{thm}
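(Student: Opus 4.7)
The plan is to set $M := \HH^0(\cX, \mathscr{F})$ and compare $\mathscr{F}$ with the candidate Banach sheaf $\mathcal{M}$ defined by $\mathcal{M}(\mathcal{V}) := M \hat\otimes_A \oscr_\cX(\mathcal{V})$ for every affinoid $\mathcal{V} \subset \cX$. The restriction maps $M \to \mathscr{F}(\mathcal{V})$ are continuous and $A$-linear, hence extend by the universal property of the completed tensor product to continuous $\oscr_\cX(\mathcal{V})$-linear morphisms $\phi_\mathcal{V} \colon \mathcal{M}(\mathcal{V}) \to \mathscr{F}(\mathcal{V})$; these assemble into a morphism of sheaves $\phi \colon \mathcal{M} \to \mathscr{F}$ that I would show is an isomorphism on every affinoid. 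Using the admissible affinoid cover $\mathfrak{U} = \{\cU_i\}_{i \in I}$ of $\cX$ supplied by the definition of a Banach sheaf---which I may take finite since $\cX$ is quasi-compact---both $\mathscr{F}|_{\cU_i}$ and $\mathcal{M}|_{\cU_i}$ are the Banach sheaves associated respectively to the Banach $\oscr_\cX(\cU_i)$-modules $\mathscr{F}(\cU_i)$ and $M \hat\otimes_A \oscr_\cX(\cU_i)$. It thus suffices to prove that $\phi_{\cU_k}$ is an isomorphism for every $k$; then $\phi$ is an isomorphism on every $\cU_i$ by the local defining property, and by the sheaf axiom applied to the cover $\{\mathcal{V} \cap \cU_i\}$ of an arbitrary affinoid $\mathcal{V} \subset \cX$, it is an isomorphism on $\mathcal{V}$.

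For the core step, fix $k \in I$. The sheaf axiom for $\mathscr{F}$ relative to $\mathfrak{U}$ yields a left-exact sequence of Banach $A$-modules
\begin{equation*}
0 \longrightarrow M \longrightarrow \prod_i \mathscr{F}(\cU_i) \longrightarrow \prod_{i,j} \mathscr{F}(\cU_i \cap \cU_j).
\end{equation*}
Tensoring over $A$ with $\oscr_\cX(\cU_k)$ and completing, then using Tate's identification $\oscr_\cX(\cU_{i_0} \cap \cdots \cap \cU_{i_p}) \hat\otimes_A \oscr_\cX(\cU_k) = \oscr_\cX(\cU_k \cap \cU_{i_0} \cap \cdots \cap \cU_{i_p})$ together with the local property of $\mathscr{F}$ to compute $\mathscr{F}(\cU_{i_0} \cap \cdots \cap \cU_{i_p}) \hat\otimes_A \oscr_\cX(\cU_k) = \mathscr{F}(\cU_k \cap \cU_{i_0} \cap \cdots \cap \cU_{i_p})$, I would obtain
\begin{equation*}
0 \longrightarrow M \hat\otimes_A \oscr_\cX(\cU_k) \longrightarrow \prod_i \mathscr{F}(\cU_k \cap \cU_i) \longrightarrow \prod_{i,j} \mathscr{F}(\cU_k \cap \cU_i \cap \cU_j).
\end{equation*}
On the other hand, applying Proposition \ref{prop_acy} to the Banach $\oscr_\cX(\cU_k)$-module $\mathscr{F}(\cU_k)$ relative to the finite admissible cover $\{\cU_k \cap \cU_i\}_{i \in I}$ of $\cU_k$, and again invoking the local property to rewrite the completed tensor products, gives
\begin{equation*}
0 \longrightarrow \mathscr{F}(\cU_k) \longrightarrow \prod_i \mathscr{F}(\cU_k \cap \cU_i) \longrightarrow \prod_{i,j} \mathscr{F}(\cU_k \cap \cU_i \cap \cU_j).
\end{equation*}
Identifying the kernels of the common outgoing map yields the desired isomorphism $M \hat\otimes_A \oscr_\cX(\cU_k) \cong \mathscr{F}(\cU_k)$.

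The main obstacle will be justifying that the first displayed sequence remains left-exact after completed tensor product with $\oscr_\cX(\cU_k)$. Algebraic exactness of $0 \to M \otimes_A \oscr_\cX(\cU_k) \to \prod_i \mathscr{F}(\cU_i) \otimes_A \oscr_\cX(\cU_k)$ is ensured by flatness of the ring morphism $A \to \oscr_\cX(\cU_k)$, and since $I$ is finite the products commute with both tensor product and completion termwise; lemma \ref{lem-completion} then preserves exactness on passing to completions. The only delicate point is reconciling the subspace norm on $M \otimes_A \oscr_\cX(\cU_k)$ inherited from $\prod_i \mathscr{F}(\cU_i) \otimes_A \oscr_\cX(\cU_k)$ with the norm defining $M \hat\otimes_A \oscr_\cX(\cU_k)$; this I would deduce from the open mapping theorem applied to the closed embedding $M \hookrightarrow \prod_i \mathscr{F}(\cU_i)$ (closed as the kernel of a continuous map between Banach modules), which shows that the subspace norm is equivalent to the Banach norm of $M$ and therefore produces the same completion.
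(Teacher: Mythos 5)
The decisive step in your argument --- that the sequence $0\to M\to\prod_i\mathscr{F}(\cU_i)\to\prod_{i,j}\mathscr{F}(\cU_i\cap\cU_j)$ stays left-exact, with kernel equal to $M\hat{\otimes}_A\oscr_\cX(\cU_k)$, after applying $-\,\hat{\otimes}_A\,\oscr_\cX(\cU_k)$ --- is where the entire content of the theorem is hidden, and your justification does not close it. The completed tensor product is right-exact but not left-exact. Lemma \ref{lem-completion} does preserve exactness, but only for the semi-norm \emph{induced} on the kernel from the middle term, and what it then identifies with the kernel of the completed outgoing map is the \emph{closure of the image} of $M\otimes_A\oscr_\cX(\cU_k)$ inside $\prod_i\mathscr{F}(\cU_k\cap\cU_i)$. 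To conclude you need two further facts: (a) that this closure is $M\hat{\otimes}_A\oscr_\cX(\cU_k)$, i.e.\ that the tensor-product semi-norm on $M\otimes_A\oscr_\cX(\cU_k)$ is equivalent to the subspace semi-norm --- the open mapping theorem applied to $M\hookrightarrow\prod_i\mathscr{F}(\cU_i)$ only controls the norm on $M$ itself, not the infimum over representations $\sum m_j\otimes a_j$ with $m_j\in M$ versus representations with $m_j$ ranging in the larger module; and (b), more seriously, that the image of $M\otimes_A\oscr_\cX(\cU_k)$ is \emph{dense} in the kernel $\mathscr{F}(\cU_k)$ of the second displayed sequence. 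Point (b) is precisely the assertion that global sections topologically generate the sections over $\cU_k$ --- that is, the theorem itself --- and nothing in your argument produces it, since you never establish surjectivity of any Cech differential for the global cover.

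The paper's proof is organized so that only \emph{right}-exactness of $\hat{\otimes}$ is ever invoked: after reducing to a Laurent covering $\{\cU_+,\cU_-\}$ attached to a single function, the essential analytic input is the surjectivity of $\HH^0(\cU_+,\mathscr{F})\oplus\HH^0(\cU_-,\mathscr{F})\to\HH^0(\cU_+\cap\cU_-,\mathscr{F})$, proved by a Kiehl-type successive approximation with bounded topological generators and the open mapping theorem. Once one has a short exact sequence ending in a surjection, completing the tensored sequence identifies $\HH^0(\mathcal{V},\mathscr{F})$ with $\HH^0(\cX,\mathscr{F})\hat{\otimes}_A A_{\mathcal{V}}$. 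You would need to supply an analogue of that approximation step; without it, your kernel comparison yields at best a continuous map $M\hat{\otimes}_A\oscr_\cX(\cU_k)\to\mathscr{F}(\cU_k)$ whose surjectivity (even density of its image) is exactly what remains to be proved.
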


\begin{demo} We'll follow the outline of the proof given in \cite{Bos}, p. 93 to 96. Let $\mathfrak{U}$ be an admissible affinoid
covering of $\cX$ such that $\mathscr{F}\vert_{\cU_i}$ is the sheaf associated to the
Banach modules $M_i$ for all $\cU_i \in \mathfrak{U}$. We say that $\mathscr{F}$ is
$\mathfrak{U}$-quasi-coherent. By a standard reduction we can assume that $\mathfrak{U}$ is a Laurent
covering (we can always refine $\mathfrak{U}$ by a Laurent covering). By induction, we can assume that the
Laurent covering is given by a single invertible  function $f \in A$. So $\mathfrak{U} = \{\cU_+, \cU_-\}$
where $\cU_ + =\{ x \in \cX,  \vert f \vert \leq 1\} =  \Spm~A<f>$ and
$\cU_ - =\{ x \in \cX,  \vert f \vert \geq 1\} =  \Spm~A<f^{-1}>$. Also set $\cU = \cU_+\cap \cU_-$.

\begin{lem} The Cech complex
$$0 \rightarrow \HH^0(\cX, \mathcal{F}) \rightarrow \HH^0(\cU_+,\mathcal{F}) \oplus
\HH^0(\cU_-,\mathcal{F} ) \rightarrow \HH^0(\cU, \mathcal{F})\rightarrow 0$$ is exact.
\end{lem}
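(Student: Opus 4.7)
The plan is to establish the three non-trivial exactness statements separately, following the general pattern of Bosch's proof of Kiehl's theorem but being careful that we are working with Banach (as opposed to merely coherent) modules.

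First, I would handle injectivity at $\HH^0(\cX,\mathcal{F})$ and exactness at $\HH^0(\cU_+,\mathcal{F})\oplus\HH^0(\cU_-,\mathcal{F})$. Both follow directly from the sheaf axioms applied to the finite admissible covering $\{\cU_+,\cU_-\}$ of $\cX$: a global section restricting to zero on both opens is zero, and any pair of sections with the same restriction to $\cU$ glues. These do not use the Banach structure at all.

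The real content is the surjectivity of the difference map $\HH^0(\cU_+,\mathcal{F})\oplus\HH^0(\cU_-,\mathcal{F}) \to \HH^0(\cU,\mathcal{F})$. Writing $M_{\pm}:=\HH^0(\cU_\pm,\mathcal{F})$ and $N:=\HH^0(\cU,\mathcal{F})$, the $\mathfrak{U}$-quasi-coherence assumption gives two canonical isomorphisms
$$
N \;\cong\; M_+\,\hat{\otimes}_{A\langle f\rangle}\, A\langle f,f^{-1}\rangle \;\cong\; M_-\,\hat{\otimes}_{A\langle f^{-1}\rangle}\, A\langle f,f^{-1}\rangle.
$$
The key algebraic input is the topological direct sum decomposition of $A$-Banach modules
$$
A\langle f, f^{-1}\rangle \;=\; A\langle f\rangle \;\hat{\oplus}\; f^{-1}A\langle f^{-1}\rangle,
$$
obtained by splitting a Laurent series into its non-negative and strictly negative powers of $f$. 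Combined with the first of the two identifications above, this yields $N = M_+ \,\hat{\oplus}\, N^{-}$, where $N^{-}:=M_+\hat{\otimes}_{A\langle f\rangle}f^{-1}A\langle f^{-1}\rangle$; symmetrically, using $A\langle f, f^{-1}\rangle = A\langle f^{-1}\rangle \,\hat{\oplus}\, fA\langle f\rangle$, we obtain $N = M_- \,\hat{\oplus}\, N^{+}$.

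The main obstacle is to conclude surjectivity from these two decompositions; concretely I need to show that the summand $N^{-}$ is contained in the image of $M_{-}\to N$. The idea is to verify that under the second identification $N\cong M_{-}\hat{\otimes}_{A\langle f^{-1}\rangle}A\langle f,f^{-1}\rangle$, the closed subspace $N^{-}$ coincides with (the image of) $M_{-}$ modulo its $N^{+}$ complement: both are characterized intrinsically as the elements whose ``non-negative $f$-part'' vanishes. Once this identification is carried out, any $n\in N$ decomposes as $n = n_+ + n_-$ with $n_+\in M_+$ and $n_-$ in the image of $M_-$, so $n$ is hit by $(n_+,-n_-)\in M_+\oplus M_-$. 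The completeness of $M_+$ and $M_-$, together with the open mapping theorem and Lemma \ref{lem-completion}, is what allows one to form the required absolutely convergent Laurent-type expansions and to justify that the naive termwise splitting of an arbitrary element of the completed tensor product actually converges in $M_+$ and $M_-$ respectively.
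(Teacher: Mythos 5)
Your handling of injectivity and exactness in the middle is fine (these are just the sheaf axioms for the admissible covering $\{\cU_+,\cU_-\}$, and the paper treats them the same way), but the surjectivity argument has a genuine gap at exactly the step you flag as "the main obstacle". The claim that $N^{-}$ lies in the image of $M_-\to N$ because "both are characterized intrinsically as the elements whose non-negative $f$-part vanishes" is false: there is no intrinsic notion of the $f$-degree decomposition of an element of $N$. The two presentations $N\cong M_+\hat{\otimes}_{A\langle f\rangle}A\langle f,f^{-1}\rangle$ and $N\cong M_-\hat{\otimes}_{A\langle f^{-1}\rangle}A\langle f,f^{-1}\rangle$ are related by transition data with coefficients in $A\langle f,f^{-1}\rangle$ that mix positive and negative powers of $f$, so the two direct sum decompositions of $N$ are not compatible. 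Concretely: if $v'$ is a topological generator of $N$ coming from $M_+$ and $w'$ one coming from $M_-$, with $v'=g\,w'$ for some $g\in A\langle f,f^{-1}\rangle$ involving positive powers of $f$, then $f^{-1}v'\in N^{-}$ equals $(f^{-1}g)\,w'$, and $f^{-1}g$ need not lie in $A\langle f^{-1}\rangle$; such an element need not be in the image of $M_-$. The splitting of $A\langle f,f^{-1}\rangle$ into non-negative and negative powers of $f$ only transports to $N$ through one presentation at a time.

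This incompatibility is precisely what the paper's proof (following Bosch's treatment of Kiehl's theorem) is built to handle, and it cannot be bypassed by a direct sum decomposition. The paper chooses bounded topological generators $\{v_i\}$ of $M_+$ and $\{w_j\}$ of $M_-$, writes the transition relations $v_i'=\sum_j a_{i,j}w_j'$ with $a_{i,j}\in A\langle f,f^{-1}\rangle$ uniformly bounded, approximates the $a_{i,j}$ by elements $x_{i,j}\in A\langle f\rangle$ using density, and produces, for every $u\in N$ and every $\epsilon>0$, elements $u_\pm\in M_\pm$ with $\vert u_\pm\vert\leq\alpha\vert u\vert$ and $\vert u-u_+-u_-\vert\leq\epsilon\vert u\vert$ for a uniform $\alpha$. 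Surjectivity then follows by iterating this approximation and summing the resulting series, which is where the completeness of $M_\pm$ and the open mapping theorem (guaranteeing the equivalence of the two residue norms on $N$) are actually used. In your proposal these tools are invoked only to justify convergence of a termwise splitting, but it is the splitting itself, not its convergence, that fails.
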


We set $M_{+/-} = H^0(\cU_{+/-}, \mathcal{F})$ and $M= H^0(\cU, \mathcal{F})$. We need to prove the surjectivity
of the map $M_+ \oplus M_- \rightarrow M$.

Let $\vert\_ \vert$ be the norm on $A$. We deduce from it  a norm on the rings of restricted power series
$A\langle X
\rangle$, $A\langle Y\rangle$, $A\langle X,Y\rangle $.  Using  the surjections
$A\langle X\rangle \rightarrow
A\langle f\rangle$, $A\langle Y\rangle \rightarrow A\langle f^{-1}\rangle $, $A\langle X,Y\rangle\rightarrow
A\langle f,f^{-1}\rangle$, we equip $A\langle f\rangle $, $A\langle f^{-1}\rangle $ and
$A\langle f,f^{-1}\rangle$
with the residue norms.
  By definition of the norm,  for all $\beta>1$,  any element $g \in A\langle f,f^{-1}\rangle $ can be
written in the
form $\sum_{n \in \Z} c_n f^n$ with $\sup_n \vert c_n \vert \leq \beta \vert g\vert$. As a result, we deduce
that there
exist   $g_{+/-} \in A\langle f^{+/-1}\rangle$ such that  $\vert g_{+/-}\vert \leq \beta \vert g \vert $ and $g = g_+-g_-$.
Let $\{v_i\}_{i\in I}$ and $\{w_j\}_{j\in J}$ be  bounded topological generators of $M_+$ and $M_-$ as
$A\langle f\rangle $ and $A\langle f^{-1}\rangle $ modules. We get surjections $C(I) \rightarrow M_+$ and
$C(J) \rightarrow M_-$. We denote by $\vert. \vert$ the corresponding norms on $M_+$ and
$M_-$. We let $v'_i$ and $w'_j$
be the images of $v_i$ or $w_j$ in $M$. The $\{v_i\}_{i\in I}$ and $\{w_j\}_{j\in J}$ are bounded
topological generators
of $M$. It follows from the open mapping theorem that the residue norms induced by both surjections
$C(I) \rightarrow M$ and $C(J) \rightarrow M$ are equivalent. We call them $\vert . \vert_I$ and
$\vert. \vert_J$.
There exists $m,n \in \R_{>0}$ such that $m\vert. \vert_I \leq \vert. \vert_J \leq n \vert.\vert_I$. In
the sequel we
simply put $\vert. \vert$ for $\vert.\vert_I$.

To prove the lemma , it suffices to show that for any $\epsilon >0$, there is $\alpha>1$ such that for
any $u \in M$, there is $u_{+/-} \in M_{+/-}$ with  $\vert u_{+/-} \vert \leq \alpha \vert u \vert $
and $\vert u- u_+-u_-\vert \leq
\epsilon \vert u \vert $.

 There exist elements $a_{i,j}, b_{i,j} \in A\langle f,f^{-1}\rangle$ uniformly bounded, say by
$L \in \R$, such that
\begin{eqnarray*}
 v'_i& = &\sum_j a_{i,j} w'_j \\
 w'_j &=& \sum_i b_{i,j} v'_j
\end{eqnarray*}

Since $A\langle f^{-1}\rangle $ is dense in $A\langle f,f^{-1}\rangle$, for any $L'>0$, we can find $x_{i,j} \in
A\langle f\rangle $ such that $\vert x_{i,j} - a_{i,j}\vert \leq L'$. Let $u \in M$. We can write
$u = \sum_i a_i v_i$ where $\sup_i \vert a_i \vert \leq \beta \vert u\vert$. Now write $a_i = a_i^+- a_i^{-}$
where $a_i^{+/-} \in A\langle f^{+/-1}\rangle$ satisfies $\vert a_i^{+/-}\vert\leq \beta \vert a_i \vert$.
Now we set
\begin{eqnarray*}
 u_+ &=& \sum_i a_i^{+} v_i\\
 u_- &=& \sum_{i,j} a_i^{-} x_{i,j} w_j\\
 \end{eqnarray*}

 We compute that $$u-u_++u_- = - \sum_i a_i^{-} v_i' + \sum_{i,j} a_i^{-} x_{i,j} w'_j = -
\sum_{i,j} a_i^{-}(x_{i,j}-a_{i,j}) w'_{j} $$
 It follows that
 \begin{eqnarray*}
 \vert u-u_++u_- &\vert& \leq n \beta\vert u \vert L' \\
 \vert u_+ \vert \leq \beta \vert u \vert \\
\vert u_- \vert \leq \beta (L+L')\vert u \vert \\
\end{eqnarray*}

We now take $L' = \frac{\epsilon}{n\beta}$ and $\alpha = \beta (L+L')$. This finishes the proof of the lemma.

We now show that $\mathscr{F}$ is a Banach sheaf. Let $\mathcal{V} = \Spm~A_{\mathcal{V}} $ be an affinoid open
sub-set of $\cX$. If we
take the completed tensor product of the sequence of the lemma with $A_{\mathcal{V}}$, we get the exact sequence
$$0 \rightarrow  \HH^0(\cX, \mathscr{F}) \hat{\otimes}_A A_{\mathcal{V}} \rightarrow \HH^0(\cU_+ \cap \mathcal{V}, \mathscr{F}) \oplus
\HH^0(\cU_-\cap \mathcal{V} , \mathscr{F}) \rightarrow \HH^0(\cU\cap \mathcal{V}, \mathscr{F}) \rightarrow 0,$$ which proves that
$\HH^0(\cX,\mathscr{F}) \hat{\otimes}_A A_{\mathcal{V}} = \HH^0(\mathcal{V},\mathscr{F})$.
\end{demo}

\bigskip

We would now like to raise a question about proper morphisms. Let $\phi\colon  \cX \rightarrow \cY$ be a
proper morphism.






\begin{opprob}\label{opprobB} Let $\mathscr{F}$ be a   Banach sheaf over $\cX$. Is $\phi_\ast \mathscr{F}$ a
Banach sheaf over $\cY$?
\end{opprob}

When $\mathscr{F}$ is a coherent module, the answer is positive (this is a theorem of Kiehl, see \cite{Bos}, p. 105,  thm. 9).

$R^{q}\phi_\ast \mathscr{F}$ is a Banach sheaf.




\subsection{Formal geometry and Banach sheaves}\label{sect_formal_ban}

 Let $\mathfrak{X}$ be a formal scheme over $\Spf~\ocal_K$. Let $\varpi$ be a uniformizing element in
$\ocal_K$. We denote
by $\cX$ the rigid-analytic fiber of $\mathfrak{X}$ and by $X_n$ the scheme over
$\Spec~\ocal_{K}/\varpi^n\ocal_K$ deduced from $\mathfrak{X}$ by reduction modulo $\varpi^n$.
 \begin{defi}\label{def:formalBanach}  A  formal Banach sheaf on $\mathfrak{X}$ is a family of quasi coherent sheaves
$\mathfrak{F} = (\mathscr{F}_n)_{n\in \N}$ where:
 \begin{enumerate}
 \item $\mathscr{F}_n$ is a sheaf over $X_n$,
 \item  for all $n \geq m$, if $i\colon  X_m \hookrightarrow X_n$ is the closed immersion, we have
$i^\ast \mathscr{F}_n = \mathscr{F}_m$.
 \end{enumerate}
 \end{defi}

   Let $\mathfrak{F}$ be a formal Banach sheaf. We can associate to $\mathfrak{F}$ a Banach sheaf
$\mathscr{F}$ on
$\cX$ as follows. Let $\Spf ~A$ be an open affine sub-set of $\mathfrak{X}$.
Let $M_n = \HH^0(A/\varpi^n A, \mathscr{F}_n)$.
We set $\mathscr{F}(\Spm~A_{\qq_p}) = ( \mathrm{lim}_n M_n) \otimes_{\ocal_K} K$. This is a Banach
$A_{\qq_p}$-module with
unit ball the image of $  \mathrm{lim}_n M_n$ in $( \mathrm{lim}_n M_n) \otimes_{\ocal_K} K$.

We now give a simple criterion for the direct image of a Banach sheaf to be a Banach sheaf.
Let $\Phi\colon
\mathfrak{X} \rightarrow \mathfrak{Y}$ be a quasi-compact and quasi-separated morphism between two
formal schemes.
As before we denote
by $X_n$ and $Y_n$ the schemes obtained by reduction modulo $\varpi^n$ and by $\phi_n\colon  X_n
\rightarrow Y_n$ the induced map. Let $\mathfrak{F}$ be a formal Banach sheaf on $\mathfrak{X}$.
Let $\cX$ and $\cY$ be the rigid spaces associated to $\mathfrak{X}$ and $\mathfrak{Y}$ and $\phi\colon
\cX \rightarrow \cY$ the induced map.  For all $n \geq m$, we have a cartesian diagram:

\begin{eqnarray*}
\xymatrix{ X_n \ar[r]^{i}\ar[d]^{\phi_n} & X_m \ar[d]^{\phi_m}\\
Y_n \ar[r]^{j} &Y_m}
\end{eqnarray*}
\begin{prop}\label{last-prop} Assume that for all $n \geq m$ we have the base change property:
$$ j^\ast \bigl( \phi_{m,\ast}  \mathscr{F}_m\bigr) \simeq \phi_{n,\ast} \mathscr{F}_n $$

then $\phi_\ast \mathscr{F}$ is a Banach sheaf.
\end{prop}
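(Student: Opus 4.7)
The plan is to deduce the proposition from the observation that the base change hypothesis makes $\phi_\ast \mathfrak{F} := (\phi_{n,\ast}\mathscr{F}_n)_n$ into a formal Banach sheaf on $\mathfrak{Y}$ in the sense of Definition \ref{def:formalBanach}: the hypothesis $j^\ast\bigl(\phi_{m,\ast}\mathscr{F}_m\bigr) \simeq \phi_{n,\ast}\mathscr{F}_n$ is precisely the compatibility between the pushforward sheaves on the successive thickenings of $\mathfrak{Y}$ required by the definition. Let $\mathscr{G}$ denote the Banach sheaf on $\cY$ associated to $\phi_\ast\mathfrak{F}$ by the construction recalled at the beginning of \S\ref{sect_formal_ban}; by that construction $\mathscr{G}$ is a Banach sheaf on $\cY$. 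The proof then reduces to identifying $\mathscr{G}$ with $\phi_\ast\mathscr{F}$ as sheaves on $\cY$, together with matching Banach-module structures on sections.

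I would check this identification on the admissible affinoid cover of $\cY$ consisting of rigid fibers $\mathcal{V}=\Spm B_{\qq_p}$ of affine opens $\Spf B \subset \mathfrak{Y}$. On one hand $\mathscr{G}(\mathcal{V}) = \bigl(\lim_n M_n\bigr) \otimes_{\ocal_K} K$ with $M_n := \HH^0\bigl(\Phi^{-1}(\Spf B), \mathscr{F}_n\bigr)$, using the base change hypothesis to rewrite $\HH^0(\Spf B, \phi_{n,\ast}\mathscr{F}_n)$. On the other hand $(\phi_\ast\mathscr{F})(\mathcal{V}) = \mathscr{F}\bigl(\phi^{-1}(\mathcal{V})\bigr)$. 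Using quasi-compactness and quasi-separatedness of $\Phi$, choose a finite affine cover $\{\Spf A_\alpha\}_{\alpha}$ of $\Phi^{-1}(\Spf B)$ with affine pairwise intersections. The Cech complex computing $\mathscr{F}\bigl(\phi^{-1}(\mathcal{V})\bigr)$ with respect to the induced rigid cover $\{\Spm A_{\alpha,\qq_p}\}$ is acyclic by Proposition \ref{prop_acy}, and $M_n$ sits as the kernel of the integral Cech differential $\bigoplus_\alpha \mathscr{F}_n(A_\alpha) \to \bigoplus_{\alpha,\beta} \mathscr{F}_n(A_\alpha \cap A_\beta)$. The two presentations are to be compared term by term using the description of the Banach sheaf associated to a formal Banach sheaf on each affine piece.

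The identification $\mathscr{G}(\mathcal{V}) = (\phi_\ast\mathscr{F})(\mathcal{V})$ thus amounts to the commutation of the inverse limit over $n$ with the kernel of the finite Cech differential, followed by tensoring with $K$. The inverse system $\{M_n\}$ and its analogues on each $\Spf A_\alpha$ have surjective transition maps by the defining property of a formal Banach sheaf on an affine, so Mittag-Leffler yields exactness of the inverse limit along the finite Cech complex; tensoring with $K$ preserves this since the integral sections are $\ocal_K$-flat. Conditions (1)--(3) of the Banach sheaf definition for $\phi_\ast\mathscr{F}$ then transfer from those for $\mathscr{G}$, which are built into the construction of the associated sheaf of a formal Banach sheaf and into the flat base change property of completed tensor products. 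The equivalence between the norms on $(\phi_\ast\mathscr{F})(\mathcal{V})$ coming from the Cech presentation and those on $\mathscr{G}(\mathcal{V})$ coming from the inverse-limit construction is handled by the open mapping theorem applied to the natural surjections between these Banach modules.

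I expect the main obstacle to be the careful bookkeeping needed to match the three noncommuting operations of integral inverse limit, Cech kernel, and base change to $K$-coefficients, and in particular to verify that the Banach-module norm on each $(\phi_\ast\mathscr{F})(\mathcal{V})$ so produced agrees with that on $\mathscr{G}(\mathcal{V})$. Each of these steps goes through cleanly only because of three inputs working together: quasi-compactness of $\Phi$ (producing a finite Cech complex with finitely many terms in each degree), the base change hypothesis (keeping the pushforward a formal Banach sheaf with surjective transition maps), and $\ocal_K$-torsion-freeness of the integral sections (so $\otimes_{\ocal_K} K$ is exact on the finite Cech complex).
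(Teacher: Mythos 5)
Your proposal is correct and follows exactly the paper's route: the paper's proof is the one-line observation that the base change hypothesis makes $(\phi_{n,\ast}\mathscr{F}_n)_n$ a formal Banach sheaf on $\mathfrak{Y}$ whose associated Banach sheaf is $\phi_\ast\mathscr{F}$, and your Cech/inverse-limit/flatness argument simply supplies the details of that identification which the paper leaves implicit. (Minor remark: Mittag--Leffler is not actually needed, since you only commute an inverse limit with a kernel, which is automatic from left exactness, and $\otimes_{\ocal_K}K$ preserves kernels by flatness.)
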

\begin{demo} Indeed $\phi_\ast \mathscr{F}$ is the Banach sheaf associated to the formal Banach sheaf $(\phi_{n,\ast} \mathscr{F}_n)_{n\in \N}$.
\end{demo}

\section{\bf List of symbols}

$B$ standard Borel in $\GL$, \S\ref{sec:algrep}

$U\subset B$ unipotent radical, \S\ref{sec:algrep}

$T\subset B$ standard torus, \S\ref{sec:algrep}

$B^0$ Borel opposite to $B$, \S\ref{sec:algrep}

$U^0\subset B^0$ unipotent radical, \S\ref{sec:algrep}

$I$ Iwahori subgroup of $\GL(\Z_p)$, \S\ref{sec:analrep}

$\mathfrak{T}$, $\mathfrak{T}_w$, formal torus, \S\ref{sect-main}

$\mathfrak{B}_w$, $\mathfrak{U}_w$, formal groups, \S\ref{sec:ThesheafcalF}

$\mathcal{W}$ weight space, \S\ref{sect_ws}

$\mathcal{W}(w)$, $\mathcal{W}(w)^o$, \S\ref{sect_ws}

$\kappa^{\un}$, universal character,  proposition \ref{prop_cara}

$\kappa\mapsto\kappa'$, involution on weights, \S\ref{sec-classicalmf}

$Y$ moduli space of principally polarized abelian
schemes $(A,\lambda)$ of dimension $g$ equipped with a  principal level $N$, \S\ref{sec-classicalmf}

$Y\subset X$ toroidal compactification, \S\ref{sec-classicalmf}

$Y\subset X^\star$, minimal compactification, \S\ref{sec:mimimalcompact}

$\mathfrak{X}$,  formal scheme associated to $X$, \S\ref{sec:blowup}

$Y_{\mathrm{Iw}}$ moduli space with principal level $N$ structure and Iwahori structure at $p$, \S\ref{sec-classicalmf}

$Y_{\mathrm{Iw}}\subset \XI$  toroidal compactification, \S\ref{sec-classicalmf}

$\mathfrak{X}_1(p^n)(v)$, $\mathfrak{X}_{\mathrm{Iw}^+}(p^n)(v)$, $\mathfrak{X}_{\mathrm{Iw}}(p^n)(v)$, $\mathfrak{X}(v)$ formal schemes,  \S\ref{sec:blowup}

$\cX(v)$ rigid space, neighbourhood of ordinary locus of width $v$, \S\ref{sec:blowup}

$\cX_1(p^n)(v)$, $\cX_{\mathrm{Iw}^+}(p^n)(v)$, $\cXI(p^n)(v)$, rigid spaces, \S\ref{sect-rigide}

$\mathcal{F}$, proposition \ref{prop-FHT}

$\mathfrak{IW}_w$, Grassmannian of $w$-compatible flags in $\mathcal{F}$,   \S\ref{sect-main}

$\mathfrak{IW}^+_w$, Grassmannian of $w$-compatible flags in $\mathcal{F}$ and bases elements of the graded pieces, \S\ref{sect-main}

$\mathcal{IW}_w^+$,  rigid space over $ {\cX}_1(p^n)(v)$ associated to $\mathfrak{IW}_w^+$, \S\ref{sect-rigide}

$\mathcal{IW}_w$, rigid space over $ {\cX}_1(p^n)(v)$ associated to  $\mathfrak{IW}_w$, \S\ref{sect-rigide}

$\mathcal{IW}^{o+}_w$, descent of $\mathcal{IW}_w^+$ to  $\cX_{\mathrm{Iw}^+}(p^n)(v)$, \S\ref{sect-rigide}

$\mathcal{IW}^o_w$,  descent of $\mathcal{IW}_w$ to  $\cXI(p^n)(v)$, \S\ref{sect-rigide}

$\mathcal{IW}^{o+}_{\underline{w}}$, rigid space with dilations parameters, \S\ref{sec:dilations}

$\mathfrak{w}^{\dag\kappa}_w$ the formal Banach sheaf of $w$-analytic, $v$-overconvergent modular forms of weight $\kappa$,
definition \ref{def:formalintegralsheaf}

$\omega^{\dag\kappa}_w$
Banach sheaf of $w$-analytic, $v$-overconvergent weight $\kappa$ modular forms, \S\ref{sect-rigide}

$\mathrm{M}_w^{\dag\kappa}(\cXI(p)(v))$, $\mathrm{M}^{\dag\kappa}(\XI(p))$ space of overconvergent modular forms of weight $\kappa$, definition \ref{def:overconvmodforms}

$\omega^{\dag\kappa^{\un}}_{w}$, $M_{v,w}$, $M^\dag$ families of overconvergent modular forms, \S\ref{sec:universalomega}

$\tilde{\mathfrak{w}}^{\dag\kappa^{o\un}}_w$, family of integral overconvergent modular forms, \S\ref{sec:anintegralfamily}

$\omega^{\dag\kappa}_{\underline{w}}$, $\mathrm{M}_{\underline{w}}^{\dag\kappa}(\cXI(p)(v))$, variants with dilations parameters, \S\ref{sec:dilations}

$U_{p,g}$, $U$ operator, \S\ref{sec:Upg}

$U_{p,i}$, $U$ operator, \S\ref{sec:Upi}

\end{document}